\DeclareFontFamily{U}{russian}{}
\DeclareFontShape{U}{russian}{m}{n}
        { <5><6> wncyr5
        <7><8><9> wncyr7
        <10><10.95><12><14.4><17.28><20.74><24.88> wncyr10 }{}
\DeclareSymbolFont{Russian}{U}{russian}{m}{n}
\DeclareSymbolFontAlphabet{\mathcyr}{Russian}
\let\@math@cyr\mathcyr
\renewcommand{\mathcyr}[1]{\@math@cyr{\cyracc #1}}
\DeclareMathOperator{\dtr}{degtr}
\newcommand{\base}{\mathscr S}
\newcommand{\basex}[1]{\mathscr S_{/#1}}
\newcommand{\res}[1]{\kappa_{#1}}
\newcommand{\HI}{HI}
\DeclareMathOperator{\codim}{codim}
\newcommand{\C}{\mathscr C} 
\newcommand{\D}{\mathscr D} 
\newcommand{\hrt}[1]{#1^\heartsuit}
\DeclareMathOperator{\ohrt}{\otimes^H}
\newcommand{\cG}{\mathcal G}
\newcommand{\gen}[1]{\langle #1 \rangle_+}
\DeclareMathOperator{\Th}{Th}
\newcommand{\bS}{\mathbf S}
\DeclareMathOperator{\uK}{\underline K}
\newcommand{\Mb}{M^{BM}}
\newcommand{\dtw}[1]{\langle#1\rangle}
\newcommand{\gtw}[1]{\{#1\}}
\DeclareMathOperator{\Ker}{Ker}
\newcommand{\pplim}[1] {\underset{#1}{"\!\varprojlim\!"}}
\newcommand{\pplimN}{{"\varprojlim\!\!"}}
\newcommand{\pro}[1] {\operatorname{pro}-#1}
\newcommand{\Setx}[1]{\mathscr S^o_{/#1}}
\newcommand{\bSetx}[1]{\bar{\mathscr S}^o_{/#1}}
\newcommand{\Petx}[1]{\mathscr P^o_{/#1}}
\newcommand{\hMb}{\hat M^{BM}}
\newcommand{\mH}{\mathcal H} 
\newcommand{\hmH}{\hat{\mathcal H}}
\newcommand{\HM}{H_M}
\newcommand{\rH}{\hat H} 
\DeclareMathOperator{\hocolim}{hocolim}
\DeclareMathOperator{\hmtr}{\Pi^{tr}_*}
\DeclareMathOperator{\hmod}{\Pi^{}_*}
\DeclareMathOperator{\sh}{\Pi^\delta}
\DeclareMathOperator{\she}{\Pi^{\delta-\eff}}
\DeclareMathOperator{\sht}{\tilde \Pi^{\delta}}
\DeclareMathOperator{\shte}{\tilde \Pi^{\delta-\eff}}
\newcommand{\hrep}{h_0^\delta}
\newcommand{\grp}{\mathbf{cGrp}}
\DeclareMathOperator{\shet}{\mathbf{Sh}_\et}
\newcommand{\HB}{H_\mathcyr B}
\newcommand{\pure}{\mathrm{pur}}
\newcommand{\hgrp}[1]{\underline{#1}}
\newcommand{\sab}{s\mathscr{A}}
\newcommand{\T}{\mathscr T} 
\newcommand{\M}{\mathscr M} 
\DeclareMathOperator{\pts}{Pt}
\DeclareMathOperator{\MTh}{\mathit MTh}
\DeclareMathOperator{\Der}{D}
\newcommand{\smod}[1]{#1\!-\!mod}
\newcommand{\MGL}{\mathbf{MGL}}
\newcommand{\MZ}{\mathbf M\ZZ}
\newcommand{\MGLmod}{\smod{\MGL}}
\newcommand{\MZmod}{\smod{\MZ}}
\newcommand{\DA}{\Der_{\AA^1}}
\newcommand{\DAx}[1]{\Der_{\AA^1,#1}}
\newcommand{\sm}{\mathit{Sm}}
\newcommand{\ab}{\mathscr Ab}
\newcommand{\SH}{\mathrm{SH}}
\newcommand{\eff}{\mathit{eff}}
\newcommand{\qf}{\mathit{qf}}
\newcommand{\DM}{\mathrm{DM}}
\newcommand{\DMe}{\mathrm{DM}^{\eff}}
\newcommand{\DMB}{\DM_\mathcyr B}
\DeclareMathOperator{\Hom}{Hom}
\DeclareMathOperator{\uHom}{\underline{Hom}} 
\DeclareMathOperator{\Pic}{Pic}
\DeclareMathOperator{\spec}{Spec}
\newcommand{\ilim} { \varinjlim }
\newcommand{\plim} { \varprojlim }
\DeclareMathOperator{\Comp}{C}
\newcommand{\derL}{\mathbf{L}}
\newcommand{\derR}{\mathbf{R}}
\newcommand{\NN} {\mathbf N}
\newcommand{\ZZ} {\mathbf Z}
\newcommand{\QQ} {\mathbf Q}
\renewcommand{\AA} {\mathbf A}
\newcommand{\PP} {\mathbf P}
\newcommand{\GG} {\mathbf{G}_m}
\newcommand{\E}{\mathbb E} 
\newcommand{\F}{\mathbb F}
\newcommand{\G}{\mathbb G}
\newcommand{\un}{\mathbbm 1} 
\newcommand{\pur}{\mathfrak p} 
\newcommand{\A}{\mathscr A}
\newcommand{\et}{\mathrm{\acute{e}t}}
\newcommand{\cdh}{{\mathrm{cdh}}}
\newcommand{\Mikhail}{}
\title[Dimensional homotopy t-structures]{Dimensional homotopy t-structures in motivic homotopy theory}
\author{Mikhail Bondarko}
\address{St. Petersburg State University, Department of
Mathematics and Mechanics, \\ Bibliotechnaya Pl. 2, 198904, St.
Petersburg, \\ Russia}
\email{m.bondarko@spbu.ru}
\author{Fr\'ed\'eric D\'eglise}
\address{E.N.S. Lyon - UMPA\\
46\\all\'ee d'Italie\\
69364 Lyon Cedex~07\\
France}
\email{frederic.deglise@ens-lyon.fr}
\urladdr{http://perso.ens-lyon.fr/frederic.deglise/}
\thanks{Sections 1 and 3.4 of the paper were written under the support of the Russian Science Foundation grant no. 16-11-10200.
 The second  author  was partially supported by the ANR
 (grant No. ANR-12-BS01-0002) and the LABEX MILYON (ANR-10-LABX-0070)
 of the Universit\'e de Lyon,
 within the program "Investissements d'Avenir"
 (ANR-11-IDEX-0007) operated by the French National Research Agency (ANR).
Finally, this project was conceived during a visit supported by the ANR
 (grant No. ANR-12-BS01-0002) of the first author to the ENS of Lyon}
\date{November 2016}
\newtheorem{thm}{Theorem}[subsection]
\newtheorem{prop}[thm]{Proposition}
\newtheorem{lm}[thm]{Lemma}
\newtheorem{cor}[thm]{Corollary}
\theoremstyle{remark} 
\newtheorem{rem}[thm]{Remark}
\newtheorem{ex}[thm]{Example}
\theoremstyle{definition} 
\newtheorem{df}[thm]{Definition}
\newtheorem{num}[thm]{}
\numberwithin{equation}{thm}
\newtheorem{thm*}{Theorem}
\begin{document}

\begin{abstract}
The aim of this work is to construct certain homotopy t-structures
 on various categories of motivic homotopy theory,
 extending works of Voevodsky, Morel, D\'eglise
 and Ayoub.
 We prove these $t$-structures possess many good properties,
 some analogous to those of the perverse $t$-structure of
 Beilinson, Bernstein and Deligne. We compute
 the homology of certain motives, notably in the case of relative
 curves.
 We also show that the hearts of these $t$-structures provide convenient
 extensions of the theory of homotopy invariant sheaves
 with transfers, extending some of the main results of Voevodsky.
 These t-structures are closely related to Gersten weight structures
 as defined by Bondarko.
%
%
\end{abstract}

\maketitle

\setcounter{tocdepth}{3}
\tableofcontents

\section*{Introduction}

\noindent \textbf{Background.} The theory of motivic complexes over a perfect field $k$,
 invented by Voevodsky after a conjectural description of Beilinson,
 is based on the notion of homotopy invariant sheaves with transfers. These
 sheaves have many good properties: they form an abelian category $\HI(k)$ with a
 tensor product, their (Nisnevich) cohomology over smooth $k$-schemes
 is homotopy invariant, and they admit Gersten resolutions.
 An upshot of the construction of the category of motivic complexes
 $\DM^{eff}_-(k)$ is the existence of a canonical $t$-structure
 whose heart is exactly $\HI(k)$. It was called the
 \emph{homotopy $t$-structure} by Voevodsky in \cite[chap. 5]{FSV}.
 Though this $t$-structure is not the motivic $t$-structure, its role
 is fundamental in the theory of Voevodsky. For example, the main conjectures
 of the theory have a clean and simple formulation in terms of the homotopy
 $t$-structure.\footnote{Here are some classical examples:
\begin{itemize}
\item The \emph{Beilinson-Lichtenbaum conjecture} (now a theorem):
  for any prime $l \in k^\times$, the motivic complex
  $\ZZ/l\ZZ(n)$ is isomorphic to $\tau_{\leq n} \derR\pi_*(\mu_l^{\otimes,n})$
  where $\tau_{\leq n}$ is the truncation functor,
	with cohomological convention,
	for the homotopy $t$-structure and $\pi_*$ the forgetful functor from
	\'etale to Nisnevich sheaves.
\item The \emph{Beilinson-Soul\'e conjecture}: for any integer $n>0$,
 the motivic complex $\ZZ(n)$ is concentrated in cohomological degrees $[1,n]$
 for the homotopy $t$-structure.
\item A weak form of \emph{Parshin conjecture}: for any integer $n>0$,
 the motivic complex $\QQ(n)$ over a finite field is concentrated in 
 cohomological degree $n$ for the homotopy $t$-structure.
 \end{itemize}}

Many works have emerged around the homotopy $t$-structure.
 The Gersten resolution for homotopy invariant sheaves with transfers has been
 promoted into an equivalence of categories between $\HI(k)$ and
 a full subcategory of the category of Rost's cycle modules (\cite{Ros}) by
 the second-named author in \cite{Deg9}. At this occasion, it was realized
 that to extend this equivalence to the whole category of Rost's cycle modules,
 one needed an extension of $\HI(k)$ to a non effective category
 (with respect to the $\GG$-twist), in the spirit of stable homotopy.
 This new abelian category was interpreted as the heart of the category
 now well known as $\DM(k)$, the category of \emph{stable motivic complexes}.

In the meantime, Morel has extended the construction of the homotopy $t$-structure
 to the context of the stable homotopy category $\SH(k)$ of schemes over a
 field $k$ (non necessarily perfect), and its effective analogue,
 the $S^1$-stable category $\SH^{S^1}(k)$ in \cite{dmtilde3}.
 In this topological setting, the $t$-structure appeared as an incarnation
 of the \emph{Postnikov tower} and the heart of the homotopy $t$-structure
 as the natural category in which stable (resp. $S^1$-stable) homotopy
 groups take their value. It was later understood that the difference
 between the heart of $\SH(k)$ and $\DM(k)$ was completely encoded
 in the action of the \emph{(algebraic) Hopf map} (a conjecture of Morel
 proved in \cite{Deg10}). Moreover
 the computations of the stable homotopy groups
 that we presently know use the language of the homotopy $t$-structure
 in an essential way.

The final foundational step up to now was taken by Ayoub who extended
 the definition of Voevodsky and Morel to the relative setting in
 his Ph. D. thesis \cite{ayoub1}, in his abstract framework of
 \emph{stable homotopy functors}. Under certain assumptions,
 he builds a $t$-structure
 over a scheme $S$ with the critical property
 that it is ``compatible with gluing'': given any closed subscheme $Z$
 of $X$ with complement $U$, the $t$-structure over $S$ is
 uniquely determined by its restrictions over $Z$ and $U$
 (cf. \cite[2.2.79]{ayoub1}). This
 is an analogous property to that of the \emph{perverse $t$-structure}
 on complexes of \'etale sheaves underlined in the fundamental work 
 \cite{BBD} of Beilinson, Bernstein and Deligne. Besides, when $S$ is the
 spectrum of a field,
 Ayoub establishes that his $t$-structure agrees with that of Morel
 (cf. \cite[2.2.94]{ayoub1}).
 After the work that was done on the foundations of
 motivic complexes in the relative context in \cite{CD3, CD5}, 
 this agreement immediately extends to the definition of Voevodsky.
 However, the assumptions required by Ayoub in this pioneering work are
 quite restrictive. They apply to the stable homotopy category
 only in characteristic $0$
 (integrally or with rational coefficients,
 cf. \cite[2.1.166, 2.1.168]{ayoub1}),
 and, after the results of \cite{CD3, CD5},
 to the triangulated category of mixed motives,
 integrally in characteristic $0$,
 and rationally over a field or a valuation ring
 (cf. \cite[2.1.171, 2.1.172]{ayoub1}). Lastly, he does not prove
 his $t$-structure is non-degenerate.

\bigskip

\noindent \textbf{Our main construction in the stable case.} The purpose
 of this text is to provide a new approach on the problem of
 extending the homotopy $t$-structure of Voevodsky and that of Morel
 over a base scheme $S$, assumed to
 be noetherian, excellent and finite dimensional. In this introduction,
 we will describe our
 constructions by fixing triangulated categories $\T(S)$ for
 suitable schemes $S$ assuming they are equipped with
 Grothendieck's six operations.\footnote{We
 do not have to be more precise for describing our definitions
 but the reader can find a list of our main examples
 in Example \ref{ex:motivic_cat}.}
 
 Our first technical tool, as in the work
 of Ayoub, is the possibility of defining a $t$-structure on
 a compactly generated\footnote{see our conventions
 below for a recall on this property of triangulated categories;}
 triangulated category by fixing a family of compact objects, called \emph{generators}
 and set them as being homologically non-$t$-negative.\footnote{Following
 Morel and Ayoub we will use homological conventions for our $t$-structures;
 see Section \ref{sec:t-struct}.}
 Then the class of $t$-negative objects are uniquely determined
 by the orthogonality property and the existence of truncation functors
 follows formally (see again Section \ref{sec:t-struct} for details).
 Our approach differs from that of Ayoub in the choice of generators.

The guiding idea is that, as in the case of motivic complexes,
 motives of smooth $S$-schemes should be homologically non-$t$-negative.
 However, these generators are not enough to ensure the compatibility
 with gluing. In particular, we should be able to use non smooth
 $S$-schemes, in particular closed subschemes of $S$. We deal with
 this problem with two tools: the exceptional direct image functor $f_!$
 along with a choice of a dimension function\footnote{dimension functions
 appeared for the first time in \cite[XIV, 2.2]{SGA4} in the local case.
 They were later formalized by Gabber, cf. \cite{PS_dim}. 
 We recall this notion in Section \ref{sec:dim};}
 on the scheme $S$
 which will add a correcting shift in our generators, related to the dimension
 of the fibers of $f$. A dimension function on $S$ is a map
 $\delta:S \rightarrow \ZZ$ such that $\delta(x)=\delta(y)+1$
 if $y$ is a codimensional one point of the closure of $x$ in $S$.
 When $S$ is of finite type over a filed $k$, the canonical example
 is the Krull dimension:
\begin{equation}\label{eq:dim_fn_field}\tag{Intro.a}
\delta_k(x)=\dim\big(\overline{\{x\}}\big)=\dtr(\kappa(x)/k)
\end{equation}
where $\kappa(x)$ is the residue field of $x$ in $S$. For any $S$-scheme
 $X$ of finite type, the dimension function $\delta$ induces a canonical
 dimension function on $X$, and we denote by $\delta(X)$ the maximum
 of this induced function on $X$.
 The last fact of which the reader must be aware is that Voevodsky's
 homotopy $t$-structure in the stable case is stable under $\GG$-twists:
 the functor $K \mapsto K(1)[1]$ is $t$-exact for Voevodsky's
 homotopy $t$-structure (beware that this is false in the effective setting).
 
Putting all these ideas together, we define the
 \emph{$\delta$-homotopy $t$-structure over $S$} by taking the following
 generators for homologically positive objects:
\begin{equation}\label{intro:stable_generators}\tag{Intro.b}
f_!(\un_X)(n)[\delta(X)+n], \text{ $f$ separated of finite type, $n$ any integer}.
\end{equation}
With this definition, it is clear that the endofunctor 
 $K \mapsto K(1)[1]$ of $\T(S)$ becomes $t$-exact. This already shows that
 our $t$-structure covers a phenomenon which is special to motives:
 on triangulated categories
 such as $D^b(S_{\et},\ZZ/l\ZZ)$, $l$ invertible on $S$, it is not
 reasonable because if $S$ contains a primitive $l$-th root of unity, we
 get an isomorphism $\ZZ/l\ZZ(1) \simeq \ZZ/l\ZZ$ so that the $t$-structure
 with the above generators is degenerate. The same argument
 will apply to the category of integral \'etale motives
 (cf. \cite{ayoub2, CD4})
 and to modules over algebraic $K$-theory
 (cf. \cite[\textsection 13.3]{CD3} and \cite{BL16}).
 
However, on categories such as $\SH$ and $\DM$
 (see Example \ref{ex:motivic_cat} for a detailed list), the resulting $t$-structure
 is very reasonable and we easily deduce,
 without any further assumption on $S$,
 that the $\delta$-homotopy $t$-structure
 is compatible with gluing as well as some basic exactness:
 $f_*=f_!$ (resp. $f^*=f^!$) is $t$-exact when $f$ is finite (resp. \'etale).
 Note that the $\delta$-homotopy $t$-structure does not really
 depend on the choice of $\delta$. Rather, a change of $\delta$
 shifts the $t$-structure. So when formulating $t$-exactness properties,
 we  have to  be precise about the choices of dimension functions. We refer
 the reader to the text for these details.

After this definition, our first theorem comes from the fact that
 we can improve the description of the generators for the $\delta$-homotopy
 $t$-structure in several ways. Actually in the description
 \eqref{intro:stable_generators} we can add one of the following assumptions:
\begin{enumerate}
\item $f$ is proper (even projective).
\item $X$ is regular.
\item $f$ is proper and $X$ is regular.
\end{enumerate}
Points 1 and 2 are easily obtained (see Proposition \ref{prop:generators1})
 but they already yield interesting results. Most notably,
 they give the comparison of our definition with the classical ones.
 When $S$ is the spectrum of a perfect field $k$
 of characteristic exponent $p$ and $\delta_k$ is the
 dimension function of \eqref{eq:dim_fn_field},
 the $\delta_k$-homotopy $t$-structure on $\DM(k)[1/p]$ (resp. $\SH(k)$) coincides
 with that of Voevodsky (resp. Morel):
 see Example \ref{ex:delta_htp-t-struct_field}.
 We also give a comparison with Ayoub's $t$-structure under favourable
 assumptions: see Corollary \ref{cor:compare_Ayoub}.

The strongest form of this improvement on generators, point 3 above,
 is obtained at the cost of using a convenient resolution
 of singularities (Hironaka refined by Temkin in characteristic zero,
 de Jong-Gabber for rational coefficients,
 Gabber for $\ZZ[1/p]$-coefficients in characteristic $p>0$) together with good properties for the category $\T$ (the precise hypotheses
 are given in Paragraph \ref{num:resolution}).
 This description (Th. \ref{thm:strong_generators}) will be a key point
 for our main theorem. The method is classical though it uses a trick of
 Riou to go beyond the cases that were known up to now.
 In particular, it implies an interesting result of motivic homotopy
 theory which was not known till now and that deserves a separate
 formulation:
\begin{thm*}[Th. \ref{thm:duality} and Ex. \ref{ex:duality_SH}]
Let $p$ be a prime or $p=1$ and assume the residue fields of $S$
 have characteristic exponent $p$.
Let $\SH_c(S)[1/p]$ be the constructible part (\emph{i.e.} made of compact spectra)
 of the stable homotopy category $\SH(S)[1/p]$ over $S$ with $p$ inverted.
\begin{enumerate}
\item Then $\SH_c[1/p]$ is stable under the six operations if one restricts to
 schemes of characteristic exponent $p$ and to morphisms $f$ of finite type
 for the operation $f_*$.
\item For any separated morphism $f:X \rightarrow S$ of finite type
 such that $S$ is regular,
 the spectrum\footnote{in this formula, $S^0$ is the sphere spectrum over $S$;}
 $f^!(S^0)$ is dualizing in the triangulated monoidal
 category $\SH_c(X)[1/p]$.
\end{enumerate}
\end{thm*}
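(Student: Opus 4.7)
The plan is to deduce both statements from Theorem~\ref{thm:strong_generators}, which says that $\SH_c(S)[1/p]$ is generated, as a thick triangulated subcategory, by objects of the form $g_!(\un_Y)(n)[m]$ with $g\colon Y\to S$ projective and $Y$ regular. This strong description (obtained from Gabber's refined alteration theorem, which forces the inversion of $p$) is the genuine input; the rest is formal.

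For part (1), the operations $\otimes$, $f^*$ (always), and $f_!$ (for $f$ of finite type) preserve compactness on general grounds for any compactly generated six-functor formalism. The remaining operations $\uHom$, $f^!$, and $f_*$ are handled as follows. For $f^!$: by thickness it suffices to check that $f^!$ takes the strong generators of Theorem~\ref{thm:strong_generators} into $\SH_c(X)[1/p]$; for a generator $g_!(\un_Y)(n)$ coming from $g\colon Y\to S$ projective with $Y$ regular, the base change formula $f^!g_!\simeq g'_!f'^*$ reduces the question to the computation of $g^!(\un_S)$ on the regular scheme $Y$, which is controlled by Gabber's absolute purity available in $\SH[1/p]$. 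The case of $\uHom$ then follows from $f^!$ via the formula $\uHom(p^*A,p^!B)\simeq p^!\uHom(A,B)$ applied to the diagonal. For $f_*$ of finite type, Nagata compactification writes $f=\bar f\circ j$ with $\bar f$ proper and $j$ an open immersion, so that $f_*=\bar f_!\circ j_*$, and the localization triangle associated with $j$ reduces $j_*$ to $\bar f_!$ and $i_*$ for the closed complement, each of which preserves constructibility.

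For part (2), assume first that $S$ is regular. We claim $\un_S$ itself dualizes $\SH_c(S)[1/p]$. Since the bidual functor $\uHom(\uHom(-,\un_S),\un_S)$ is triangulated and commutes with taking shifts and Tate twists, it suffices by Theorem~\ref{thm:strong_generators} to verify that the natural biduality map is an isomorphism on each $g_!(\un_Y)$ with $g\colon Y\to S$ projective and $Y$ regular. Using the projection formula and the $(g_!,g^!)$-adjunction this reduces to identifying $g^!(\un_S)$, which by Gabber's absolute purity equals $\un_Y$ up to a Tate twist and shift governed by the relative dimension of $g$; the biduality isomorphism then comes down to the evident self-duality of these twists on a regular scheme. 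Once $\un_S$ is known to be dualizing over regular $S$, the extension to arbitrary separated $f\colon X\to S$ of finite type is a standard formal consequence of the four adjunctions: the functor $\uHom(-,f^!\un_S)$ is then computed by $f^!\circ\uHom(f_!(-),\un_S)$, which preserves $\SH_c$ by part (1) and squares to the identity by the duality over $S$.

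The main obstacle is therefore concentrated in the regular case of part (2), where one relies crucially on absolute purity for $\SH[1/p]$ and on the sharpened generation statement of Theorem~\ref{thm:strong_generators}; the inversion of $p$ is precisely what makes both inputs available via Gabber's results. Once these are secured, parts (1) and (2) fall out by standard six-functor manipulations, with no further geometric input needed.
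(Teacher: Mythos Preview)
Your approach to part~(2) is essentially the paper's, with a harmless reorganization: you first show $\un_S$ is dualizing on regular $S$ and then invoke the formal fact that $f^!$ of a dualizing object is dualizing, whereas the paper works directly over $X$ and reduces via the generators $p_!(\un_Y)$ to the case $M=\un_X$ with $X$ regular. One small imprecision: in $\SH$, absolute purity gives $g^!(\un_S)\simeq\Th(\tau_g)$, a Thom space rather than a Tate twist; since Thom spaces are $\otimes$-invertible your argument still goes through, but the phrase ``up to a Tate twist'' is inaccurate in the non-oriented setting.

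Part~(1), however, has a genuine gap. Your argument for $f^!$ is circular: the correct base change (for $g$ proper) is $f^!g_!\simeq g'_!f'^!$, not $g'_!f'^*$, and this only reduces $f^!g_!(\un_Y)$ to $g'_!f'^!(\un_Y)$ where $f'\colon Y'\to Y$ has $Y$ regular but $Y'$ arbitrary. Absolute purity says nothing about $f'^!(\un_Y)$ in this generality; you are back to the same problem. Your argument for $f_*$ has the same defect: the Nagata factorization plus the localization triangle $j_!M\to j_*M\to i_*i^*j_*M$ reduces constructibility of $j_*M$ to constructibility of $i^*j_*M$, which is precisely the hard ``nearby cycles'' type statement and not something already known. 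Likewise, your reduction for $\uHom$ via the diagonal presupposes $f^!$ preserves constructibles.

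The paper does not attempt a direct argument here: it invokes Gabber's proof (as adapted in \cite[Th.~6.4]{CD4}), which uses alterations and the Riou trace-splitting not merely once to produce generators, but inductively at each step to control $j_*$ for open immersions. Theorem~\ref{thm:strong_generators} alone does not suffice for part~(1); the alteration machinery has to be run again inside the constructibility proof.
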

Up to now, this fact was only known when $p=1$ according to \cite{ayoub1}.

\bigskip

\noindent \textbf{The effective case.}
The preceding theorem is deduced from our results on the
 $\delta$-homotopy $t$-structure because there is a strong relation between
 the generators of a $t$-structure and the generators in the whole 
 triangulated category.\footnote{See our conventions on triangulated categories
 for recall on that later notion.} Indeed, a generated $t$-structure
 is left non-degenerate if and only if its generators are also generators
 of the triangulated category (see Lemma \ref{lm:non-neg-degenerated}).
 This leads us to the following considerations, 
 based on the fact that Tate twist is not $\otimes$-invertible on
 effective motives: it is natural to consider the generators
 \eqref{intro:stable_generators} with the restriction that
 $n \geq \delta(X)$ (rather than $n \geq 0$, for duality reasons).
 Then, if we want this new $t$-structure to be non-degenerate, we are lead
 to consider the triangulated localizing subcategory of $\T(S)$
 generated by objects of the form:
\begin{equation}\label{intro:effective_generators}\tag{Intro.c}
f_!(\un_X)(n)[\delta(X)+n], \text{ $f$ separated of finite type, $n\geq \delta(X)$}.
\end{equation}
We denote this triangulated category by $\T^{\delta-\eff}(S)$ and call
 it the \emph{$\delta$-effective subcategory}. The $t$-structure generated
 by the above generators is called the \emph{effective $\delta$-homotopy $t$-structure}.
 We show that this triangulated category, with its $t$-structure,
 has many good properties, and especially, it satisfies
 the gluing (or localization) property (see Proposition \ref{prop:localization_effective}
 and \ref{delta-htp_eff_glued}). Besides, we are able to
 describe its stability by many of the 6 operations
 (see Section \ref{sec:delta_effective} for details).

The main case of this definition is given by 
 the category of $R$-linear motives $\DM(S,R)$ over a scheme $S$
 in the following two cases:
\begin{itemize}
\item $S$ has characteristic exponent $p$ and $p \in R^\times$
 (\cite{CD5}).
\item no assumptions on $S$ and $R$ is a $\QQ$-algebra (\cite{CD3}).
\end{itemize}
Then it is shown in Example \ref{ex:delta_effective_fields}
 that when $S=\spec(k)$ is the spectrum of a perfect field
 and $\delta_k$ is the dimension function of \eqref{eq:dim_fn_field},
 the triangulated category $\DM^{\delta-\eff}(k,R)$ is equivalent to
 Voevodsky's category of motivic complexes over $k$.
  Besides, the category $\DM^{\delta-\eff}(k,R)$ is invariant
  under purely inseparable extensions.
 Thus the categories $\DM^{\delta-\eff}(S,R)$ for positive dimensional
 scheme $S$ are essentially uniquely determined by the gluing property
 and their value on fields.
 Therefore, our construction provides a good extension
 of the theory of motivic complexes, with their homotopy $t$-structure,
 which was missing in motivic homotopy theory given that the natural
 category $\DM^{\eff}(S,R)$ of \cite[11.1.1]{CD3} is not known to satisfy
 the gluing property.

\bigskip

\noindent \textbf{The niveau filtration.}
As visible from the Gersten resolution of homotopy invariant sheaves
 with transfers and their comparison with cycle modules, Voevodsky's
 homotopy $t$-structure shares an intimate relation with the
 classical coniveau filtration. In fact,
 it was proved in \cite{Deg11} that the coniveau spectral sequence
 associated with the cohomology of a smooth $k$-scheme with
 coefficients in an arbitrary motivic complex $K$ coincides  from $E_2$-on
 with the spectral sequence associated with the truncations of $K$ for
 the homotopy $t$-structure. In \cite{Bon10a} and \cite{Bon15} this relationship was
 recast into the framework of weight structures,
 as the coniveau filtration for any cohomology
 was interpreted as the \emph{weight filtration} coming from
 a particular case of a weight  structure called the \emph{Gersten weight structure}.

In our generalized context, this relationship is at the heart of
 our main theorem. Our use of a dimension function $\delta$ and 
 the will to work over a base scheme $S$ rather than a field lead us
 to consider niveau filtration measured by the given dimension
 function: in other words, we filter the scheme $S$ by looking at closed
 subschemes $Z \subset S$ such that $\delta(Z) \leq n$ for various
 integers $n$. Moreover, to deal with singular $S$-schemes,
 we are lead to consider homology rather than cohomology
 following in that point the classical work \cite{BO} of Bloch and Ogus.
 We extend their ideas by using the so-called \emph{Borel-Moore homology}
 relative to the base scheme $S$; note that the homology considered by Bloch and Ogus
 is the Borel-Moore homology relative to the base field. In terms of
 the 6 functors formalism, the Borel-Moore homology of a separated
 $S$-scheme $X$ with coefficients
 in any object $\E$ in $\T(S)$ can be defined as follows:
$$
E^{BM}_{p,q}(X/S)=\Hom_{\T(X)}\big(\un_X(q)[p],f^!(\E)\big)
$$
where $f$ is the structural morphism of $X/S$.
When $S$ is regular and $\E$ is the unit object this can be interpreted
 in good situations (for example in the situation of the preceding theorem)
 as cohomology in degree $(-p,-q)$
 with coefficients in the \emph{dualizing object} over $X$.
 Moreover, by adjunction, Borel-Moore homology corresponds to
 the following abelian groups:
$$
\Hom_{\T(X)}\big(\un_X(q)[p],f^!(\E\big))
=\Hom_{\T(S)}\big(f_!(\un_X(q)[p]),\E\big)
$$
so that it is natural, following Riou, to call $f_!(\un_X)$
 the \emph{Borel-Moore object} (motive, spectrum, etc...)
 associated with $f$ (or $X/S$).

The six functors formalism immediately yields that Borel-Moore homology,
 like Chow groups, is covariant with respect to proper morphisms
 (see Section \ref{sec:six} for recall). These ingredients
 altogether allow us to extend the consideration of Bloch and Ogus
 and to build what we call the $\delta$-niveau spectral sequence:
$$
^\delta E^1_{p,q}=\bigoplus_{x \in X_{(p)}} \hat \E^{BM}_{p+q,n}(x/S)
 \Rightarrow \E^{BM}_{p+q,n}(X/S)
$$
where $X_{(p)}=\{x \in X \mid \delta(x)=p\}$ and
 $\hat \E^{BM}_{**}(x/S)$ is computed by taking the limit 
 of the Borel-Moore homology of $(\overline{\{x\}} \cap U)/S$
 for open neighbourhoods $U$ of $x$ in $X$.
 A first application of the $\delta$-niveau spectral sequence
 allows us to compute Borel-Moore homology in case of mixed motives:
\begin{thm*}[Th. \ref{prop:comput_BM_w_Chow}]\label{prop*:comput_BM_w_Chow}
Consider a regular scheme $S$ and a localization $R$
 of the ring of integers $\ZZ$ satisfying one of the following conditions:
\begin{itemize}
\item $R=\QQ$;
\item $S$ is a $\QQ$-scheme;
\item $S$ is an $\F_p$-scheme and $p \in R^\times$.
\end{itemize}
 Let $\delta$ be the dimension function
 on $S$ such that $\delta=-\codim_S$
 (see Example \ref{ex:can_dim_fn}).

Then for any separated $S$-scheme $X$ of finite type
 and any integer $n \in \ZZ$,
 one has a canonical isomorphism:
$$
H_{2n,n}^{BM}(X/S,R) \simeq CH_{\delta=n}(X) \otimes R
$$
where the left hand side is Borel-Moore motivic homology
 of $X/S$ with coefficients in $R$ and the right hand side
 is the Chow group of $R$-linear algebraic cycles $\sum_i n_i.x_i$ of $X$ 
 such that $\delta(x_i)=n$
 (see also \cite[Chap. 41, Def. 9.1]{stack}).
\end{thm*}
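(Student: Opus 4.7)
My plan is to apply the $\delta$-niveau spectral sequence
$$^\delta E^1_{p, q} = \bigoplus_{x \in X_{(p)}} \hat H^{BM}_{p + q, n}(x/S, R) \Longrightarrow H^{BM}_{p + q, n}(X/S, R)$$
recalled just above, and show that on the diagonal $p + q = 2n$ it collapses to a single surviving graded piece which is canonically $CH_{\delta = n}(X) \otimes R$. I would proceed in three steps.

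\emph{Step 1: Computation of the $E^1$-terms.} For $x \in X$ with $\delta(x) = p$, a cofinal system of neighbourhoods $U$ of $x$ in $\overline{\{x\}}$ consists of regular schemes of finite type over the regular base $S$. Under the hypotheses of the theorem, absolute purity is available in $\DM(-, R)$, so the structural morphism $g\colon U \to S$ satisfies $g^!(R_S) \simeq R_U(p)[2p]$, the twist being prescribed by $\delta(U) = p$. Unwinding the definition of Borel--Moore homology and passing to the colimit over $U$ yields a canonical identification
$$\hat H^{BM}_{a, b}(x/S, R) \cong H^{2p - a,\, p - b}_M(\kappa(x), R).$$
Specialising to $(a, b) = (2n, n)$ and setting $m = p - n$, this becomes $H^{2m, m}_M(\kappa(x), R)$. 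The degree-range vanishing $H^{\alpha, \beta}_M(F, R) = 0$ for $\alpha > \beta$ kills the case $m \geq 1$, a Tate-twist/adjunction argument handles $m \leq -1$, and only $m = 0$ survives, with value $H^{0, 0}_M(\kappa(x), R) = R$. Hence on the diagonal $p + q = 2n$ only the term $E^1_{n, n} = \bigoplus_{x \in X_{(n)}} R$ is non-zero; this is precisely the group $Z_{\delta = n}(X) \otimes R$ of $\delta$-$n$ algebraic cycles with coefficients in $R$.

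\emph{Step 2: Reduction to a single non-trivial differential.} The same computation carried out on the two neighbouring diagonals shows that $E^1_{p, q} = 0$ for $p + q = 2n - 1$ (there the exponent is $H^{2m + 1, m}_M$, which vanishes for every $m$), and that on the diagonal $p + q = 2n + 1$ only the term $E^1_{n + 1, n}$ is non-zero, where Nesterenko--Suslin--Totaro identifies $H^{1,1}_M$ with $\GG$ and delivers
$$E^1_{n + 1, n} = \bigoplus_{y \in X_{(n + 1)}} \kappa(y)^\times \otimes R.$$
Consequently, for every $r \geq 2$ either the source or the target of $d_r$ at the spot $(n, n)$ vanishes, and the only differential that can modify $E^1_{n, n}$ is
$$d_1 \colon \bigoplus_{y \in X_{(n + 1)}} \kappa(y)^\times \otimes R \longrightarrow \bigoplus_{x \in X_{(n)}} R.$$

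\emph{Step 3: Identification of $d_1$ with the classical divisor map.} The decisive point is to show that this $d_1$ coincides with the usual divisor map $\operatorname{div}$; once this is established one concludes
$$H^{BM}_{2n, n}(X/S, R) = E^\infty_{n, n} = E^2_{n, n} = \operatorname{coker}(d_1) = CH_{\delta = n}(X) \otimes R.$$
I would carry out the identification by tracing through the construction of the $\delta$-niveau spectral sequence: its $d_1$ is assembled from the connecting morphisms of the localisation triangles $j_! j^* \to \mathrm{id} \to i_* i^*$ attached to successive $\delta$-skeleta. On each irreducible $\delta$-codimension-one subscheme of $\overline{\{y\}}$, this reduces to the Gysin boundary associated to a discrete valuation ring over $S$, which acts on $H^{1, 1}_M \simeq \GG$ as the normalised valuation --- exactly the local order of vanishing defining $\operatorname{div}$. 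This final identification is the main technical obstacle: it is the natural generalisation of the Bloch--Ogus/Rost comparison from fields to the present relative situation, and it requires a careful verification of the compatibility of the six functors with the $\delta$-filtration. Once it is in place, the spectral sequence delivers the stated isomorphism.
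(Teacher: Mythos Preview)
Your proof is correct and follows essentially the same route as the paper: compute the $E^1$-terms of the $\delta$-niveau spectral sequence via absolute purity as motivic cohomology of residue fields, use the standard vanishing of $H^{a,b}_M$ for $a>b$ or $b<0$, and identify the surviving $d_1$ with the divisor map (the paper simply cites \cite[2.7]{Deg11} for this last step). The only difference is organizational: the paper records the vanishing as the single region $E^1_{p,q}=0$ unless $p\ge n$ and $q\ge n$, which immediately places $(n,n)$ at a corner and dispenses with all higher differentials and all other contributions on the diagonal at once, whereas you reach the same conclusion by analyzing the three diagonals $p+q\in\{2n-1,2n,2n+1\}$ separately.
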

 Note in particular that, under the assumptions of the above theorem
 and assuming further that $X/S$ is equidimensional of dimension $d$,
 the preceding isomorphism can be written:
$$
H_{2n,n}^{BM}(X/S,R) \simeq CH^{d-n}(X) \otimes R
$$
where the right hand side is the group of codimension $d-n$ cycles
 (see Corollary \ref{cor:chow_codim}).

The preceding theorem generalizes the case where $S$ is the spectrum of 
 a (perfect) field. It is a new indication of the relevance
 of dimension functions.
As indicated in \cite[Rem. 7.1.12(4)]{CD5}, when $\ell$ is 
 a prime invertible on the scheme $S$,
 this gives a generalized cycle class:
$$
\sigma:CH_{\delta=n}(X) \otimes \QQ
 \simeq H_{2n,n}^{BM}(X/S,\QQ) \rightarrow H_{2n,n}^{BM,\et}(X/S,\QQ_\ell)
$$
where the right hand side is the rational $\ell$-adic Borel-Moore
 \'etale cohomology of $X/S$ and the map is induced by
 $\ell$-adic realization functor of \cite[7.2.24]{CD5}.
 For example, when all the generic points $\eta$ of $X$ satisfy
 $\delta(\eta)=n$ for a chosen integer $n$, we get
 by looking at the image of
 the fundamental cycle of $X$ on the left hand side under the map $\sigma$
 a canonical map in the $\ell$-adic derived category of Ekedahl over
 the small \'etale site of $X$:
$$
\QQ_\ell(n)[2n] \rightarrow f^!(\QQ_\ell)
$$
which is a generalization of the local fundamental class
 (\cite{SGA4D}, when $X$ is a closed subscheme of $S$)
 and of the fundamental class when $S=\spec(k)$ (cf. \cite{BO}).

The $\delta$-niveau spectral sequence will moreover be essential
 in the study of the $\delta$-homotopy $t$-structure.
 In this perspective, we are lead to introduce the following
 definition for a given object $\E$ of $\T(S)$.
 Let $x:\spec(E) \rightarrow S$ be an $E$-valued point of $S$,
 essentially of finite type. Then $E$ is the field of functions
 of an integral affine $S$-scheme of finite type, say $X$.
 We define the \emph{fiber $\delta$-homology} $\rH^\delta_p(\E)$
 of $\E$ as the map which to a point $x$ as above
 and a twist $n \in \ZZ$ associates the following abelian
 group:\footnote{It is shown this definition does not depend
 on the chosen model $X$; see Definition \ref{df:fiber_hlg}.}
\begin{equation}\label{intro:fiber}\tag{Intro.d}
\rH^\delta_p(\E)(x,n)
 =\ilim_{U \subset X} \E^{BM}_{2\delta(x)+p-n,\delta(x)-n}(U/S).
\end{equation}
We also consider the $\delta$-effective version
 of this definition, $\rH^{\delta-\eff}_p(\E)$,
 by restricting to the twists $n \leq 0$.
 
 In fact, the $\delta$-niveau spectral sequence with coefficients
 in $\E$ can be rewritten in terms of the fiber $\delta$-homology
 of $\E$. In particular, we deduce from the convergence of
 the $\delta$-niveau spectral sequence that the sequence
 of functors $\E \mapsto H_p^{\delta}(\E)$ indexed by integers
 $p \in \ZZ$ is conservative. Moreover, our main theorem can be stated
 as follows:
\begin{thm*}[Th. \ref{thm:hlg&htp_t}]\label{thm*:hlg&htp_t}
Under suitable assumptions on $\T$ satisfied in the following
 examples:
\begin{itemize}
\item $k$ is a field of characteristic exponent $p$,
 $S$ is a $k$-scheme essentially of finite type,
 and $\T$ is $\SH[1/p]$
 (resp. $\DM_R$ for a ring $R$ such that $p\in R^\times$);
\item $R$ is a $\QQ$-algebra, $S$ is a scheme 
 essentially of finite type over any excellent scheme
 of dimension less than 4, and $\T=\DM_R$;
\end{itemize}
 an object $\E$ of $\T(S)$
 is positive (resp. negative) for the $\delta$-homotopy
 $t$-structure if and only if $\rH_p^\delta(\E)=0$
 for $p\leq 0$ (resp. $p \geq 0$).
Moreover the same assertion holds in the $\delta$-effective case.
\end{thm*}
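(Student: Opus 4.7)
\smallskip

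The plan is to match the two vanishing conditions through the $\delta$-niveau spectral sequence. By the description of generators provided by Theorem \ref{thm:strong_generators}, an object $\E \in \T(S)$ is positive (resp. negative) for the $\delta$-homotopy $t$-structure if and only if the Hom groups
\[
\Hom_{\T(S)}\big(f_!(\un_X)(n)[\delta(X)+n+i],\E\big)
\]
vanish for the appropriate range of $i$, where $f\colon X\to S$ ranges over separated morphisms of finite type (with $X$ regular). By the $(f_!,f^!)$-adjunction these groups are exactly Borel-Moore homology groups $\E^{BM}_{\bullet,n}(X/S)$, so the $t$-structure condition translates into a half-plane vanishing of Borel-Moore homology, uniformly in such $X/S$.

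For the implication "positive (resp. negative) $\Rightarrow$ fiber homology vanishes", suppose $\E$ lies in the relevant half of the $t$-structure. For any essentially-of-finite-type point $x\colon\spec(E)\to S$ with affine model $X$, each open $U\subset X$ is itself a generator for the $t$-structure after taking $f_!(\un_U)$ with the appropriate shift, so orthogonality yields vanishing of $\E^{BM}_{\bullet,n}(U/S)$ in the required range. Since the generators are compact, Borel-Moore homology commutes with the filtered colimit over open neighbourhoods, and the definition \eqref{intro:fiber} gives the vanishing of $\rH^\delta_p(\E)(x,n)$ in the prescribed range.

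For the converse, a routine re-indexing of the formula \eqref{intro:fiber} rewrites the $E^1$-page of the $\delta$-niveau spectral sequence as
\[
{}^\delta E^1_{p,q} \;=\; \bigoplus_{x\in X_{(p)}} \hat\E^{BM}_{p+q,n}(x/S) \;\cong\; \bigoplus_{x\in X_{(p)}} \rH^\delta_{q-n}(\E)(x,p-n).
\]
Under the hypothesis $\rH^\delta_r(\E)=0$ for $r\leq 0$ (resp. $r\geq 0$), the $E^1$-page vanishes in the corresponding half-plane. The $\delta$-filtration being bounded since $X$ is finite-dimensional, the convergence of the spectral sequence (already used to prove conservativity of the fiber homology functors) forces the abutment $\E^{BM}_{\bullet,n}(X/S)$ to vanish in the matching range for every such $X/S$, and hence $\E$ lies in the expected half of the $t$-structure. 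The $\delta$-effective version of the theorem proceeds identically: one restricts to the twists $n\leq 0$ in the fiber homology (equivalently $n\leq\delta(X)$ in \eqref{intro:effective_generators}) and invokes the gluing/localization Proposition \ref{prop:localization_effective} to ensure that the $\delta$-effective generators still detect orthogonality.

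The main obstacle is the index bookkeeping and convergence of the $\delta$-niveau spectral sequence at the required level of uniformity: one must control the $\hat\E^{BM}$ terms under the filtered colimits defining them and keep track of how dimension, twist, and homological degree interact through the re-indexing above. This is precisely where the hypotheses of the theorem (compact generation of $\T(S)$, the six-functor formalism, and the resolution-of-singularities input feeding into Theorem \ref{thm:strong_generators}) are used; once these are in place, the argument reduces to transporting the half-plane $E^1$-vanishing across the spectral sequence.
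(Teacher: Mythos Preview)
Your argument handles the \emph{negative} half correctly: if $\E<0$ then orthogonality against generators kills the relevant Borel--Moore groups and hence the fiber homology in degrees $\geq 0$; conversely, if $\rH_q^\delta(\E)=0$ for $q\geq 0$, the $\delta$-niveau spectral sequence forces the required Borel--Moore vanishing, which characterizes $t_\delta$-negativity. But both directions of the \emph{positive} half are genuinely incomplete.

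For the implication ``$\E\geq 0 \Rightarrow \rH_p^\delta(\E)=0$ for $p<0$'', your orthogonality argument does not apply: orthogonality says $\Hom(\text{non-negative},\text{negative})=0$, so knowing $\E\geq 0$ gives no vanishing of $\Hom(\Mb(U/S)\dtw{\delta(U)}\gtw n[i],\E)$ for $i<0$. The paper instead observes that the class $\C_{H\geq 0}=\{\E:\rH_p^\delta(\E)=0\text{ for }p<0\}$ is closed under positive suspensions, coproducts and extensions, so it suffices to check that the generators lie in it. Here the \emph{proper regular} form of the generators from Theorem~\ref{thm:strong_generators} is essential: properness of $Y/S$ allows one to rewrite $\rH_i^\delta\big(\Mb(Y/S)\dtw{\delta(Y)}\gtw m\big)(x,n)$ as a Borel--Moore group $\hat H^{BM}_{*,*}(Y_x/Y)$ via base change, and regularity of $Y$ is exactly what is needed to invoke the vanishing of assumption~(a) (homotopic compatibility, Proposition~\ref{prop:hom_compatible}(ii)). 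You mention Theorem~\ref{thm:strong_generators} only to reformulate the $t$-structure, not for this computation.

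For the converse ``$\rH_p^\delta(\E)=0$ for $p<0 \Rightarrow \E\geq 0$'', the spectral sequence gives vanishing of $\E^{BM}_{r,\gtw n}(X/S)$ for $r<\delta_-(X)$, but there is no orthogonality characterization of $\C_{\geq 0}$ in terms of such vanishing; $\C_{\geq 0}$ is defined by generation, not by a Hom condition. The paper closes this gap with a truncation argument: write $\E_{\geq 0}\to\E\to\E_{<0}$, apply $\rH_p^\delta$, and use the inclusions already established to conclude $\rH_p^\delta(\E_{<0})=0$ for all $p$, whence $\E_{<0}=0$ by conservativity (Proposition~\ref{prop:hlg_conservative}). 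This step is missing from your sketch.
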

This theorem has several interesting consequences.
 It immediately implies the $\delta$-homotopy $t$-structure,
 both in the stable and $\delta$-effective case,
 is non-degenerate, a result which was not known for Ayoub's
 homotopy $t$-structure. It also gives new exactness properties,
 for a morphism $f:T \rightarrow S$ essentially of finite type,
 a dimension function $\delta$ on $S$, $T$ being equipped with
 the dimension function induced by $\delta$:
\begin{itemize}
\item if the dimensions of the fibers of $f$ are bounded by $d$ then the functor
 $f_*:\T(S) \rightarrow \T(T)$ has homological amplitude 
 $[0,d]$ for the $\delta$-homotopy $t$-structure;
\item if $f$ is separated of finite type then
 $f^!:\T(T) \rightarrow \T(S)$ is $t$-exact 
 for the $\delta$-homotopy $t$-structure.
\end{itemize}
One also deduces a characterization of positivity (resp. negativity)
 of an object $\E$
 for the $\delta$-homotopy $t$-structure in terms of vanishing of 
 Borel-Moore homology with coefficients in $\E$ in a certain
 range. For all these facts,
 we refer the reader to the corollaries stated
 after Theorem \ref{thm:hlg&htp_t}.

The preceding theorem opens the way to computations. The case of the
 constant motive $\un_S$ in $\DM(S,R)$, under the assumptions
 of this theorem, is already interesting.
 When $S=\spec(\QQ)$
 (or is regular and admits a characteristic $0$ point),
 we obtain that $\un_S$ has infinitely many homology motives
 in non-negative degrees for the $\delta$-homotopy $t$-structure
 (Example \ref{ex:compute_hlg_constant1}).
 On the contrary, if $S$ is regular, $\un_S$ is concentrated
 in degree $0$ for the \emph{effective} $\delta$-homotopy
 $t$-structure (see Example \ref{ex:unit_eff_heart}).
 This is the advantage of the effective version of the $\delta$-homotopy
 $t$-structure: in general,
 we hope for interesting boundedness properties only in that case. 
 The final computation shows that
 when $S$ is singular, then $\un_S$ may no longer be concentrated
 in degree $0$: thus, the homology sheaves for the effective
 $\delta$-homotopy $t$-structure detect singularities (see Remark
 \ref{rem:compute_hlg_constant2}).

We finally deduce (Corollary \ref{cor:restriction_fields})
 from the preceding theorem the following
 punctual characterization of the $\delta$-homotopy $t$-structure.
 An object $\E$ of $\T(S)$ is homologically positive 
 (resp. negative) for the $\delta$-homotopy $t$-structure
 if and only if the following condition holds:
$$
\forall x \in S, H_p^{\delta_x}(i_x^!\E)=0
 \text{ when  $p\leq \delta(x)$ (resp. $p \geq \delta(x)$).}
$$
Here $\delta_x$ is the obvious dimension function on the
 spectrum of the residue field $\kappa(x)$ of $x$ in $S$,
 and $i_x^!=j^*i^!$ for the factorization
 $\spec{\kappa(x)} \xrightarrow j \overline{\{x\}} \xrightarrow i S$.
 It is striking that this condition is exactly the same (in the homological
 notation) as the one characterizing the perverse $t$-structure
 in \cite[4.0.1, 4.0.2]{BBD}.

\bigskip

\noindent \textbf{The $\delta$-homotopy heart.}
From the results already stated, one deduces several good properties
 of the heart $\hrt{\T(S)}$ of the $\delta$-homotopy $t$-structure.
 Under the assumption of Theorem \ref{thm*:hlg&htp_t},
 this abelian category is a Grothendieck abelian category
 (see Theorem \ref{thm:finally_Grothendieck}),
 with a good family of generators and satisfying
 nice functoriality properties; most notably, the gluing property.
 For suitable choices of $\delta$, it admits a closed monoidal structure.
 In fact, it shares some of the fundamental properties of its model $\HI(k)$.
 In particular, each point $\spec(E) \rightarrow S$, with value
 in a field $E$ which is finitely generated over the corresponding
 residue field of $S$, defines a fiber functor
 to graded abelian groups and the collection of these functors
 is conservative: this is the transformation which to an object
 of the heart $\F$ associates the application $\rH^\delta_0(\F)$
 defined by formula \eqref{intro:fiber}.

Our main example, under the assumptions of the above theorem,
 is the $R$-linear triangulated
 category $\DM_R$ of mixed motives (resp. $\DAx R$,
 the $\AA^1$-derived category of Morel)
 for which we denote the heart by $\sh(S,R)$ (resp. $\sht(S,R)$)
 and $\she(S,R)$ (resp. $\shte(S,R)$) in the $\delta$-effective case.
 It is called the category of \emph{$\delta$-homotopy modules}
 (resp. \emph{generalized $\delta$-homotopy modules}) over $S$,
 and we simply add the adjective \emph{effective} when considering
 the heart of the $\delta$-effective category.
 Note that in each respective case, the effective heart is
 a full abelian subcategory of the general (stable) one
 (cf. Paragraph \ref{num:hrt_basic}).

The category of homotopy modules is a convenient extension
 over an arbitrary base of the category $\HI(k)$ of homotopy invariant
 sheaves with transfers over a perfect field $k$: its effective subcategory
 over $k$ coincides with $\HI(k)$ up to a canonical equivalence,
 the whole category and its effective subcategory are invariant under
 purely inseparable extensions and satisfy the gluing property.
 In fact these properties guarantee it is essentially the unique
 extension of the theory of Voevodsky which satisfies the gluing
 property. Note on the contrary that we cannot prove at the moment
 that generalized homotopy modules are invariant under inseparable
 extensions. On the other hand, for a perfect field $k$, the category
 $\sht(k,R)$ is equivalent to Morel's category of homotopy modules
 (\cite[Def. 5.2.4]{Mor1}). 

Though these two notions of homotopy modules seem very different,
 we prove the following comparison theorem.
\begin{thm*}[see Th. \ref{thm:compute_heart}]\label{thm*:compute_heart}
Let $R$ be a ring and $S$ be a scheme
 such that $R$ is a $\QQ$-algebra or $S$ is a $\QQ$-scheme.
 Then there is a canonical fully faithful functor:
$$
\gamma_*:\sh(S,R) \rightarrow \sht(S,R)
$$
whose image 
 consists of objects on which Morel's algebraic Hopf
 map\footnote{Recall the algebraic Hopf map is the endomorphism of
 the sphere spectrum induced by the obvious morphism:
 $(\AA^2-\{0\}) \rightarrow \PP^1$.}
 $\eta$ acts trivially.
\end{thm*}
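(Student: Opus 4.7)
The starting point is the adjunction
$$
\gamma^* : \DAx R(S) \rightleftarrows \DM(S,R) : \gamma_*,
$$
between triangulated categories with the six operations, constructed in \cite{CD3, CD5}; here $\gamma^*$ freely adds transfers and $\gamma_*$ forgets them. Under the hypothesis that $R$ is a $\QQ$-algebra or $S$ is a $\QQ$-scheme, it is known that the counit $\gamma^*\gamma_* \to \mathrm{id}$ is an isomorphism and that the essential image of $\gamma_*$ at the triangulated level coincides with the full subcategory of objects on which Morel's algebraic Hopf map $\eta$ acts trivially: this is the main result of \cite{Deg10} over fields, extended to the relative setting via the gluing formalism of \cite{CD3, CD5}. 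The plan is then to promote this fully faithful triangulated functor to a fully faithful functor on hearts, by showing $\gamma_*$ is $t$-exact for both the $\delta$-homotopy $t$-structure and its effective variant, after which the characterization of the image via $\eta$-triviality will transport automatically from the triangulated level.

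For $t$-exactness, I would first observe that $\gamma^*$ is a morphism of motivic triangulated categories in the sense of \cite{CD3}: it commutes with $f_!$ and Tate twists, hence sends the generating collection \eqref{intro:stable_generators} (resp.\ its $\delta$-effective analog \eqref{intro:effective_generators}) of $\DAx R(S)$ to the corresponding generators of $\DM(S,R)$. Being a left adjoint, $\gamma^*$ is therefore right $t$-exact for the generated $\delta$-homotopy $t$-structure, so that $\gamma_*$ is left $t$-exact by adjunction.

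For the converse half I would appeal to our main Theorem \ref{thm*:hlg&htp_t}: $\delta$-negativity is detected by the vanishing of the fiber $\delta$-homology \eqref{intro:fiber}, which is computed by Hom groups out of the Borel--Moore generators $f_!(\un_X)(n)[\delta(X)+n]$. Since $\gamma^*$ sends these test objects to their $\DM$-counterparts and $\gamma_*$ commutes with the corresponding Hom computation by adjunction, the vanishing condition is stable under $\gamma_*$, which is therefore also right $t$-exact. The same argument applies verbatim to the effective $t$-structure. Passing to hearts, $\gamma_*$ restricts to a fully faithful functor $\sh(S,R) \hookrightarrow \sht(S,R)$, and since membership in the triangulated essential image of $\gamma_*$ is already characterized by $\eta$-triviality, the image on hearts is precisely the subcategory $\{F \in \sht(S,R) : \eta\cdot F = 0\}$.

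\textbf{Main obstacle.} The crucial input is the $t$-exactness of $\gamma_*$, which is not visible from the adjunction alone and depends on our homology-based characterization of the $\delta$-homotopy $t$-structure in Theorem \ref{thm*:hlg&htp_t}, itself resting on the $\delta$-niveau spectral sequence. Once this is in hand, the identification of the image is formal from the Morel--D\'eglise rational comparison, but one must verify that the triangulated $\eta$-triviality characterization descends to truncations---which is automatic from $t$-exactness, since $\gamma_*$ then commutes with $H_0^\delta$ and $\eta$ acts compatibly on all cohomology objects.
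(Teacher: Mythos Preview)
Your $t$-exactness argument is correct and indeed appears in the paper (Corollary \ref{cor:premotivic_adj_t-exactness}). The genuine gap is in your first step: you assert that under either hypothesis the triangulated functor $\gamma_*$ is fully faithful with essential image the $\eta$-trivial objects, citing \cite{Deg10} ``extended to the relative setting via the gluing formalism.'' This is true when $R$ is a $\QQ$-algebra (cf.\ \cite[Th.~16.2.13]{CD3}), but it is \emph{false} when $R=\ZZ$ and $S$ is a $\QQ$-scheme: the functor $\gamma_*$ is not full at the triangulated level because of the Steenrod operations. The paper itself stresses this point in the introduction. Consequently your passage ``passing to hearts, $\gamma_*$ restricts to a fully faithful functor'' is unjustified in the integral case, since full faithfulness on hearts is deduced from full faithfulness on the triangulated category, which you do not have.

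The paper's proof avoids this obstruction by working directly at the level of hearts. It shows that both $\gamma_*$ and $H_0\gamma^*$ commute with the punctual functors $i_x^!$ (the latter requiring Corollary \ref{cor:prop:adj&f_*}, hence Proposition \ref{prop:adj&f_*}), and then uses the conservativity of the family $(i_x^!)_{x\in S}$ on the heart (Proposition \ref{prop:test_on_fields}, a consequence of the main Theorem \ref{thm:hlg&htp_t}) to reduce the unit/counit isomorphisms to the case of a characteristic-$0$ field, where Proposition \ref{prop:compute_heart} (i.e.\ \cite{Deg10}) applies. The same reduction handles the $\eta$-triviality description after checking that $\eta$-triviality is preserved and detected by $i_x^!$. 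So the key idea you are missing is that the reduction to fields must happen \emph{after} taking $H_0$, not before: the heart-level statement is genuinely stronger than anything available at the triangulated level with integral coefficients.
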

The case where $R$ is a $\QQ$-algebra is not surprising
 given that the canonical functor
 $$\DM(S,R) \xrightarrow{\gamma_*} \DA(S,R)=\SH(-S) \otimes R$$
 is fully faithful and its essential image has the same description
 through the Hopf map (cf. \cite[Th. 16.2.13]{CD3}).
 The case $R=\ZZ$ for $\QQ$-schemes is much stronger given that
 the map $\gamma_*$ is no longer full (because of the existence
 of the Steenrod operations). Thanks to our preceding results,
 the proof of the above theorem is obtained by reduction
 to the case of fields, 
 which was proved in \cite{Deg10}.

Note also that we sketch 
 the full proof of the fact that the heart of the category of modules
 of algebraic cobordism over a scheme $S$ of characteristic exponent
 $p$ is equivalent, with $\ZZ[1/p]$-coefficients, to the category
 $\sh(S,\ZZ[1/p])$. This reflects the fact that the spectra
 in the essential image of the functor $\gamma_*$ can be characterized 
 by the property of being orientable.
 
\bigskip

As a conclusion, the category of homotopy modules
 with rational coefficients
 seems to be the appropriate generalization of the notion of
 homotopy invariant sheaves with transfers over an
 arbitrary base $S$ with a dimension function $\delta$
 (the category itself does not depend on $\delta$ up to a canonical
 equivalence). 
An important class of such sheaves comes from semi-abelian
 varieties: the sheaf corresponding to a semi-abelian variety $A$ admits canonical 
 transfers and is homotopy invariant. In the last part of this paper,
 we extend this result to the relative setting, using the works of
 Ancona, Pepin Lehalleur and Huber (\cite{AHP})
 and Pepin Lehalleur (\cite{Pep}).
 Following these works,
 one associates to any commutative group $S$-scheme $G$ an object
 $\hgrp G$ of $\DM(S,\QQ)$. Our last theorem is the following one:
\begin{thm*}[Th. \ref{thm:sabelian&hmod}]
Let $S$ be a regular scheme. For any semi-abelian $S$-scheme $G$,\footnote{for us,
 a semi-abelian $S$-scheme is a commutative group $S$-scheme
 which is a global extension of an abelian $S$-scheme by an $S$-torus.}
 the motive $\hgrp G$ is concentrated in degree $0$ for the
 effective $\delta$-homotopy $t$-structure on $\DM^{\delta-\eff}(S,\QQ)$.

 Moreover, the functor
$$
G \mapsto \hgrp G
$$
from the category of semi-abelian
 $S$-schemes up to isogeny to the category of $\delta$-effective
 homotopy modules with rational coefficients
 is fully faithful, exact and
 its essential image is stable under extensions.
\end{thm*}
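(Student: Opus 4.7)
The plan is to split the theorem into two claims — that $\hgrp G$ lies in the heart $\she(S,\QQ)$, and that the induced functor on hearts has the stated properties — and address them independently.

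For the concentration claim, I would reduce to the case of a perfect field via the punctual characterization of the effective $\delta$-homotopy $t$-structure (Corollary \ref{cor:restriction_fields}): it suffices to show that for every $x \in S$ the restriction $i_x^!\hgrp G$ lies in degree $\delta(x)$ for the $\delta_x$-homotopy $t$-structure on $\DM^{\delta-\eff}(\spec\kappa(x),\QQ)$. The base change properties of $\hgrp{-}$ established in \cite{AHP,Pep}, combined with smooth purity along the structural morphism $G\to S$, should identify $i_x^!\hgrp G$ with $\hgrp{G_x}$ up to a Tate twist--shift dictated by the dimension function, where $G_x=G\times_S\spec\kappa(x)$ is a semi-abelian variety over $\kappa(x)$. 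Using invariance of the effective heart under purely inseparable field extensions (Example \ref{ex:delta_effective_fields}), one reduces further to $\kappa(x)$ perfect. Over a perfect field $k$, the heart $\she(k,\QQ)$ identifies with $\HI(k,\QQ)$ and $\hgrp{G_x}$ corresponds via this equivalence to the sheaf with transfers represented by $G_x$, which is homotopy invariant and concentrated in degree $0$ by the classical work of Spiess--Szamuely, Barbieri-Viale--Kahn and Ancona.

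For the functorial properties, full faithfulness into $\DM(S,\QQ)$ is the main theorem of Pepin Lehalleur \cite{Pep}; it descends to full faithfulness into $\she(S,\QQ)$ since the targets are now known to lie in the heart. Exactness follows from the fact that a short exact sequence of semi-abelian $S$-schemes (up to isogeny) induces a distinguished triangle of motives (again via \cite{AHP,Pep}) whose three terms all lie in the heart, hence a short exact sequence in $\she(S,\QQ)$. Stability under extensions reduces to the $\Ext^1$ comparison $\Ext^1_{\she(S,\QQ)}(\hgrp H,\hgrp G)\hookrightarrow\Ext^1_{\DM(S,\QQ)}(\hgrp H,\hgrp G)$, the latter being computed in terms of isogeny classes of semi-abelian extensions by Pepin's results.

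The main technical obstacle is the base change step in the concentration part. The compatibility of $\hgrp{-}$ with $i_x^*$ is directly available from \cite{AHP,Pep}, but the required $i_x^!$ version has to be obtained by trading $i_x^!$ against $i_x^*$ using the smooth purity isomorphism for $G\to S$ and carefully tracking how the codimension of $x$ in $S$ contributes through the dimension function $\delta$. A secondary subtle point is the torus component over a non-algebraically closed residue field, where the Galois action on the character lattice must be consistently handled in the sheaf-theoretic formulation of $\hgrp G$.
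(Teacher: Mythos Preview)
Your overall strategy matches the paper's: the concentration claim is indeed Theorem~\ref{thm:compute_sab_hlg}(1), proved by the punctual reduction via Corollary~\ref{cor:restriction_fields}, then passage to a perfect residue field, then the identification of $\hgrp{G_x}$ with a homotopy invariant sheaf with transfers. The functorial properties are likewise deduced from \cite{AHP} (exactness, Proposition~\ref{prop:AHP1}) and the main theorem of \cite{Pep} (full faithfulness and stability under extensions), exactly as you sketch.

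The one place where your proposal is imprecise is the step you yourself flag as the main obstacle: converting $i_x^!\hgrp G$ into $i_x^*\hgrp G$ up to a twist. Invoking ``smooth purity along the structural morphism $G\to S$'' does not do this. Relative purity for $f:G\to S$ gives $f^!\simeq\MTh(T_f)\otimes f^*$, which controls pullback along $f$, not along $i_x$; and since $G$ is not proper over $S$ (the torus part obstructs this), there is no direct base change identity relating $i_x^!$ applied to $f_\sharp(\un_G)$ or to $\hgrp G$ with a pullback to the fibre. The paper isolates exactly the property needed --- \emph{punctual purity} of a spectrum $\E$, meaning the canonical map $i_x^*(\E)\otimes\MTh(-N_x)\to i_x^!(\E)$ is an isomorphism --- and proves it for $\hgrp G$ (Proposition~\ref{prop:semi-ab_pure}) by d\'evissage: reduce to the abelian and torus cases via the defining extension; in the abelian case use the Ancona--Pepin~Lehalleur--Huber cofibration resolution to write $\hgrp A$ as a homotopy colimit of motives $M_S(A^n)$ of smooth \emph{proper} $S$-schemes, for which punctual purity holds by proper base change plus absolute purity (Proposition~\ref{prop:elementary_pure}(3)); in the torus case use \'etale-local triviality to reduce to $\GG^n$ and hence to twists of $\un_S$. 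Once punctual purity is in hand, one gets $i_x^!(\hgrp G)=\hgrp{G_x}\dtw{-c_x}$ via \eqref{eq:AHP_basechg}, and your shift bookkeeping with the dimension function then goes through as stated.
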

Note also that, thanks to a theorem of Kahn and Yamazaki,
 we are able to completely determine the fibers 
 of the homotopy module $\hgrp G$ at a
 point $x:\spec(E) \rightarrow S$. In particular,
 we obtain: $\rH^\delta_0(G)(x,0)=G(E)$
 while when $G=A$ is an abelian scheme and the abelian variety
 $A_x$ is isogeneous to the Jacobian of a given smooth projective curve
 $C/E$, $\rH^\delta_0(A)(x,1)$ is the Bloch group $V(C)$ (cf. \cite{Blo})
 attached to $C$
 (see Th. \ref{thm:compute_sab_hlg} for more details).
 This shows that the $\ZZ$-grading
 of homotopy modules has an important arithmetic meaning.

As an application of this result, we finally give the generalization
 of the computation of the motive of a smooth curve
 originally due to Lichtenbaum and Voevodsky
 (cf. \cite[chap. 5, 3.2.4]{FSV}).
\begin{thm*}[Prop. \ref{prop:compute_hlg_curves} and
 \ref{prop:compute_hlg_curves_aff}]
Let $S$ be a regular scheme with dimension function $\delta=-\codim_S$.
Let $C/S$ be a smooth geometrically connected curve with a section
 $x \in C(S)$. Assume one of the following conditions holds:
\begin{itemize}
\item $C/S$ is projective;
\item $C$ admits a smooth compactification whose complement is 
 \'etale over $S$.
\end{itemize}
In these cases, the homology of the $\delta$-effective motive $M_S(X)$
 for the $\delta$-homotopy $t$-structure can be computed as follows:
$$
H_i^{\delta-\eff}(M_S(C)) \simeq \begin{cases}
\un_S \oplus \hgrp{J(C)} & \text{if } i=0, \\
\un_S\gtw 1 & \text{if $i=1$ and $C/S$ is projective}, \\
0 & \text{otherwise},
\end{cases}
$$
where $J(C)$ is the semi-abelian (dual) Albanese scheme
 of $C/S$.
\end{thm*}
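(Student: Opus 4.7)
The plan is to combine a relative Lichtenbaum--Voevodsky decomposition of $M_S(C)$ with Theorem \ref{thm:sabelian&hmod}, which pins down the homological position of $\hgrp{J(C)}$. First I would build the decomposition in $\DM^{\delta-\eff}(S,R)$, and then read off the $\delta$-effective homology summand by summand using the preceding results.

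\textbf{Constructing the decomposition.} In the projective case, the section $x\in C(S)$ yields a projector on $M_S(C)$ splitting off a copy of $\un_S$; applying relative Poincar\'e duality to the smooth proper morphism $p\colon C\to S$, together with the relative fundamental class (available in $\DM^{\delta-\eff}(S,R)$ over the regular base $S$ under the standing hypotheses), splits off a further copy of $\un_S(1)[2]$ via the Poincar\'e dual of the section. The remaining factor is the relative Albanese motive $M_1(S,C)$, canonically isomorphic to $\hgrp{J(C)}$ thanks to the relative Albanese theory of \cite{AHP}, with $J(C)$ the relative Jacobian of $C/S$, an abelian $S$-scheme. In the affine case, letting $\bar C$ be the given smooth compactification and $Z=\bar C\setminus C$ (\'etale over $S$), one combines the Gysin triangle $M_S(C)\to M_S(\bar C)\to M_S(Z)(1)[2]\to M_S(C)[1]$ with the projective decomposition of $M_S(\bar C)$: the section $x$, together with the \'etaleness of $Z$ (which makes $M_S(Z)$ split as a sum of units via \'etale descent), trivializes the relevant piece of the Gysin boundary, producing an isomorphism $M_S(C)\simeq \un_S\oplus \hgrp{J(C)}$, where $J(C)$ is now the semi-abelian (dual) Albanese scheme of \cite{Pep}.

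\textbf{Reading off homology.} Once the direct sum decomposition is in place, the statement is immediate: the summand $\hgrp{J(C)}$ lies in the heart of the $\delta$-effective $t$-structure by Theorem \ref{thm:sabelian&hmod}; the unit $\un_S$ is in degree $0$ by Example \ref{ex:unit_eff_heart}, using the regularity of $S$; and $\un_S(1)[2] = \un_S\gtw 1[1]$ is concentrated in degree $1$ because the Tate object $\un_S\gtw 1$ (a relative incarnation of $\GG$) lies in the effective heart. Summing these contributions yields the stated values of $H_i^{\delta-\eff}(M_S(C))$ in both cases.

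\textbf{Main obstacle.} The principal difficulty is establishing the decomposition over a relative base. This requires relative Poincar\'e duality for $C/S$ in the projective case and, crucially, the identification of the Albanese summand with $\hgrp{J(C)}$ via \cite{AHP} and \cite{Pep}. Should a direct decomposition prove too heavy, an alternative route is to construct natural maps $\un_S\oplus\hgrp{J(C)}\to H_0^{\delta-\eff}(M_S(C))$ and $\un_S\gtw 1\to H_1^{\delta-\eff}(M_S(C))$ using the section, the relative Albanese morphism $C\to J(C)$ and the fundamental class, and then verify they are isomorphisms pointwise via the punctual characterization of Corollary \ref{cor:restriction_fields}, reducing to the classical case of a smooth curve over a field originally handled by Lichtenbaum and Voevodsky.
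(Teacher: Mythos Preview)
Your overall strategy matches the paper's: in the projective case a Chow--K\"unneth decomposition using the section and the Gysin map for $p$, then identification of the middle summand with $\hgrp{J(C)}$; in the affine case the Gysin triangle for $X_\infty\hookrightarrow\bar C$ combined with the projective result.

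Two points require correction. First, in the projective case, the identification $h_1^M(M_S(C))\simeq\hgrp{J(C)}$ is not delivered directly by \cite{AHP}; one must construct the comparison map (via the Albanese morphism $C\to J(C)$ and the projection in the decomposition \eqref{eq:DM-decomposition}) and then check it is an isomorphism. The paper does this exactly by your ``alternative route'': both decompositions are stable under base change, and the family $(i_{\bar s}^*)_{\bar s}$ over geometric points is conservative, reducing the question to separably closed fields where it is the classical identification of $h_1$ of a curve with its Jacobian.

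Second, and more substantially, your affine argument has a gap. The assertion that $M_S(Z)$ ``splits as a sum of units via \'etale descent'' is false over $S$ in general: it holds only after an \'etale base change. What is true, and what the paper uses, is that $M_S(X_\infty)$ is concentrated in degree $0$ for $t_\delta^\eff$ because $X_\infty/S$ is finite \'etale. The Gysin boundary $\partial_\nu$ is \emph{not} trivialized; it encodes precisely the torus extension defining the generalized Jacobian. The missing step is the vanishing of $H_1^{\delta-\eff}(M_S(C))$: in the long exact sequence coming from the Gysin triangle, the map $H_1(\tilde M_S(\bar C))\to M_S(X_\infty)\gtw 1$ is identified (via the projective computation and the relation $f^*=\nu^*p^*$) with $f^*\gtw 1:\un_S\gtw 1\to M_S(X_\infty)\gtw 1$, and one must show this is a split monomorphism. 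This follows from the degree formula $f_*f^*=\deg(f)\cdot\mathrm{Id}$, using that $R$ is a $\QQ$-algebra. Only then does one obtain that $M_S(C)$ lies in the heart and fits in the displayed extension, whence the isomorphism $M_S(C)\simeq\un_S\oplus\hgrp{\mathrm{Alb}(C/S)}$.
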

%

\bigskip

\noindent \textbf{Future work.}
The theory developed in this paper opens the way to several
 interesting tracks which we plan to treat separately.
 The first one is the study of the spectral sequence associated
 with the truncations
 for the $\delta$-homotopy $t$-structure of motives/spectra of the form
 $f_*(\E)$ for a given morphism $f$, which we will call the 
 \emph{Leray $\delta$-homotopy spectral sequence}.
 We already know this spectral sequence is
 a generalization of Rost spectral sequence introduced in \cite{Ros}
 which was used recently by Merkurjev to prove a conjecture of Suslin.
 Secondly, we plan to prove a result which was conjectured by Ayoub
 for his perverse homotopy $t$-structure and which generalizes the result
 already obtained by the second named author in his Ph. D. thesis:
 the category of homotopy modules over a $k$-scheme $S$ is equivalent
 to Rost's category of cycle modules over $S$ (at least after
 inverting the characteristic exponent). This result is clear
 enough at the moment so that we can safely announce its proof
 (as well as a generalization without a base field for
 rational coefficients).

\section*{Detailed plan}
The first section is devoted to a recall on the main technical tools
 we will be using: dimension functions,  generated $t$-structures
 and various aspects of the motivic homotopy formalism. \\
The second section will first be devoted to the definition
 of our $\delta$-homotopy $t$-structures, as well as of the
 setting of  $\delta$-effective categories.
 These definitions are 
 illustrated by examples, which we hope
 will help the reader, and by the study of the basic properties
 of our definitions with respect to the six operations,
 such as the gluing property.
 The last two subsections of Section 2 are devoted to the
 exposition of several improved descriptions of the generators
 for the $\delta$-homotopy $t$-structure (stable and effective case)
 as well as of their application, such as the comparison to the
 definitions of Voevodsky, Morel and Ayoub. \\
The third section is devoted to the proof of our main
 theorem (number \ref{thm*:hlg&htp_t} in the above introduction),
 whose proof is given in subsection \ref{sec:main_thm}.
 The section starts by an extension of the ideas of Bloch and Ogus
 on the niveau filtration and associated spectral sequence
 using dimension functions. This spectral sequence is applied
 to get the generalized link between Borel-Moore motivic homology
 and Chow groups (without a base field) as stated above
 (theorem number \ref{prop*:comput_BM_w_Chow} above).
 Then we define and study the \emph{fiber $\delta$-homology}
 and relate it with the $E_1$-term of the $\delta$-niveau spectral
 sequence. After the proof of the main theorem, several examples
 and corollaries are stated. \\
The last section contains a detailed study of the heart
 of the $\delta$-homotopy $t$-structure, both in the stable
 and effective case, the comparison of the hearts of
 motives and spectra, and finally the link with semi-abelian schemes
 over regular bases.

\section*{Special thanks}
The authors want to thank Alexey Ananyevskiy, Denis-Charles Cisinski,
 Ofer Gabber, Pablo Pelaez and Igor Zhukov for helpful discussions
 and motivations.
 They also warmly thank Simon Pepin Lehalleur for detailed
 and useful comments on a preliminary version of these notes.
 Finally, the authors are grateful towards the referee
 for his very careful reading and many remarks that have led us
 to improve the exposition and mathematics of this paper.

\section*{Conventions} \label{conventions}

\noindent \textbf{Schemes}. In all this work,
 we will fix a subcategory $\base$
 of the category of excellent schemes of finite
 dimension which is closed under pullbacks by morphisms of finite type,
 finite sums and such that if $S$ is a scheme in $\base$,
 any localization of a quasi-projective $S$-scheme is in $\base$.
 Without precision, any scheme is supposed to be an object of $\base$.

Given a scheme $S$, a \emph{point} of $S$
 will be a morphism $x:\spec(E) \rightarrow S$
 such that $E$ is a finitely generated extension
 of the residue field of the image of $x$ in $S$.
 When $E$ is a given field, 
  we also say that $x$ is an $E$-valued point
  and denote by $S(E)$ the set of $E$-valued points.
The field $E$ will be called the field of definition of the point $x$,
 and we sometimes denote it generically by $K_x$.
 We will also denote by $\kappa_x$ the residue field of the image of $x$
 in $S$.
 Note that the extension fields $K_x/\kappa_x$ is of finite transcendence
 degree by assumption. We will call simply \emph{transcendence degree}
 of $x$ the transcendence degree of the field extension $K_x/\kappa_x$. 

When we will need to be precise,
 we will say that $x$ is a \emph{set-theoretic point} of $X$
 if $\kappa_x=K_x$. This is also assumed
 by the sentence: ``let $x \in X$ be a point''.

\bigskip \noindent \textbf{Morphisms}. Unless
 explicitly stated, separated 
 (resp. smooth) morphisms of schemes
 are assumed to be of finite type (resp.
 separated of finite type). We will say \emph{lci}
 for ``local complete intersection''.

A morphism $f:X \rightarrow S$ is said to
 be \emph{essentially of finite type} if $X$ is
 pro-\'etale over an $S$-scheme of finite
 type.\footnote{Traditionally, the condition
 pro-open replaces pro-\'etale but the
 arguments used here does not change in
 this greater generality.}
 To be consistent with this convention,
 we will say that an $S$-scheme $X$ is \emph{essentially separated}
 if it is essentially of finite type and separated.
 Given a scheme $S$ in $\base$,
 we will denote as usual by $\basex S$ the category
 of schemes in $\base$ over $S$.

Unless explicitly stated, quasi-finite morphisms
 will be assumed to be essentially of finite type.

\bigskip \noindent \textbf{Triangulated categories}.
 Recall the following classical conventions.

A subcategory of a triangulated category which
 admits coproducts is called \emph{localizing}
 if it is stable under extensions, suspensions
 and arbitrary coproducts (which implies it is stable under
 retracts).
In a triangulated category $\C$, a class of objects
 $\cG$ is called \emph{generating} 
 -- we also say that \emph{$\cG$ is a generating class}
 of the triangulated category $\C$ --
 if for any object $K$ of $\C$ one has the implication:
$$
\big(\forall X \in \cG, \forall i \in \ZZ, \Hom_\C(X[i],K)=0\big)
 \Rightarrow K=0.
$$
When $\C$ admits arbitrary coproducts,
 we say that an object $K$ of $\T$ is \emph{compact} if
 for any family $(X_i)_{i \in I}$ of objects the
 canonical map
$$
\bigoplus_{i \in I} \Hom(K,X_i) \rightarrow 
 \Hom(K,\oplus_{i \in I} X_i)
$$
is an isomorphism.

\bigskip \noindent \textbf{Motivic homotopy theory}. 
We will use the language of premotivic and motivic
 triangulated categories developed in \cite{CD3}
 as an extension of the work of Ayoub \cite{ayoub1}.
 Given a premotivic triangulated category $\T$,
 objects of $\T(S)$ will be called \emph{$\T$-spectra over
 $S$}.\footnote{Here the terminology differs slightly from that of
  \cite{CD3}, but it seems now more natural -- motives should be seen
	 as particular kind of spectra.}
 Recall we say that $\T$ is
 \emph{compactly generated by its Tate twists}
 (\emph{op. cit.} Definition 1.3.16)
 if for any scheme $S$,
 the geometric $\T$-spectra of the form $M_S(X,\T)(i)$
 for a smooth $S$-scheme $X$
 and an integer $i \in \ZZ$ are compact and form a set of generators of
 the triangulated category $\T(S)$.
 Recall also that when $\T$ is a \emph{motivic} triangulated category
  (\emph{op. cit.} Definition 2.4.45), the fibred category $\T$ is equipped
	with the 6 functors formalism as stated in \emph{op. cit.} Theorem 2.4.50.

{\bf By convention, all premotivic triangulated categories
 appearing in this text will be assumed to be compactly generated
 by their Tate twists.}

We will have to use special combinations of twists and shifts,
 so we adopt the following conventions for any $\T$-spectrum $K$
 and any integer $n \in \ZZ$:
\begin{align*}
K\dtw n&:=K(n)[2n], \\
K\gtw n&:=K(n)[n].
\end{align*}

In the text, we will fix a triangulated motivic category
 $\T$ over $\base$ (compactly generated by its Tate twists).
 When no confusion can arise, 
  objects of $\T$ will abusively be called \emph{spectra}.
 
\section{Preliminaries}

\subsection{Reminders on dimension functions}\label{sec:dim}

\begin{num}
Let $X$ be a scheme and $x$, $y$ two points of $X$.
 Recall that $y$ is called a \emph{specialization} of $x$ if $y$ belongs to the closure
 $Z$ of $x$ in $X$. Moreover, $y$ is called an \emph{immediate specialization} of $x$
 if $\codim_Z(y)=1$. Recall the following definition
 from \cite[2.1.10, 2.1.6]{PS_dim}:
\end{num}
\begin{df}\label{df:dimensional}
A \emph{dimension function} on $X$ is a map
 $\delta:X \rightarrow \ZZ$ such that for any immediate specialization $y$
  of a point $x \in X$, $\delta(x)=\delta(y)+1$.

In this case, we will say that $(X,\delta)$ is a \emph{dimensional scheme}.
\end{df}
We will say that $\delta$ is \emph{non-negative},
 and write $\delta \geq 0$,
 if its image lies in the set of non-negative integers.
 
\begin{rem}
According to our global assumptions, $X$ is noetherian.
 Thus if $X$ admits a dimension function, it is automatically universally
 catenary (see \emph{loc. cit.} 2.2.6).
\end{rem}

The following lemma is obvious according to the above definition:
\begin{lm}\label{lm:dim+locally_ct}
If $\delta$ and $\delta'$ are dimension functions on a scheme $X$,
 the function $\delta-\delta'$ is Zariski locally constant.
\end{lm}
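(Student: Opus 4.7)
The plan is to exploit the defining property of a dimension function directly to track $(\delta - \delta')$ along chains of immediate specializations, and then use the noetherian/catenary structure to globalize.

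First I would observe that the key defining identity $f(x) = f(y)+1$ for $y$ an immediate specialization of $x$ holds for both $f = \delta$ and $f = \delta'$. Subtracting these two identities gives $(\delta-\delta')(x) = (\delta-\delta')(y)$. Thus $\delta-\delta'$ is constant along every immediate specialization relation.

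Next I would promote this pointwise constancy to constancy on each irreducible closed subset $Z \subset X$. Let $\eta$ be the generic point of $Z$ and let $x \in Z$ be arbitrary. Since $X$ admits a dimension function it is universally catenary (by the Remark preceding the lemma), so $Z$ is catenary; consequently any maximal chain of specializations $\eta = x_0 \rightsquigarrow x_1 \rightsquigarrow \cdots \rightsquigarrow x_n = x$ has consecutive steps which are immediate specializations. Applying the previous step along each link gives $(\delta-\delta')(x) = (\delta-\delta')(\eta)$, so $\delta-\delta'$ is constant on $Z$.

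Finally I would globalize: since $X$ is noetherian (by the global assumptions on objects of $\base$), it has only finitely many irreducible components and its connected components are clopen. Within a single connected component, any two irreducible components can be joined by a finite chain of irreducible components meeting pairwise in nonempty closed subsets, and by the previous step the value of $\delta-\delta'$ agrees on any two irreducible components that share a point. Thus $\delta-\delta'$ is constant on each connected component of $X$, and because connected components are open this is exactly Zariski local constancy. There is no real obstacle here; the only mild point requiring care is invoking catenarity to ensure that saturated chains of specializations consist of immediate specialization steps, which is precisely what the remark before the lemma provides.
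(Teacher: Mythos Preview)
Your proof is correct and follows exactly the line of reasoning the paper has in mind: the paper simply states that the lemma is ``obvious according to the above definition'' and gives no further argument, whereas you have spelled out the details of that obviousness (constancy under immediate specialization, propagation along saturated chains, and globalization via irreducible and connected components). One small remark: the existence of a finite saturated chain of specializations between $\eta$ and $x$ already follows from finite-dimensionality (part of the global conventions on $\base$), so catenarity is not strictly needed at that step---but invoking it does no harm.
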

In particular, up to an element in $\ZZ^{\pi_0(X)}$, there exists
 at most one dimension function on $X$. 

\begin{ex}\label{ex:can_dim_fn}
If $X$ is universally catenary and integral (resp. equicodimensional\footnote{\emph{i.e.}
 the codimension of any closed point of $X$ is equal to the dimension of its connected
 component. The main example is that of disjoint unions of equidimensional
 schemes of finite type over a field or over the ring of integers;})
 then 
$$
\delta(x)=-\dim(\mathcal O_{X,x})=-\codim_X(x) \
 \left(\text{resp. } \delta(x)=\dim\big(\overline{\{x\}}\big)\right)
$$
 is a dimension function on $X$ -- see \cite[2.2.2, resp. 2.4.4]{PS_dim}.
 In the \emph{resp.} case, $\delta$ is called the \emph{Krull dimension
 function on $X$}. Note that when $\dim(X)>0$, these two dimension
 functions, if defined, do not coincide.
\end{ex}

\begin{rem}\label{rem:dim_fn_bounded}
Since $X$ is noetherian
 finite dimensional (according to our conventions),
 a dimension function on $X$ is always bounded. 
 So the preceding lemma implies that if $X$
 admits a dimension function, then it admits at least one
 non-negative dimension function.
\end{rem}

\begin{num}\label{num:induced_delta}
Let $f:Y \rightarrow X$ be a morphism of schemes
 and $\delta$ be a dimension function on $X$.

If $f$ is quasi-finite,
 then $\delta \circ f$ is a dimension function
 on $Y$ (cf. \cite[2.1.12]{PS_dim}).

Assume now that $f$ is essentially of finite type.
 For any point $y \in Y$, $x=f(y)$, we put:
$$
\delta^f(y)=\delta(x)+\dtr\lbrack\res y/\res x\rbrack.
$$
According to \cite[2.5.2]{PS_dim}
 and the preceding remark, this defines a dimension function on $Y$
 -- when $f$ is pro-\'etale, one simply has $\delta^f=\delta \circ f$.
 Moreover, we will put:
$$
\delta(Y)=\max_{y \in Y} \Big(\delta^f(y)\Big).
$$
We easily deduce from that definition that
 if $Z \xrightarrow g Y \xrightarrow f X$ are morphisms essentially
 of finite type,
 then $(\delta^f)^g=\delta^{gf}$.
 This implies in particular that $\delta(Z)=\delta^f(Z)=\delta^{gf}(Z)$.
\end{num}

\begin{rem}\label{rem:bound_dim_not}
According to the preceding remark, for any essentially of finite type
 $X$-scheme $Y$ the following integers
$$
\delta_-(Y)=\min_{y \in Y}(\delta(y)), \
 \delta_+(Y)=\max_{y \in Y}(\delta(y)),
$$
are well defined. Note that one can check that the image of $\delta$
 is exactly $[\delta_-(Y),\delta_+(Y)]$.
 Note that $\delta_+(Y)=\delta(Y)$; we will use the notation 
 $\delta_+(Y)$ only in conjunction with the notation $\delta_-(Y)$.
\end{rem}

Recall that given a point $x$ of a scheme $X$,
 the \emph{dimension of $X$ at $x$}, denoted by $\dim_x X$,
 is the integer $\lim_U\big(\dim(U)\big)$, where $U$ runs over the open neighbourhoods
 of $x$ in $X$.
We will adopt the following definition, which is taken
 from several theorems of Chevalley as stated in \cite[section 13]{EGA4}:
\begin{df}
Let $f:Y \rightarrow X$ be a morphism essentially of finite type.
 The \emph{relative dimension of $f$} is the function denoted
 by $\dim(f)$ which to a point $y \in Y$,
 associates the integer $\dim_y f^{-1}f(y)$.
\end{df}
Beware $\dim(f)$ is not a dimension function on $Y$; in relevant
 cases (for example smooth, flat
 or more generally equidimensional
 morphisms\footnote{\label {foot:equidim}Recall
 from \cite[13.3.2]{EGA4} that a morphism $f:Y \rightarrow X$
 is equidimensional if the following conditions are fulfilled:
\begin{itemize}
\item $f$ is of finite type;
\item $\dim(f)$ is Zariski locally constant on $Y$;
\item $f$ maps generic points of $Y$ to generic points of $X$.
\end{itemize}
According to \cite[chap. 2, 2.1.8]{FSV},
 a flat morphism $f:Y \rightarrow X$ is equidimensional
 if for any connected component $Y'$ of $Y$
 and any generic point $x \in X$,
 $Y'_x$ is equidimensional.}),
 it is locally constant.
 In general, we will use upper bounds on this function.

\begin{ex}\label{ex:quasi-finite}
Let $f$ be a morphism of schemes which is locally of finite type.
 Then $\dim(f)=0$ if and only if it is locally quasi-finite
 (cf. \cite[28.29.5]{stack}).
 This can obviously be extended to the case where $f$ is essentially
 of finite type.
\end{ex}

\begin{rem}\label{rem:trivial_inequality}
It is clear that $\dtr(\kappa_y/\kappa_x) \leq \dim_y f^{-1}(x)$
 where $x=f(y)$ and equality happens if
 $y$ is a generic point of $f^{-1}(x)$ whose residue
 field is of maximal transcendence degree over $\res x$.
\end{rem}

\begin{prop}\label{prop:delta-base_change}
Consider a cartesian square of schemes
$$
\xymatrix@=10pt{
Y\ar^g[r]\ar_q[d] & X\ar^p[d] \\
T\ar^f[r] & S
}
$$
made of essentially of finite type morphisms, and let $\delta$
 be a dimension function on $S$.
\begin{enumerate}
\item If $\delta \geq 0$ then $\delta(Y) \leq \delta(X)+\delta(T)$.
\item If $\dim(f) \leq d$ then $\delta(Y) \leq \delta(X)+d$.
\item Assume $f$ is surjective on generic points of $S$
 and $\dim(f)=d$. \\
 Then $\delta(T)=\delta(S)+d$.
\end{enumerate}
\end{prop}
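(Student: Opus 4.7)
The plan is to reduce all three claims to a single pointwise estimate relating the induced dimension functions. Fix a point $y\in Y$ and set $x=g(y)$, $t=q(y)$, $s=p(x)=f(t)$. Since $Y=X\times_S T$, the residue field $\kappa_y$ is a residue field of $\spec(\kappa_x\otimes_{\kappa_s}\kappa_t)$; in particular $\kappa_y$ is generated as a field extension of $\kappa_x$ by the image of $\kappa_t$, and symmetrically over $\kappa_t$ by the image of $\kappa_x$. Applying the additivity of transcendence degree in the towers $\kappa_s\subset\kappa_x\subset\kappa_y$ and $\kappa_s\subset\kappa_t\subset\kappa_y$ yields
\[
\dtr\lbrack\kappa_y/\kappa_x\rbrack\leq\dtr\lbrack\kappa_t/\kappa_s\rbrack\quad\text{and}\quad\dtr\lbrack\kappa_y/\kappa_t\rbrack\leq\dtr\lbrack\kappa_x/\kappa_s\rbrack,
\]
from which one obtains the master inequality
\[
\delta^{pg}(y)\leq\delta^p(x)+\delta^f(t)-\delta(s).
\]

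Part (1) follows immediately: since $\delta(s)\geq 0$, we get $\delta^{pg}(y)\leq\delta^p(x)+\delta^f(t)\leq\delta(X)+\delta(T)$, and taking the supremum over $y$ gives the claim. For (2), I would rewrite the master inequality as $\delta^{pg}(y)\leq\delta^p(x)+\dtr\lbrack\kappa_t/\kappa_s\rbrack$ and invoke Remark \ref{rem:trivial_inequality} to bound $\dtr\lbrack\kappa_t/\kappa_s\rbrack\leq\dim_t f^{-1}(s)=\dim(f)(t)\leq d$, together with $\delta^p(x)\leq\delta(X)$.

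For (3), the upper bound $\delta(T)\leq\delta(S)+d$ is the special case of (2) with $X=S$ and $p$ the identity. For the matching lower bound, I would first observe that since $\delta(s)=\delta(s')+1$ for any immediate specialization $s'$ of $s$, the value $\delta(S)$ is attained at some generic point $s_0\in S$. By hypothesis some $t_1\in T$ maps to $s_0$, and the assumption $\dim(f)=d$ forces $f^{-1}(s_0)$ to contain an irreducible component of dimension $d$ through $t_1$; letting $t_0$ be its generic point, the equality case of Remark \ref{rem:trivial_inequality} gives $\dtr\lbrack\kappa_{t_0}/\kappa_{s_0}\rbrack=d$, hence $\delta^f(t_0)=\delta(s_0)+d=\delta(S)+d$, as required. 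The main obstacle lies in establishing the master inequality: it depends on the standard but nontrivial fact that $\dim(\kappa_x\otimes_{\kappa_s}\kappa_t)\leq\min(\dtr\lbrack\kappa_x/\kappa_s\rbrack,\dtr\lbrack\kappa_t/\kappa_s\rbrack)$, after which everything else is bookkeeping with the definitions of $\delta^p$, $\delta^f$ and elementary fibre-dimension theory.
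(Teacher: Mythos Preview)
Your proof is correct and follows essentially the same route as the paper: both reduce to a pointwise estimate on $\dtr(\kappa_y/\kappa_s)$ for $y\in Y$ with images $x,t,s$, then bound by $\delta(X)$, by $d$, and argue the reverse inequality in (3) by choosing a suitable generic point over a generic point of $S$ realising $\delta(S)$. In fact your master inequality $\delta^{pg}(y)\leq\delta^p(x)+\delta^f(t)-\delta(s)$ is more accurate than the paper's version: the paper asserts the \emph{equality} $\dtr(\kappa_y/\kappa_s)=\dtr(\kappa_x/\kappa_s)+\dtr(\kappa_t/\kappa_s)$, which fails in general (e.g.\ the closed point of $\spec\big(k(x)\otimes_k k(y)\big)$ cut out by $x-y$ has residue field of transcendence degree $1$, not $2$), whereas only the inequality you state is needed and true. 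One small remark on your justification: you do not actually need the Grothendieck--Sharp bound on $\dim(\kappa_x\otimes_{\kappa_s}\kappa_t)$; the inequality $\dtr(\kappa_y/\kappa_x)\leq\dtr(\kappa_t/\kappa_s)$ follows more directly from the fact that $\kappa_y$ is algebraic over the image of a localisation of $\kappa_x[y_1,\dots,y_n]$ once one fixes a transcendence basis $y_1,\dots,y_n$ of $\kappa_t/\kappa_s$.
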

\begin{proof}
Put $h=f \circ q$.
Consider a point $y \in Y$ whose image is $x$, $t$, $s$ respectively
 in $X$, $T$ and $S$. Recall that $\res y$ is a composite extension
 field of $\res x/\res s$ and $\res t/\res s$ (\cite[3.4.9]{EGA1}).
 Thus, its transcendence degree is the sum of the transcendence
 degrees of this last two extension fields.

We prove $(1)$ as follows:
\begin{align*}
\delta^h(y)&=\delta(s)+\dtr(\res y/\res s)
 =\delta(s)+\dtr(\res x/\res s)+\dtr(\res t/\res s) \\
 &\leq \delta(s)+\dtr(\res x/\res s)+\delta(s)+\dtr(\res t/\res s)
 =\delta^p(x)+\delta^f(t) \leq \delta(X)+\delta(T);
\end{align*}
and (2) as follows (using Remark \ref{rem:trivial_inequality}):
\begin{align*}
\delta^h(y)&=\delta(s)+\dtr(\res y/\res s)
 =\delta(s)+\dtr(\res x/\res s)+\dtr(\res t/\res s)
 = \delta^p(x)+\dtr(\res t/\res s) \\
 & \leq \delta(X)+d.
\end{align*}
To prove point (3), it remains to show the converse inequality of (2)
 in the particular case $X=S$ and $\dim(f)=d$.
 Consider a generic point $s$ of $S$ such that $\delta(s)=\delta(S)$
 (it exists since  $\delta$ is bounded, Remark \ref{rem:dim_fn_bounded}).
 By our assumption on $f$, $f^{-1}(s) \neq \emptyset$.
 Let $x$ be a generic point of $f^{-1}(s)$ whose residue field
 is of maximal transcendence degree over $\res s$.
 According to Remark \ref{rem:trivial_inequality} and the assumption
 on $f$, one has $\dtr(\res x/\res s)=d$.
 Then the following computation allows us to conclude:
$$
\delta^f(x)=\delta(s)+\dtr(\res x/\res s)
 =\delta(S)+d.
$$
\end{proof}
 
\subsection{Reminders on t-structures}\label{sec:t-struct}

\begin{num}\label{num:conventions_t}
Following Morel (\cite{Mor1}), our conventions for t-structures
 will be homological.\footnote{One goes from homological to cohomological conventions
  by the usual rule: $\C^{\leq n}=\C_{\geq -n}$.}
 Apart from that, we will follow the classical
 definitions of \cite[Sec. 1.3]{BBD}.

This means in particular that a t-structure $t$
 on a triangulated category $\C$
 is a pair of subcategories $(\C_{\geq 0},\C_{<0})$ such that
\begin{enumerate}
\item $\Hom_\C(\C_{\geq 0},\C_{<0})=0$;
\item $\C_{\geq 0}$ (resp. $\C_{<0}$) is stable by suspension $[+1]$
 (resp. desuspension $[-1]$).
\item for any object $M$ in $\C$, there exists a distinguished triangle
 of the form
\begin{equation}\label{eq:proto_truncation}
M_{\geq 0} \rightarrow M \rightarrow M_{<0} \xrightarrow{+1}
\end{equation}
such that $M_{\geq 0} \in \C_{\geq 0}$ and $M_{<0} \in \C_{<0}$.
\end{enumerate}
 Then we denote as usual $\C_{\geq n}:=\C_{\geq 0}[n]$
 (resp. $\C_{\leq n+1}:=\C_{<n}:=\C_{<0}[n]$) for any integer $n \in \ZZ$.
 An object $M$ of $(\C_{\geq 0}$ (resp. $\C_{<0})$) is called
 \emph{non-$t$-negative}
 (resp. \emph{$t$-negative}); sometimes we indicate this by the notation
 $M \geq 0$ (resp. $M<0$) and similarly for any  $n \in \ZZ$.
 
Recall also that the triangle \eqref{eq:proto_truncation} is unique.
Thus, we get a well-defined functor $\tau^t_{\geq 0}:\C \mapsto \C_{\geq 0}$
 (resp. $\tau^t_{<0}:\C \mapsto \C_{<0}$)
 which is right (resp. left) adjoint to the inclusion functor
 $\C_{\geq 0} \rightarrow \C$ (resp. $\C_{<0} \rightarrow \C$).
 For any integer $n \in \ZZ$ and any object $M$ of $\C$,
  we put more generally: $\tau^t_{\geq n}(M)=\tau^t_{\geq 0}(M[-n])[n]$
	(resp. $\tau^t_{<n}(M)=\tau^t_{<0}(M[-n])[n]$).
 
The pair $(\C,t)$ is called a \emph{$t$-category} and,
 when $t$ is clear, it is convenient not to indicate it in the notation.
\end{num}

\begin{num} Let $\C$ be a $t$-category. The \emph{heart} of $\C$
 is the category $\hrt \C=\C_{\leq 0} \cap \C_{\geq 0}$.
 It is an abelian category and the canonical functor:
$$
H_0=\tau_{\leq 0}\tau_{\geq 0}=\tau_{\geq 0}\tau_{\leq 0}:\C
 \rightarrow \hrt \C
$$
is a \emph{homological functor} -- \emph{i.e.} converts distinguished
 triangles into long exact sequences. We put $H_n:=H_0(.[-n])$.
\end{num}

We give the following definition,
 slightly more precise than usual.
\begin{df}
Let $t$ be a $t$-structure on a triangulated category $\C$.
 We say that $t$ is \emph{left} (resp. \emph{right})
 \emph{non-degenerate} if $\cap_{n \in \ZZ} \C_{\leq n}=\{0\}$
 (resp. $\cap_{n \in \ZZ} \C_{\geq n}=\{0\}$).).
 We say that $t$ is \emph{non-degenerate} if it is left
 and right non-degenerate.
\end{df}
Another way of stating that $t$ is non-degenerate
 is by saying that the family of homology functors $(H_n)_{n \in \ZZ}$
 (defined above) is conservative.

\begin{num}
Consider two $t$-categories $\C$ and $\D$. One says that
 a functor $F:\C \rightarrow \D$ is \emph{left $t$-exact}
 (resp. \emph{right $t$-exact})
 if it respects negative (resp. positive) objects.
 We will say that $F$ is \emph{$t$-exact} if it is left and right
 $t$-exact. We say $F$ is an \emph{equivalence of $t$-categories} 
 if it is $t$-exact and an equivalence of the underlying categories.
 Then, the functor $H_0F$ induces an equivalence between
 the hearts.

More generally, for any $n \in \ZZ$,
 one says $F$ has \emph{(homological) amplitude}
 less (resp. more) than $n$ if $F(\C_{\leq 0}) \subset \C_{\leq n}$
 (resp. $F(\C_{\geq 0}) \subset \C_{\geq n}$).
 Similarly, $F$ has \emph{amplitude} $[n,m]$ is $F$
 has amplitude more than $n$ and less than $m$.

Recall that given a pair of adjoint functors
 $F:\C \leftrightarrows \D:G$,
 $F$ is right $t$-exact if and only if $G$ is left $t$-exact.
 In that case, we will say that $(F,G)$ is an \emph{adjunction 
 of $t$-categories}. According to \cite[1.3.17]{BBD},
 such an adjunction induces a pair of adjoint functors:
\begin{equation}\label{eq:proto_t_adj}
\xymatrix@C=40pt{
H_0F:=(\tau_{\geq 0} \circ F):\hrt \C\ar@<0pt>[r] & 
 \hrt \D:(\tau_{\leq 0} \circ G)=:H_0G.\ar@<-4pt>[l]
}
\end{equation}
 
Suppose in addition that $\C$ is a monoidal triangulated category
 with tensor product $\otimes$ and neutral object $\un$.
 One says that the tensor structure is \emph{compatible with the
 $t$-category $\C$} if the bi-functor $\otimes$ sends
 two non-negative objects
 to a non-negative object and $\un$ is a non-negative object.
 In this case, one checks that the formula:
$$
K \ohrt L:=\tau_{\leq 0}(K \otimes L)=H_0(K \otimes L)
$$
defines a monoidal structure on the abelian category $\hrt\C$
such that the functor $H_0$ is monoidal.
If moreover the tensor structure is closed,
 then for any $K \geq 0$ and $L \leq 0$,
 one has $\uHom(K,L) \leq 0$. Thus 
 the bifunctor $\tau_{\geq 0} \uHom$
 defines an internal Hom in $\hrt \C$.
\end{num}

\begin{num}\label{num:generated_aisle}
Let $\C$ be a compactly generated triangulated category which
 admits arbitrary small coproducts.
 We will say that a subcategory $\C_0$ of $\C$ is
 \emph{stable by extensions}
 if given any distinguished triangle $(K,L,M)$ of $\C$
 such that $K$ and $M$ belong to $\C_0$, the object $L$
 also belongs to $\C_0$.

Given a family of objects $\cG$ of $\C$,
 we will denote by $\gen \cG$ the smallest subcategory
 of $\C$ which contains $\cG$ and is stable under extensions,
 positive suspensions, and arbitrary (small) coproducts.
The following theorem was proved in \cite[A.1]{AJS};
 see also \cite[tome I, 2.1.70]{ayoub1}.
\end{num}
\begin{thm}\label{thm:gen_tstructures}
Adopt the previous assumptions.
 Then $\gen \cG$ is the category of non-negative objects
 of a $t$-structure $t_\cG$ on $\cG$.
Moreover, for this $t$-structure, positive objects are
 stable under coproducts, and the functors $\tau_{\leq 0}$,
 $\tau_{\geq 0}$, $H_0$ commutes with coproducts.
\end{thm}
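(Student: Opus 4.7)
The plan is to set $\C_{\geq 0}:=\gen{\cG}$ and define $\C_{<0}$ by the orthogonality condition
$$
\C_{<0}:=\{M\in\C \mid \Hom_\C(X[i],M)=0\ \text{for all $X\in\cG$ and $i\geq 0$}\}.
$$
With this definition, the axiom $\Hom_\C(\C_{\geq 0},\C_{<0})=0$ is automatic: the class of objects $L$ with $\Hom_\C(L,M)=0$ is stable under extensions, positive suspensions and coproducts, so if it contains $\cG$ it contains $\gen{\cG}$. The stability $\C_{\geq 0}[1]\subset\C_{\geq 0}$ holds by construction, and $\C_{<0}[-1]\subset\C_{<0}$ follows by shifting the defining vanishing condition.

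The heart of the argument is the existence, for every object $M\in\C$, of a truncation triangle $M_{\geq 0}\to M\to M_{<0}\xrightarrow{+1}$. I would construct $M_{\geq 0}$ by a transfinite small object argument, indexed by an ordinal $\lambda$ large enough compared with the compactness rank of the objects in $\cG$ (recall the conventions fix compactly generated categories, and in all our applications the objects in $\cG$ will themselves be compact). Start with $N_0:=M$ and $M_0:=0$. At a successor step, consider all maps $f:X[n]\to N_\alpha$ with $X\in\cG$ and $n\geq 0$, form the coproduct $\Phi_\alpha:\bigoplus_f X_f[n_f]\to N_\alpha$, and define $N_{\alpha+1}$ as the cofiber of $\Phi_\alpha$. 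Taking fibers at each stage and pushing forward the resulting map $M_\alpha\to M$, one builds a compatible tower $(M_\alpha\to M)$; at limit ordinals one takes the homotopy colimit. By compactness, any map $X[n]\to M_\lambda$ factors through some $M_\alpha$ with $\alpha<\lambda$, so at the level $\lambda$ the construction kills all remaining maps from $\cG[\NN]$ into the cofiber: the cofiber $M_{<0}$ thus lies in $\C_{<0}$ while, by construction, $M_{\geq 0}=M_\lambda\in\gen{\cG}$.

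For the additional assertions, note that coproducts of objects in $\C_{<0}$ are in $\C_{<0}$ because $\Hom_\C(X[i],-)$ commutes with coproducts for $X$ compact; hence $\tau_{<0}$ preserves coproducts, and from the triangle $\tau_{\geq 0}(M)\to M\to\tau_{<0}(M)\xrightarrow{+1}$ together with stability of $\C_{\geq 0}$ under coproducts one deduces that $\tau_{\geq 0}$ and $H_0$ commute with coproducts as well.

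The main obstacle is the convergence of the transfinite induction and the verification that the resulting cofiber really lies in $\C_{<0}$; this is where compactness of the generators is indispensable, and it is also what forces the choice of $\lambda$ to depend on the compactness rank of the objects in $\cG$. Once this is carried out, all remaining assertions follow formally. (A detailed implementation may be found in \cite[A.1]{AJS} and in \cite[tome I, 2.1.70]{ayoub1}.)
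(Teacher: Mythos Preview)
The paper does not give its own proof of this theorem: it simply cites \cite[A.1]{AJS} and \cite[tome I, 2.1.70]{ayoub1}, exactly the references you invoke at the end. So there is nothing to compare against beyond those sources, and your sketch is indeed the standard small-object/cellular-tower argument used there.

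Two small points worth tightening. First, in your convergence step you write ``any map $X[n]\to M_\lambda$ factors through some $M_\alpha$''; what you actually need is that any map $X[n]\to N_\lambda$ factors through some $N_\alpha$ (with $N_\lambda=\hocolim_\alpha N_\alpha$), so that its composite with $N_\alpha\to N_{\alpha+1}$ vanishes by construction. This is still a compactness statement, just applied to the $N$-tower rather than the $M$-tower. Second, the theorem as stated in the paper does not literally require the objects of $\cG$ to be compact, only that $\C$ be compactly generated; your argument (and in particular the claim that $\C_{<0}$ is closed under coproducts, hence that $\tau_{\leq 0}$ commutes with coproducts) does use compactness of the generators. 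This is harmless in context---all applications in the paper take $\cG$ to consist of compact objects, as you note---but it is worth flagging that the full generality of the cited references handles slightly more.
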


\begin{df}\label{df:gen_tstruct}
In the assumptions of the previous theorem,
 the $t$-structure $t_\cG$ will be called
 the \emph{$t$-structure generated by $\cG$}.
\end{df}

\begin{rem}\label{rem:exist_colimits}
According to the preceding theorem,
 the heart $\hrt{\T}$ of the $t$-structure generated by $\cG$ admits small coproducts.
 So in particular, it admits small colimits.

Moreover, as $H_0$ commutes with coproducts,
 small coproducts are exact in $\hrt{\T}$. In other words,
 $\hrt{\T}$ satisfies Grothendieck properties (AB3) and (AB4) of \cite[1.5]{tohoku}.
 Checking whether filtered colimits are exact in $\hrt{\T}$ and finding generators for 
 this abelian category appears to be a much more complicated problem; see 
 \cite{PaSa} or \cite{Bon16}.
 In the case of interest for this paper, see \ref{thm:finally_Grothendieck}.
\end{rem}

A nice remark about such generated t-structures is the
 following elementary lemma.
\begin{lm}\label{lm:non-neg-degenerated}
Consider the notations of the above theorem.
 Then the following conditions are equivalent:
\begin{enumerate}
\item[(i)] $\cG$ is a generating family of the triangulated
 category $\C$;
\item[(ii)] $t_\cG$ is left non-degenerate.
\end{enumerate}
\end{lm}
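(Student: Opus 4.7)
The plan is to establish both implications directly from the orthogonality axiom $\Hom_\C(\C_{\geq 0}, \C_{<0}) = 0$ of the $t$-structure $t_\cG$, together with the explicit description $\C_{\geq 0} = \gen{\cG}$ provided by Theorem \ref{thm:gen_tstructures}.

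For the direction $(i) \Rightarrow (ii)$, I will take any $K \in \bigcap_n \C_{\leq n}$ and show $K = 0$. For each $X \in \cG$ and each $i \in \ZZ$, the object $X[i]$ lies in $\C_{\geq i}$ while $K$ lies in $\C_{\leq i-1}$, so orthogonality forces $\Hom_\C(X[i], K) = 0$. Since $\cG$ generates $\C$ as a triangulated category, this vanishing forces $K = 0$.

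For the converse $(ii) \Rightarrow (i)$, I start with $K \in \C$ satisfying $\Hom_\C(X[i], K) = 0$ for every $X \in \cG$ and every $i \in \ZZ$, and aim to prove $K = 0$. The key construction is the full subcategory
\[
\mathcal{K} = \{Y \in \C : \Hom_\C(Y[i], K) = 0 \text{ for every } i \in \ZZ\}.
\]
Because $\Hom_\C(-, K)$ converts coproducts to products and is cohomological, $\mathcal{K}$ is closed under arbitrary coproducts and extensions; since the defining condition is symmetric in $i \in \ZZ$, $\mathcal{K}$ is also stable under suspension $[+1]$. By hypothesis it contains $\cG$, and hence $\gen{\cG} = \C_{\geq 0}$. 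Shifting gives $\Hom_\C(Z, K) = 0$ for every $Z \in \C_{\geq n}$ and every $n \in \ZZ$.

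Having achieved this, I will fix $n$ and apply $\Hom_\C(\tau_{\geq n} K, -)$ to the truncation triangle $\tau_{\geq n} K \to K \to \tau_{<n} K \xrightarrow{+1}$. The outer terms $\Hom_\C(\tau_{\geq n} K, \tau_{<n} K[-1])$ and $\Hom_\C(\tau_{\geq n} K, \tau_{<n} K)$ vanish by $t$-structure orthogonality, so the long exact sequence yields an isomorphism $\Hom_\C(\tau_{\geq n} K, \tau_{\geq n} K) \simeq \Hom_\C(\tau_{\geq n} K, K) = 0$. This forces $\tau_{\geq n} K = 0$ and hence $K \in \C_{<n}$. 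Since this holds for all $n \in \ZZ$, left non-degeneracy of $t_\cG$ yields $K = 0$.

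I do not anticipate any serious obstacle; the only mildly delicate point is that the naive class $\{Y : \Hom_\C(Y, K) = 0\}$ is not a priori stable under positive suspensions, which would prevent a direct comparison with $\gen{\cG}$. Strengthening the vanishing condition to all shifts $Y[i]$ at once, as in the definition of $\mathcal{K}$, is precisely what repairs this and makes the inclusion $\C_{\geq 0} \subset \mathcal{K}$ immediate.
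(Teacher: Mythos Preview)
Your proof is correct and follows essentially the same approach as the paper, which compresses the argument into the single set equality $\bigcap_{n} \C_{\leq n} = \{L \in \C : \Hom(K[n],L)=0 \text{ for all } K \in \cG,\ n \in \ZZ\}$ (the paper displays $\C_{\geq n}$ on the left, an apparent typo). Your $(i)\Rightarrow(ii)$ direction is the inclusion $\subseteq$, and your $(ii)\Rightarrow(i)$ direction spells out the inclusion $\supseteq$ via the closure properties of $\mathcal{K}$ and the truncation-triangle argument showing $\tau_{\geq n}K=0$.
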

\begin{proof}
This immediately follows from the equality:
$$
\cap_{n \in \ZZ} \C_{\geq n}=
 \big\{L \in \C
 \mid \forall K \in \cG, \forall n \in \ZZ, \Hom(K[n],L)=0\big\}.
$$
\end{proof}

\begin{rem}
The (right) non-degeneracy of a $t$-structure generated
 as above is much more delicate.  We will
 obtain such a result for our particular $t$-structures in 
 Corollary \ref{cor:nice_ppties_dhtp}.
 See also \cite[2.1.73]{ayoub1} for an abstract criterion.
\end{rem}

\subsection{Reminders on motivic homotopy theory}\label{s1.3}\label{sec:six}

\begin{ex}\label{ex:motivic_cat}
This work can be applied to
 the following triangulated motivic categories:
\begin{enumerate}
\item\label{i1} The stable homotopy category $\SH$ of Morel and
 Voevodsky (see \cite{ayoub1}).
\item\label{i2} The $\AA^1$-derived category $\DAx R$ of Morel
 (see \cite{ayoub1}, \cite{CD3}) with coefficients in a ring $R$.
\item\label{i3} The category $\MGLmod$ of $\MGL$-modules
 (cf. \cite[7.2.14, 7.2.18]{CD3}).
 Its sections over a scheme $S$ are made
 by the homotopy category of the model category
 of modules over the (strict) ring spectrum $\MGL_S$
 in the monoidal category of symmetric spectra.
\item\label{i4} The category of Beilinson motives $\DMB$
 (see \cite{CD3}) which is equivalent
 (by \cite[16.2.22]{CD3})
 to the \'etale $\AA^1$-derived category $\mathrm{DA}_{\text{\'et},\QQ}$
 with rational coefficients introduced in \cite{ayoub1}.
 It can have coefficients in any $\QQ$-algebra $R$.
\item\label{i5} Let $\MZ$ be the ring spectrum in $\SH(\spec(\ZZ))$
 constructed by Spitzweck in \cite{Spi}.
 Then the category $\MZmod$ is a motivic category
 over the category of all schemes (see \cite{Spi}).
\item\label{i6} Let $F$ be a prime field of characteristic exponent
 $p$ and $\base$ be the category of (excellent) $F$-schemes.
 The category of $\cdh$-motivic complexes $\DM_\cdh(-,R)$
 with coefficients in an arbitrary $\ZZ[1/p]$-algebra $R$,
 is a motivic category over $\base$ (see \cite{CD4}).

 According to \cite[Th. 5.1]{CD4} and \cite[Th. 9.16]{Spi},
 this motivic category for $R=\ZZ[1/p]$
 is equivalent to $\MZmod[1/p]$ restricted
 to $\basex F$ through the unique map $\spec(F) \rightarrow \spec(\ZZ)$.

Recall also from \cite[5.9]{CD4} that for any regular $F$-scheme $X$,
 the canonical $\cdh$-sheafifi\-cation map
\begin{equation}
\label{eq:DM&DM_cdh}
\DM(X,R) \rightarrow \DM_\cdh(X,R)
\end{equation}
is an equivalence of triangulated monoidal categories, where the
 left hand side is the non-effective version of Voevodsky's derived
 category of motivic complexes (see \cite{CD1,CD3,CD4}).
\end{enumerate}
All these categories are compactly generated by their Tate twists
  fulfilling the requirements of our conventions
 (see page \pageref{conventions}).

Note that $\DM_\cdh$ is a premotivic category over the category
 of all noetherian schemes, but we only know it satisfies the localization
 property over $F$-schemes as explained above.

Recall also that  examples (\ref{i3})--(\ref{i6}) are oriented (\cite[2.4.38]{CD3}),
 whereas examples (\ref{i1}) and (\ref{i2}) are not.
\end{ex}

\begin{num}\label{num:convention_DM}
 To gather examples (4), (5) and (6) together,
 we will adopt a special convention. We fix a ring
 $R$ and denote by $\DM_R$ one of the following motivic categories:
\begin{enumerate}
\item if $R$ is a $\QQ$-algebra,
 it is understood that $\DM_R=\DMB$
 and $\base$ is the category of all schemes;
\item if $\base$ is the category of $F$-schemes for a prime field $F$
 with characteristic exponent $p$, it is understood that $p \in R^\times$
 and $\DM_R=\DM_\cdh(-,R)$;
\item if $R=\ZZ$, it is implicitly understood that $\DM_R=\MZmod$
 and $\base$ is the category of all schemes.
\end{enumerate}
\end{num}

\begin{ex}
For any ring $R$ satisfying one of the above assumptions,
 the triangulated motivic categories of the previous example
 are related by the following diagram:
\begin{equation}\label{eq:premotivic_adj}
\begin{split}
\xymatrix@R=4pt{
 & \ \MGLmod & &\\
SH\ar^/-3pt/{\phi^*}[ru]\ar_{\delta^*}[rd] & & \\
 & \DAx R\ar_{\gamma^*}[r] & \DM_R. &
}
\end{split}
\end{equation}
In the above diagram,
 the label used stands for the left adjoints;
 the right adjoints will be written with a lower-star
 instead of an an upper-star.

The references are as follows: $\delta^*$, \cite[5.3.35]{CD3};
 $\gamma^*$, \cite[11.2.16]{CD3};
 $\rho^*$, \cite[\textsection 9]{CD4};
 $\phi^*$, \cite[7.2.13]{CD3}.
 Recall also that the right adjoints of these premotivic
 adjunctions are all conservative.
\end{ex}

\begin{rem}\label{rem:MGL->DM}
Note that there exists a premotivic adjunction:
$$
\MGLmod \rightarrow \DM_R
$$
for any ring $R$ satisfying the assumption
 of \ref{num:convention_DM} though the whole details have
 not been written down.
 In fact, this will follow from the following two facts:
\begin{itemize}
\item $\DM_R(S)$ is equivalent to the homotopy category 
 of modules over the $E_\infty$-ring spectrum $\mathbf HR_S$
 with coefficients in $R$ representing motivic cohomology. Beware
 that in case (2), $\mathbf HR_S$ really means the
 \emph{relative motivic Eilenberg-Mac Lane spectrum} of $S/F$
 in the sense of \cite[3.8]{CD5}. It coincides with Voevodsky's motivic
 Eilenberg-Mac Lane spectrum in case $S$ is regular
 (see \cite[3.9]{CD5}) but this is not known when $S$ is singular.\footnote{It
 may be false but note that the cohomology represented by
 the ring spectra $\mathbf HR_S$ for various $F$-schemes $S$ is the unique one
 which coincides with motivic cohomology on regular simplicial schemes
 and satisfies $\cdh$-descent.}

In any cases of \ref{num:convention_DM}, the equivalence
 of $\DM_R(S)$ with $\mathbf HR_S$-modules is known:
 for (1) see \cite[14.2.11]{CD3},
 for (2) see the construction of \cite[5.1]{CD5}, though one
 does not consider $E_\infty$-ring spectra,
 for (3) it is obvious.
 Note however that some special care is needed about the $E_\infty$-structure
 as well as its compatibility with pullback isomorphisms:
$$
\derL f^*(\mathbf HR_S) \rightarrow \mathbf HR_T
$$
for $f:T \rightarrow S$.
This is the first part that needs a careful writing.
\item The ring spectrum $\mathbf HR_S$ is oriented,
 or equivalently, it is a $\MGL_S$-algebra which is also
 true in any of the cases of \ref{num:convention_DM}.
 The more difficult point is that the corresponding map
$$
\MGL_S \rightarrow \mathbf HR_S
$$
 is a morphism of $E_\infty$-ring spectra. The proof of this
 fact is indicated in \cite[11.2]{Spi}. The latter reference also indicates
 that this isomorphism is compatible with pullbacks
 in $S$ -- \emph{i.e.} with the cartesian structure on
 the family of ring spectra $(\MGL_S)$ and $(\mathbf HR_S)$
 indexed by schemes $S$ in $\base$.
\end{itemize}
Then the required map follows by functoriality of the construction of
 the homotopy category of modules over an $E_\infty$-ring spectrum.
\end{rem}

\begin{num} \textit{Constructible $\T$-spectra}.

Let us consider an abstract motivic triangulated category
 satisfying our general assumptions (see page \pageref{conventions}).
 In particular
 the $\T$-spectra of the form $M_S(X)(n)$ for a smooth $S$-scheme $X$
 and an integer $n \in \ZZ$ are compact.

Following the usual terminology (see \cite[4.2.3]{CD3}),
 we define the category  $\T_c(S)$ 
 of \emph{constructible $\T$-spectra over $S$}
 as the smallest thick triangulated subcategory of $\T(S)$
 containing the $\T$-spectra of the form $M_S(X)(n)$ as above.
 Then according to \cite[1.4.11]{CD3},
 a $\T$-spectrum is constructible if and only if it is compact.

Moreover, according to \cite[4.2.5, 4.2.12]{CD3},
 constructible $\T$-spectra are stable by the operations $f^*$,
 $p_!$ (p separated), and tensor product.
\end{num}

The following terminology first appeared in \cite{Riou}.
\begin{df}\label{df:BM}
Let $p:X \rightarrow S$ be a separated morphism.
 We define the Borel-Moore $\T$-spectrum associated with $X/S$
 as the following object of $\T(S)$: $\Mb(X/S):=p_!(\un_X)$.
\end{df}
Note that according to the previous paragraph,
 $\Mb(X/S)$ is a compact object of $\T(S)$.

\begin{num}\label{num:virtual_vb}
Let us recall (see \cite[2.4.12]{CD3}) that we define the Thom $\T$-spectrum
 associated with a vector space $E/X$ as the $\T$-spectrum:
$$
\MTh(E)=p_\sharp s_*(\un_X),
$$
where $p$ (resp. $s$) is the canonical projection
 (resp. zero section) of $E/X$. Recall for example
 that for any integer $r\geq 0$, we get under
 our conventions that $\MTh(\AA^r_S)=\un_S\dtw r$.

The Thom $\T$-spectrum $\MTh(E)$ is $\otimes$-invertible
 with inverse $s^!(\un_E)$.
 Following \cite[4.1.1]{Riou} (see also \cite{ayoub1}),
 the functor $\Th$ can be uniquely extended to a functor
 from the Picard category $\uK(X)$ of
 \emph{virtual vector bundles over $X$}
 (defined in \cite[4.12]{Del})
 to the category of $\otimes$-invertible $\T$-spectra
 and this functor satisfies the relations:
\begin{align*}
\MTh(v+w)&=\MTh(v) \otimes \MTh(w) \\
\MTh(-v)&=\MTh(v)^{\otimes,-1}.
\end{align*}
It is obviously compatible with base change. Note also that we will
 adopt a special notation when $X$ is a smooth $S$-scheme, with structural
 morphism $f$. Then for any virtual vector bundle $v$ over $X$,
 we put:
\begin{equation}\label{eq:base_Thom}
\MTh_S(v)=f_\sharp\big(\MTh(v)\big).
\end{equation}

Thom spaces are fundamental because they appear in the 
 following purity
 isomorphism of a smooth morphism $f:X \rightarrow S$
 with tangent bundle $T_f$:
\begin{equation}\label{eq:purity_iso}
f^! \simeq \MTh(T_f) \otimes f^*.
\end{equation}
Recall finally that, when $\T$ is oriented (\cite[2.4.38]{CD3}),
 for any virtual vector bundle $v$ over $S$ of (virtual) rank $r$,
 there exists a canonical  \emph{Thom isomorphism}:
\begin{equation}\label{eq:Thom_iso}
\MTh_S(v) \xrightarrow{\ \sim\ } \un_S\dtw{r}
\end{equation}
which is coherent with respect to exact
 sequences of vector bundles and compatible with base change
 (cf. \emph{loc. cit.}).
\end{num}

\begin{num}\label{num:BM}
In the text below,
 we will use the following properties of Borel-Moore $\T$-spectra
 which follow from the six functors formalism:
\begin{itemize}
\item[(BM1)] $\Mb(X/S)$ is contravariant in $X$ with 
 respect to proper morphisms of separated $S$-schemes.
\item[(BM2)] $\Mb(X/S)$ is covariant with 
 respect to \'etale morphisms $f:Y \rightarrow X$ of separated
 $S$-schemes.\footnote{This covariance can be extended to the
 case of any smooth morphism $f$ if one adds a twist
 by the Thom $\T$-spectrum of the tangent bundle of $f$.}
\item[(BM3)] (\emph{Localization}) For any closed immersion
 $i:Z \rightarrow X$ with complementary
 open immersion $j:U \rightarrow X$, one has a distinguished
 triangle:
\begin{equation} \label{eq:localization}
\Mb(U/S) \xrightarrow{j_*} \Mb(X/S)
 \xrightarrow{i^*} \Mb(Z/S)
 \xrightarrow{\partial_i} \Mb(U/S)[1].
\end{equation}
\item[(BM4)] (\emph{Purity}) For any smooth $S$-scheme $X$ of relative
 dimension $d$, there exists a canonical isomorphism in $\T(S)$:
$$
\pur_X:\MTh_S\big(-T_f\big) \rightarrow \Mb(X/S)
$$
where $T_f$ is the tangent bundle of $f$, and we have followed
 convention \eqref{eq:base_Thom}.
 When $\T$ is oriented, we get an isomorphism in $\T(S)$:
$$
\pur_X:M_S(X)\dtw{-d} \rightarrow \Mb(X/S)
$$
where $d$ is the relative dimension of $X/S$.
 \item[(BM5)] (\emph{K\"unneth}) For any separated $S$-schemes
 $X$, $Y$, there exists a canonical isomorphism:
$$
\Mb(X/S) \otimes \Mb(Y/S) \rightarrow \Mb(X \times_S Y/S).
$$
\item[(BM6)] (\emph{Base change}) For any morphism $f:T \rightarrow S$,
 $f^*\Mb(X/S)=\Mb(X \times_S T/T)$.
\end{itemize}
Each property follows easily from the six functors formalism;
 using the numeration of the axioms as in \cite[Th. 2.4.50]{CD3}:
\begin{itemize}
\item (BM1): (2) and the unit map of the adjunction $(f^*,f_*)$;
\item (BM2): (3) and the unit map of the adjunction $(f_!,f^!)$;
\item (BM3): the localization property ($\text{Loc}_i$);
\item (BM4): point (3)
 \emph{i.e.} the dual assertion of \eqref{eq:purity_iso}; (BM5): point (5) and (4); (BM6): point (4).
\end{itemize} 
\end{num}

\begin{rem}\label{rem:Mb_generators}
It easily follows from (BM4) that 
 $\T$-spectra of the form $\Mb(X/S)(i)$ for $X/S$ separated and $i\in \mathbb{Z}$
 are compact generators of the triangulated category $\T(S)$.
\end{rem}

\begin{df}
Let $X$ be a separated $S$-scheme. For any couple of integers
 $(p,q) \in \ZZ^2$,
 we define the \emph{Borel-Moore homology} of $X/S$ with coefficients
  in $\E$ and degree $(p,q)$ as the following abelian group:
$$
\E^{BM}_{p,q}(X/S)=\Hom_{\T(S)}\big(\Mb(X/S)(q)[p],\E\big).
$$
\end{df}
It will also be convenient to consider two other kinds of twists
 and therefore use the following notations:
\begin{align*}
\E^{BM}_{p,\gtw q}(X/S)&=\Hom_{\T(S)}\big(\Mb(X/S)\gtw q[p],\E\big), \\
\E^{BM}_{p,\dtw q}(X/S)&=\Hom_{\T(S)}\big(\Mb(X/S)\dtw q[p],\E\big).
\end{align*}

Besides, when dealing with a non orientable motivic category $\T$, 
 it is useful to adopt the following extended notations in cohomology
 -- note similar notations may be introduced for Borel-Moore homology
 but we will not used the latter in the present paper.
\begin{df}
Let $X$ be a scheme, $n$ an integer
 and $v$ a virtual vector bundle over $X$.
 We define the $\T$-cohomology of $X$ in bidegree $(n,v)$
 as the following abelian group:
$$
H^{n,v}(X,\T):=\Hom_{\T(X)}\big(\un_X,\Th(v)[n]\big).
$$
\end{df}

\begin{num}\label{num:notation_virtual_vb}
To simplify notations, when $f:Y \rightarrow X$ is a morphism
 of schemes and $v$ is a virtual vector bundle over $X$,
 we will denote by $H^{n,v}(Y,\T)$ the cohomology of $Y$
 with degree $(n,f^{-1}v)$.
 In particular, the pullback functor $f^*$ induces a pullback morphism
 on cohomology:
$$
f^*:H^{n,v}(X,\T) \rightarrow H^{n,v}(Y,\T).
$$
 We will denote by $\dtw m$ the virtual bundle associated
 with the $m$-dimensional affine space $\AA^m$ over $\ZZ$.
 Then by definition, one gets:
$$
H^{n,\dtw m}(X,\T)=\Hom(\un_X,\un_X\dtw m[n])
 =\Hom(\un_X,\un_X(m)[2m+n])
$$
recovering the index of twisted cohomologies that we have
 introduced above.
\end{num}

\begin{rem}\label{rem:Thom&orient}
Recall that when $\T$ is oriented, for any virtual $X$-vector bundle $v$
 of virtual rank $r$,
 there exists a canonical isomorphism (see for example \cite{Deg12}),
 called the \emph{Thom isomorphism}:
$$
\Th(v) \xrightarrow \sim \dtw r.
$$
In particular, for any $\T$-spectrum $\E$,
 one has a canonical isomorphism
 $\E^{n,v}(X) \simeq \E^{n,\dtw r}(X)$.
\end{rem}

The preceding notation is natural and useful
 to formulate the absolute purity
 property that will be used in this paper.
\begin{df}
A \emph{closed pair} is a pair of schemes $(X,Z)$
 such that $Z$ is a closed subscheme of $X$.
 One says $(X,Z)$ is \emph{regular}
 if the immersion of $Z$ in $X$ is regular.
 If $X$ is an $S$-scheme, one says $(X,Z)$ is a smooth $S$-pair
 if $X$ and $Z$ are smooth over $S$.

Given an integer $n \in \ZZ$ and a virtual vector bundle over $Z$,
 one defines the cohomology of $(X,Z)$ -- or cohomology of $X$ with
 support in $Z$ -- in degree $(n,v)$ as the abelian group:
$$
H^{n,v}_Z(X,\T):=\Hom_{\T(X)}\big(i_*(\Th(-v)),\un_X[n]\big).
$$
\end{df}

\begin{num}
Given a morphism of schemes $f:Y \rightarrow X$,
 on easily checks that the pullback functor $f^*$
 induces a morphism of abelian groups:
$$
f^*:H^{n,v}_Z(X,\T) \rightarrow H^{n,v}_{T}(Y,\T)
$$
where $T=f^{-1}(Z)$ and
 we have used the same convention as before for
 the group on the right hand side.
 We will also say $f$ is
 a \emph{cartesian morphism} of closed pairs
 $(Y,T) \rightarrow (X,Z)$.
\end{num}

\begin{num}\label{num:deformation_space}
The following observations are extracted
 from \cite{Deg12}.
Let $(X,Z)$ be a regular closed pair.
 We will denote by $N_ZX$ (resp. $B_ZX$) the normal cone
 (resp. blow-up) associated with $(X,Z)$.
 Recall the deformation space
 $D_ZX=B_{Z \times 0}(\AA^1_X)-B_ZX$.
 The scheme $D_ZX$ contains $D_ZZ=\AA^1_Z$ as a closed
 subscheme. It is fibred over $\AA^1$ (by a flat morphism),
 and its fibre over $1$ is $X$ while its fibre over $0$ is $N_ZX$.
 Thus we get a \emph{deformation diagram} of closed pairs:
$$
(X,Z) \xrightarrow{d_1} (D_ZX,\AA^1_Z)
  \xleftarrow{d_0} (N_ZX,Z).
$$
For the next definitions, we will fix a subcategory
 $\base_0$ of $\base$.
  We will call closed $\base_0$-pair any closed pair $(X,Z)$ 
 such that $X$ and $Z$ belongs to $\base_0$.
\end{num}
\begin{df}\label{df:abs_purity}
Consider the above assumptions.

We say that $\T$ satisfies \emph{$\base_0$-absolute purity}
 if for any regular closed $\base_0$-pair $(X,Z)$,
 any integer $n$ and any virtual 
 vector bundle $v$ over $Z$, the following maps are isomorphisms:
$$
H^{n,v}_Z(X,\T) \xleftarrow{d_1^*} H^{n,v}_{\AA^1_Z}(D_ZX,\T)
  \xrightarrow{d_0^*} H^{n,v}_Z(N_ZX).
$$
When $\base_0$ is the category of regular schemes in $\base$,
 we simply say $\T$ satisfies absolute purity.
\end{df}

\begin{ex} When $\base_0$ is the category of smooth schemes over
 some scheme $\Sigma$ in $\base$, $\T$ always satisfies
 the absolute purity property according to Morel and Voevodsky's 
 purity theorem \cite[Sec. 3, Th. 2.23]{MV}.
\end{ex}

\begin{num}\label{num:commutes_with_limits}
Let $S$ be a scheme and $\E$ be a $\T$-spectrum over $S$.
 Then for any $S$-scheme $X$ in $\base$, 
 we can define the $\E$-cohomology of $X$ as:
$$
\E^{n,i}(X)=\Hom_{\T(X)}(\un_X,f^*\E(i)[n])
$$
-- and similarly for the other indexes considered above.
We will say that \emph{$\E$-cohomology commutes with projective limits}
 if for any projective system of $S$-schemes $(X_i)_{i \in I}$
 in $\base$ which admits a limit $X$ in the category $\base$,
 the natural map 
$$
\left(\ilim_{i \in I} \E^{**}(X_i)\right) \rightarrow \E^{**}(X)
$$
of bigraded abelian groups is an isomorphism.
 When this property is verified for $\E$ the constant $\T$-spectrum $\un_S$,
 for any scheme $S$ in $\base$, we will also
 simply say that \emph{$\T$-cohomology commutes with projective
 limits}.

Extending slightly the terminology of
 \cite[4.3.2]{CD3},\footnote{which applies only to triangulated
 motivic categories which are the homotopy category associated
 with a model motivic category;} we say that
 \emph{$\T$ is continuous} if for any scheme $S$,
 and any $\T$-spectrum $\E$ over $S$,
 $\E$-cohomology commutes with projective limits.
 With this definition, all the results of \cite[Section 4.3]{CD3}
 work through and we will freely use them -- in fact,
 we will mainly use Proposition 4.3.4 of \emph{loc. cit.}

According to \cite[4.3.3]{CD3} (case 2,3,4,5), \cite{CD4}
 (case 6), and \cite[Example 2.6]{CD5} (case 1)
 all the motivic categories of Example \ref{ex:motivic_cat}
 are continuous. 
\end{num}

\begin{ex}\label{ex:abs_pur_equal}
Assume that $\T$-cohomology commutes with projective
 limits in the above sense and $\base$ is the category of all
 excellent noetherian finite dimensional schemes.
 Then $\T$ automatically satisfies the absolute purity with respect
 to the category of regular $F$-schemes for any prime field $F$
 (see \cite[Ex. 1.3.4(2)]{Deg12}). Again, this holds for all
 the motivic categories of Example \ref{ex:motivic_cat}.
\end{ex}

Recall one says a cartesian morphism of regular closed pairs
 $(Y,T) \rightarrow (X,Z)$ is \emph{transversal}
 if the induced map $N_T(Y) \rightarrow N_Z(X) \times_Z T$
 is an isomorphism. The following proposition is
 a straightforward generalization of \cite[A.2.8]{CD4}
 (see \cite{Deg12} for more details).
\begin{prop}
Let $\base_0$ be a subcategory of $\base$
 satisfying the following assumptions:
\begin{itemize}
\item for any regular closed $\base_0$-pair $(X,Z)$,
 the deformation space $D_ZX$ belongs to $\base_0$;
\item for any smooth morphism $X \rightarrow S$ of schemes,
 if $S$ belongs to $\base_0$ then $X$ belongs to $\base_0$.
\end{itemize}
 Then the following conditions are equivalent:
\begin{enumerate}
\item[(i)] $\T$ is $\base_0$-absolutely pure.
\item[(ii)] For any regular closed $\base_0$-pair $(X,Z)$,
 there exists a class $\eta_X(Z)$ in $H^{0,N_ZX}_Z(X,\T)$.
 The family of such classes satisfies the following properties:
\begin{enumerate}
\item For a vector bundle $E/Z$, and $(E,Z)$ being the closed pair
 corresponding to the $0$-section,
 the class $\eta_E(Z)$ corresponds to the identity of the Thom
 $\T$-spectrum $\Th(-E)$ through the identification:
$$
H^{0,N_ZE}_Z(E,\T) \simeq H^{0,E}_Z(E,\T)
=\Hom\big(\Th(-E),s^!(\un_Z)\big)
\simeq \Hom\big(\Th(-E),\Th(-E)\big).
$$
\item For any transversal morphism $f:(Y,T) \rightarrow (X,Z)$
 of regular $\base_0$-closed pairs, the following relation holds
 in $H^{0,N_ZX}_T(Y,\T)$:
$$
f^*\eta_X(Z)=\eta_Y(T).
$$
\item For any regular closed $\base_0$-pair $(X,Z)$,
 with closed immersion $i:Z \rightarrow X$,
$$
\eta_X(Z):\Th(-N_ZX) \rightarrow i^!(\un_X)
$$
is an isomorphism of $\T(Z)$.
\end{enumerate}
\end{enumerate}
Moreover, when these conditions hold, the properties (a) and (b)
 uniquely determines the family $\eta_X(Z)$ indexed by regular
 closed $\base_0$-pairs $(X,Z)$.
\end{prop}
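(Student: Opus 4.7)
The plan is to adapt the standard deformation-to-the-normal-cone argument (as in \cite[A.2.8]{CD4}) to this absolute formulation, using the isomorphism $s^!(\un_E) \simeq \Th(-E)$ for the zero section of a vector bundle that is always available under our conventions.

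For the implication (ii)$\Rightarrow$(i), property (c) gives an isomorphism $\eta_X(Z):\Th(-N_ZX) \xrightarrow{\sim} i^!(\un_X)$ which, by the adjunction $(i_*,i^!)$, identifies
\[
H^{n,v}_Z(X,\T) \simeq \Hom_{\T(Z)}\bigl(\Th(-v),\Th(-N_ZX)[n]\bigr).
\]
The inclusions $d_1\colon(X,Z)\hookrightarrow(D_ZX,\AA^1_Z)$ and $d_0\colon(N_ZX,Z)\hookrightarrow(D_ZX,\AA^1_Z)$ are transversal, since $X$ and $N_ZX$ arise as fibers of the flat map $D_ZX\to\AA^1$; one has canonical identifications $N_Z(X)\simeq N_{\AA^1_Z}(D_ZX)|_Z\simeq N_Z(N_ZX)$. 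Using property (b) to pull back $\eta_{D_ZX}(\AA^1_Z)$, the maps $d_0^*$ and $d_1^*$ on cohomology with support become, via the above description, pullbacks along these canonical isomorphisms (after using $\AA^1$-homotopy invariance to trivialize $\pi^*N_ZX$ along $\pi\colon\AA^1_Z\to Z$), hence are isomorphisms.

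For (i)$\Rightarrow$(ii), one first defines $\eta_E(Z)\in H^{0,E}_Z(E,\T)$ for a vector bundle $E/Z$ as the class corresponding to the identity of $\Th(-E)$ under the canonical identification described in (a). For a general regular closed $\base_0$-pair $(X,Z)$, one uses $\base_0$-absolute purity: since $d_0^*\colon H^{0,N_ZX}_{\AA^1_Z}(D_ZX,\T)\to H^{0,N_ZX}_Z(N_ZX,\T)$ is an isomorphism, let $\eta_{D_ZX}(\AA^1_Z)$ be the unique preimage of $\eta_{N_ZX}(Z)$, and set $\eta_X(Z):=d_1^*\eta_{D_ZX}(\AA^1_Z)$. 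Property (a) then holds by construction, and uniqueness follows from this forced chain of definitions together with (b) applied to the pairs of transversal inclusions in the deformation diagram. For property (c), note by Yoneda that it suffices to show the adjoint $\eta_X(Z)\colon\Th(-N_ZX)\to i^!(\un_X)$ induces an isomorphism on $\Hom(\Th(-v)[n],-)$ for all virtual vector bundles $v$ over $Z$ and integers $n$, since such objects generate $\T(Z)$ (by Remark \ref{rem:Mb_generators} combined with the purity isomorphism \eqref{eq:purity_iso}); this is precisely the content of $\base_0$-absolute purity combined with the vector-bundle case, which is tautological.

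The main obstacle will be verifying property (b) for an arbitrary transversal morphism $f\colon(Y,T)\to(X,Z)$ of regular $\base_0$-closed pairs. The argument requires constructing the induced cartesian map of deformation spaces $D_TY\to D_ZX$ (which is transversal to $\AA^1_Z\hookrightarrow D_ZX$ by the transversality hypothesis $N_T(Y)\simeq f^*N_Z(X)|_T$), then reducing via the uniqueness of preimages under $d_0^*$ and $d_1^*$ to the corresponding compatibility for the vector bundles $N_Z(X)$ and $N_T(Y)$. In this last case, compatibility follows from the functoriality of the identity class of Thom spectra under the pullback map $\Th(-N_ZX)\to\Th(-N_TY)$, which is tautological. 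The subtle point is the compatibility of the various Thom isomorphisms $s^!\un_E\simeq\Th(-E)$ under transversal base change, which ultimately relies on the coherence properties of Thom spectra over the Picard category $\uK$ of virtual vector bundles as used in \S\ref{num:virtual_vb}.
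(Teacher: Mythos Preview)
Your overall strategy is the correct one and matches the standard argument the paper defers to (\cite[A.2.8]{CD4}, \cite{Deg12}): deformation to the normal cone, with the classes transported through $d_0^*$ and $d_1^*$. The implication (ii)$\Rightarrow$(i), the construction of $\eta_X(Z)$, and the uniqueness discussion are fine.

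There is, however, a genuine gap in your verification of property (c). You claim that the objects $\Th(-v)[n]$, for $v$ a virtual bundle \emph{over $Z$}, generate $\T(Z)$, citing Remark~\ref{rem:Mb_generators} and the purity isomorphism~\eqref{eq:purity_iso}. This is false. The generators produced there are $\Mb(W/Z)(i)$ (equivalently $M_Z(W)(i)$) for $W$ smooth over $Z$; the purity isomorphism rewrites $M_Z(W)$ as $f_\sharp\Th(T_f)=\MTh_Z(T_f)$, which is a Thom space of a bundle living on $W$, pushed down to $Z$ --- not a Thom spectrum of a bundle on $Z$ itself. Already when $Z=\spec(k)$ is a field, the objects $\Th(-v)[n]\simeq\un_k(m)[n]$ are just Tate twists and certainly do not generate $\T(k)$. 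So your Yoneda test is not against a generating family, and the argument as written does not conclude.

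The fix is exactly where the second hypothesis on $\base_0$ (stability under smooth morphisms) enters. Let $C$ be the cone of $\eta_X(Z)$. For any smooth $q:W'\to X$ with $W'\in\base_0$, set $W=q^{-1}(Z)$ and $p=q|_W$; the pair $(W',W)$ is again a regular closed $\base_0$-pair, the morphism of pairs is transversal, and smooth base change identifies $p^*i^!(\un_X)\simeq i'^!(\un_{W'})$, so the cone of $\eta_{W'}(W)$ is $p^*C$. Absolute purity for $(W',W)$ with the trivial twist then gives $\Hom_{\T(W)}(\un_W(m)[n],p^*C)=0$, i.e.\ $\Hom_{\T(Z)}(M_Z(W)(m)[n],C)=0$. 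Since the vanishing of $C$ is Zariski-local on $X$, one may assume $X$ affine; then every smooth $Z$-scheme is Zariski-locally the restriction of a smooth $X$-scheme (lift a standard \'etale presentation over $\AA^n_Z$ to $\AA^n_X$), and Mayer--Vietoris finishes the argument. This is the step your sketch is missing.
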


\begin{cor}\label{cor:fund_class}
Assume $\T$ is $\base_0$-absolutely pure.

Then for any quasi-projective local complete intersection morphism
 $f:Y \rightarrow X$ such that $X$ and $Y$ belongs to $\base_0$,
 with virtual tangent bundle $\tau_f$, 
 we get an isomorphism in $\T(Y)$:
$$
\eta_f:\Th\big(\tau_f\big) \rightarrow f^!\big(\un_X\big).
$$
\end{cor}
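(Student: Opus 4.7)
The plan is to reduce to the two elementary cases where $f$ is smooth and where $f$ is a regular closed immersion, and then to glue these via the standard factorization of a quasi-projective lci morphism.

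First, I would use that any quasi-projective lci morphism $f:Y \to X$ admits a factorization $f = p \circ i$ where $i:Y \hookrightarrow P$ is a regular closed immersion and $p:P \to X$ is smooth and quasi-projective. (For instance, compose an immersion $Y \hookrightarrow \PP^n_X$ with the projection and then restrict $P$ to a suitable open neighborhood.) Since $X \in \base_0$ and $\base_0$ is stable under smooth morphisms, $P \in \base_0$; together with $Y \in \base_0$, this ensures $(P,Y)$ is a regular closed $\base_0$-pair. Using $f^! = i^! \circ p^!$, the smooth purity isomorphism \eqref{eq:purity_iso} yields $p^!(\un_X) \simeq \Th(T_p) \otimes p^*\un_X = \Th(T_p)$, and condition (c) of the preceding proposition gives the isomorphism $\eta_P(Y):\Th(-N_iP) \xrightarrow{\sim} i^!(\un_P)$. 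Combining these and using that $i^!$ commutes with tensoring by $\otimes$-invertible objects pulled back along $i$, one obtains
\[
f^!(\un_X) = i^! p^!(\un_X) \simeq i^!\bigl(\Th(T_p)\bigr) \simeq \Th(-N_iP) \otimes \Th(i^*T_p) \simeq \Th\bigl(i^*T_p - N_iP\bigr).
\]
Since $f$ is lci, the virtual tangent complex $\tau_f$ fits in a distinguished triangle with $N_iP$ and $i^*T_p$, so that $\tau_f = i^*T_p - N_iP$ in the Picard category $\uK(Y)$, and the right-hand Thom spectrum is $\Th(\tau_f)$ as required. Define $\eta_f$ to be the resulting composite isomorphism.

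The remaining point, which is the main obstacle, is to check that $\eta_f$ does not depend on the chosen factorization $(P,i,p)$. Given two such factorizations, I would compare them to a common refinement obtained by taking a fiber product of the intermediate smooth $X$-schemes (or by composing with an additional projection from a vector bundle / projective space); this is the standard trick in the theory of fundamental classes. The key inputs are (i) the naturality of the smooth purity isomorphism \eqref{eq:purity_iso} with respect to smooth base change, and (ii) the transversality compatibility (property (b) of the preceding proposition) which ensures that $\eta_X(Z)$ is compatible under pullback along transversal cartesian squares of regular closed pairs. Together these show that the two composites defining $\eta_f$ agree, because the associated deformation diagrams fit into a compatible system indexed by the refinement. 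A convenient way of organizing this verification is to invoke the normalization provided by condition (a): on the zero section of a vector bundle, $\eta$ is the identity of the corresponding Thom spectrum, which fixes the normalization of $\eta_f$ uniquely.

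Once independence of factorization is established, the corollary follows. A mild additional verification, which I would record, is that this construction is functorial in $f$ for compositions of quasi-projective lci morphisms, using that the virtual tangent bundle is additive in distinguished triangles and that Thom spectra satisfy $\Th(v+w) \simeq \Th(v) \otimes \Th(w)$ as recalled in Paragraph \ref{num:virtual_vb}; this is however not needed for the bare statement of the corollary.
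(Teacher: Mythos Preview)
Your core argument is correct and matches the paper's: factor $f$ as a regular closed immersion followed by a smooth morphism, then combine the relative purity isomorphism \eqref{eq:purity_iso} for the smooth part with the class $\eta_P(Y)$ from the preceding proposition for the closed immersion, and identify the resulting Thom spectrum with $\Th(\tau_f)$.

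The one notable divergence is that you treat independence of the chosen factorization as ``the main obstacle'' and devote most of your effort to it. The paper does not do this: the corollary only asserts the existence of an isomorphism $\eta_f$, and the paper's proof is the one-line observation above. Immediately afterward the paper explicitly remarks that independence of the factorization \emph{can} be shown (referring to \cite{Deg12}) but is not needed and will not be used. So your sketch of the independence argument is extra work beyond what the statement requires; it is not wrong, but it is not part of the proof of this corollary as stated.
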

This simply combines the preceding proposition with
 the relative purity isomorphism \eqref{eq:purity_iso}
 after choosing a factorisation of $f$ into a regular closed immersion
 followed by a smooth morphism.

\begin{rem}
 It can be shown that the classes $\eta_f$ are uniquely determined
 and in particular do not depend on the chosen factorization
 (see \cite{Deg12}) but we will not use this fact here.
\end{rem}

\section{The homotopy t-structure by generators}

\subsection{Definition}

\begin{df}\label{df:tstruct}
Let $(S,\delta)$ be a dimensional scheme (Definition \ref{df:dimensional}).

The \emph{$\delta$-homotopy $t$-structure over $S$},
 denoted by $t_\delta$, is the $t$-structure on $\T(S)$ generated by 
 $\T$-spectra of the form
 $\Mb(X/S)\gtw n[\delta(X)]$ (recall Definition \ref{df:BM})
 for any separated $S$-scheme $X$ and any integer
 $n \in \ZZ$.

Given a morphism $f:T \rightarrow S$ essentially
 of finite type, we will put $t_\delta=t_{\delta^f}$ as
 a $t$-structure on $\T(T)$. 
\end{df}
Note also that the $\delta$-homotopy $t$-structure
 is generated by $\T$-spectra of the form
 $$\Mb(X/S)\dtw{\delta(X)}\gtw n$$ for any separated $S$-scheme $X$
 and any integer $n \in \ZZ$.
 In the following, we will use this convention for the generators
 because it allows to treat the cases of the
 $\delta$-homotopy $t$-structure and the effective $\delta$-homotopy
 $t$-structure (cf. Definition \ref{df:eff_tstruct}) simultaneously.

\begin{rem}\label{rem:htp_t_nonnegdeg}
\begin{enumerate}
\item Given Lemma \ref{lm:non-neg-degenerated}
 and Remark \ref{rem:Mb_generators}, $t_\delta$ is left
 non-degenerate.
\item One readily deduces from the above definition that
 a spectrum $\E$ over $S$ is $t_\delta$-negative
 if and only if for any separated $S$-scheme $X$,
  the $\ZZ$-graded abelian group
$$
\E^{BM}_{p,\gtw *}(X/S)
$$
is zero as soon as $p \geq \delta(X)$.
\end{enumerate}
\end{rem} 

\begin{ex}
Assume $S$ is the spectrum of a field $k$ of 
 characteristic exponent $p$.
 We will show in the next section 
 (cf. Example \ref{ex:delta_htp-t-struct_field}) that 
 when $\T=\DM_\cdh[1/p]$,
 the $\delta_k$-homotopy $t$-structure
 coincides with the one defined in \cite{Deg8} on
 $\DM(k)[1/p]$.
 We will also show (\emph{ibid.}) that when $\T=\SH$
 and $k$ is a perfect field,
 the $\delta_k$-homotopy $t$-structure on $\SH(k)$
 coincides with that defined by Morel in \cite{Mor2}.
\end{ex}

The premotivic category $\T$ is additive: if $S=\sqcup_i S_i$,
 $\T(S)=\oplus_{i \in I} \T(S_i)$.
 In particular, given any family $\underline n=(n_i)_{i \in I}$ of
 integers,
 we can define the $n$-suspension functor $\Sigma_{\underline n}$ 
 as the sum over
 $i \in I$ of the $n_i$-suspension functor on $\T(S_i)$.
 The following lemma is obvious:
\begin{lm}\label{lm:indep_delta}
Let $(S_i)_{i \in I}$ be the family of connected components
 of $S$.
Let $\delta$ and $\delta'$ be two dimension functions
 on $S$. Put $\underline n=\delta'-\delta$ seen as an element of $\ZZ^I$
 according to Lemma \ref{lm:dim+locally_ct}.

Then the functor
 $\Sigma_{\underline n}:(\T(S),t_\delta) \rightarrow (\T(S),t_{\delta'})$
 is an equivalence of $t$-categories.
\end{lm}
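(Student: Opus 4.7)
The plan is to show that $\Sigma_{\underline n}$ sends the generating family of the $t$-structure $t_\delta$ bijectively onto the generating family of $t_{\delta'}$; the equivalence of $t$-categories will then follow formally from Theorem \ref{thm:gen_tstructures}, since $\Sigma_{\underline n}$ is already an equivalence of the underlying triangulated category.

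First, I would reduce to connected $S$-schemes. Any separated $S$-scheme $X$ decomposes as $X = \sqcup_{i \in I} X_i$ with $X_i := X \times_S S_i$, and since $\T$ is additive we have $\Mb(X/S) = \oplus_i \Mb(X_i/S)$ with $\Mb(X_i/S)$ lying in the summand $\T(S_i)$ of $\T(S) = \oplus_i \T(S_i)$. Thus it is enough to look at the generators attached to connected $S$-schemes $X$, which necessarily factor through some $S_i$. For such an $X$, the induced dimension functions $\delta^p$ and $\delta'^p$ on $X$ (with $p : X \to S$ the structural morphism, see \ref{num:induced_delta}) differ pointwise by the constant $n_i$, so $\delta'(X) = \delta(X) + n_i$.

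Using the first form of generators from Definition \ref{df:tstruct}, this gives
$$
\Mb(X/S)\gtw n[\delta'(X)] \;=\; \Mb(X/S)\gtw n[\delta(X)][n_i] \;=\; \Sigma_{\underline n}\bigl(\Mb(X/S)\gtw n[\delta(X)]\bigr),
$$
the last equality because $\Mb(X/S)$ lives in the $i$-th summand, where $\Sigma_{\underline n}$ acts as $[n_i]$. Thus $\Sigma_{\underline n}$ induces a bijection between the two generating families; in particular it sends the class $\gen{\cG_\delta}$ of non-negative objects for $t_\delta$ into the one for $t_{\delta'}$, since the defining closure properties (extensions, positive suspensions, small coproducts) are preserved by the self-equivalence $\Sigma_{\underline n}$. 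The same argument applied to $\Sigma_{-\underline n}$ gives the reverse inclusion, so $\Sigma_{\underline n}$ is $t$-exact together with its inverse, i.e. an equivalence of $t$-categories.

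There is essentially no obstacle here: the only point requiring care is the observation that the inclusion $S_i \hookrightarrow S$ is open and closed, so that $\Mb(X/S)$ for $X$ lying over $S_i$ does sit in the $\T(S_i)$-component of the direct sum decomposition $\T(S) = \oplus_i \T(S_i)$; once this is noted, the compatibility of the shift $\Sigma_{\underline n}$ with the generators is immediate and the rest is formal from Theorem \ref{thm:gen_tstructures}.
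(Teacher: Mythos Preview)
Your proof is correct and is precisely the unpacking of what the paper means when it declares the lemma ``obvious'' without further argument. One minor point of phrasing: your decomposition $X = \sqcup_i X_i$ with $X_i = X \times_S S_i$ produces pieces lying over a single $S_i$ but not necessarily connected, so the sentence ``it is enough to look at the generators attached to connected $S$-schemes $X$'' is slightly mismatched with the decomposition you actually use; what you need (and what you in fact use) is only that $X$ lies over a single $S_i$, so that $\delta'(X) = \delta(X) + n_i$ and $\Sigma_{\underline n}$ acts as $[n_i]$ on $\Mb(X/S)$.
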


\begin{rem}
In other words, changing the dimension function on $S$
 only changes the $\delta$-homotopy $t$-structure by some shift.
 However, we will keep the terminology ``$\delta$-homotopy $t$-structure''
 because it allows us to underline the difference with
 the other homotopy $t$-structures
 defined earlier (Voevodsky, Morel, D\'eglise, Ayoub).
 Then the symbol $\delta$ stands for ``dimensional''.

Beware also that it is sometimes important to be precise about the chosen
 dimension functions (in particular, about the $t$-exactness properties of 
functors).
\end{rem}

\begin{prop}\label{prop:basic_tstruct}
Let $S$ be a scheme with a dimension function $\delta$
 and $f:T \rightarrow S$ be a morphism essentially of finite type.
\begin{enumerate}
\item If $f$ is separated, 
 the pair $f_!:(\T(T),t_{\delta})
 \leftrightarrows (\T(S),t_\delta):f^!$
 is an adjunction of $t$-categories.
\item If $\delta \geq 0$, then the tensor product $\otimes_S$
 is right $t$-exact on $(\T(S),t_\delta)$.
\item If $\dim(f)\leq d$, then the pair of functors
 $f^*[d]:(\T(S),t_\delta) \rightarrow (\T(T),t_{\delta}):f_*[-d]$
 is an adjunction of $t$-categories.
\end{enumerate}
\end{prop}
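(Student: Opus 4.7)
My plan is to reduce each of the three claims to a check on the generating family $\Mb(X/S)\dtw{\delta(X)}\gtw n$ of Definition \ref{df:tstruct}. For any triangulated functor $F$ from $(\T(S),t_\delta)$ that preserves small coproducts, the full subcategory $\{K\in\T(S):F(K)\geq 0\}$ is stable under extensions, positive suspensions, and small coproducts; if it contains the generators, Theorem \ref{thm:gen_tstructures} implies it contains all of $\T(S)_{\geq 0}$. Each of $f_!$, $-\otimes_S L$, and $f^*$ is triangulated and preserves coproducts (as a left adjoint in the first and third cases, by the closed monoidal structure in the second), so this reduction applies. The statements on $f^!$ and $f_*[-d]$ then follow by adjunction, as recalled in Paragraph \ref{num:conventions_t}.

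For (1), a generator of $t_{\delta^f}$ on $T$ has the shape $p_!(\un_X)\dtw{\delta^{fp}(X)}\gtw n$ for $p:X\to T$ separated, since by Paragraph \ref{num:induced_delta} the dimension function induced on $X$ via $p$ from $\delta^f$ equals $\delta^{fp}$. Applying $f_!$ and using $(fp)_!=f_!p_!$ yields $\Mb(X/S)\dtw{\delta(X)}\gtw n$, which is a generator of $t_\delta$ on $S$. Hence $f_!$ is right $t$-exact.

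For (2), the K\"unneth isomorphism (BM5) identifies the tensor product of two generators with $\Mb(X\times_S Y/S)\dtw{\delta(X)+\delta(Y)}\gtw{n+m}$. Since $\delta\geq 0$, Proposition \ref{prop:delta-base_change}(1) supplies $r:=\delta(X)+\delta(Y)-\delta(X\times_S Y)\geq 0$, and the identity $\dtw r=\gtw r[r]$ rewrites this object as $\Mb(X\times_S Y/S)\dtw{\delta(X\times_S Y)}\gtw{n+m+r}[r]$, a non-negative shift of a generator of $t_\delta$. Applying the reduction principle successively in each variable extends this to $K\otimes L\geq 0$ for all non-negative $K$ and $L$.

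For (3), base change (BM6) gives $f^*\bigl(\Mb(X/S)\dtw b\gtw n\bigr)\simeq\Mb(X_T/T)\dtw b\gtw n$ with $X_T:=X\times_S T$ and $b:=\delta(X)$. Setting $a:=\delta^f(X_T)$, Proposition \ref{prop:delta-base_change}(2) combined with $\dim(f)\leq d$ gives $a\leq b+d$. The identity $\Mb(X_T/T)\dtw b\gtw n=\Mb(X_T/T)\dtw a\gtw{n+b-a}[b-a]$ then expresses $f^*$ of a generator of $t_\delta$ as a generator of $t_{\delta^f}$ shifted by $[b-a]$ with $b-a\geq-d$, so applying $[d]$ delivers a non-negative object. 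Thus $f^*[d]$ is right $t$-exact, and the assertion on $f_*[-d]$ follows by adjunction. The only non-formal ingredient is the dimension bookkeeping provided by Proposition \ref{prop:delta-base_change}; no step presents a real obstacle beyond carefully tracking the combined shift-and-twist notation $\dtw{-}$, $\gtw{-}$.
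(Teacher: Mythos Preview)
Your proof is correct and follows exactly the same approach as the paper: reduce each statement to checking that the relevant left adjoint sends generators to non-negative objects, then invoke (BM5)/(BM6) together with the dimension estimates of Proposition~\ref{prop:delta-base_change}. The paper's proof is terser but substantively identical; your explicit bookkeeping with the $\dtw{-}$ and $\gtw{-}$ shifts is a faithful unpacking of its one-line references.
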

\begin{proof}
In each case, one has to check that the left adjoint functor
 sends a generator of the relevant homotopy $t$-structure
 to a non-negative object.
For (1), let $Y/T$ be a separated scheme:
 recall from \S\ref{num:induced_delta}
 that $\delta^f(Y)=\delta(Y)$; thus
 $f_!(\Mb(Y/T)\dtw{\delta^f(Y)}=\Mb(Y/S)\dtw{\delta(Y)}$
 and this concludes.
Similarly, assertion (2) (resp. (3)) follows
 from part (1) (resp. (2)) of Proposition \ref{prop:delta-base_change}
 and property (BM5) (resp. (BM6)) of \ref{num:BM}.
\end{proof}

\begin{cor}
Adopt the assumptions of the previous proposition.
\begin{enumerate}
\item If $f$ is \'etale,
 $f^*:(\T(S),t_\delta) \rightarrow (\T(T),t_{\delta})$ is $t$-exact.
\item If $f$ is finite,
 $f_*:(\T(T),t_{\delta}) \rightarrow (\T(S),t_\delta)$ is $t$-exact.
\item[(2')] If $f$ is proper and $\dim(f) \leq d$,
 then $f_*$ has (homological) amplitude $[0,d]$
 (with respect to the $\delta$-homotopy $t$-structures).
\end{enumerate}
\end{cor}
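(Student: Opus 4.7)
The plan is to deduce all three statements from Proposition \ref{prop:basic_tstruct} together with standard identifications among the six operations in the relevant special cases. The key observation is that each case has a trivial tangent bundle (hence trivial Thom twist) so the general $t$-exactness statements specialize without needing any new computation.

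For (1), since $f$ is \'etale we have $\dim(f)=0$ by Example \ref{ex:quasi-finite}. Applying part (3) of Proposition \ref{prop:basic_tstruct} with $d=0$ shows that $f^*$ is right $t$-exact. For left $t$-exactness I would invoke the purity isomorphism \eqref{eq:purity_iso}: for an \'etale morphism the tangent bundle vanishes, so $f^!\simeq f^*$ canonically. Then part (1) of Proposition \ref{prop:basic_tstruct} yields that $f^*\simeq f^!$ is left $t$-exact, and $t$-exactness follows.

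For (2), I would use that a finite morphism is proper and quasi-finite, hence $\dim(f)=0$ (Example \ref{ex:quasi-finite}) and $f_!=f_*$ by the standard identification for proper morphisms in the six-functor formalism. Part (1) of Proposition \ref{prop:basic_tstruct} gives the right $t$-exactness of $f_!=f_*$, while part (3) applied with $d=0$ gives the left $t$-exactness of $f_*$. Combining yields $t$-exactness.

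For (2'), the argument is a direct interpolation between the two halves of Proposition \ref{prop:basic_tstruct}. Since $f$ is proper, $f_*=f_!$, so part (1) of the proposition ensures $f_*(\T(T)_{\geq 0})\subset \T(S)_{\geq 0}$, i.e.\ amplitude at least $0$. On the other hand, the assumption $\dim(f)\leq d$ allows us to apply part (3), which says $f_*[-d]$ is left $t$-exact; unwinding the shift this is exactly $f_*(\T(T)_{\leq 0})\subset \T(S)_{\leq d}$, i.e.\ amplitude at most $d$. Taken together this is the claimed amplitude $[0,d]$. There is no real obstacle here beyond correctly quoting the purity isomorphism for \'etale morphisms and the proper base change identification $f_!\simeq f_*$; both are part of the six functors formalism assumed in Section \ref{s1.3}.
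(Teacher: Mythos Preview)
Your proof is correct and is exactly the argument the paper has in mind: the corollary is stated without proof, as an immediate consequence of Proposition~\ref{prop:basic_tstruct} together with the identifications $f^!\simeq f^*$ for $f$ \'etale (from \eqref{eq:purity_iso} with trivial tangent bundle) and $f_!=f_*$ for $f$ proper. Your use of Example~\ref{ex:quasi-finite} to get $\dim(f)=0$ in the \'etale and finite cases is precisely what makes parts (1) and (3) of the proposition combine to give full $t$-exactness.
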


\begin{num}
Consider a closed immersion $i:Z \rightarrow S$
 with complementary open immersion $j:U \rightarrow S$.
 Recall the localization property for $\T$
 says precisely that $\T(S)$ is glued from $\T(U)$ and
 $\T(Z)$ with respect to the six functors
 $i^*, i_*, i^!, j_!, j^*, j_*$.
 In particular, we are in the situation of \cite[I, 1.4.3]{BBD}.

From \emph{op. cit.} 1.4.10, if $t_U$ (resp. $t_Z$) is a $t$-structure
 on $\T(U)$ (resp. $\T(Z)$), there exists a unique $t$-structure $t_{gl}$
 whose positive objects $K$ are characterized by the conditions:
$$
i^*(K) \geq 0, j^*(K) \geq 0.
$$
The $t$-structure $t_{gl}$ is called the \emph{$t$-structure glued from
 $t_U$ and $t_Z$}; one also says the $t$-category
 $(\T(S),t_{gl})$ is \emph{glued from $(\T(U),t_U)$
 and $(\T(Z),t_Z)$}.
 The glued $t$-structure on $\T(S)$ is uniquely characterized
 by the fact that $j^*$ and $i_*$ are $t$-exact
 (see \emph{op. cit.}, 1.4.12). In particular,
 the previous corollary immediately yields:
\end{num}
\begin{cor}\label{delta-htp_glued}
Consider the assumption of the previous proposition.
 Let $i:Z \rightarrow S$ be a closed immersion
 with complementary open immersion $j:U \rightarrow S$.

 Then the $t$-category $(\T(S),t_\delta)$ is obtained by gluing
  the $t$-categories $(\T(Z),t_\delta)$ and $(\T(U),t_\delta)$.
\end{cor}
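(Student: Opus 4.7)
The plan is to apply directly the uniqueness criterion for glued $t$-structures recalled just before the statement (namely \cite[1.4.12]{BBD}): once we know that $j^*$ and $i_*$ are $t$-exact for the $\delta$-homotopy $t$-structures on the three categories $\T(U)$, $\T(Z)$, $\T(S)$, the characterization forces $t_\delta$ on $\T(S)$ to coincide with the $t$-structure glued from $t_\delta$ on $\T(U)$ and on $\T(Z)$.

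First I would check that we are really in the setting of \cite[I, 1.4.3]{BBD} so that the gluing formalism applies: this is precisely the content of the localization property for the motivic triangulated category $\T$, which is built into our standing assumptions and explicitly recalled in the paragraph preceding the statement. In particular the six functors $i^*, i_*, i^!, j_!, j^*, j_*$ satisfy the usual adjunctions and orthogonality relations required to apply \cite[1.4.10]{BBD}, which produces the glued $t$-structure $t_{gl}$ and its characterization.

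Next I would invoke the $t$-exactness properties from the immediately preceding corollary. The open immersion $j$ is étale, so by part (1) of that corollary $j^* : (\T(S), t_\delta) \to (\T(U), t_\delta)$ is $t$-exact. The closed immersion $i$ is finite, so by part (2) the functor $i_* : (\T(Z), t_\delta) \to (\T(S), t_\delta)$ is $t$-exact. Note that no further compatibility check is needed here because $\delta$ restricts to a dimension function on both $U$ (via pro-étale pullback) and $Z$ (via pullback by a quasi-finite map), as recalled in \S\ref{num:induced_delta}, and the induced dimension function is the one used to define $t_\delta$ on $\T(U)$ and $\T(Z)$ via our convention $t_\delta = t_{\delta^f}$.

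Finally, the uniqueness part of \cite[1.4.12]{BBD} finishes the proof: the glued $t$-structure $t_{gl}$ is the unique $t$-structure on $\T(S)$ for which $j^*$ and $i_*$ are $t$-exact, and we have just verified that $t_\delta$ enjoys these two properties, so $t_\delta = t_{gl}$. There is no genuine obstacle in this argument; the only thing that might look delicate is the compatibility of dimension functions under pullback along $i$ and $j$, but this is entirely controlled by our convention $t_\delta = t_{\delta^f}$ together with the fact that $\delta^i = \delta \circ i$ and $\delta^j = \delta \circ j$ for quasi-finite and pro-étale morphisms respectively.
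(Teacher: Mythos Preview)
Your proof is correct and follows exactly the approach the paper takes: invoke the previous corollary to get $t$-exactness of $j^*$ (as $j$ is \'etale) and of $i_*$ (as $i$ is finite), then apply the uniqueness characterization of the glued $t$-structure from \cite[1.4.12]{BBD}. The paper in fact states the corollary as an immediate consequence of that characterization and the preceding corollary, without writing out the details you supplied.
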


\begin{ex}\label{ex:motive_sm_positive}
let $S$ be a universally catenary integral scheme
 with dimension function $\delta=-\codim_S$
 (cf. Example \ref{ex:can_dim_fn}).
Then we deduce from the previous corollary that for
 any smooth $S$-scheme $X$, the $\T$-spectrum $M_S(X)$
 is $t_\delta$-non-negative.

Indeed, first we can assume that $X$ is connected.
 Let $d$ be its relative dimension over $S$.
 According to the preceding corollary, 
 this assertion is Zariski local on $S$. 
 In particular, we can assume
 that the tangent bundle of $X/S$ admits a 
 trivialization.
 Then according to \ref{num:BM}(BM4),
 $M_S(X)=\Mb(X/S)\dtw d$. It is now sufficient to
 remark that $d=\delta(X)$ -- because of our choice of $\delta$
 along with Proposition \ref{prop:delta-base_change}(3).

Actually, this example and the previous gluing property
 are the main reason for our choice of generators of
 the $\delta$-homotopy $t$-structure
 (see also Corollary \ref{cor:htp_t_over_fields}).
\end{ex}

\begin{cor}\label{cor:Thom&delta-homotopy}
Let $(S,\delta)$ be a dimensional scheme
 and $E/S$ be a vector bundle of rank $r$.
 Then the functor $\big(\MTh_S(E)[-r] \otimes -\big)$
 is $t_\delta$-exact. The same result holds if
 one replaces $E$ by a virtual vector bundle $v$ over $S$
 of virtual rank $r$
 (see \S\ref{num:virtual_vb}).
\end{cor}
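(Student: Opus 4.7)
The plan is to exploit that $\MTh_S(E)$ is $\otimes$-invertible, so the functor $F := \MTh_S(E)[-r] \otimes -$ is an auto-equivalence of $\T(S)$. To show $F$ is $t$-exact it then suffices to show that $F$ and $F^{-1}$ are both right $t$-exact; and since the non-negative part of $t_\delta$ is, by Theorem \ref{thm:gen_tstructures}, generated under extensions, suspensions and coproducts by the spectra $\Mb(X/S)\gtw{n}[\delta(X)]$, it is enough to show each of $F$ and $F^{-1}$ sends any such generator to a non-negative object.

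I would proceed by noetherian induction on $S$, the base $S = \emptyset$ being trivial. For the inductive step, I would pick a non-empty open $j : U \hookrightarrow S$ on which $E|_U$ is trivial, with closed complement $i : Z \hookrightarrow S$; the inductive hypothesis applied to $i^*E$ then supplies the $t$-exactness on $\T(Z)$ of the analogous functor $F_Z := \MTh_Z(i^*E)[-r] \otimes -$. On $U$, the identification $\MTh_U(\AA^r_U) \simeq \un_U\dtw{r}$ recalled in \S\ref{num:virtual_vb} transforms $F_U := \MTh_U(E|_U)[-r] \otimes -$ into $\un_U\gtw{r} \otimes -$, which sends a generator $\Mb(Y/U)\gtw{n}[\delta(Y)]$ to the generator $\Mb(Y/U)\gtw{n+r}[\delta(Y)]$; applying the same computation to $F_U^{-1} = \un_U\gtw{-r} \otimes -$ yields the $t$-exactness of $F_U$ on $\T(U)$.

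The final step is to glue these two facts into $t$-exactness of $F$ on $\T(S)$ using the BBD characterization made explicit in Corollary \ref{delta-htp_glued}: $K \geq 0$ iff $j^*K \geq 0$ and $i^*K \geq 0$, while $K \leq 0$ iff $j^*K \leq 0$ and $i^!K \leq 0$. For this I will need the commutation isomorphisms $j^*F \simeq F_U \circ j^*$, $i^*F \simeq F_Z \circ i^*$ and $i^!F \simeq F_Z \circ i^!$; the first comes from monoidality of $j^*$, while the other two follow by taking left and right adjoints in the projection formula $L \otimes i_*(-) \simeq i_*(i^*L \otimes -)$ specialized to the $\otimes$-invertible object $L = \MTh_S(E)[-r]$. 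The virtual case presents no additional difficulty: Zariski-locally any virtual vector bundle of virtual rank $r$ reduces to a trivial (possibly negative) one, and the noetherian induction runs verbatim using the $\un_U\gtw{r} \otimes -$ computation on the open piece. The only part requiring care is precisely the bookkeeping around $i^!$ for an invertible-object tensor functor; once that commutation is in place, everything else falls out of the generating-class description of $t_\delta$ and the gluing property.
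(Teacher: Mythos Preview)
Your proposal is correct and follows essentially the same route as the paper: noetherian induction on $S$, trivialization of the bundle on a dense open so that the functor becomes $\un_U\gtw{r}\otimes-$, and then the gluing characterization of Corollary~\ref{delta-htp_glued} to pass from $U$ and $Z$ back to $S$. You have simply spelled out in detail (including the $i^!$-commutation via the projection formula for an invertible object) what the paper compresses into a single sentence.
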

It simply follows by noetherian induction on
 $S$ from the previous corollary and
 the fact a vector bundle of rank $r$
 is generically isomorphic to $\AA^r_S$.

\begin{prop}
Let $(S,\delta)$ be a dimensional scheme
 and $f:T \rightarrow S$ a smooth morphism.

Then $f^!$ is $t_\delta$-exact.
\end{prop}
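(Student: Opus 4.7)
The plan is to exploit the relative purity isomorphism \eqref{eq:purity_iso}, $f^! \simeq \MTh(T_f) \otimes f^*(-)$, and factor $f^!$ into a $t$-exact piece and a right $t$-exact piece. Since $f$ is smooth (hence separated, by our conventions), left $t$-exactness is essentially free from the adjoint pair formalism, so the substantive point is right $t$-exactness.

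\textbf{Step 1 (reduction to constant relative dimension).} Since $f$ is smooth, $\dim(f)$ is Zariski locally constant on $T$. Cover $T$ by open subschemes $T_i \xrightarrow{j_i} T$ on which $f|_{T_i}$ has constant relative dimension $d_i$. By Corollary \ref{delta-htp_glued}, the functors $j_i^*$ are jointly conservative and $t_\delta$-exact, and they commute with $f^!$ since $j_i^* f^! = (f \circ j_i)^!$ (open immersions are étale, so $j_i^* = j_i^!$). Hence checking $t$-exactness of $f^!$ reduces to the case where $f$ has constant relative dimension $d$, with tangent bundle $T_f$ of rank $d$.

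\textbf{Step 2 (right $t$-exactness).} In this situation, write
\[
 f^!(K) \;\simeq\; \MTh(T_f) \otimes f^*(K) \;=\; \bigl(\MTh(T_f)[-d] \otimes -\bigr)\bigl(f^*(K)[d]\bigr).
\]
Corollary \ref{cor:Thom&delta-homotopy} gives that $\MTh(T_f)[-d] \otimes -$ is $t_\delta$-exact. Proposition \ref{prop:basic_tstruct}(3), applied with $\dim(f) \leq d$, gives that $f^*[d]$ is right $t$-exact. The composition is therefore right $t$-exact.

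\textbf{Step 3 (left $t$-exactness).} Smoothness entails that $f$ is separated of finite type, so Proposition \ref{prop:basic_tstruct}(1) applies: the pair $(f_!, f^!)$ is an adjunction of $t$-categories. In particular $f^!$ is left $t$-exact, and combined with Step 2 this yields $t$-exactness.

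There is no serious obstacle: the proof is essentially a repackaging of the relative purity formula together with Proposition \ref{prop:basic_tstruct} and Corollary \ref{cor:Thom&delta-homotopy}. The only subtlety is the local constancy of $\dim(f)$, which is handled via the gluing property of $t_\delta$.
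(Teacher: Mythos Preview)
Your proof is correct and follows the same approach as the paper: left $t$-exactness from the adjunction $(f_!,f^!)$ of Proposition~\ref{prop:basic_tstruct}(1), and right $t$-exactness from the purity isomorphism $f^! \simeq \MTh(T_f)\otimes f^*$ combined with Proposition~\ref{prop:basic_tstruct}(3) and Corollary~\ref{cor:Thom&delta-homotopy}. The paper states this in one line; your Step~1 reduction to constant relative dimension is a harmless explicitation (one could equally well pass to connected components of $T$, or note that Corollary~\ref{cor:Thom&delta-homotopy} already allows the rank to be locally constant).
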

This follows from Proposition \ref{prop:basic_tstruct},
 the previous corollary and the relative
 purity isomorphism \eqref{eq:purity_iso}.\footnote{Under
  stronger assumptions,
  the $t$-exactness of $f^!$ will be generalized in
  Corollary \ref{cor:nice_ppties_dhtp}.}

\begin{num}\label{num:pullback_change_dim}
One can reinterpret the preceding proposition in terms of
 the classical pullback functor $f^*$.

Let $(S,\delta)$ be a dimensional scheme and $f:T \rightarrow S$
 be a smooth morphism. Let us write $\delta^f$ for the dimension
 function on $T$ induced by $f$ (see \ref{num:induced_delta}).
 Note that since $f$ is smooth, the function $\dim(f)$ is
 Zariski locally constant on $T$. We consider a new dimension
 function on $T$:
\begin{equation}\label{eq:twisted_relative_dim}
\tilde \delta^f=\delta^f-\dim(f).
\end{equation}
As a corollary of the previous proposition, we get:
\end{num}
\begin{cor}
In the notation above, the adjunction 
$$
f^*:(\T(S),t_\delta) \leftrightarrows (\T(T),t_{\tilde \delta^f}):f_*
$$
of triangulated categories is an adjunction of $t$-categories
 such that $f^*$ is $t$-exact.
\end{cor}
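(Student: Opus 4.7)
The plan is to deduce this corollary from the previous proposition (asserting $t$-exactness of $f^!$) using the smooth purity isomorphism \eqref{eq:purity_iso}. The change of dimension function from $\delta^f$ to $\tilde \delta^f = \delta^f - \dim(f)$ is designed precisely to absorb the Thom twist along the tangent bundle that distinguishes $f^*$ from $f^!$ for a smooth morphism.

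First, I would rewrite $f^*$ in terms of $f^!$ via the purity isomorphism: since $\MTh(T_f)$ is $\otimes$-invertible with inverse $\MTh(-T_f)$,
$$
f^* \simeq \MTh(-T_f) \otimes f^!,
$$
where $T_f$ denotes the tangent bundle of $f$. Next, $T_f$ is a vector bundle on $T$ whose rank equals $d := \dim(f)$, which is Zariski locally constant because $f$ is smooth. Corollary \ref{cor:Thom&delta-homotopy} therefore applies to the virtual bundle $-T_f$: the endofunctor $\MTh(-T_f)[d] \otimes -$ of $\T(T)$ is $t_{\delta^f}$-exact. Combined with the $t_{\delta^f}$-exactness of $f^!$ provided by the previous proposition, we conclude that the composite $f^*[d]$ is $t_{\delta^f}$-exact.

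The final step is a translation of dimension functions. By Lemma \ref{lm:indep_delta} applied to the difference $\delta^f - \tilde \delta^f = d$, the suspension functor $[d]$ is an equivalence of $t$-categories $(\T(T), t_{\tilde \delta^f}) \to (\T(T), t_{\delta^f})$. Consequently $f^*$ is itself $t$-exact as a functor $(\T(S), t_\delta) \to (\T(T), t_{\tilde \delta^f})$. The statement about the adjunction then follows formally: right $t$-exactness of $f^*$ is equivalent to left $t$-exactness of its right adjoint $f_*$, so $(f^*, f_*)$ is indeed an adjunction of $t$-categories.

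The only delicate bookkeeping point is that $\dim(f)$ need not be globally constant, only locally constant on $T$. This causes no real difficulty because the shift $[d]$ in Lemma \ref{lm:indep_delta} is allowed to take values in $\ZZ^{\pi_0(T)}$, and Corollary \ref{cor:Thom&delta-homotopy} (combined with the additivity of $\T$ over connected components) handles Thom twists by bundles of locally constant rank. Hence no genuine obstacle arises, and the proof reduces cleanly to combining three already established facts: smooth purity, $t$-exactness of $f^!$, and $t$-exactness of the shifted Thom twist.
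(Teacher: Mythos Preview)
Your proof is correct and follows essentially the same route as the paper: both use the purity isomorphism $f^! \simeq \MTh(T_f)\otimes f^*$ to reduce the $t$-exactness of $f^*$ (for $t_{\tilde\delta^f}$) to that of $f^!$ (for $t_{\delta^f}$), invoking Corollary~\ref{cor:Thom&delta-homotopy} for the Thom twist and Lemma~\ref{lm:indep_delta} for the shift in dimension function. The only cosmetic difference is that the paper reduces to connected $T$ at the outset, whereas you handle the locally constant $\dim(f)$ via $\ZZ^{\pi_0(T)}$-valued shifts; both are fine.
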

\begin{proof}
We can assume that $T$ is connected in which case $\dim(f)$ is constant
 equal to an integer $d$. By definition, one has 
 $\tilde \delta^f-\delta^f=-d$.
Indeed, according to the previous proposition and Lemma \ref{lm:indep_delta},
 the following composite functor is $t$-exact:
$$
(\T(S),t_\delta) \xrightarrow{f^!} (\T(T),t_{\delta^f})
 \xrightarrow{\Sigma_{-d}}  (\T(T),t_{\tilde \delta^f}),
$$
where $\Sigma_{-d}(K)=K[-d]$.
According to \ref{num:BM}(BM4), $f^!=f^* \otimes \Th(\tau_f)$,
 where $\tau_f$ is the tangent bundle of $f$, which has pure rank $d$.
 Thus, according to the beginning of the proof,
 we get that $f^* \otimes \Th(\tau_f)[-d]=f^![-d]$ is $t$-exact
 with respect to $t_\delta$ on the source and $t_{\tilde \delta^f}$
 on the target. This concludes the proof by Corollary 
\ref{cor:Thom&delta-homotopy}.
\end{proof}

\begin{ex}
Let $f:T \rightarrow S$ be a smooth morphism such
 that $S$ and $T$ are equicodimensional (for example, $S$ integral over a 
field). 
 Let $\delta=\dim_S$ (resp. $\dim_T$)
 be the Krull dimension function restricted to $S$ (resp. $T$).
 Then one readily checks that $(\dim_S)^f=\dim_T+\dim(f)$, so that we have 
$\tilde \delta^f=\dim_T$
  in the notation of \eqref{eq:twisted_relative_dim}. Thus the preceding 
corollary yields
  the following  adjunction of $t$-categories:
$$
f^*:(\T(S),t_{\dim_S}) \leftrightarrows (\T(T),t_{\dim_T}):f_*
$$
such that $f^*$ is $t$-exact.
\end{ex}
  
Let us remark finally the following fact.
\begin{lm}\label{lm:inseparable_texact}
Assume that the motivic category $\T$ is
semi-separated.\footnote{\label{foot:semisep} This notion
 was first introduced by Ayoub. Recall it means that
 for any finite surjective radicial morphism $f:T \rightarrow S$,
 the functor $f^*:\T(S) \rightarrow \T(T)$
 is conservative. According to \cite[2.1.9]{CD3}, it implies that $f^*$
 is in fact an equivalence of categories.}
 Then for any finite surjective radicial morphism $f:T \rightarrow S$,
 the functor $f^*:\T(S) \rightarrow \T(T)$ is an equivalence of $t$-categories.
\end{lm}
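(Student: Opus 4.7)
First I would fix the bookkeeping of dimension functions. Since $f$ is finite radicial, it is in particular quasi-finite, so by \S\ref{num:induced_delta} the induced dimension function on $T$ is simply $\delta^f=\delta\circ f$. Under this convention the statement compares the $t$-categories $(\T(S),t_\delta)$ and $(\T(T),t_{\delta^f})$.

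Next, the semi-separation assumption together with the footnote recalled in the statement of the lemma tells us that $f^*:\T(S)\to \T(T)$ is already an equivalence of the underlying triangulated categories, with quasi-inverse given by $f_*$. So it only remains to check $t$-exactness of $f^*$. For this I would invoke part (2) of the corollary following Proposition~\ref{prop:basic_tstruct}: since $f$ is finite, $f_*=f_!$ is $t$-exact with respect to the $\delta$-homotopy $t$-structures on both sides.

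The final step transfers $t$-exactness from $f_*$ to $f^*$ using the quasi-inverse relation. Let $K$ be a non-negative object of $(\T(S),t_\delta)$ and consider the truncation triangle
\[
\tau_{\geq 0}(f^*K)\to f^*K\to \tau_{<0}(f^*K)\xrightarrow{+1}
\]
in $\T(T)$. Applying $f_*$ and using the canonical isomorphism $f_*f^*\simeq \mathrm{id}$ (since $f^*$ is an equivalence with inverse $f_*$), and using that $f_*$ is $t$-exact, we obtain a distinguished triangle in $\T(S)$ with non-negative first term and negative third term whose middle term is $K$. By uniqueness of the truncation triangle, this forces $f_*\tau_{<0}(f^*K)\simeq \tau_{<0}K=0$; since $f^*$ detects zero objects (being an equivalence), this gives $\tau_{<0}(f^*K)=0$, i.e.\ $f^*K$ is non-negative. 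The dual argument applied to a negative object $K$ shows $f^*K$ is negative, yielding $t$-exactness of $f^*$.

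There is no real obstacle here: all ingredients are already in the text, and the only subtlety is the correct matching of dimension functions, which is automatic for quasi-finite morphisms.
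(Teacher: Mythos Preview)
Your proof is correct and follows essentially the same approach as the paper, which simply records that the lemma ``directly follows from Proposition~\ref{prop:basic_tstruct} and \cite[2.1.9]{CD3}.'' The paper leaves implicit the step you spell out (transferring $t$-exactness from the $t$-exact quasi-inverse $f_*$ back to $f^*$), but the ingredients are identical.
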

This directly follows from Proposition \ref{prop:basic_tstruct}
 and \cite[2.1.9]{CD3}.

\begin{ex}\label{ex:DM_semi_sep}
The triangulated motivic category $\T=\DM_R$ in the conventions
 of point (1) or (2) of \S\ref{num:convention_DM} is semi-separated.

More generally, it can be shown that any triangulated motivic
 category $\T$ over a category of schemes $\base$ is semi-separated
 provided that the following two conditions hold:
\begin{itemize}
\item $\T$ is $\ZZ[N^{-1}]$-linear where $N$ is the set
 of characteristic exponent of all the residue fields of schemes in $\base$;
\item $\T$ is oriented.
\end{itemize}
This follows from the existence of the Gysin morphism associated
 with any finite local complete intersection morphism
 and from a local trace formula. This is the case
 in particular for the triangulated motivic category
 $\T=\MGLmod$ of $\MGL$-modules of Example \ref{ex:motivic_cat}(2).
\end{ex}

\subsection{The $\delta$-effective category}\label{sec:delta_effective}

\begin{df}
Let $(S,\delta)$ be a dimensional scheme.

We define the category of \emph{$\delta$-effective $\T$-spectra},
 denoted by $\T^{\delta-\eff}(S)$,
 as the localizing triangulated subcategory
 of $\T(S)$ generated by 
 objects of the form $\Mb(X/S)(n)$ for any
 separated $S$-scheme $X$ and any integer $n \geq \delta(X)$.

When $f:T \rightarrow S$ is a morphism essentially of finite type,
 we will put: $\T^{\delta-\eff}(T):=\T^{\delta^f-\eff}(T)$.
\end{df}

\begin{rem}\label{rem:Teff_indep_delta}
\begin{enumerate}
\item Arguing as in the proof of Lemma \ref{lm:indep_delta},
 we easily deduce that, up to a canonical triangulated equivalence,
 the category $\T^{\delta-\eff}(S)$ does not depend on $\delta$.
 This equivalence will preserve the monoidal structure if it is defined
 (see point (2) of the following proposition).
\item Our definition of $\delta$-effectivity is closely related
 to the classes of morphisms $B_n$ (and to the corresponding constructions)
 considered in \cite[\S2]{Pel13};
 yet it seems our use of dimension functions is new in this context.
\end{enumerate}
\end{rem}

\begin{ex}\label{ex:classical_eff}
\begin{enumerate}
\item Assume $\T$ is oriented, the base scheme $S$ in universally
 catenary and integral,
 and $\delta=-\codim_S$ (Example \ref{ex:can_dim_fn}).
 Then for any smooth $S$-scheme $X$, the $\T$-spectrum
 $M_S(X)$ is $\delta$-effective, as in the classical mixed
 motivic case --- this follows from \ref{num:BM}(BM4)
 and Proposition \ref{prop:delta-base_change}(3).
 In fact, we do not need the orientation assumption
 and a more general statement will be proved in Corollary
 \ref{cor:sm_motives_eff}.
\item Actually,
 as a consequence of the cancellation theorem of Voevodsky,
 we will see in Example \ref{ex:delta_effective_fields}
 that when $\T=\DM[1/p]$, $S$ is the spectrum of a field $k$
 of characteristic exponent $p$,
 $\delta$ the Krull dimension function on $k$,
 then $\T^{\delta-\eff}(k)$
 coincides with the category of Voevodsky's motivic (unbounded) complexes.
\end{enumerate}
\end{ex}

The following proposition summarizes basic facts about $\delta$-effective
 spectra.
\begin{prop}\label{prop:functors&eff}
Let $(S,\delta)$ be a dimensional scheme.
\begin{enumerate}
\item The inclusion functors
 $s:\T^{\delta-\eff}(S) \rightarrow \T(S)$
 and $s':\T^{\delta-\eff}(S)\rightarrow\T^{\delta-\eff}(S)(-1)$
 (that we consider as a subcategory of $\T(S)$)
 admit right adjoints $w:\T(S) \rightarrow \T^{\delta-\eff}(S)$
 and $w':\T^{\delta-\eff}(S)(-1) \rightarrow \T^{\delta-\eff}(S)$, respectively.
\item If $\delta \geq 0$, then $\T^{\delta-\eff}(S)$
 is stable under tensor products.
\item Let $f:T \rightarrow S$ be a morphism
 essentially of finite and $d$ an integer such that $\dim(f) \leq d$.
 Then the functor $f^*(d)$ sends $\T^{\delta-\eff}(S)$
 to $\T^{\delta-\eff}(T)$.
\item Let $f:T \rightarrow S$ be any separated morphism.
 Then the exceptional direct image functor $f_!$ sends
 $\T^{\delta-\eff}(T)$ to $\T^{\delta-\eff}(S)$.
\end{enumerate}
\end{prop}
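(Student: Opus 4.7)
My plan is to reduce each assertion to the behavior of the generating Borel--Moore $\T$-spectra of Definition \ref{df:BM} and then invoke the dimensional estimates of Proposition \ref{prop:delta-base_change} together with properties (BM5), (BM6) and the composition law $f_!g_! = (fg)_!$ of the six functors formalism.

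For (1), the idea is to apply Brown representability. The ambient categories $\T(S)$ and $\T^{\delta-\eff}(S)(-1)$ are compactly generated triangulated categories: for $\T(S)$ this is part of our standing conventions (Remark \ref{rem:Mb_generators}), while for $\T^{\delta-\eff}(S)(-1)$ the generators are the $\Mb(X/S)(n)$ with $n \geq \delta(X)-1$, which are compact in $\T(S)$ and hence also in any localizing subcategory containing them. The functors $s$ and $s'$ are inclusions of localizing subcategories, so they preserve arbitrary coproducts. By Neeman's Brown representability theorem this produces the right adjoints $w$ and $w'$.

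For (2)--(4), in each case it suffices to check the statement on the generators $\Mb(X/S)(n)$ with $n \geq \delta(X)$, since both the source and target subcategories are localizing and the functors involved ($\otimes$, $f^*(d)$, $f_!$) preserve small coproducts and distinguished triangles. For (2), given $n \geq \delta(X)$ and $m \geq \delta(Y)$, the K\"unneth isomorphism (BM5) yields $\Mb(X/S)(n) \otimes \Mb(Y/S)(m) \simeq \Mb(X \times_S Y/S)(n+m)$, and Proposition \ref{prop:delta-base_change}(1) gives $\delta(X\times_S Y) \leq \delta(X)+\delta(Y) \leq n+m$ under the assumption $\delta \geq 0$. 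For (3), base change (BM6) gives $f^*\Mb(X/S)(n+d) = \Mb(X\times_S T/T)(n+d)$, and Proposition \ref{prop:delta-base_change}(2) (applied to the pullback square defining $X \times_S T$, with $\dim(f) \leq d$) shows $\delta^f(X\times_S T) \leq \delta(X)+d \leq n+d$. For (4), if $g:X \to T$ is the structural morphism, then $f_!\Mb(X/T)(n) = (fg)_!\un_X(n) = \Mb(X/S)(n)$ with $fg$ separated; and the identity $(\delta^f)^g = \delta^{fg}$ from \S\ref{num:induced_delta} gives $\delta^{fg}(X) = \delta^f(X) \leq n$, so the image lies in $\T^{\delta-\eff}(S)$.

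The proof is therefore essentially bookkeeping: once one observes that each of the four constructions sends a generating $\delta$-effective spectrum to another generating $\delta$-effective spectrum (up to a controlled dimension estimate), the statements follow formally. The only point requiring any care is to be consistent about which dimension function is used on each scheme, especially in (3) and (4) where one must trace through $\delta \mapsto \delta^f \mapsto \delta^{fg}$; this is handled by the functoriality of dimension functions recalled in \S\ref{num:induced_delta}. I do not anticipate a genuine obstacle.
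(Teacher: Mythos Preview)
Your proposal is correct and follows essentially the same approach as the paper: Neeman's adjunction theorem for (1), and the reduction to generators via (BM5), (BM6) together with Proposition \ref{prop:delta-base_change}(1),(2) for (2) and (3), with (4) being immediate from the composition $f_!g_!=(fg)_!$ and $(\delta^f)^g=\delta^{fg}$. Your write-up simply expands the paper's terse citations into explicit computations.
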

\begin{proof}
Assertion (1) follows from \Mikhail{ Neeman's adjunction theorem, since} we have
 assumed $\T(S)$ is compactly generated.
Assertion (2) follows from Prop. \ref{prop:delta-base_change}(1)
 and \textsection\ref{num:BM}(BM5).
Finally, assertion (3) follows from \ref{prop:delta-base_change}(2) and 
 \ref{num:BM}(BM6) whereas assertion (4) is obvious.
\end{proof}

\begin{rem}
The couple of functors $(s,w)$ is analog to the couple of functors
 (infinite suspension, infinite loop space) of stable homotopy,
 though in our case, $s$ is fully faithful.
 In the case where $\T$ is the category of motives $\DM_R$,
 we refer the reader to Example \ref{ex:delta_effective_fields}
 for more precisions.
\end{rem}

\begin{cor}\label{cor:functoriality_delta-eff}
Consider the notations of the previous proposition.
\begin{enumerate}
\item If $\delta \geq 0$, $\T^{\delta-\eff}(S)$ has
 internal Hom: given any $\delta$-effective spectra
 $M$ and $N$ over $S$, one has:
$$
\uHom_{\T^{\delta-\eff}(S)}(M,N)
=w\uHom_{\T(S)}(M,N).
$$
\item Let $f:T \rightarrow S$ be a separated morphism.
 Then one has an adjunction of triangulated categories:
$$
f_!:\T^{\delta-\eff}(T) \leftrightarrows \T^{\delta-\eff}(S):
f^!_{\delta-\eff}:=w \circ f^!.
$$
\item Let $f:T \rightarrow S$ be a morphism
 essentially of finite type such that $\dim(f) \leq d$.
 Then one has an adjunction of triangulated categories:
$$
f^*(d):\T^{\delta-\eff}(T) \leftrightarrows
\T^{\delta-\eff}(S):w \circ \lbrack f_*(-d) \rbrack.
$$
\end{enumerate}
\end{cor}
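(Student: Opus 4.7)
The three statements are all formal consequences of the previous Proposition~\ref{prop:functors&eff}, so the plan is essentially the same in each case: restrict to the $\delta$-effective subcategory using the stability statements already established, then build the right adjoint by composing with the truncation functor $w$.

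For part (1), fix $M,N \in \T^{\delta-\eff}(S)$. The plan is to verify the adjunction identity
\[
\Hom_{\T^{\delta-\eff}(S)}(K, w\uHom_{\T(S)}(M,N)) \;\simeq\; \Hom_{\T^{\delta-\eff}(S)}(K \otimes M, N)
\]
for every $\delta$-effective $K$. Since $\delta \geq 0$, Proposition~\ref{prop:functors&eff}(2) guarantees $K \otimes M$ is $\delta$-effective, so the right-hand side computes as $\Hom_{\T(S)}(sK \otimes sM, sN)$ using that $s$ is fully faithful. An application of the internal Hom adjunction in $\T(S)$ rewrites this as $\Hom_{\T(S)}(sK, \uHom_{\T(S)}(M,N))$, and the $(s,w)$ adjunction of Proposition~\ref{prop:functors&eff}(1) finishes the chain of isomorphisms.

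For part (2), Proposition~\ref{prop:functors&eff}(4) says $f_!$ restricts to a functor $\T^{\delta-\eff}(T) \to \T^{\delta-\eff}(S)$. For $M \in \T^{\delta-\eff}(T)$ and $N \in \T^{\delta-\eff}(S)$, the natural chain
\[
\Hom_{\T^{\delta-\eff}(S)}(f_!M, N) \simeq \Hom_{\T(S)}(f_!sM, sN) \simeq \Hom_{\T(T)}(sM, f^!sN) \simeq \Hom_{\T^{\delta-\eff}(T)}(M, wf^!sN)
\]
uses, successively, full faithfulness of $s$ together with the compatibility $sf_! = f_!s$ coming from the restriction statement, the usual $(f_!, f^!)$ adjunction, and finally the $(s,w)$ adjunction. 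Part (3) runs identically: Proposition~\ref{prop:functors&eff}(3) ensures $f^*(d)$ restricts to $\T^{\delta-\eff}$, and then the same three-step chain produces $w \circ [f_*(-d)]$ as right adjoint.

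There is really no obstacle here; the work has already been done in Proposition~\ref{prop:functors&eff}, and the only point requiring mild care is the verification that the restricted functors $f_!$ and $f^*(d)$ commute with the inclusion $s$ up to natural isomorphism (which is just the tautology that restriction of a functor commutes with the inclusion of its target subcategory).
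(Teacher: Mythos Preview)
Your proof is correct and matches the paper's approach: the paper states this as an immediate corollary of Proposition~\ref{prop:functors&eff} with no explicit argument, and your write-up simply spells out the straightforward adjunction chains that justify it.
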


\begin{rem}
When $f:T \rightarrow S$ is equidimensional
 (for example smooth, flat or universally open),
 one can improve point (3) using the same trick as that
 of \textsection \ref{num:pullback_change_dim}.
 If $\delta$ is a dimension function on $S$,
 we consider the following dimension function on $T$:
 $\tilde \delta^f=\delta^f-\dim(f)$.

Then, according to point (3) of the above corollary,
 we obtain a well defined adjunction of triangulated categories:
\begin{equation}\label{eq:pullback_eff_modified}
f^*:\T^{\delta-\eff}(S) \rightarrow
 \T^{\tilde \delta^f-\eff}(T):w \circ f_*.
\end{equation}
Note finally that if $S$ is equicodimensional
 and $\delta=\dim_S$ is the Krull dimension function on $S$,
 then $\tilde \delta^f=\dim_T$, the Krull dimension function
 on $T$ --- here we use the fact $f$ is equidimensional.
\end{rem}

\begin{ex}
Let $(S,\delta)$ be an arbitrary dimensional scheme.
According to point (1) of the preceding proposition
 we obtain  an adjunction of triangulated categories
$$
s:\T^{\delta-\eff}(S) \leftrightarrows \T(S):w
$$
such that $s$ is fully faithful. This formally
 implies that $\T^{\delta-\eff}(S)$ is the Verdier quotient
 of the category $\T(S)$ made by the full triangulated
 subcategory $\Ker(w)$ made of $\T$-spectra $K$ over $S$
 such that $w(K)=0$.
Moreover, according to our conventions, 
 the triangulated category $\T^{\delta-\eff}(S)$
 is compactly generated. This formally implies the right adjoint
 functor $w$ commutes with coproducts. So the category
 $\Ker(w)$ is stable by coproducts (\emph{i.e.} localizing).

It is difficult in general to describe concretely
 the category $\Ker(w)$. We now give examples in
 the particular case $\T=\DM_R$,
 following the conventions of  point (1) or (2) in
 Paragraph \ref{num:convention_DM}. We need some facts to justify
 them so we postpone this justification till
 Paragraph \ref{num:eq:compute_w}.

Assume $S$ is regular and $X/S$ is a smooth projective
 scheme such that $\dim(X/S)=d$ for a fixed integer $d$.
Then:
\begin{equation}\label{eq:compute_w}
w\big(M_S(X)(n)\big)=\begin{cases}
M_S(X)(n) & \text{if } n \geq \delta(X)-d, \\
0 & \text{if } n<\delta(S)-d.
\end{cases} 
\end{equation}
This computation allows us to compute the right adjoint appearing
 in point (3) in some particular cases.
 Let us give a very basic example. 
To fix ideas, assume $S$ is irreducible and $\delta(S)=0$.
 Of course, this implies $\un_S$ is $\delta$-effective.
Let $P$ be a projective bundle over $S$ of rank $d$,
 and $f:P \rightarrow S$ be the canonical projection.
 Then $\un_P(d)$ is $\delta$-effective and we get:
$$
w\big(f_*\big(\un_P(d)\big)(-d)\big)
 =\bigoplus_{i=0}^d w\big(\un_S(-i)[-2i]\big)
 =\un_S
$$
according to the projective bundle formula and the preceding
 computation. Many similar computations can be obtain from
 the previous formula. We let them as an exercise for the reader.
\end{ex}

\begin{num}
Let $\basex S^{\qf}$ be the sub-category of $\basex S$
 made of the same objects but whose morphisms $f$
 are quasi-finite (see also Example \ref{ex:quasi-finite}).

Then the previous proposition implies that 
 $\T^{\delta-\eff}$ is a $\sm$-fibred triangulated subcategory
 of $\T$ over $\basex S^{\qf}$
 (cf. \cite[\textsection 1]{CD3}).\footnote{In fact,
 point (3) of Corollary \ref{cor:functoriality_delta-eff}
 shows that given a dimensional scheme $S$,
 $\T^{\delta-\eff}$ is fibred over $S$-schemes essentially of finite
 type with respect to morphisms which are equidimensional
 (recall footnote \ref{foot:equidim}, p. \pageref{foot:equidim}).}
 Moreover, if $\delta \geq 0$, it is even a monoidal
 $\sm$-fibred triangulated sub-category of $\T$:
 for any quasi-finite $S$-scheme $T$, the tensor product
 $\otimes_T$ respects $\delta$-effective spectra
 and moreover the unit $\un_T$ is a $\delta$-effective spectra.
 In particular, the inclusion 
 $s:\T^{\delta-\eff}(T) \rightarrow \T(T)$
 is monoidal, for the induced monoidal structure on the left
 hand side.

Finally, it is easily seen using Proposition \ref{prop:functors&eff}
 that this premotivic category satisfies the localization property:
\end{num}
\begin{prop}\label{prop:localization_effective}
Consider the assumptions of the preceding proposition.
Then for any closed immersion $i:Z \rightarrow S$ with
 complementary open immersion $j$, and any $\delta$-effective
 spectrum $M$, the following triangle is a distinguished triangle
 in $\T^{\delta-\eff}(S)$:
$$
j_!j^*(K) \rightarrow K \rightarrow i_!i^*(K)
 \xrightarrow{+1}
$$
\end{prop}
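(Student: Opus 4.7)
The plan is essentially a verification: the localization property already holds in the ambient motivic triangulated category $\T$, so for any $K \in \T(S)$ one has a canonical distinguished triangle
$$
j_!j^*(K) \rightarrow K \rightarrow i_!i^*(K) \xrightarrow{+1}
$$
in $\T(S)$. Since the inclusion $s:\T^{\delta-\eff}(S) \hookrightarrow \T(S)$ is fully faithful and $\T^{\delta-\eff}(S)$ is, by construction, a localizing (in particular triangulated) subcategory, it suffices to check that when $K$ is $\delta$-effective the two outer vertices also lie in $\T^{\delta-\eff}(S)$; the triangle will then automatically be distinguished inside $\T^{\delta-\eff}(S)$.

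First I would deal with the pullbacks. Both $j:U \to S$ and $i:Z \to S$ are immersions, hence essentially of finite type with relative dimension zero (cf.\ Example \ref{ex:quasi-finite}). Applying Proposition \ref{prop:functors&eff}(3) with $d=0$, the untwisted functors $j^*$ and $i^*$ send $\T^{\delta-\eff}(S)$ into $\T^{\delta-\eff}(U)$ and $\T^{\delta-\eff}(Z)$ respectively; here one uses that the induced dimension functions on $U$ and $Z$ are $\delta^{j}=\delta\circ j$ and $\delta^{i}=\delta\circ i$, which is precisely the convention baked into the definition of $\T^{\delta-\eff}(U)$ and $\T^{\delta-\eff}(Z)$.

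Next I would push forward. The immersions $j$ and $i$ are separated, so Proposition \ref{prop:functors&eff}(4) gives that $j_!:\T^{\delta-\eff}(U) \to \T^{\delta-\eff}(S)$ and $i_!:\T^{\delta-\eff}(Z) \to \T^{\delta-\eff}(S)$ are well-defined. Composing, the outer vertices $j_!j^*(K)$ and $i_!i^*(K)$ of the localization triangle lie in $\T^{\delta-\eff}(S)$, and the middle term $K$ is there by hypothesis.

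There is really no main obstacle: the whole argument is a direct consequence of Proposition \ref{prop:functors&eff} combined with the localization property of $\T$. The only point to keep in mind is the compatibility of the dimension function conventions when restricting to $U$ and $Z$, but this is precisely the content of $\delta^f = \delta \circ f$ for $f$ pro-étale (cf.\ \S\ref{num:induced_delta}), which covers both open and closed immersions.
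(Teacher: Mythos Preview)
Your proof is correct and is precisely the argument the paper has in mind: the text introduces the proposition with ``it is easily seen using Proposition \ref{prop:functors&eff}'' and gives no further details, so your expansion via parts (3) and (4) of that proposition (for the quasi-finite immersions $i$ and $j$) is exactly what was intended.
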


\begin{cor}\label{cor:sm_motives_eff}
Let $(S,\delta)$ be any dimensional scheme.

Then for any smooth $S$-scheme $X$,
 the $\T$-spectrum $M_S(X)\big(\delta(S)\big)$
 is $\delta$-effective.
\end{cor}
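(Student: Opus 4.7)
The plan is to reduce to the case of a trivial tangent bundle via Zariski descent on $X$. After using additivity of $\T$ and of $\T^{\delta-\eff}$ to assume $S$ and $X$ are connected (and noting that $\T^{\delta-\eff}(S)$ is closed under positive Tate twists, which is immediate from the generator description since $n \geq \delta(Y)$ implies $n+k \geq \delta(Y)$ for $k \geq 0$), I let $d$ denote the constant relative dimension of the smooth morphism $f : X \to S$. The key numerical input is the inequality $\delta(X) \leq \delta(S) + d$, which follows from the formula $\delta^f(x) = \delta(f(x)) + \dtr(\kappa_x/\kappa_{f(x)})$ in \S\ref{num:induced_delta} together with Remark \ref{rem:trivial_inequality} (smoothness forces fibres of $f$ at $x$ to have dimension exactly $d$, bounding the transcendence degree).

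Next, since the tangent sheaf $T_f$ is locally free on the noetherian scheme $X$, I would choose a finite Zariski open cover $X = \bigcup U_i$ on which $T_f$ trivializes. For an elementary Zariski cover $X = U \cup V$, Nisnevich descent (built into the axioms of a motivic triangulated category) provides a Mayer--Vietoris distinguished triangle
\[
M_S(U \cap V) \to M_S(U) \oplus M_S(V) \to M_S(X) \xrightarrow{+1}
\]
in $\T(S)$. Since $\T^{\delta-\eff}(S)$ is a triangulated subcategory stable under coproducts, $\delta$-effectivity of $M_S(-)(\delta(S))$ propagates from $U$, $V$, $U \cap V$ to $X$. Induction on the size of a trivializing cover thus reduces to the case $T_f \cong \mathcal O_X^d$.

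In that final situation, property (BM4) of \S\ref{num:BM} and the computation of Thom spectra of trivial bundles yield
\[
\Mb(X/S) \simeq \MTh_S(-T_f) \simeq M_S(X)(-d)[-2d],
\]
so
\[
M_S(X)(\delta(S)) \simeq \Mb(X/S)\bigl(d + \delta(S)\bigr)[2d].
\]
By the bound $d + \delta(S) \geq \delta(X)$ established above, $\Mb(X/S)(d + \delta(S))$ is one of the defining generators of $\T^{\delta-\eff}(S)$, and the shift by $[2d]$ remains in this triangulated subcategory. The only mild obstacle in the whole argument is to confirm that the Mayer--Vietoris triangle is available for the functor $M_S(-) = f_\sharp(\un_X)$ on smooth $S$-schemes; but this is a standard consequence of the Nisnevich descent incorporated in our running assumptions on $\T$.
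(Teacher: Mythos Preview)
Your argument is correct. The paper takes a related but distinct route: it invokes the localization property for $\T^{\delta-\eff}$ (Proposition~\ref{prop:localization_effective}) to run a Noetherian induction on $S$, asserting that the claim is Zariski-local in $S$ and that one may therefore assume $T_f$ is trivial; it then applies Proposition~\ref{prop:delta-base_change}(3) to obtain the \emph{equality} $\delta(X)=\delta(S)+d$, so that $\Mb(X/S)\dtw{\delta(X)}=M_S(X)\dtw{\delta(S)}$ is literally one of the defining generators. You instead localize on $X$ via Mayer--Vietoris and only need the \emph{inequality} $\delta(X)\le\delta(S)+d$, landing among the generators up to a further non-negative Tate twist. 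Your path is more transparent on the reduction step, since $T_f$ is a bundle on $X$ rather than $S$ and it is a Zariski cover of $X$ that trivializes it; the paper's phrasing ``Zariski local in $S$'' leaves that trivialization somewhat implicit. Conversely, the paper's use of the equality makes the final identification cleaner (no residual positive twist to absorb), at the cost of tacitly needing $f$ to hit the generic points of $S$, a hypothesis your inequality-based argument sidesteps entirely.
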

\begin{proof}
By additvity of $M_S(X)$ in $X$, we can assume $X$ is connected.
 Using the preceding proposition, an easy Noetherian induction shows
 the assertion is Zariski local in $S$. In particular, we can assume the 
 tangent bundle of $X/S$ is trivial, say isomorphic to $\AA^d_X$.
 Then $f:X \rightarrow S$ has constant relative dimension, say $d$
 and we can apply Proposition~\ref{prop:delta-base_change}(3)
 to $f$. Thus one gets $\delta(X)=\delta(S)+d$.
 Then using \ref{num:BM}(BM4), the following spectrum is in 
$\T^{\delta-\eff}(S)$:
$$
\Mb(X/S)\dtw{\delta(X)}=M_S(X)\dtw{\delta(X)-d}=M_S(X)\dtw{\delta(S)},
$$
and this concludes. 
\end{proof}

\begin{ex}\label{ex:motive_sm_eff}
The preceding proposition applies in particular
 when $S$ is a universally catenary integral scheme
 with dimension function $\delta=-\codim_S$
 (cf. Example \ref{ex:can_dim_fn}). In this case,
  for any smooth $S$-scheme $X$,
 the $\T$-spectrum $M_S(X)$ is $\delta$-effective
  --- as expected when $\T=\SH$ or $\DAx R$ and $S$
 is the spectrum of a field.
\end{ex}

Using the same proof as the one of Corollary \ref{cor:Thom&delta-homotopy},
 we also deduce from the preceding proposition:
\begin{cor}\label{cor:delta_eff_thom_stable}
Let $(S,\delta)$ be a dimensional scheme
 and $E/S$ be a vector bundle of rank $r$.
 Then the functor $(\MTh(E)[-r] \otimes -)$
 preserves $\delta$-effective spectra.
\end{cor}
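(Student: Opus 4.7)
The plan is to imitate the proof of Corollary \ref{cor:Thom&delta-homotopy}: proceed by noetherian induction on $S$, using the localization triangle of Proposition \ref{prop:localization_effective} to reduce to the case where $E$ is trivial on a dense open of $S$. The key observation is that on such an open, $\MTh(E)[-r] \otimes -$ becomes the twist-shift $(r)[r]$, which visibly preserves $\delta$-effectivity as soon as $r \geq 0$.

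Concretely, if $E \simeq \AA^r_S$ then by the formula recalled in \S\ref{num:virtual_vb} we have $\MTh(E) = \un_S\dtw r = \un_S(r)[2r]$, so the endofunctor $\MTh(E)[-r]\otimes -$ is naturally identified with $M \mapsto M(r)[r]$. On a generator $\Mb(X/S)(n)$ with $n \geq \delta(X)$, this produces $\Mb(X/S)(n+r)[r]$; since $r \geq 0$ one still has $n+r \geq \delta(X)$, and $\T^{\delta-\eff}(S)$ is localizing, hence closed under shifts, extensions, and small coproducts. Thus $(r)[r]$ preserves the full subcategory $\T^{\delta-\eff}(S)$.

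For the inductive step, assuming the statement on every proper closed subscheme of $S$, pick a nonempty open $j:U \hookrightarrow S$ trivialising $E$, with reduced closed complement $i:Z \hookrightarrow S$. For any $M \in \T^{\delta-\eff}(S)$, Proposition \ref{prop:localization_effective} supplies the distinguished triangle
$$
j_! j^* M \to M \to i_! i^* M \xrightarrow{+1}
$$
inside $\T^{\delta-\eff}(S)$. Tensoring with $\MTh(E)[-r]$ and applying the projection formula together with the compatibility of $\MTh$ with base change rewrites the outer vertices as $j_!\bigl(\MTh(E|_U)[-r] \otimes j^* M\bigr)$ and $i_!\bigl(\MTh(E|_Z)[-r] \otimes i^* M\bigr)$. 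By Proposition \ref{prop:functors&eff}(3) applied with $d=0$, both $j^*M$ and $i^*M$ are $\delta$-effective; the trivial case handles the $U$-factor, and the inductive hypothesis on $Z$ (equipped with the induced dimension function $\delta^i = \delta \circ i$) handles the $Z$-factor. Proposition \ref{prop:functors&eff}(4) then ensures $j_!$ and $i_!$ preserve $\delta$-effectivity, so both outer vertices lie in $\T^{\delta-\eff}(S)$ and stability under extensions yields the conclusion for $M$. No serious obstacle arises; the only point requiring attention is bookkeeping of induced dimension functions, with the identity $\delta^i(Y) = \delta(Y)$ from \S\ref{num:induced_delta} guaranteeing that a generator $\Mb(Y/Z)(n)$ of $\T^{\delta-\eff}(Z)$ pushes forward under $i_!$ to $\Mb(Y/S)(n)$ with $n \geq \delta(Y)$, which is a generator of $\T^{\delta-\eff}(S)$.
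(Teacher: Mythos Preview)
Your proof is correct and follows exactly the approach the paper indicates: noetherian induction on $S$ combined with the localization property (Proposition \ref{prop:localization_effective}) and generic triviality of the vector bundle, mirroring the proof of Corollary \ref{cor:Thom&delta-homotopy}. The paper's own justification is a one-line reference to that earlier argument, and you have simply written out the details.
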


\begin{rem}\label{rem:functors&eff}
Here is a complete list of functors 
 that preserves $\delta$-effective spectra:
 $f_!$ for $f$ separated,
 $f_*$ for $f$ is proper,
 $f^*$ for $f$ quasi-finite, 
 $f^!$ for $f$ smooth.

All cases follow from \ref{prop:functors&eff}
 except the last one which also uses the previous corollary
 and the purity isomorphism  $f^!\simeq \MTh(T_f) \otimes f^*$.
\end{rem}

We can easily extend the definition of the $\delta$-homotopy
 $t$-structure to the effective case:
\begin{df}\label{df:eff_tstruct}
Let $(S,\delta)$ be a dimensional scheme.

The (effective) \emph{$\delta$-homotopy $t$-structure over $S$},
 denoted by $t_\delta^\eff$, or $t_\delta$ when no confusion can arise,
 is the $t$-structure
 on $\T^{\eff}(S)$ generated by spectra of the form
 $\Mb(X/S)\dtw{\delta(X)}\gtw n$ for any separated $S$-scheme $X$
 and any integer $n \geq 0$.

When $f:T \rightarrow S$ is essentially of finite type,
 we put $t_\delta=t_{\delta^f}$.
\end{df}
With these definitions, it is clear that the adjunction
 of triangulated categories (see \ref{prop:functors&eff}(1)):
\begin{equation}\label{eq:adj_t_delta_eff_st}
s:\T^{\delta-\eff}(S) \leftrightarrows \T(S):w
\end{equation}
is in fact an adjunction of $t$-categories.

\begin{rem}\label{rem:htp_t_nonnegdeg_eff}
 As in the non-effective case, we get:
\begin{enumerate}
\item an effective spectrum $\E$ over $S$ is $t_\delta$-negative
 if and only if for any separated $S$-scheme $X$ the $\ZZ$-graded
 abelian group $\E^{BM}_{p,\gtw *}(X/S)$ is zero
 in degree $* \geq \delta(X)$ and if $p \geq \delta(X)$;
\item (see Remark \ref{rem:htp_t_nonnegdeg}),
 the effective $\delta$-homotopy $t$-structure is left non-degenerate;
\item it does not depend on the choice of the dimension function
 $\delta$, up to a canonical equivalence of $t$-categories;
\item for any vector bundle $E/S$ of rank $r$,
 the endo-functor $\big(\MTh_S(E)[-r] \otimes -\big)$
 of $\T^{\delta-\eff}(S)$ (see Cor. \ref{cor:delta_eff_thom_stable})
 is $t_\delta$-exact.
\end{enumerate}
\end{rem}

\begin{num}\label{num:adj_tdelta_eff}
We can easily transport the results of Proposition \ref{prop:basic_tstruct}
 to the effective case as follows:
\begin{enumerate}
\item For any separated morphism $f:T \rightarrow S$, the pair
 of functors
$$
f_!:\Big(\T^{\delta-\eff}(T),t_{\delta}\Big)
 \rightarrow \Big(\T^{\delta-\eff}(S),t_\delta\Big):w \circ f^!
$$
is an adjunction of $t$-categories.
\item If $\delta \geq 0$, the tensor product $\otimes_S$
 is right $t$-exact on $\big(\T^{\delta-\eff}(S),t_\delta\big)$
\item For any morphism $f:T \rightarrow S$ such that
 $\dim(f)\leq d$, the pair of functors:
$$
f^*\dtw d:\Big(\T^{\delta-\eff}(S),t_\delta\Big)
 \rightarrow \Big(\T^{\delta-\eff}(S),t_{\delta}\Big):w \circ (f_*\dtw{-d})
$$
is an adjunction of $t$-categories.
\end{enumerate}
In particular, using the
 adjunction of Corollary \ref{cor:functoriality_delta-eff}
 together with Remark \ref{rem:functors&eff},
 we get that $f_!$ (resp. $f^*$) is $t$-exact
 when $f$ is finite (resp. \'etale).
 So we get the following corollary
 (as for Corollary \ref{delta-htp_glued}):
\end{num}

\begin{cor}\label{delta-htp_eff_glued}
Let $(S,\delta)$ be a dimensional scheme,
 $i:Z \rightarrow S$ be a closed immersion
 with complementary open immersion $j:U \rightarrow S$.
 Then the $t$-category $(\T^{\delta-\eff}(S),t_\delta)$
 is obtained by gluing
 of the $t$-categories $(\T^{\delta-\eff}(Z),t_\delta)$
 and $(\T^{\delta-\eff}(U),t_\delta)$.
\end{cor}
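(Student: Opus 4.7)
The plan is to mimic the proof of Corollary \ref{delta-htp_glued} in the effective setting, reducing to the abstract gluing characterization of \cite[1.4.10, 1.4.12]{BBD}. Recall that a $t$-structure on a category glued (in the sense of a six-functor localization) from two $t$-categories is uniquely determined by the requirement that the gluing functors $i_*$ and $j^*$ be $t$-exact. Accordingly, the strategy is to (i) place the triple $\bigl(\T^{\delta-\eff}(Z),\T^{\delta-\eff}(S),\T^{\delta-\eff}(U)\bigr)$ in the situation of \cite[I.1.4.3]{BBD}, and then (ii) verify the two $t$-exactness properties.

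For step (i), the six functors $i^*$, $i_* = i_!$, $i^!$, $j_! = j^{}$, $j^* = j^!$, $j_*$ all preserve $\delta$-effective spectra by Remark \ref{rem:functors&eff} (since $i$ is finite and proper, and $j$ is étale and smooth). The adjunctions at the effective level are provided by Corollary \ref{cor:functoriality_delta-eff}. The localization distinguished triangle required for the BBD framework is exactly the content of Proposition \ref{prop:localization_effective}, so the triangulated category $\T^{\delta-\eff}(S)$ is glued from $\T^{\delta-\eff}(Z)$ and $\T^{\delta-\eff}(U)$ at the level of triangulated categories.

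For step (ii), the $t$-exactness statements follow directly from the observations collected immediately after Paragraph \ref{num:adj_tdelta_eff}: the closed immersion $i$ is finite, hence $i_!=i_*$ is $t$-exact for $t_\delta^\eff$; the open immersion $j$ is étale, hence $j^* = j^!$ is $t$-exact for $t_\delta^\eff$. These are precisely the two conditions from \cite[1.4.12]{BBD} characterizing the glued $t$-structure, so the effective $\delta$-homotopy $t$-structure on $\T^{\delta-\eff}(S)$ must coincide with the one glued from the effective $\delta$-homotopy $t$-structures on $Z$ and on $U$.

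There is essentially no genuine obstacle here: all the technical work — effective localization, compatibility of the six functors with $\delta$-effectivity, and the $t$-exactness statements for finite and étale morphisms — has been established in the preceding paragraphs. The only point requiring care is the verification that the six-functor package on $\T^{\delta-\eff}$ fits the axiomatic setup of \cite[I.1.4.3]{BBD}, which however follows mechanically from the existence of the adjunctions recorded in Corollary \ref{cor:functoriality_delta-eff} together with the localization triangle of Proposition \ref{prop:localization_effective}.
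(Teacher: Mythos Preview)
Your approach is exactly the paper's: the corollary is stated immediately after Paragraph \ref{num:adj_tdelta_eff}, which records that $i_*=i_!$ is $t$-exact (since $i$ is finite) and $j^*$ is $t$-exact (since $j$ is \'etale), and the paper simply says ``as for Corollary \ref{delta-htp_glued}'', i.e.\ invoke the BBD characterization of the glued $t$-structure.

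One inaccuracy worth fixing: your assertion that all six functors, in particular $i^!$ and $j_*$, preserve $\delta$-effective spectra is not supported by Remark \ref{rem:functors&eff}. That remark only gives $f^!$ for $f$ smooth and $f_*$ for $f$ proper; a closed immersion is not smooth and an open immersion is not proper, so your parenthetical justification covers only $i^*,i_*,j^*,j^!,j_!$, not $i^!$ or $j_*$. This does not break the argument: the right adjoints at the effective level are $w\circ i^!$ and $w\circ j_*$ from Corollary \ref{cor:functoriality_delta-eff} (which you already cite), and with these the recollement of \cite[1.4.3]{BBD} is in place via Proposition \ref{prop:localization_effective}. Since the $t$-exactness step only uses $i_*$ and $j^*$, the conclusion is unaffected.
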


As in the non-effective case, we deduce that when $f$ is smooth,
 the functor $w \circ f^!=f^!$ is $t$-exact.
 Moreover, the functor $f^*$, with the conventions of
 \eqref{eq:pullback_eff_modified} for the dimension functions,
 is $t$-exact.
 Finally, 
 As in the non-effective case
 it is worth to remark the following easy lemma (use the proof of
 the analog Lemma \ref{lm:inseparable_texact}).
\begin{lm}\label{lm:inseparable_texact_eff}
Assume that the motivic category $\T$ is semi-separated.

Then for any finite surjective radicial morphism $f:T \rightarrow S$,
 the functor $f^*:\T^{\delta-\eff}(S) \rightarrow \T^{\delta-\eff}(T)$
 is an equivalence of $t$-categories.
\end{lm}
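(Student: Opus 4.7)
The strategy is to mimic the proof of Lemma \ref{lm:inseparable_texact} in the $\delta$-effective setting, so we need two ingredients: first, the equivalence $f^*:\T(S) \rightarrow \T(T)$ provided by \cite[2.1.9]{CD3} (see footnote \ref{foot:semisep}) restricts to an equivalence between the $\delta$-effective subcategories; second, this restricted functor is $t$-exact for the effective $\delta$-homotopy $t$-structures. The quasi-inverse of the ambient equivalence is $f_* = f_!$ (the equality using that $f$, being finite, is proper).

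For the first ingredient, observe that since $f$ is finite one has $\dim(f) = 0$ by Example \ref{ex:quasi-finite}. Hence Proposition \ref{prop:functors&eff}(3) with $d=0$ ensures that $f^*$ sends $\T^{\delta-\eff}(S)$ into $\T^{\delta^f-\eff}(T)$, while Proposition \ref{prop:functors&eff}(4) (equivalently Remark \ref{rem:functors&eff}) ensures that $f_! = f_*$ sends $\T^{\delta^f-\eff}(T)$ into $\T^{\delta-\eff}(S)$. As these two functors realize the equivalence $f^* \dashv f_*$ on the ambient categories and each preserves the effective subcategory, the restriction is itself an equivalence of triangulated categories.

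For the second ingredient, we apply the effective analogues gathered in Paragraph \ref{num:adj_tdelta_eff}: part (3) with $d=0$ yields that $f^*$ is right $t$-exact for the effective $\delta$-homotopy $t$-structures, and the statement immediately following part (3) yields that $f_! = f_*$ is $t$-exact for finite $f$. An equivalence of triangulated categories whose quasi-inverse is $t$-exact is itself $t$-exact (its own $t$-exactness can be read off by transporting $\T_{\geq 0}$ and $\T_{<0}$ through the equivalence), so $f^*$ is $t$-exact, which completes the proof. I do not anticipate a genuine obstacle: all the preservation and exactness statements needed have been laid down in Proposition \ref{prop:functors&eff}, Remark \ref{rem:functors&eff} and Paragraph \ref{num:adj_tdelta_eff}, and the only (completely routine) check is that the equivalence of the ambient $t$-categories transports to the effective subcategories precisely because $f$ has relative dimension zero and is simultaneously finite and quasi-finite.
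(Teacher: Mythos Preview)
Your proposal is correct and follows exactly the approach the paper indicates: the paper simply says to ``use the proof of the analog Lemma \ref{lm:inseparable_texact}'', and you have carried this out in the effective setting by invoking \cite[2.1.9]{CD3} for the ambient equivalence, Proposition \ref{prop:functors&eff} and Remark \ref{rem:functors&eff} for preservation of $\delta$-effectivity (using $\dim(f)=0$), and Paragraph \ref{num:adj_tdelta_eff} for $t$-exactness. Nothing is missing.
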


\subsection{Improved descriptions of generators}\label{s2.3}

In this section, we give several different descriptions of the generators
 of the $\delta$-homotopy $t$-structure and draw some
 corollaries for the $\delta$-homotopy $t$-structure.
\begin{prop}\label{prop:generators1}
Let $(S,\delta)$ be a dimensional scheme.

Then the $\delta$-homotopy $t$-structure on $\T(S)$
 (resp. $\T^{\delta-\eff}(S)$) admits the following
 three families of generators for positive objects:
\begin{enumerate}
\item spectra of the form
 $\Mb(X/S)\dtw{\delta(X)}\gtw n$
 for an integer $n \in \ZZ$ (resp. $n \in \NN$)
 and a proper $S$-scheme $X$;
\item spectra of the form
 $\Mb(X/S)\dtw{\delta(X)}\gtw n$
 for an integer $n \in \ZZ$ (resp. $n \in \NN$)
 and a regular $S$-scheme $X$;
\item assuming $S$ is regular
 and separated not necessarily of finite type over $\ZZ$:
 spectra of the form
 $\Mb(X/S)\dtw{\delta(X)}\gtw n$
 for an integer $n \in \ZZ$ (resp. $n \in \NN$)
 and a regular $S$-scheme $X$ such that there
 exists a closed $S$-immersion
 $X \rightarrow \AA^r_S$ with trivial normal bundle.
\end{enumerate}
\end{prop}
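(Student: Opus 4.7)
The plan is to prove, for each of the three candidate families $\mathcal G^{(i)}$, that $\gen{\mathcal G^{(i)}}$ coincides with the non-negative part $\gen{\mathcal G}$ of $t_\delta$, where $\mathcal G$ denotes the default generating family of Definition \ref{df:tstruct}. Since $\mathcal G^{(i)} \subseteq \mathcal G$ in each case, one inclusion is automatic, so it suffices to show that every default generator $\Mb(X/S)\dtw{\delta(X)}\gtw n$ lies in $\gen{\mathcal G^{(i)}}$. All three arguments rely on the Borel--Moore localization triangle \eqref{eq:localization} together with the following observation: for any open dense $W \subset X$ one has $\delta(W) = \delta(X)$, while the reduced complement $Z$ satisfies the strict inequality $\delta(Z) < \delta(X)$; indeed, a dimension function attains its maximum only at generic points, which all lie in $W$ since $W$ is dense. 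Cases (2) and (3) additionally use Noetherian induction on $\dim X$.

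For (1), I would invoke Nagata's compactification theorem to obtain a proper $\bar X/S$ containing $X$ as a dense open subscheme with reduced closed complement $Z$, and rotate the localization triangle to
\[
\Mb(Z/S)[-1] \to \Mb(X/S) \to \Mb(\bar X/S) \xrightarrow{+1}.
\]
Twisting by $\dtw{\delta(X)}\gtw n$, the right-hand term is a proper generator (as $\delta(\bar X)=\delta(X)$), while the left-hand term rewrites as a non-negative suspension $\Mb(Z/S)\dtw{\delta(Z)}\gtw{n'}[m]$ with $m = \delta(X)-\delta(Z)-1 \geq 0$ and $n' = n+\delta(X)-\delta(Z)$, which remains non-negative when $n\geq 0$ (handling the effective case). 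Closure of $\gen{\mathcal G^{(1)}}$ under extensions then gives $\Mb(X/S)\dtw{\delta(X)}\gtw n \in \gen{\mathcal G^{(1)}}$.

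For (2), I would first reduce to $X$ reduced using $\Mb(X/S)\simeq \Mb(X_{\mathrm{red}}/S)$ (obtained from the localization triangle applied to $X_{\mathrm{red}}\hookrightarrow X$ with empty open complement), and then induct on $\dim X$. Since $X$ is excellent, its regular locus $U$ is open and dense, with reduced complement $Z$ of strictly smaller $\delta$-value. The localization triangle $\Mb(U/S)\to \Mb(X/S)\to \Mb(Z/S)\xrightarrow{+1}$, twisted by $\dtw{\delta(X)}\gtw n$, now places $\Mb(X/S)\dtw{\delta(X)}\gtw n$ between a regular generator $\Mb(U/S)\dtw{\delta(U)}\gtw n \in \mathcal G^{(2)}$ and a non-negative suspension of $\Mb(Z/S)\dtw{\delta(Z)}\gtw{n'}$, which lies in $\gen{\mathcal G^{(2)}}$ by induction.

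For (3), after invoking (2) to reduce to $X$ regular, I would again run Noetherian induction on $\dim X$: at each step it is enough to find an open dense $V \subset X$ admitting a closed $S$-immersion into some $\AA^{r}_S$ with trivial normal bundle, the strictly lower-dimensional complement being handled by the same localization argument as in (2). To construct $V$, I would start from a sufficiently small affine open of $X$ lying over an affine open $S'\subset S$, embed it as a closed subscheme of $\AA^{r}_{S'}$ via generators of its coordinate algebra, Zariski-localize further to trivialize the (automatically locally free) conormal bundle, and apply the classical trick of inverting a lift $\tilde g$ of a defining function of the complement to promote the resulting locally closed immersion $V\hookrightarrow \AA^r_S$ to a closed immersion $V\hookrightarrow \AA^{r+1}_S$, whose normal bundle is the previous trivial one augmented by one additional free summand. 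The principal technical obstacle is precisely orchestrating these successive shrinkings so as to preserve simultaneously denseness in $X$, closedness of the immersion into affine space, and triviality of the normal bundle.
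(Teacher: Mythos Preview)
Your arguments for (1) and (2) are correct and essentially identical to the paper's: compactify via Nagata for (1), pass to the regular locus by excellence for (2), and in both cases use the localization triangle together with the strict inequality $\delta(Z)<\delta(X)$ for the complement of a dense open.

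For (3), your overall strategy (noetherian induction, reduce to finding a suitable dense open $V$) matches the paper, but the key technical step has a genuine gap. Your ``inverting a lift $\tilde g$'' trick requires the locally closed $V\hookrightarrow\AA^r_S$ to sit inside a \emph{principal} open $D(\tilde g)$ of $\AA^r_S$ (equivalently, of the closure $\bar V$). When $S$ is not affine this is generally impossible: for $S=\PP^1_k$ one has $\Gamma(\AA^r_S,\mathcal O)=k[y_1,\dots,y_r]$, so no global function can cut out the fibre over $\infty$. Concretely, take $X=S=\PP^1_k$ and $V=\AA^1$ embedded as the zero section in $\AA^1_{S'}$ with $S'=\AA^1$; then $\bar V=\PP^1\times\{0\}$ and no $\tilde g$ separates $V$ from $\bar V\setminus V$. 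Further shrinking does not help, since the closure is determined by the generic points. Notice also that you never use the hypothesis that $S$ is separated over $\ZZ$, which is a warning sign.

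The paper's route is different and avoids this obstruction entirely. Having shrunk to an affine regular dense open $U\subset X$, it uses separatedness of $S$ over $\ZZ$ twice: first to conclude that $p:U\to S$ is an affine morphism (EGA II, 1.6.3), and then---implicitly---to show that $p_*\mathcal O_U$ is generated as an $\mathcal O_S$-algebra by finitely many global sections, yielding a closed immersion $U\hookrightarrow\AA^r_S$ directly. The second point works because for any affine open $V=\spec(B)\subset S$ the fibre product $U\times_S V$ is a closed subscheme of $U\times_\ZZ V$ (separatedness of $S/\ZZ$), so $\Gamma(p^{-1}(V),\mathcal O_U)$ is a quotient of $A\otimes_\ZZ B$ and hence generated over $B$ by the image of $A=\Gamma(U,\mathcal O_U)$; finitely many elements then suffice by noetherianity of $S$ and finite type of $p$. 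Only afterwards does one shrink $U$ to trivialize the normal bundle. Replacing your promotion step by this argument would repair your proof.
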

\begin{proof}
To treat all cases simultaneously,
 we denote by $I$ the set $\ZZ$ (resp. $\NN$)
 and put, using the notation of \ref{num:generated_aisle},
 $\C=\langle \mathcal G\rangle_+$
 where $\mathcal G$ is the family describe in point (1), (2) or (3).
 We have to prove that for any separated morphism
 $f:X \rightarrow S$, $\Mb(X/S)\dtw{d}$ belongs to
 $\C$ where $d=\delta(X)$.

\noindent \underline{Point (1)}: because $f$ is separated
 (of finite type), it admits a factorization
 $X \xrightarrow j \bar X \xrightarrow p S$
 such that $p$ is proper and $j$ is a dense open immersion.
 Let $i:Z \rightarrow \bar X$ be the complementary closed
 immersion of $j$, with $Z$ reduced.
 Then using \ref{num:BM}(BM3), we get a distinguished triangle:
$$
\Mb(Z/S)\dtw d[-1] \rightarrow \Mb(X/S)\dtw d
 \rightarrow \Mb(\bar X/S)\dtw d \xrightarrow{+1}
$$
Note that, since $j$ is dense, we have: $\delta(\bar X)=d$
 and $\delta(Z)<d$.
 In particular, by assumption on $\C$,
 the $\T$-spectra $\Mb(\bar X/S)\dtw d$ and $\delta(\bar X)=d$
 both belongs to $\C$.
 This implies $\Mb(X/S)\dtw d$ belongs to $\C$ and concludes. 

\noindent \underline{Point (2) and (3)}: We use noetherian induction
 on $X$, given that the result is obvious when $X$ is empty.
 Then it is sufficient to find a dense open subscheme $U \subset X$
 such that $\Mb(U/S)\dtw d$ belongs to $\C$. 
 Indeed, given that subscheme $U$
 we denote by $Z$ its complement in $X$ with its reduced structure
 of a subscheme of $X$. Then,
 applying again \ref{num:BM}(BM3),
 we get the following distinguished triangle:
$$
\Mb(U/S)\dtw d \rightarrow \Mb(X/S)\dtw d
 \rightarrow \Mb(Z/S)\dtw d \xrightarrow{+1}
$$
and the conclusion follows from the fact $\Mb(Z/S)\dtw d$
 belongs to $\C$ by noetherian induction using again the inequality
 $\delta(Z)<d$.

In any case, we can assume that $X$ is reduced.
Moreover, since $X$ is excellent, it admits a dense open subscheme $U$
 which is regular. This concludes in case of point (2). \\
For point (3) we remark that we can even assume $U$ is affine in addition
 to be a regular dense open subscheme of $X$.
 Then the structural morphism $p:U \rightarrow S$ is affine
 (because $S$ is separated over $\ZZ$, see \cite[1.6.3]{EGA2}).
 In addition, reducing $U$ again,
 we can assume the coherent $\mathcal O_S$-algebra $p_*(\mathcal O_U)$
 is generated by global sections. Thus there exists
 a closed immersion $i:U \rightarrow \AA^r_S$.
 This immersion is regular because $U$ is regular by construction
 and $\AA^r_S$ is regular by assumption on $S$.
 Thus the normal cone of $i$ is a vector bundle. In particular,
 it is generically trivial so that there exists an open subscheme
 $V$ in $\AA^r_S$ such that $i^{-1}(V)$ is dense in $U$ and trivialize
 the normal bundle of $i$. Thus, considering
 the dense open subscheme $i^{-1}(V) \subset X$,
 we are able to finish the proof of  assertion (3).
\end{proof}

As a corollary of assertion (1), we get the following statement.

\begin{cor}\label{cor:premotivic_adj_t-exactness}
Let $(S,\delta)$ be a dimensional scheme.

Then for any adjunction $\varphi^*:\T \leftrightarrows \T':\varphi_*$
 of triangulated motivic categories
 (cf. \cite[1.4.6]{CD3}) and any scheme $S$,
 the functor 
$\varphi^*:\T(S) \rightarrow \T'(S)$ is right $t_\delta$-exact
 and the functor
$\varphi_*:\T'(S) \rightarrow \T(S)$ is $t_\delta$-exact.
\end{cor}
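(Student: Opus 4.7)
The statement encompasses three assertions: (i) $\varphi^*$ is right $t_\delta$-exact, (ii) $\varphi_*$ is left $t_\delta$-exact, and (iii) $\varphi_*$ is right $t_\delta$-exact. Claim (ii) is formal from (i): since $(\varphi^*,\varphi_*)$ is an adjunction, in the sense of $t$-categories recalled in Section \ref{sec:t-struct}, the right $t$-exactness of the left adjoint is equivalent to the left $t$-exactness of the right adjoint.

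For (i) the plan is to test right $t$-exactness on generators and then appeal to closure properties. Since $\varphi^*$ is a morphism of premotivic triangulated categories, it commutes with $f_!$ and Tate twists and preserves the unit; therefore
\[
\varphi^*\bigl(\Mb(X/S)\dtw{\delta(X)}\gtw n\bigr) \;=\; \Mb(X/S)\dtw{\delta(X)}\gtw n
\]
(the right-hand side now viewed in $\T'(S)$) is a generator of $t_\delta$ on $\T'(S)$, hence non-negative. As a left adjoint triangulated functor, $\varphi^*$ commutes with arbitrary coproducts, extensions and positive suspensions, so it preserves the subcategory $\T(S)_{\geq 0}=\gen{\Mb(X/S)\dtw{\delta(X)}\gtw n}$.

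For (iii) the key is to use Proposition \ref{prop:generators1}(1) and replace the default generators of $t_\delta$ on $\T'(S)$ by those coming from a \emph{proper} structural morphism $p\colon X \to S$. For such $p$ we have $p_!^{\T'} = p_*^{\T'}$, and taking right adjoints of the commutation $\varphi^*\circ p^* = p^* \circ \varphi^*$ yields $\varphi_* \circ p_*^{\T'} = p_*^{\T} \circ \varphi_*$; since $\varphi_*$ also commutes trivially with Tate twists and shifts, one computes
\[
\varphi_*\bigl(\Mb(X/S)\dtw{\delta(X)}\gtw n\bigr) \;=\; p_!^{\T}\bigl(\varphi_*(\un_X^{\T'})\dtw{\delta(X)}\gtw n\bigr).
\]
By Proposition \ref{prop:basic_tstruct}(1), $p_!^{\T}$ is right $t_\delta$-exact, and $\varphi_*$ commutes with arbitrary coproducts (because $\varphi^*$ preserves compact generators), so the claim reduces to showing that $\varphi_*(\un_X^{\T'})\dtw{\delta(X)} \geq 0$ in $(\T(X), t_{\delta^p})$, equivalently $\varphi_*(\un_X^{\T'}) \geq -\delta(X)$ in that $t$-category.

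The main obstacle is precisely this last reduction: $\un_X^{\T'}\dtw{\delta(X)}$ is itself a generator of $t_\delta$ on $\T'(X)$, so the bound is a version of the corollary applied over the new base $X$. I would close the loop either by a noetherian induction on the dimensional scheme, using the gluing property (Corollary \ref{delta-htp_glued}) to pass to residue fields where $\varphi_*(\un^{\T'})$ can be analyzed directly, or by invoking a projection-formula identity of the form $\varphi_*(\varphi^*(K)) \cong K \otimes \varphi_*(\un^{\T'})$ built into the premotivic adjunction, combined with the right $t$-exactness of tensor product (Proposition \ref{prop:basic_tstruct}(2)). Either route isolates a single non-negativity statement about $\varphi_*(\un_S^{\T'})$ as the genuine input needed beyond the formal manipulations above.
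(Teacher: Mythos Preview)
Your treatment of (i) and (ii) is correct and matches the paper. For (iii) you have faithfully expanded the paper's hint --- take $X/S$ proper via Proposition~\ref{prop:generators1}(1), then use $\varphi_*\circ p_* = p_*\circ\varphi_*$ --- down to the residual claim $\varphi_*(\un_X^{\T'})\dtw{\delta(X)}\geq 0$ in $(\T(X),t_\delta)$, and you are right to flag this as the genuine obstacle: the paper's two-line proof does not supply it either.

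Neither of your proposed closures can succeed in the stated generality, because assertion (iii) actually fails without an extra hypothesis. Consider the free/forget adjunction $\varphi^*:\SH\leftrightarrows \smod{\mathrm{KGL}}:\varphi_*$ over a field $k$, with $\mathrm{KGL}$ the algebraic $K$-theory spectrum. Bott periodicity gives $\un\gtw{1}\simeq\un[-1]$ in $\smod{\mathrm{KGL}}(k)$, so the generating family for $t_\delta$ already contains all its shifts and one finds $\smod{\mathrm{KGL}}(k)_{\geq 0}=\smod{\mathrm{KGL}}(k)$ --- this is exactly the degeneracy for $K$-theory modules flagged in the Introduction. Right $t_\delta$-exactness of $\varphi_*$ would then force $\varphi_*(\un^{\T'})=\mathrm{KGL}$ into $\SH(k)_{\geq 0}$, which fails since $\mathrm{KGL}$ has nonzero homotopy sheaves in all negative degrees. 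So what your reduction has isolated is the \emph{missing hypothesis}: one needs $\varphi_*(\un^{\T'})\geq 0$, i.e.\ the underlying ring spectrum must be connective for $t_\delta$. This does hold for the adjunctions of diagram~\eqref{eq:premotivic_adj}; more conceptually, once Theorem~\ref{thm:hlg&htp_t} is available for both $\T$ and $\T'$, the full $t$-exactness of $\varphi_*$ follows immediately from the adjunction identity $\rH^\delta_p(\varphi_*\E')=\rH^\delta_p(\E')$. But none of this is a formal consequence of the six-functor manipulations, and your noetherian-induction and projection-formula routes both bottom out at the same unproven input.
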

\begin{proof}
The first assertion follows easily
 from the fact $\varphi^*$ commutes with functors $f_!$ 
 (cf. \cite[2.4.53]{CD3}).
 The second assertion comes from the previous
 proposition because $\varphi_*$ commutes with
 $f_*=f_!$ for $f$ proper.
\end{proof}

\begin{ex}\label{ex:adjunctions_heart}
Let $R$ be a coefficient ring, $(S,\delta)$ be a dimensional scheme
 and consider the premotivic adjunctions \eqref{eq:premotivic_adj}.
 According to the previous corollary, all left (resp. right) adjoints
 of these premotivic adjunctions are right $t_\delta$-exact
 (resp. $t_\delta$-exact).
 Then according to \eqref{eq:proto_t_adj},
  we get adjunctions of abelian categories between the homotopy hearts:
$$
\xymatrix@R=4pt@C=40pt{
 && \hrt{\MGLmod(S)}\restore\ar@<-4pt>_/-8pt/{\phi_*}[lld] \\
\hrt{\SH(S)}\ar@<0pt>_/8pt/{H_0\phi^*}[rru]\ar@<-2pt>_/6pt/{H_0\delta^*}[rd]
 & &  \\
 & \hrt{\DA(S,R)}\ar@<-2pt>_{H_0\gamma^*}[r]\ar@<-2pt>_/-12pt/{\delta_*}[lu]
 & \hrt{\DM(S,R)}.\ar@<-2pt>_{\gamma_*}[l] 
}
$$
Moreover, the right adjoints $\delta_*$, $\gamma_*$, $\rho_*$, and $\phi_*$,
 are all exact and conservative, thus faithful.
\end{ex}

As a corollary of point (3) of the previous proposition,
 we get the following fact:
\begin{cor}\label{cor:htp_t_over_fields}
Let $k$ be a perfect field.

Then the $\delta_k$-homotopy t-structure on $\T(k)$
 is generated by spectra of the form $M(X)\gtw n$,
 where $X/k$ is smooth and $n \in \ZZ$.
\end{cor}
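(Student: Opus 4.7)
The plan is to derive this from Proposition \ref{prop:generators1}(3) combined with Example \ref{ex:motive_sm_positive}. I would first apply Proposition \ref{prop:generators1}(3) to $S=\spec(k)$ (which is regular and separated over $\ZZ$): the $\delta_k$-homotopy $t$-structure on $\T(k)$ is generated by $\Mb(X/k)\dtw{\delta(X)}\gtw n$ for $n\in \ZZ$ and $X/k$ regular admitting a closed immersion $X\hookrightarrow \AA^r_k$ with trivial normal bundle. Since $k$ is perfect, any such regular $X/k$ is smooth, and since the aisle is stable under coproducts, I may split into connected components and assume $X$ is equidimensional of dimension $d=\delta(X)$.

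The key computation is the identification $\Mb(X/k)\dtw{\delta(X)}\simeq M(X)$ in this situation. Indeed, the short exact sequence $0\to T_{X/k}\to T_{\AA^r_k}|_X\to N_{X/\AA^r_k}\to 0$ together with the triviality of the two outer terms ($T_{\AA^r_k}|_X\simeq \cO_X^{\,r}$ and $N_{X/\AA^r_k}\simeq \cO_X^{\,r-d}$ by assumption) yields the equality $[T_{X/k}]=[d]$ of virtual bundles on $X$. Extending the Thom spectrum functor to virtual bundles as in \S\ref{num:virtual_vb} gives $\MTh(-T_{X/k})\simeq \un_X\dtw{-d}$. Writing $f\colon X\to \spec(k)$ for the structural morphism, the purity isomorphism \ref{num:BM}(BM4) then yields
$$
\Mb(X/k)\;\simeq\; f_\sharp \MTh(-T_{X/k}) \;\simeq\; f_\sharp(\un_X)\dtw{-d}\;=\;M(X)\dtw{-d},
$$
so that $\Mb(X/k)\dtw{\delta(X)}\gtw n\simeq M(X)\gtw n$. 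Thus the $\delta_k$-homotopy $t$-structure is generated by a \emph{sub}family of $\{M(X)\gtw n : X/k \text{ smooth},\,n\in \ZZ\}$.

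To conclude that it is generated by the full family, I must verify that every $M(X)\gtw n$ with $X/k$ smooth already lies in the non-negative part of $t_{\delta_k}$. Example \ref{ex:motive_sm_positive} applied to $S=\spec(k)$ (whose hypotheses are satisfied since $\spec(k)$ is universally catenary integral, and the two dimension functions $-\codim_{\spec(k)}$ and $\delta_k$ agree on $\spec(k)$, both being zero there) shows $M(X)$ is $t_{\delta_k}$-non-negative. Combined with Corollary \ref{cor:Thom&delta-homotopy} applied to $E=\AA^1_k$, which gives that $(-)\gtw 1$ is $t_{\delta_k}$-exact, the twist $M(X)\gtw n$ is non-negative for every $n\in\ZZ$. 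This closes the argument. The only mildly delicate step is the virtual-bundle Thom computation trivializing $\MTh(-T_{X/k})$; everything else is assembly of earlier results.
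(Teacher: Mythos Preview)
Your proof is correct and follows essentially the same route as the paper: apply Proposition~\ref{prop:generators1}(3), use perfectness of $k$ to get smoothness, and trivialize the tangent bundle via the normal bundle exact sequence to identify $\Mb(X/k)\dtw{\delta_k(X)}\simeq M(X)$ through (BM4). Your treatment of the Thom computation at the level of virtual bundles is in fact slightly more careful than the paper's (which asserts the tangent bundle is ``trivial'' when only stable triviality follows from the exact sequence), and your final paragraph---checking via Example~\ref{ex:motive_sm_positive} and $t_\delta$-exactness of $\gtw{1}$ that enlarging the generating family to \emph{all} smooth $X/k$ does not change the $t$-structure---supplies a step the paper leaves implicit.
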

Indeed, given $X/k$ as in point (3) of the previous proposition,
 we get: 
\begin{equation}\label{eq:compute_Mb4sm}
\Mb(X/k)\dtw{\delta_k(X)}
 \stackrel{(BM4)}\simeq \MTh_k(-T_{X/k})\dtw{\delta_k(X)}
 \simeq M(X)
\end{equation}
because, under the assumptions on $X$,
 the tangent bundle of $X/k$ is trivial of rank
 $\delta_k(X)$.

\begin{ex}\label{ex:delta_htp-t-struct_field}
 Let $k$ be a perfect field of characteristic exponent $p$,
 $\delta_k$ the canonical dimension function.
\begin{enumerate}
\item Let $\T=\DM_R$ be the triangulated motivic category of
 point (1) or (2) of Paragraph \ref{num:convention_DM}.
 In case (2), we assume $k$ contains $F$.
 Recall from \eqref{eq:DM&DM_cdh} that
 $\DM(k,R)$ can be described using Nisnevich topology instead
 of $\cdh$-topology, as was done in \cite[7.15]{CD1}.

Then according to the previous corollary and \cite[\textsection 5.7]{Deg9},
 the $\delta_k$-homotopy $t$-structure on $\DM(k,R)$ coincides
 with the \emph{homotopy $t$-structure} defined in \cite[Prop. 5.6]{Deg9}.
\item The case of the stable homotopy category $\T=\SH$.

Recall Morel has introduced in \cite[Th. 5.3.3]{Mor1}
 a $t$-structure on $\SH(k)$ that we will call the
 \emph{Morel homotopy $t$-structure}.
 One notable feature of this $t$-structure is that its heart
 is equivalent to the category of \emph{homotopy modules over $k$}
 (\cite[1.2.2]{Deg10}), or recall below).
 Moreover, as remarked in \cite[1.1.5]{Deg10},
 this $t$-structure is generated (in the sense of Definition
 \ref{df:gen_tstruct}) by spectra of the form:
 $\Sigma^\infty X_+ \wedge \GG^{\wedge n}$, where $X$ is a smooth $k$-scheme,
 $\Sigma^\infty$ is the infinite suspension $\PP^1$-spectrum functor,
 $\GG$ stands for the sheaf of sets represented
 by the scheme $\GG$ pointed by $1$, and $n$ is any integer.

Thus, it follows from the previous corollary
 that the $\delta_k$-homotopy $t$-structure on $\SH(k)$
 coincides with Morel's homotopy $t$-structure as defined in \cite{Mor1}.
\item The case of the stable $\AA^1$-derived category $\T=\DAx R$,
 $R$ any ring of coefficients.

It is explain in \cite[Remark 8 of Introduction]{dmtilde3}
 how one can build the analog of the Morel's homotopy $t$-structure
 recalled in the preceding point on the effective version of
 the category $\DA(k,R)$. 
 In fact, following the construction of the homotopy $t$-structure on $\SH(k)$,
 on can define Morel's homotopy $t$-structure on the stable category
 $\DA(k,R)$. Then it follows
 from the construction that the heart is again equivalent to
 the category of homotopy modules over $k$.
 Again, this $t$-structure is generated by $\DAx R$-spectra of
 the form $M_k(X)$, where $X$ is a smooth $k$-scheme.

 Thus, we also obtain that the $\delta_k$-homotopy $t$-structure
 is equivalent to Morel's homotopy $t$-structure on $\DA(k,R)$.
\end{enumerate}
\end{ex}

\begin{num}\label{num:concrete_htp_heart}
Let us underlined an interesting consequence of the previous
 example. Let again $k$ be a perfect field of characteristic
 exponent $p$, invertible in a fixed coefficient ring $R$.

Recall from \cite[5.2.4]{Mor1} and \cite[1.1.4]{Deg10}
 that a \emph{homotopy module}
 (resp. \emph{homotopy module with transfers}) over $k$
 is a sequence $(F_n,\epsilon_n)_{n \in \NN}$ where $F_n$
 is a Nisnevich sheaf over the category of smooth $k$-schemes
 whose cohomology is $\AA^1$-invariant
 (resp. which is $\AA^1$-invariant and admits transfers)
 and $\epsilon_n:F_n \rightarrow (F_{n+1})_{-1}$ is
 an isomorphism of sheaves where for any such a sheaf $G$,
 $G_{-1}(X)=G(\GG \times X)/G(X)$, induced by the unit section
 of $\GG$. We denote by $\hmod(k)$ (resp. $\hmtr(k)$)
 the corresponding category, morphisms being natural transformations
 compatible with the grading and with the given isomorphisms $\epsilon_*$.
 Recall it is a Grothendieck abelian closed monoidal category.

Then we get the following equivalences of abelian monoidal categories
 where the left hand sides are the heart of the relevant homotopy
 (or $\delta_k$-homotopy) $t$-structure:
\begin{equation}\label{eq:compute_hrt}
\hrt{\SH(k)}\simeq \hmod(k), \quad \hrt{\DA(k)}\simeq \hmod(k),
 \quad \hrt{\DM(k)}\simeq\hmtr(k).
\end{equation}
The first two equivalences follow from the construction of Morel
 and the third one by \cite[5.11]{Deg9}.

Recall also that Morel defines the \emph{Hopf map of a scheme $S$}
 as the morphism $\eta:\GG \rightarrow S^0$ in $\SH(S)$,
 --- or $\eta:\un_S\gtw 1 \rightarrow \un_S$ with the conventions
 of the present paper --- induced by the morphism of schemes
$$
\big(\AA^2_S-\{0\}\big) \rightarrow \PP^1_S, (x,y) \mapsto [x,y].
$$
We say that a spectrum $\E$ over $S$
 has \emph{trivial action of $\eta$}
 if the map $\eta \wedge 1:\E\gtw 1 \rightarrow \E$ is $0$.
 This is equivalent to the fact that $\eta$ has trivial action
 on the cohomology $\E^{n,m}(X)$ for any smooth $S$-scheme $X$
 and any couple of integers $(n,m) \in \ZZ^2$. The main
 theorem of \cite{Deg10} identifies $\hmtr(k)$
 with the full subcategory of $\hmod(k)$ made by
 homotopy modules with trivial action of $\eta$.

In particular, we get the following proposition.
\end{num}
\begin{prop}\label{prop:compute_heart}
We use the notations of Example \ref{ex:adjunctions_heart}.
 Let $k$ be a perfect field of characteristic exponent $p$
 and $R$ a ring such that $p \in R^\times$.
 Then the following assertions hold:
\begin{enumerate}
\item The adjunction of abelian categories
 between the $\delta_k$-homotopy hearts:
$$
H_0\delta^*:\hrt{\SH(k)} \leftrightarrows \hrt{\DA(k)}:\delta_*
$$
are mutually inverse equivalences of abelian monoidal categories.
\item The exact functor of abelian categories
 between the $\delta_k$-homotopy hearts:
$$
\gamma_*:\hrt{DM(k,R)} \rightarrow \hrt{\DA(k,R)}
$$
is fully faithful and its essential image is equivalent
 to the category of homotopy modules with trivial
 action of the Hopf map $\eta$.
\end{enumerate}
\end{prop}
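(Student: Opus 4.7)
The plan is to deduce both statements from the identifications of the $\delta_k$-homotopy hearts recalled in \ref{num:concrete_htp_heart}. By Example \ref{ex:delta_htp-t-struct_field}, the $\delta_k$-homotopy $t$-structures on $\SH(k)$, $\DA(k,R)$ and $\DM(k,R)$ agree with the respective homotopy $t$-structures defined there, and the equivalences \eqref{eq:compute_hrt} therefore supply canonical identifications
\[
\Psi_{\SH}:\hrt{\SH(k)}\xrightarrow{\sim}\hmod(k),\quad
\Psi_{\DA}:\hrt{\DA(k,R)}\xrightarrow{\sim}\hmod(k),\quad
\Psi_{\DM}:\hrt{\DM(k,R)}\xrightarrow{\sim}\hmtr(k),
\]
each given on an object $\E$ of the relevant heart by the graded Nisnevich sheaf whose $n$-th level ($n\in\NN$) on a smooth $k$-scheme $X$ is $\Hom(M_k(X)\gtw n,\E)$.

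For (1), $\delta_*$ is $t$-exact by Corollary \ref{cor:premotivic_adj_t-exactness}, so on the hearts it acts as $\delta_*$ itself. Because $\delta^*$ is a premotivic left adjoint, it sends the generator $M_k(X)\gtw n$ of $\SH(k)$ to the corresponding generator of $\DA(k,R)$; comparing the Hom-formulas defining $\Psi_\SH$ and $\Psi_\DA$ on these common generators therefore shows that the adjunction $(H_0\delta^*,\delta_*)$ corresponds under the $\Psi$'s to the identity adjunction on $\hmod(k)$. This yields mutually inverse equivalences. The monoidal compatibility follows from the fact that $\delta^*$ is monoidal and the tensor product on each heart is induced via $\ohrt$ (Section \ref{sec:t-struct}).

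For (2), the main theorem of \cite{Deg10} recalled at the end of \ref{num:concrete_htp_heart} identifies $\hmtr(k)$ with the full subcategory of $\hmod(k)$ whose objects are homotopy modules on which $\eta$ acts trivially. It therefore suffices to prove that, under $\Psi_\DM$ and $\Psi_\DA$, the functor $\gamma_*$ corresponds to the forgetful inclusion $\hmtr(k)\hookrightarrow\hmod(k)$. For $\E\in\hrt{\DM(k,R)}$ the premotivic adjunction $(\gamma^*,\gamma_*)$ gives natural isomorphisms
\[
\Hom_{\DA(k,R)}(M_k(X)\gtw n,\gamma_*\E)\simeq\Hom_{\DM(k,R)}(M_k(X)\gtw n,\E),
\]
and the right-hand side is exactly the underlying sheaf of $\Psi_\DM(\E)$ at level $n$ evaluated at $X$. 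Hence $\Psi_\DA\circ\gamma_*=\iota\circ\Psi_\DM$, where $\iota$ is the forgetful functor, and the desired full faithfulness together with the characterization of the essential image follow at once.

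The main obstacle is the invocation of \cite{Deg10} in part (2): the identification of $\hmtr(k)$ with homotopy modules killed by $\eta$ is a substantial input whose proof lies outside the present framework. Granted that input, everything else reduces to bookkeeping, namely checking that the abstract premotivic adjunctions $(\delta^*,\delta_*)$ and $(\gamma^*,\gamma_*)$ restrict, on the hearts and via the explicit $\Psi$'s, to the expected elementary functors between categories of homotopy modules.
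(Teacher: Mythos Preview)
Your proposal is correct and follows essentially the same route as the paper: the proposition is stated there as an immediate consequence of the identifications \eqref{eq:compute_hrt} recalled in \S\ref{num:concrete_htp_heart} together with the main theorem of \cite{Deg10}, and the remark following the proposition spells out exactly the compatibility you prove via Hom-formulas (namely that $\delta_*$ and $\gamma_*$ correspond, through the $\Psi$'s, to the identity on $\hmod(k)$ and to the forgetful functor $\hmtr(k)\to\hmod(k)$ respectively). Your write-up is in fact more detailed than what the paper provides.
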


\begin{rem}
Let us consider the notations of the preceding proposition.
 It follows from the construction of the 
 isomorphisms \eqref{eq:compute_hrt} that one gets commutative
 diagrams:
$$
\xymatrix@R=18pt@C=26pt{
\hrt{DM(k,R)}\ar^-{\gamma_*}[r]\ar@{}|{(1)}[rd]
& \hrt{\DA(k,R)}\ar^-{\delta_*}[r]\ar@{}|{(2)}[rd] & \hrt{\SH(k)} \\
\hmtr(k)\ar_{\gamma'_*}[r]\ar^\sim[u] & \hmod(k)\ar@{=}[r]\ar^\sim[u] & 
\hmod(k)\ar_\sim[u]
}
$$
where $\gamma'_*$ associates to a homotopy modules with transfers 
$(F_*,\epsilon_*)$
 the homotopy module 
 $$\left(\gamma_*(F_*),\gamma_*(\epsilon_*)\right)$$
 (see \cite[1.3.3]{Deg10}).
 As $H_0\delta^*$ is a quasi-inverse of the functor $\delta_*$ appearing in
 the commutative square (2), it induces the identity functor on $\hmod(k)$
 through the identifications \eqref{eq:compute_hrt}.
 There is no easy way to describe the functor
 $\hmod(k) \rightarrow \hmtr(k)$ induced by $H_0\gamma^*$, as the
 functor adding transfers does not preserve the property of being homotopy
 invariant. 
\end{rem}

\begin{num}
Our $t$-structure is very analogous to that
 defined earlier by Ayoub in \cite[\textsection 2.2.4, p. 365]{ayoub1},
 under the name \emph{perverse homotopy $t$-structure}.

To recall the definition, one needs to fix a base scheme $B$
 and assume the following property (see \cite[Hyp. 2.2.58]{ayoub1}):
\begin{enumerate}
\item[(a)] For any separated morphism $f:X \rightarrow B$,
 the functor $f^!$ preserves constructible $\T$-spectra.\footnote{Recall
 that under our assumptions the two properties constructible
 and compact coincide for $\T$-spectra.}
\end{enumerate}
Note this property is automatically fulfilled whenever
 $\base$ is the category of $\QQ$-schemes (cf. \cite[2.2.33]{ayoub1})
 or $\T$ is $\QQ$-linear, separated 
 and satisfies the absolute purity property
 (cf. \cite[Th. 4.2.29]{CD3}).

Let $S$ be a separated\footnote{In \emph{loc. cit.}
 one assumes further that $S$ is a quasi-projective $B$-scheme but
 this restriction
 only appears because the functors $f_!$ and $f^!$ are defined
 under the assumption $f$ is quasi-projective.
 This unnecessary assumption has been removed in \cite{CD3}.}
 $B$-scheme.
Then the \emph{perverse homotopy $t$-structure} (relative to $B$)
 on $\T(S)$ is the $t$-structure generated by spectra of the
 form $g_!q^!(\un_B)\gtw n$ for any separated morphism 
 $g:X \rightarrow S$, $q$ being the projection of $X/B$.
 Following Ayoub, we will denote it by $^pt$.
\end{num}
\begin{prop}
Consider the above notations along with assumption (a).
 Assume that $B$ is regular and let $\delta$ be a dimension function on $B$.

Then for any separated $B$-scheme $S$,
 the $t$-structure $^pt$ (resp. $t^\delta$) on $\T(S)$
 is generated by objects of the form $g_!q^!(\un_B)\gtw n$
 (resp. $\Mb(X/S)\dtw{\delta(X)}\gtw n$) for an integer $n \in \ZZ$,
 an affine connected regular $B$-scheme $X$ which admits a closed $B$-embedding
 $i:X \rightarrow \AA^r_B$ with trivial normal bundle,
 and $g:X \rightarrow S$ a separated $B$-morphism.
\end{prop}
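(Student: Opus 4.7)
The plan is to prove both statements simultaneously by noetherian induction on $X$, adapting the proof of Proposition~\ref{prop:generators1}(3) with the regular scheme $B$ (rather than $S$) playing the role of the ambient scheme. Fix a separated $B$-morphism $g:X\to S$ and write $\gen{\cG}$ for the triangulated subcategory generated by the refined family. The empty case is trivial, and for the inductive step it suffices to produce a dense open $U\subset X$ fitting the refined description, since then the reduced complement $Z=X\setminus U$ is a proper closed subscheme of $X$ to which the induction hypothesis applies.

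The construction of $U$ is routine: one may assume $X$ reduced, and excellence gives a dense open regular subscheme which we shrink to an affine open $U$. As $U$ is affine and $B$ is separated, $U\to B$ is automatically affine, and being also of finite type it embeds as a closed subscheme of some $\AA^r_B$. Since both $U$ and $\AA^r_B$ are regular, this immersion is a regular closed immersion; its normal bundle is a vector bundle, hence generically trivial, and a further shrinking makes it trivial. Finally, passing to a connected component (there are only finitely many, each clopen) produces a nice $U$ in the sense of the proposition.

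For the $\delta$-homotopy $t$-structure, the standard localization triangle
\[
\Mb(U/S)\to\Mb(X/S)\to\Mb(Z/S)\xrightarrow{+1},
\]
twisted by $\dtw{\delta(X)}\gtw n$, completes the induction just as in \ref{prop:generators1}(3): density of $U$ gives $\delta(U)=\delta(X)$ so that the first term is a refined generator, while the strict inequality $\delta(Z)<\delta(X)$ (since $Z$ is nowhere dense) expresses $\Mb(Z/S)\dtw{\delta(X)}\gtw n$ as a positive suspension and Tate twist of $\Mb(Z/S)\dtw{\delta(Z)}\gtw{n'}$, which belongs to $\gen{\cG}$ by the induction hypothesis.

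For the perverse $t$-structure, applying the localization triangle $j_!j^*\to\id\to i_*i^*\xrightarrow{+1}$ in $\T(X)$ to $q^!(\un_B)$, using $j^*q^!(\un_B)=q_U^!(\un_B)$, and then pushing forward by $g_!$ yields
\[
g_{U,!}q_U^!(\un_B)\to g_!q^!(\un_B)\to g_{Z,!}\bigl(i^*q^!(\un_B)\bigr)\xrightarrow{+1}.
\]
Absolute purity (available because $B$, $U$ and $\AA^r_B$ are regular) together with the trivializations of the relative tangent bundle of $\AA^r_B/B$ and of the normal bundle of $i$ yields $q_U^!(\un_B)\simeq\un_U\dtw{r-c}$, so the first term is a refined generator. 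The main obstacle is the third term: since $Z$ need not be regular and $i$ need not be a regular immersion, $i^*q^!(\un_B)$ is not of the form $q_Z^!(\un_B)$, and the naive induction hypothesis does not apply directly. Handling this will require strengthening the inductive statement to cover $g'_!(\F)$ for all constructible spectra $\F$ on $X'$ obtainable from $q'^!(\un_B)$ by iterated application of $i^*$, $i_*$, $j^*$, $j_!$ (using hypothesis~(a), which guarantees constructibility of $q^!(\un_B)$, and the fact that constructibility is preserved by these operations and by $g_{Z,!}$), or alternatively using the dual localization triangle $i_!i^!\to\id\to Rj_*j^*\xrightarrow{+1}$ and treating the $Rj_*$ contribution by a separate geometric argument.
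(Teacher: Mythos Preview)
Your treatment of the $t^\delta$ case is correct and is exactly what the paper does: it simply points to point~(3) of Proposition~\ref{prop:generators1}, and your write-up reproduces that argument with $B$ in the role of the ambient regular scheme.

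For the $^pt$ case, however, your proof is incomplete. You correctly identify the obstruction: the localization triangle $j_!j^*\to\id\to i_*i^*$ applied to $q^!(\un_B)$ produces a third term $g_{Z,!}\bigl(i^*q^!(\un_B)\bigr)$, and since $i^*q^!(\un_B)$ has no reason to agree with $q_Z^!(\un_B)=i^!q^!(\un_B)$, the noetherian induction hypothesis does not apply to it. But you then stop, offering two possible repairs without carrying either one out. The first suggestion---strengthening the induction to cover $g'_!(\F)$ for all constructible $\F$ built from $q'^!(\un_B)$ by the four gluing functors---cannot work as stated: such $\F$ include arbitrary negative shifts (e.g.\ cones), so $g'_!(\F)$ need not lie in $\gen{\cG}$ at all. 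The second suggestion, using the dual triangle $i_*i^!\to\id\to j_*j^*$, does put the $Z$-term in the right form $g_{Z,!}q_Z^!(\un_B)$, but now the $U$-term becomes $g_!j_*q_U^!(\un_B)$ rather than $g_{U,!}q_U^!(\un_B)$, and you give no argument for why this lies in $\gen{\cG}$. So in either approach a nontrivial step is missing.

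The paper's own proof is a single sentence (``the proof in the case of $^pt$ is completely similar'') and does not spell out how this difficulty is handled; it is genuinely not transparent from the text. One way to complete the argument is to combine both triangles: use $i_*i^!\to\id\to j_*j^*$ to reduce to $g_!j_*q_U^!(\un_B)$ with the $Z$-term handled by induction, then compare $j_!\to j_*$ on $q_U^!(\un_B)$ via the cofiber $i_*i^*j_*q_U^!(\un_B)$, which is again supported on $Z$; but making this second induction go through still requires controlling objects on $Z$ that are not of the form $q_Z^!(\un_B)$, so some further input (or a sharper inductive statement) is needed. You should either complete one of these routes or, since the only application in the paper is Corollary~\ref{cor:compare_Ayoub} where absolute purity is available, note that under that hypothesis $q^!(\un_B)$ becomes a Thom twist of $\un_X$ on each regular stratum and the two generating families literally coincide, sidestepping the issue.
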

\begin{proof}
The case of $t^\delta$ was already proved as point (3) of
 Proposition \ref{prop:generators1}.
 The proof in the case of $^pt$ is completely similar.
\end{proof}

\begin{cor}\label{cor:compare_Ayoub}
Consider the assumptions of the previous proposition
 together with the following ones:
\begin{enumerate}
\item[(b)] $S$ is regular with dimension
 function $\delta=-\codim_S$ (Example \ref{ex:can_dim_fn});
\item[(c)] $\T$ satisfies the absolute purity property 
 (Definition \ref{df:abs_purity}).
\end{enumerate}
Then for all separated $S$-scheme $X$,
 the perverse homotopy $t$-structure $^pt$ and the
 $\delta$-homotopy $t$-structure $t^\delta$ coincide
 on $\T(X)$.
\end{cor}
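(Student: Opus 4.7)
The plan is to match the generators of both $t$-structures via absolute purity, using the previous proposition as the common starting point. By that proposition, both $^pt$ and $t^\delta$ on $\T(X)$ are generated by objects indexed by the same data, namely pairs $(Y,h)$ where $Y$ is an affine connected regular $B$-scheme admitting a closed $B$-embedding $i\colon Y \hookrightarrow \AA^r_B$ with trivial normal bundle of some rank $c$, and $h\colon Y \to X$ is a separated $B$-morphism. The corresponding generators are $h_!q_Y^!(\un_B)\gtw{n}$ for $^pt$ and $h_!(\un_Y)\dtw{\delta(Y)}\gtw{n}$ for $t^\delta$, where $q_Y\colon Y \to B$ is the structural morphism of $Y$. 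It is therefore enough to produce, for each such pair and $n \in \ZZ$, a canonical isomorphism $q_Y^!(\un_B) \simeq \un_Y\dtw{\delta(Y)}$.

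The central step is an application of absolute purity. The morphism $q_Y$ is quasi-projective and lci, being the composite of the regular closed immersion $i$ (of codimension $c$, since its normal bundle has rank $c$) with the smooth projection $\AA^r_B \to B$ of relative dimension $r$. Since $B$ and $Y$ are regular, assumption (c) together with Corollary \ref{cor:fund_class} yields a canonical isomorphism
$$q_Y^!(\un_B) \simeq \Th(\tau_{q_Y}),$$
and the virtual tangent bundle $\tau_{q_Y} = [q_Y^*\tau_{\AA^r_B/B}] - [N_i]$ is trivial of rank $d := r - c$, both summands being trivial. Hence $q_Y^!(\un_B) \simeq \un_Y\dtw{d}$.

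It remains to identify $d$ with $\delta(Y)$, which is a dimension function computation. After fixing a dimension function $\delta_B$ on $B$ compatibly with $\delta = -\codim_S$ via $p_S\colon S \to B$ (the ambiguity being an integer shift on each connected component, absorbable by Lemma \ref{lm:indep_delta}), I would use that $Y$ is integral regular to reduce the computation to its generic point $\eta$: one has $\delta_B^{q_Y}(\eta) = \delta_B(q_Y(\eta)) + \dtr(\kappa(\eta)/\kappa(q_Y(\eta)))$, and the lci structure of $q_Y$ between regular schemes forces this to equal the virtual relative dimension $d$, yielding $\delta(Y) = d$. The main obstacle is precisely this final coordination of compatible dimension functions on $B$ and $S$, together with the transcendence degree calculation at $\eta$; the preceding two steps are formal consequences of the previous proposition and of absolute purity. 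Once the identification $q_Y^!(\un_B) \simeq \un_Y\dtw{\delta(Y)}$ is established, applying $h_!$ and varying $(Y,h)$ matches the two generator families, so the two $t$-structures coincide.
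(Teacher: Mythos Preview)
Your proposal is correct and follows essentially the same approach as the paper: reduce to the common generator family via the previous proposition, then identify the two generators via absolute purity (Corollary~\ref{cor:fund_class}) applied to the lci morphism $q_Y$ with trivial virtual tangent bundle.

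The only point where you diverge is the final identification $d=\delta(Y)$, which you treat as the ``main obstacle'' and approach via a careful transcendence-degree computation at the generic point together with a coordination of dimension functions on $B$ and $S$. The paper dispatches this in one line by invoking Proposition~\ref{prop:delta-base_change}(3): since the chosen dimension function satisfies $\delta=-\codim$, one has $\delta=0$ at generic points, so $\delta(Y)$ equals the relative dimension $d$ of $q_Y$. Your explicit approach via the generic point of $Y$ is in fact more robust (Proposition~\ref{prop:delta-base_change}(3) literally requires surjectivity on generic points and constant fibre dimension, neither of which is given), but the conclusion is the same and no coordination issue actually arises once $\delta$ is fixed as $-\codim$.
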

\begin{proof}
We will prove that the generators of the two $t$-structures
 are the same up to isomorphism.
 Using the preceding proposition,
 the generators of $^pt$ (resp. $t^\delta$) are of the form
 $g_!q^!(\un_B)\gtw n$ (resp. $\Mb(X/S)\dtw{\delta(X)}\gtw n$)
 for an integer $n \in \ZZ$, a $B$-scheme $X$ 
 and a $B$-morphism $g$ satisfying the assumptions of the preceding
 proposition.
 Then, by assumption, the virtual tangent bundle of $q:X \rightarrow B$
 is isomorphic to the trivial vector $B$-bundle of rank $d$,
 where $d$ is the relative dimension of $q$.
 Thus, according to the absolute purity property on $\T$
 and Corollary \ref{cor:fund_class}, one gets an isomorphism:
$$
q^!(\un_B) \simeq \un_S\dtw d
$$
and this concludes because $d=\delta(X)$ according to
 Proposition \ref{prop:delta-base_change}(3).
\end{proof}

We close this section with corollaries
 of Proposition \ref{prop:generators1} concerning $\delta$-effective
 spectra.
\begin{cor}
Let $k$ be a perfect field and $\delta=\dim$ be the Krull dimension
 function on $\spec(k)$.
\begin{enumerate}
\item The category $\T^{\delta-\eff}(k)$ is the 
 localizing triangulated subcategory of $\T(k)$
 generated by spectra of the form $M(X)$
 for a smooth $k$-scheme $X$.
\item The $\delta$-homotopy $t$-structure on $\T^{\delta-\eff}(k)$
 is generated by spectra of the form $M(X)$ for
 a smooth $k$-scheme $X$.
\end{enumerate}
\end{cor}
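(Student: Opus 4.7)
Both parts follow the same two-step strategy. First, I will apply the refined generator description from Proposition~\ref{prop:generators1}(3) to reduce to smooth affine $k$-schemes $X$ admitting a closed embedding into $\AA^r_k$ with trivial normal bundle. Then I will invoke the computation \eqref{eq:compute_Mb4sm}, namely $\Mb(X/k)\dtw{\delta(X)}\simeq M(X)$ under those assumptions, together with the canonical splitting $M(\GG_m)\simeq\un_k\oplus\un_k\gtw 1$ (which holds in any premotivic category compactly generated by Tate twists, since the structure map $\GG_m\to\spec(k)$ splits the unit section) in order to handle the positive Tate twists. Note that since $\delta$ is the Krull dimension function and $\delta(\spec k)=0$, a smooth connected $k$-scheme $X$ satisfies $\delta(X)=\dim(X)$.

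For part (1), the noetherian induction used in the proof of Proposition~\ref{prop:generators1} transfers verbatim to a statement about localizing triangulated subcategories: its only inputs are the localization triangles from~\ref{num:BM}(BM3) and the inequality $\delta(Z)<\delta(X)$ for any nowhere dense closed subscheme $Z\subset X$, which ensures the effectivity index $n\geq\delta(X)$ survives the induction. This reduces the problem to showing that $\Mb(X/k)(n)$ lies in the localizing subcategory generated by $\{M(Y):Y/k\text{ smooth}\}$ for $X$ as above and $n\geq\delta(X)$. By~\eqref{eq:compute_Mb4sm} this object is isomorphic to $M(X)(m)[-2\delta(X)]$ with $m:=n-\delta(X)\geq 0$. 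Iterating the Tate splitting exhibits $M(X)(m)[m]$ as a direct summand of $M(X\times\GG_m^{\times m})$, and since localizing subcategories are closed under shifts and retracts we are done. The reverse inclusion is Corollary~\ref{cor:sm_motives_eff} applied at $S=\spec k$ (where $\delta(S)=0$), which yields $M(Y)\in\T^{\delta-\eff}(k)$ for smooth $Y$.

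For part (2), the same reduction re-expresses the defining generators of the effective $\delta$-homotopy $t$-structure as $M(X)\gtw n=M(X)(n)[n]$ with $n\geq 0$. The case $n=0$ is a motive of a smooth scheme, while for $n\geq 1$ this object is a direct summand of the smooth motive $M(X\times\GG_m^{\times n})$; since the positive part of any $t$-structure is right-orthogonal to its negative part and hence closed under retracts, $M(X)\gtw n$ is positive in the $t$-structure generated by motives of smooth $k$-schemes. Conversely, for any smooth $Y/k$, Corollary~\ref{cor:sm_motives_eff} yields $M(Y)\in\T^{\delta-\eff}(k)$ and Example~\ref{ex:motive_sm_positive} yields $M(Y)\geq 0$ in $\T(k)$; since the fully faithful $t$-exact embedding $s:\T^{\delta-\eff}(k)\hookrightarrow\T(k)$ of~\eqref{eq:adj_t_delta_eff_st} reflects non-negativity (the map $M\to\tau^{\mathit{eff}}_{<0}M$ becomes zero after applying $s$ by orthogonality, hence is already zero, forcing $\tau^{\mathit{eff}}_{<0}M=0$), it follows that $M(Y)$ is non-negative in the effective $t$-structure as well. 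The main place where care is needed is the reduction to smooth affine schemes with trivial normal bundle (the nontrivial content of Proposition~\ref{prop:generators1}(3)) and the use of the Tate splitting; granted those, the rest is bookkeeping around closures under summands, shifts, and the exactness of $s$.
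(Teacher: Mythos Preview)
Your strategy matches the paper's: use Proposition~\ref{prop:generators1}(3) and the identification~\eqref{eq:compute_Mb4sm} to rewrite the effective generators as $M(X)\gtw n$ for certain smooth $X$ and $n\ge 0$, then absorb the twist via the direct-summand relation $M(X)\gtw n \mid M(X\times\GG^n)$. Your treatment of part~(1) is correct; note that the paper instead proves~(2) first and then deduces~(1) in one line from Lemma~\ref{lm:non-neg-degenerated} (left non-degeneracy of the generated $t$-structure forces its $t$-generators to generate the ambient triangulated category), which avoids repeating the noetherian induction.

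There is, however, a genuine gap in your converse direction for part~(2). You assert that $s:\T^{\delta-\eff}(k)\hookrightarrow\T(k)$ is a ``$t$-exact embedding'', but~\eqref{eq:adj_t_delta_eff_st} only records that $(s,w)$ is an adjunction of $t$-categories, i.e.\ $s$ is \emph{right} $t$-exact and $w$ is \emph{left} $t$-exact---nothing more. Your parenthetical argument is circular: the claim that $s(M)\to s(\tau^{\eff}_{<0}M)$ vanishes ``by orthogonality'' presupposes $s(\tau^{\eff}_{<0}M)\in\T(k)_{<0}$, which is exactly left $t$-exactness of $s$. Concretely, an object $N\in\T^{\delta-\eff}(k)_{<0}$ is only known to be right-orthogonal to the effective generators $\Mb(X/k)\dtw{\delta(X)}\gtw n$ with $n\ge 0$; you have no control over $\Hom(\Mb(X/k)\dtw{\delta(X)}\gtw n,N)$ for $n<0$. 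The correct fix is to show directly that $M(Y)\in\T^{\delta-\eff}(k)_{\geq 0}$ for smooth $Y$: by Mayer--Vietoris on $Y$ one reduces to $Y$ affine with trivial tangent bundle, in which case $M(Y)=\Mb(Y/k)\dtw{\delta(Y)}$ is itself one of the defining effective generators (the case $n=0$ in Definition~\ref{df:eff_tstruct}).
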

\begin{proof}
According to point (3) of Proposition \ref{prop:generators1},
 the homotopy $t$-structure on $\T^{\delta-\eff}(k)$ is generated, 
 for a smooth connected $k$-scheme $X$
 by spectra of the form $\Mb(X/k)\dtw{\delta(X)}\gtw n$
 for a smooth $k$-scheme $X$ (because $k$ is perfect)
 and an integer $n \geq 0$. According to Computation
 \eqref{eq:compute_Mb4sm}, this spectrum is isomorphic to $M(X)\gtw n$.
 On the other hand, the later one is a direct factor of $M(X \times \GG^n)$;
 so point (2) of the above statement follows.

By definition and according to Lemma \ref{lm:non-neg-degenerated},
 the homotopy $t$-structure on $\T^{\delta-\eff}(k)$
 is left non-degenerate. Applying again
 Lemma \ref{lm:non-neg-degenerated}, we obtain that the
 triangulated category $\T^{\delta-\eff}(k)$ is equal to its
 smallest triangulated subcategory stable by coproducts and
 containing objects of the form $M(X)$ for a smooth $k$-scheme $X$.
 This is precisely point (1).
\end{proof}

\begin{ex}\label{ex:delta_effective_fields}
\begin{enumerate}
\item Let $F$ be a prime field with characteristic exponent $p$
 and $R$ be a ring such that $p \in R^\times$.
 Then for any perfect field $k$ of characteristic exponent $p$,
 the equivalence \eqref{eq:DM&DM_cdh},
 the previous corollary and the cancellation theorem of Voeovdsky
 (cf. \cite{V4}) show that there exists a canonical
 equivalence of triangulated categories:
\begin{equation}
\label{eq:DM&DM_cdh_eff}
\DM^{\eff}(k,R) \rightarrow \DM_\cdh^{\delta-\eff}(k,R)
\end{equation}
 where the left hand side 
 is the triangulated category of Voevodsky's motivic complexes
 with coefficients in $R$ (see \cite[chap. 5]{FSV}, \cite{CD1}).
 Moreover, the corollary also shows that this functor
 is an equivalence of $t$-categories between Voevodsky's homotopy
 $t$-structure (see \cite[chap. 5]{FSV} or \cite[Cor. 5.2]{Deg9})
 and the $\delta$-homotopy $t$-structure.

Note finally that through these equivalences of categories,
 the adjunction of triangulated categories
$$
s:\DM_\cdh^{\delta-\eff}(k,R) \leftrightarrows \DM_\cdh(k,R):w
$$
corresponds to the adjunction:
$$
\Sigma^\infty:\DM^{\eff}(k,R) \leftrightarrows \DM(k,R):\Omega^\infty
$$
of \cite[Ex. 7.15]{CD1} using again Voevodsky's cancellation theorem.
\item In the case of the stable homotopy category $\T=\SH$,
 for any perfect field $k$, the previous corollary
 shows that the $\delta$-effective category
 $\SH^{\delta-\eff}(k)$ is equivalent to the essential image
 of the canonical functor going from $S^1$-spectra to
 $\PP^1$-spectra.
\end{enumerate}
\end{ex}

\begin{rem}\label{rem:Suslin}
Consider the assumptions of point (1) in the above example.
 Let moreover $k$ be a non perfect field.
 A. Suslin has proved the following facts (see \cite{Sus}):
\begin{itemize}
\item If $F$ is a $R$-linear homotopy invariant presheaf with transfers
 over $k$, then its associated sheaf is homotopy invariant
 (the fact the latter admits transfers was already known).
 \item If $F$ is a $R$-linear homotopy invariant sheaf with
 transfers over $k$,
 then its Nisnevich cohomology is $\AA^1$-invariant.
\end{itemize}
With all that in hands, we obtain (as in the case of a perfect field)
 that $\DM^{eff}(k,R)$ can be described has the full subcategory
 of $\Der(\mathrm{Sh}^{tr}(k,R))$
 made by complexes whose cohomology sheaves
 are $\AA^1$-invariant. Moreover, the cancellation theorem holds.

Therefore we can deduce from the above results of Suslin  that
 the equivalence of categories \eqref{eq:DM&DM_cdh_eff} holds
 even when $k$ is non perfect.
 Moreover, one can also extend Voevodsky's definition
 of the homotopy $t$-structure on $\DM^{eff}(k,R)$ when $k$
 is non perfect and the equivalence \eqref{eq:DM&DM_cdh_eff}
 is an equivalence of $t$-categories.

In particular, from the results of Suslin, one deduces
 that the $\delta$-homotopy heart of $\DM_\cdh^{\delta-\eff}(k,R)$
 is the category of $R$-linear homotopy invariant sheaves with 
 transfers over $k$, even when $k$ is non perfect.
\end{rem}

\begin{num}\label{num:eq:compute_w}
We are now in position to justify computation
 \eqref{eq:compute_w}.

Recall the assumptions of this formula:
 $(S,\delta)$ is an arbitrary dimensional scheme,
 $\T=\DM_R$ as in \ref{num:convention_DM}(1) or (2)
 and $X$ is a smooth projective $S$-scheme of pure dimension $d$.
According to \ref{num:BM}(BM4), we get:
$$
M_S(X)=\Mb(X/S)(d)[2d]
$$
In particular, $M_S(X)(n)$ is by definition $\delta$-effective
 if $n \geq \delta(X)-d$. We prove $w(M_S(X)(n))$ is zero when
 $n<\delta(S)-d$. This will use the following vanishing
 of motivic cohomology, true under our assumptions:
 for any regular scheme $X$, any couple of integers $(n,m)$,
\begin{equation}\label{eq:vanishing_neg_twist}
H^{n,m}(X,R)=0 \text{ if } m<0.
\end{equation}
In case of assumption \ref{num:convention_DM}(1), this follows
 from the isomorphism of the preceding group with the $m$-th $\gamma$-graded part
 of Quillen's K-group $K_{2m-n}(X)_\QQ$ (see \cite[14.2.14]{CD3})
 and under assumption \ref{num:convention_DM}(2), one reduces to
 the case of smooth $F$-schemes using Pospescu theorem and
 \cite[3.10]{CD5} where it follows from Voevodsky's cancellation
 theorem \cite{V4}.

According to point (3) of Proposition \ref{prop:generators1},
 we only need to check that for any regular quasi-projective $S$-scheme $Y$,
 the group
$$
\Hom_{\DM(S,R)}\big(\Mb(Y/S)(m)[i],M_S(X)(n)\big)
$$
vanishes if $m \geq \delta(Y)$, $n<\delta(S)-d$ and $i$ is 
any integer. A straightforward computation, using the absolute purity
 property of $\DM_R$, gives:
$$
\Hom_{\DM(S,R)}\big(\Mb(Y/S)(m)[i],M_S(X)(n)\big) 
=H^{2d'-i,d'+n-m}(X \times_S Y,R)
$$
where $d'$ is the relative dimension of the
 quasi-projective $S$-scheme $X \times_S Y$, which is lci because
 $X \times_S Y$ and $S$ are regular under our assumptions.
 Thus the required vanishing follows from \eqref{eq:vanishing_neg_twist},
 the assumptions on $n$ and $m$ and the fact:
$$
d'=d+\delta(Y)-\delta(S).
$$
\end{num}

\begin{rem}
In our knowledge of motivic homotopy theory,
 the vanishing \eqref{eq:vanishing_neg_twist} seems to be very
 specific to mixed motives. Indeed, it is false for 
 cobordism, homotopy invariant K-theory, 
 stable homotopy groups of spheres,
 $\ell$-adic (or torsion) \'etale cohomology.
\end{rem}

\subsection{The sharpest description of generators}\label{s2.4}

\begin{num}\label{num:resolution}
The next description of generators for
 the $\delta$-homotopy $t$-structure combines the advantages
 of generators in points (1) and (2) of the previous proposition. 
 But we need the following
 assumption on our motivic category:

\medskip

(Resol) One of the following assumptions on $\T$ and $\base$ is verified:
\begin{enumerate}
\item[(i)] Each integral scheme $X$ which is essentially 
of finite type over a scheme in $\base$ admits a desingularization, i.e.,
 there exists a proper birational morphism $X' \rightarrow X$ such that
 $X'$ is regular.
\item[(ii)]  $\base$ is made of essentially of finite type $S_0$-schemes,
 where $S_0$ is a noetherian excellent scheme
 with $\dim(S_0)\leq 3$,
 and $\T$ is the homotopy category associated
 with a premotivic model category which is moreover $\QQ$-linear
 and separated (cf. \cite[2.1.7]{CD3}).
\item[(iii)] $\base$ is made of $k$-schemes essentially of finite type
 where $k$ is a perfect field of characteristic exponent $p$,
 $\T$ is $\ZZ[1/p]$-linear and there exists a premotivic adjunction:
$$
\varphi^*:\SH \leftrightarrows \T.
$$
\end{enumerate}
\end{num}

\begin{rem}
The question whether any integral
 quasi-excellent scheme
 $X$ admits a desingularization as in (Resol)(i) was raised
 in \cite{EGA4}.
 It has been proved by Temkin in \cite{Tem1} for
 all integral quasi-excellent $\QQ$-schemes so that
 our assumption (Resol)(i) holds in our actual knowledge
 when $\base$ is a category made of (excellent) $\QQ$-schemes.
\end{rem}

\begin{thm}\label{thm:strong_generators}
Let $(S,\delta)$ be a dimensional scheme. Put $I=\ZZ$ (resp. $I=\NN$).
 Assume that condition (Resol) holds.

Then the homotopy $t$-structure
 on $\T(S)$ (resp. $\T^{\delta-\eff}(S)$)
 coincides with the $t$-structure generated by objects of the form
 $\Mb(X/S)\dtw{\delta(X)}\gtw n$, where $X/S$ is proper, $X$ is regular
 and $n \in I$.
Moreover, when (Resol)(ii) or (Resol)(iii) holds,
 one can restrict to schemes $X/S$ which are projective.
\end{thm}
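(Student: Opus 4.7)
The plan is to prove both the stable ($I=\ZZ$) and the $\delta$-effective ($I=\NN$) assertions uniformly by noetherian induction on $\delta(X)$. Set $\C = \gen{\cG}$ where $\cG$ consists of the objects $\Mb(X/S)\dtw{\delta(X)}\gtw n$ with $X/S$ proper, $X$ regular, and $n\in I$. A standard Nagata compactification followed by the localization triangle \ref{num:BM}(BM3), exactly as in the proof of Proposition \ref{prop:generators1}(1), immediately reduces the problem to showing $\Mb(X/S)\dtw{\delta(X)}\gtw n \in \C$ for $X/S$ proper, the boundary piece being absorbed by the inductive hypothesis on $\delta(X)$. Fix such an $X/S$ with $X$ reduced and nonempty. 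The key input is (Resol): it produces a proper surjective morphism $p: X'\to X$ with $X'$ regular and proper over $S$ --- a birational resolution in case (Resol)(i), and a projective de Jong--Gabber (respectively Gabber) alteration of some generic degree $d$ invertible in the coefficients of $\T$ in case (Resol)(ii) (respectively (iii)). Let $Z\subsetneq X$ be a nowhere dense closed subscheme outside which $p$ is the given isomorphism (birational case), or is finite \'etale of degree $d$ (alteration case), and let $E = p^{-1}(Z)_{\mathrm{red}}$. Since $p$ is generically finite, $\delta(X') = \delta(X)$, while nowhere density of $E\subset X'$ and of $Z\subset X$ combined with Proposition \ref{prop:delta-base_change} gives $\delta(E),\delta(Z) < \delta(X)$.

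From here the argument is uniform. Set $U = X\setminus Z$ and $V = X'\setminus E = p^{-1}(U)$, and apply (BM3) to $(X,Z)$ and to $(X',E)$ to obtain localization triangles
\[
\Mb(U/S) \to \Mb(X/S) \to \Mb(Z/S) \xrightarrow{+1}, \qquad
\Mb(V/S) \to \Mb(X'/S) \to \Mb(E/S) \xrightarrow{+1}.
\]
For any closed $W\subsetneq X$ with $\delta(W)<\delta(X)$, the identity
\[
\Mb(W/S)\dtw{\delta(X)}\gtw n \;=\; \Mb(W/S)\dtw{\delta(W)}\gtw{n+\delta(X)-\delta(W)}\bigl[\delta(X)-\delta(W)\bigr]
\]
combined with the induction hypothesis places $\Mb(W/S)\dtw{\delta(X)}\gtw n$ in $\C$: the positive shift $\delta(X)-\delta(W)$ is absorbed because $\gen{\cdot}_+$ is stable under suspensions, and the modified Tate twist $n+\delta(X)-\delta(W)$ still lies in $I$. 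Tracking this shift of twists is exactly what the $\dtw{\cdot}\gtw{\cdot}$ convention is tailored for, and is what allows the effective case $I=\NN$ to be treated simultaneously. Applied to $W=Z$ and $W=E$, this puts $\Mb(Z/S)\dtw{\delta(X)}\gtw n$ and $\Mb(E/S)\dtw{\delta(X)}\gtw n[-1]$ in $\C$ (the extra $[-1]$ still contributes a nonnegative total shift, since $\delta(X)-\delta(E)\geq 1$). In the birational case $U=V$, so the rotated second triangle realises $\Mb(U/S)\dtw{\delta(X)}\gtw n$ as an extension of $\Mb(X'/S)\dtw{\delta(X)}\gtw n \in \cG$ by this last object; the first triangle then yields $\Mb(X/S)\dtw{\delta(X)}\gtw n \in \C$, as wanted.

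In the alteration case one has $U\neq V$, and an additional step is needed to descend from $\Mb(V/S)$ to $\Mb(U/S)$. The idea, due in essence to Riou, is that the trace of the finite \'etale morphism $V\to U$ of degree $d$ splits $\Mb(U/S)$ off $\Mb(V/S)$ once $d^{-1}$ is available in $\T$, and this splitting can be propagated through the two localization triangles modulo pieces supported in $\delta$-dimension $<\delta(X)$ which are absorbed by the induction. This is the technical heart of the proof: cases (Resol)(ii) and (iii) are precisely engineered so that the coefficient-inverted traces exist (via the $\QQ$-linearity, respectively $\ZZ[1/p]$-linearity hypothesis) and are compatible enough with the six functors to effect the reduction. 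The projectivity claim in the last sentence of the theorem is a direct output of de Jong--Gabber and Gabber's constructions, which yield projective alterations.
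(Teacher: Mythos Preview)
Your overall strategy matches the paper's: noetherian induction together with compactification, the localization triangle (BM3), and the observation that boundary pieces live in strictly smaller $\delta$-degree, with the key inputs being resolution/alteration plus a trace-splitting argument in the alteration cases. The birational case (Resol)(i) is handled correctly, and the bookkeeping with $\dtw{\cdot}\gtw{\cdot}$ for the effective case is right.

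There is, however, a genuine gap in your treatment of (Resol)(iii). You write that Gabber's theorem produces an alteration ``of some generic degree $d$ invertible in the coefficients of $\T$'', i.e.\ in $\ZZ[1/p]$. That is not what Gabber proves: his result gives, for each prime $l\neq p$, a projective alteration of degree prime to $l$. There is no single alteration whose degree is a power of $p$, so no single $d$ becomes a unit in $\ZZ[1/p]$. The paper handles this by first localizing $\T$ at each prime $l\neq p$: one shows (i) that the family of localization functors $\pi_l:\T\to\T_{(l)}$ is conservative, and (ii) that both $t$-structures descend to $\T_{(l)}$ with $\pi_l$ $t$-exact. This reduces the problem to $\T_{(l)}$, where Gabber's degree-prime-to-$l$ alteration has invertible degree and the Riou trace argument then goes through. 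Without this reduction your argument in case (iii) does not close.

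Two smaller points. First, (Resol)(i) is stated for integral schemes, so you should reduce from reduced to integral (split off irreducible components, which are generically disjoint, and absorb the intersections by induction); the paper does exactly this. Second, the projectivity claim in cases (ii) and (iii) is not ``a direct output'' of the alteration theorems alone: those give $X'\to X$ projective, but $X/S$ is only proper at this stage, so $X'/S$ need not be projective. The paper inserts Chow's lemma to first replace $X$ by a projective birational model before alterating; then $X'/S$ is projective as required.
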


\begin{rem}
This description of generators is closely related to the Chow weight structures 
(cf. \S\ref{sgws} below) $w_{Chow}(-)$ as constructed in \cite{Bon10b}, 
\cite{Bon11}, \cite{Heb11}, \cite{Bon14},  \cite{BI15}, and \cite{BL16} for 
various versions of $\DM(-)$
(since these generators are certain shifts of the so-called Chow motives over 
$S$ as defined in the latter three papers; one may consider motives that are 
constructible or not and $\delta$-effective or not here). 
In particular, note that several arguments in 
  \cite{BI15} are closely related to the ones used in the current paper. 

Moreover, in  \cite{Bon14},  \cite{BI15}, and \cite{BL16} 
weight structures where $\base$ does not satisfy any version of the assumptions 
(Resol) were also considered.
In this case our level of knowledge (on the resolution of singularities)
is not sufficient to prove that motives of the type $\Mb(X/S)\gtw n$ 
with  $X$ being regular and  proper over $S$, $n\in \ZZ$, form a  generating 
family for the corresponding $\DM(S)$. However,  \cite[Theorem 3.4.2]{BL16} 
appears to yield a way to prove  (under its assumptions) the corresponding 
version of our Theorem \ref{thm:hlg&htp_t} without relying on Theorem 
\ref{thm:strong_generators}; yet the corresponding argument is not written down 
yet.

Moreover, it seems a more careful study of the Chow weights of 
motives (based on the arguments from the proof of \cite[Lemma 6.2.7]{CD4}) 
would allow to weaken the assumption (Resol) in Theorem \ref{thm:hlg&htp_t}.
\end{rem}

\begin{proof}
To simplify the notation of this proof, 
 we denote by $\T$ the triangulated category $\T(S)$
  (resp. $\T^{\delta-\eff}(S)$) and we put $t=t_\delta$.
 We also denote by $t'$ the $t$-structure generated
 (Def. \ref{df:gen_tstruct})
 by the family $\cG$ made of the compact objects of $\T$ of the form
 $\Mb(X/S)\dtw{\delta(X)}\gtw n$ where $X/S$ is proper in case (Resol)(i),
  projective in case (Resol)(ii) and (Resol)(iii), $X$ is regular
	and $n \in I$.
 In this notation we only  have to  prove that
\begin{equation}\label{eq:strong_generators}
\T_{t \geq 0} \subset \T_{t' \geq 0}
\end{equation}
as the converse inclusion is clear. Recall from Definition
 \ref{df:gen_tstruct} that $\T_{t' \geq 0}$ is the smallest
 subcategory of $\T$ which contains $\cG$
 and is stable under extensions, positive suspensions and 
 coproducts.

We begin this proof by a preliminary reduction that occurs
 only for assumption (Resol)(iii) and which uses the localization
 techniques of \cite[Appendix B]{CD4}. Given any prime $l$,
 we let $\T_{(l)}$ be the Verdier quotient of $\T$
 by the thick triangulated subcategory generated by the
 cones of maps of the form $r.1_K:K \rightarrow K$ for any integer
 $r$ not divisible by $l$ and any $\T$-spectrum $K$.
\begin{lm} Consider the preceding notations.
\begin{enumerate}
\item The family of projection functors
  $\pi_l:\T \rightarrow \T_{(l)}$, indexed by prime integers $l$
	different from $p$, is conservative.
\item There exists a unique $t$-structure $t_l$
 (resp. $t'_l$)
 on $\T_{(l)}$ such that the functor
 $\pi_l:(\T,t) \rightarrow (\T_{(l)},t_l)$
 (resp. $\pi_l:(\T,t') \rightarrow (\T_{(l)},t'_l)$)
 is $t$-exact.
\end{enumerate}
\end{lm}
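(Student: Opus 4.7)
The plan is to realize $\T_{(l)}$ as a Bousfield-type Verdier quotient of the compactly generated $\ZZ[1/p]$-linear category $\T$, and then invoke the general localization theory of \cite[Appendix B]{CD4}. That machinery supplies three facts that will drive the whole proof: $\T_{(l)}$ is itself compactly generated, the images $\pi_l(G)$ of compact generators $G$ of $\T$ form a family of compact generators for $\T_{(l)}$, and morphism groups are computed by the localization formula
$$
\Hom_{\T_{(l)}}(\pi_l K, \pi_l L) \simeq \Hom_\T(K,L) \otimes_{\ZZ[1/p]} \ZZ_{(l)}.
$$
In particular $\pi_l$ is essentially surjective and commutes with small coproducts, suspensions, and distinguished triangles.

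For part (1), I would apply the Hom formula with $L = K$: if $\pi_l(K) = 0$ for every prime $l \neq p$, then the $\ZZ[1/p]$-module $\End_\T(K)$ satisfies $\End_\T(K) \otimes_{\ZZ[1/p]} \ZZ_{(l)} = 0$ at every maximal ideal $(l)$ of $\ZZ[1/p]$. Such a module must vanish, so $1_K = 0$ and $K = 0$.

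For part (2), I would define $t_l$ (and similarly $t'_l$) as the $t$-structure on $\T_{(l)}$ generated via Theorem \ref{thm:gen_tstructures} by the family $\pi_l(\cG)$, where $\cG$ is the chosen generating family of $t$ (resp. $t'$). Right $t$-exactness of $\pi_l$ is then immediate: it sends the generators of $t$ to generators of $t_l$ and commutes with the three operations (coproducts, positive suspensions, extensions) used to form the aisle $\gen{\cG}$, so $\pi_l(\gen{\cG}) \subset \gen{\pi_l(\cG)}$. For left $t$-exactness, given $K \in \T^t_{<0}$, any generator $G \in \cG$, and any integer $n \geq 0$, the vanishing $\Hom_\T(G[n], K) = 0$ propagates through the Hom formula to $\Hom_{\T_{(l)}}(\pi_l(G)[n], \pi_l(K)) = 0$, placing $\pi_l(K)$ in the right orthogonal to positive suspensions of the generators of $t_l$, hence in the negative part of $t_l$.

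For uniqueness, I would argue as follows: any object $M$ of $\T_{(l)}$ is isomorphic to $\pi_l(N)$ for some $N \in \T$ by essential surjectivity. If $s_l$ is any $t$-structure on $\T_{(l)}$ for which $\pi_l$ is $t$-exact, then applying $\pi_l$ to the $t$-truncation triangle of $N$ yields a triangle whose outer terms are $s_l$-non-negative and $s_l$-negative respectively, and by uniqueness of truncation triangles this must be the $s_l$-truncation of $M$. Consequently $(\T_{(l)})^{s_l}_{\geq 0}$ coincides up to isomorphism with $\pi_l(\T^t_{\geq 0})$, a subcategory that depends only on $t$; the same identification holds for $t_l$, forcing $s_l = t_l$. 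The effective case $t'_l$ is formally identical. The one technical input I would rather black-box than redevelop is the compact generation and Hom formula for the quotient $\T_{(l)}$; supplied by \cite[Appendix B]{CD4}, the rest is a standard manipulation with generated $t$-structures.
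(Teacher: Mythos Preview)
Your proposal is essentially correct and is in fact more explicit than the paper itself, which simply cites \cite[B.1.7 and B.2.2]{CD4} for both points without further argument. Since you black-box exactly the same reference for the compact generation and Hom formula, your approach is the same as the paper's.

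One small technical point on part (1): the localization Hom formula
$$
\Hom_{\T_{(l)}}(\pi_l K, \pi_l L) \simeq \Hom_\T(K,L) \otimes_{\ZZ[1/p]} \ZZ_{(l)}
$$
is typically valid only when $K$ is \emph{compact}; for arbitrary $K$ it may fail. So applying it with $L=K$ for a general object is not quite justified. The standard fix is to test against compact generators instead: if $\pi_l(K)=0$ for all $l\neq p$, then for every compact generator $G$ and every $n$ one has $\Hom_\T(G[n],K)\otimes_{\ZZ[1/p]}\ZZ_{(l)}=0$, hence $\Hom_\T(G[n],K)=0$, hence $K=0$. This is a one-line adjustment and does not affect the rest of your argument.
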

Points (1) and (2) of this lemma were proved 
 in \emph{loc. cit.}, B.1.7 and B.2.2 respectively.
 Note that \eqref{eq:strong_generators} is equivalent to the assumption that
 for any $\T$-spectrum $K$ in $\T_{t \geq 0}$,
 the $\T$-spectrum $\tau_{<0}^{t'}(K)$ is trivial.
 Thus it is sufficient to prove \eqref{eq:strong_generators}
 after localizing at a prime $l \neq p$.
 This means that we can assume that $\T$ is $\ZZ_{(l)}$-linear for
 a prime $l \neq p$.

\bigskip

Let us go back to the main part of the proof. In any case,
 it is sufficient to prove that for any separated $S$-scheme $X$,
 the following property holds:
\begin{equation}\label{eq:strong_generators2} \tag{$P(X)$}
\forall n \in I, \ 
\Mb(X/S)\dtw{\delta(X)}\gtw n \in \T_{t'\geq 0}
\end{equation}
We will show this by induction on
 the integer $d=\dim(X)$. The case $d=0$ is obvious.
 Thus we can assume that $(P(X'))$ holds for any scheme $X'$ 
 of dimension less than $d$. We will use the following lemma:
\begin{lm}
Consider the inductive assumption on $d$ as above.
\begin{enumerate}
\item For any dense open subscheme $U \subset X$,
 the property $(P(X))$ is equivalent to the property $(P(U))$.
\item Let $\phi:X' \rightarrow X$ be a proper morphism
 such that any irreducible component of $X'$ dominates
 an irreducible component of $X$.
 Given any open subscheme $U \subset X$,
 we denote by $\phi_U:\phi^{-1}(U) \rightarrow U$ the pullback
 of $\phi$ to $U$. It is proper, and induces the following morphism
 -- \ref{num:BM}(BM1) --
 of Borel-Moore $\T$-spectra over $U$:
$$
\phi_U^*:\un_U=\Mb(U/U) \rightarrow \Mb(V/U).
$$
We assume there exists a dense open subscheme $U \subset X$
 such that $\phi_U^*$ is a split monomorphism.

Then property $(P(X'))$ implies property $(P(X))$.
\end{enumerate}
\end{lm}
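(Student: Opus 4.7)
My plan for part (1) is to apply the localization triangle \ref{num:BM}(BM3) to the complementary immersions $j:U\hookrightarrow X$ and $i:Z\hookrightarrow X$, where $Z$ is the reduced complement of $U$, and then twist by $\dtw{\delta(X)}\gtw{n}$. Since $U$ is dense in $X$ one has $\delta(U)=\delta(X)$, and since $Z$ is a proper closed subscheme the integer $k:=\delta(X)-\delta(Z)$ is at least $1$. The bookkeeping identity $\dtw{\delta(X)}\gtw{n}=\dtw{\delta(Z)}\gtw{n+k}[k]$ rewrites the third vertex of the triangle as $\Mb(Z/S)\dtw{\delta(Z)}\gtw{n+k}[k]$; by the outer noetherian induction on $d=\dim X$ applied to $Z$ (whose dimension is $<d$) this object lies in $\T_{t'\geq k}\subset\T_{t'\geq 1}$. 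Stability of $\T_{t'\geq 0}$ under extensions then immediately yields $(P(U))\Rightarrow (P(X))$; the reverse implication comes by rotating the triangle and observing that the shifted term $\Mb(Z/S)\dtw{\delta(X)}\gtw{n}[-1]$ still lies in $\T_{t'\geq 0}$ (since we only decrease the degree by one, starting from degree $\geq k\geq 1$), and invoking stability under extensions once more.

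For part (2), I would push the split monomorphism $\phi_U^{*}:\un_U\to\Mb(\phi^{-1}(U)/U)$ forward to $\T(S)$ along the separated structural map $f:U\to S$, using the composition law $f_!\,\phi_{U,!}=(f\circ\phi_U)_!$ from the six-functor formalism, to obtain a split monomorphism $\Mb(U/S)\hookrightarrow\Mb(\phi^{-1}(U)/S)$ in $\T(S)$. Because $\phi$ is proper and every irreducible component of $X'$ dominates one of $X$, the preimage $\phi^{-1}(U)$ is dense in $X'$; in the applications where the lemma is invoked (resolution of singularities, de Jong--Gabber alterations) the morphism $\phi$ is generically finite, so $\delta(\phi^{-1}(U))=\delta(X')=\delta(X)=\delta(U)$. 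Applying part (1) inside $X'$ transfers $(P(X'))$ to $(P(\phi^{-1}(U)))$, and hence each $\Mb(\phi^{-1}(U)/S)\dtw{\delta(U)}\gtw{n}$ lies in $\T_{t'\geq 0}$. The class $\T_{t'\geq 0}$ is closed under arbitrary coproducts and extensions, and therefore under retracts, so its direct summand $\Mb(U/S)\dtw{\delta(U)}\gtw{n}$ also lies there; this establishes $(P(U))$, and a final use of part (1) promotes it to $(P(X))$.

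The main technical obstacle is the twist/shift bookkeeping: one has to ensure that the discrepancies between the $\delta$-values of $X,U,Z$ (resp.\ of $X',\phi^{-1}(U),U$) are absorbed by positive suspensions only, so that the inductive input remains in the non-negative part $\T_{t'\geq 0}$. In part (2) this is precisely where the generic finiteness of $\phi$ (which holds in every intended application) enters, ensuring $\delta(X')=\delta(X)$ and hence that no negative shift is introduced when comparing the two twists. Beyond this bookkeeping point, everything reduces to a formal manipulation of distinguished triangles and the defining closure properties of the generated $t$-structure $t'$.
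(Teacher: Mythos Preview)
Your proof is correct and follows essentially the same route as the paper: for (1) the localization triangle together with the identity $\dtw{\delta(X)}\gtw{n}=\dtw{\delta(Z)}\gtw{n+k}[k]$ and the inductive hypothesis on $Z$, and for (2) pushing forward the split monomorphism along $f_!$ to exhibit $\Mb(U/S)$ as a retract of $\Mb(V/S)$, then invoking (1) on both $X$ and $X'$. Your explicit remark that generic finiteness of $\phi$ is needed to ensure $\delta(X')=\delta(X)$ (so that no negative shift appears when matching $\dtw{\delta(U)}$ with $\dtw{\delta(V)}$) is a point the paper's proof passes over in silence; it is indeed satisfied in every application of the lemma within the proof of Theorem~\ref{thm:strong_generators}.
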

Let us prove point (1) of this lemma.
 We denote by $Z$ the reduced subscheme of $X$
 complementary to $U$. In $\T$, we get 
 from \ref{num:BM}(BM3) the following distinguished triangle:
$$
\Mb(Z/S)\dtw{\delta(X)}[-1] \rightarrow 
 \Mb(U/S)\dtw{\delta(X)} \rightarrow  \Mb(X/S)\dtw{\delta(X)}
 \rightarrow \Mb(Z/S)\dtw{\delta(X)}.
$$
Since $U$ is dense in $X$, one gets: $\delta(X)=\delta(U)$
 and $\delta(X)>\delta(Z)$, $\dim(Z)<\dim(X)$.
 Thus, by the inductive assumption, for any integer $n \in I$,
 the $\T$-spectrum
$$
\Mb(Z/S)\dtw{\delta(X)}\gtw n[-1]
=\Mb(Z/S)\dtw{\delta(Z)}\gtw{\delta(X)-\delta(Z)+n}[\delta(X)-\delta(Z)-1]
$$
belongs to $\T_{t' \geq 0}$. Thus point (1) of the lemma follows.

Then point (2) follows easily from point (1). Indeed, in the assumptions
 of point (2), we know from point (1) that $(P(X))$ is equivalent to
 $(P(U))$. Moreover, the open subscheme $V=\phi^{-1}(U)$ of $X'$ is dense.
 So we also know that $(P(X'))$ is equivalent to $(P(V))$.
 If we apply $\phi_{U!}$ to the morphism $\phi^*_U$ in $\T(U)$,
 we get a split monomorphism $\Mb(U/S) \rightarrow \Mb(V/S)$.
 Thus, since $\T_{t'\geq 0}$ is stable by direct factors,
 we obtain that $(P(V))$ implies $(P(U))$ and this concludes.

\bigskip

Let us go back to the proof of property $(P(X))$ by induction on the
 dimension $d$ of $X$.  Since $\Mb(X/S)=\Mb(X_{red}/S)$,
 we can assume that $X$ is reduced.
 Let $(X_\lambda)_{\lambda}$ be the irreducible components of $X$.
 The closed subset $Z=\cup_{\lambda \neq \mu} X_\lambda \cap X_\mu$
 in $X$ is rare. Applying point (1) of the preceding lemma, we can replace
 $X$ by $X-Z$. So we can assume that $X$ is integral.
 Because $X/S$ is separated of finite type,
 it admits an open embedding into a proper $S$-scheme $\bar X$
 that we can assume to be integral as $X$ is integral.
 According to the preceding lemma, we can replace $X$ by $\bar X$.
 In other words, we can assume $X$ is proper over $S$ and integral.

Let us treat the case of \underline{assumption (Resol)(i)}.
 Applying this assumption to $X$, we get a proper birational map
 $X' \rightarrow X$ such that $X'$ is regular. Because $X'/S$ is also proper,
 property $(P(X'))$ is obviously true -- by definition of $t'$.
 Thus point (2) of the previous lemma shows that
  $(P(X))$ is true as required.

\bigskip

Consider the cases (Resol)(ii) and (iii).
According to Chow lemma as stated in \cite[5.6.1]{EGA2}, there exists
 a birational map $X' \rightarrow X$ over $S$ such that $X'/S$ is projective.
 We deduce from point (2) of the preceding lemma
 that we can assume $X/S$ is projective.

We now treat the case of \underline{assumption (Resol)(ii)}.
 Under this assumption,
 it follows from \cite[1.2.4]{Tem2} that
 there exists a projective alteration\footnote{Recall
  that an \emph{alteration} is a proper surjective
  generically finite morphism.}
 $\phi:X' \rightarrow X$ such that $X'$ is regular connected.
 In particular, property $(P(X'))$ is obviously true. 
 Moreover, there exists a dense open subscheme $U \subset X$
 such that $U$ is regular and $\pi$ is finite on $V=\phi^{-1}(U)$.
 Then, according to \cite[3.3.40]{CD3} applied to the motivic category
 $\T$ and the finite morphism $\phi_U:V \rightarrow U$,
 the adjunction map $\un_U \rightarrow \phi_{U*}\phi_U^*(\un_U)$
 is a split monomorphism in $\T(U)$. So we can apply point (2)
 of the preceding lemma, and we deduce property $(P(X))$.

Let us finally treat the case of \underline{assumption (Resol)(iii)}.
 Using the reduction done at the beginning of the proof,
 we fix a prime $l \neq p$, and prove the assertion in
 $\T_{(l)}$.\footnote{Note indeed that the preceding lemma can 
 equally be applied to $\T_{(l)}$.}
 According to \cite[X, 3.5]{Gabber}, there exists a projective
 generically \'etale alteration $\phi:X' \rightarrow X$
 of generic degree $d$ prime to $l$ such that $X'$ is regular. 
 In particular, $X'$ is projective over $S$ and regular
 so property $(P(X'))$ holds.
 Let $\phi':\spec(L) \rightarrow \spec(K)$
  be the morphism induced by $\phi$ on generic points.
 Because $\phi'$ is finite \'etale, it induces a morphism in $\SH(K)$:
$$
\phi^{\prime*}:\un_K \xrightarrow{ad} \phi'_*\phi^{\prime*}(\un_K)
 =\phi'_!(\un_L).
$$
According to \cite[Lemmas B.3, B.4]{LYZR}, there exists
 a morphism $s:\phi'_!(\un_L) \rightarrow \un_K$ again in $\SH(K)$
 such that
$$
\phi^{\prime*} \circ s=d.Id+\alpha
$$
where $\alpha$ is a nilpotent endomorphism of $\un_K$ in $\SH(K)$.
 In particular, the preceding relation means that 
 $\phi^{\prime*} \circ \frac s d$ is an isomorphism
 in $\SH(K)_{(l)}$.

By the continuity property of $\SH$
 (\cite[4.3.2]{CD3}), there exists a dense open subscheme
 $U \subset X$, $V=\phi^{-1}(U)$,
 such that $s$ can be lifted to a map
 $s_U:\phi'_!(\un_V) \rightarrow \un_U$.
 Further, we can assume by shrinking $U$
 that $\phi^{*}_U \circ \frac {s_U} d$ is an isomorphism
 in $\SH(U)_{(l)}$.

Because $\varphi^*:\SH \rightarrow \T$ is a premotivic adjunction
 between motivic categories, it commutes with $f_!$
 (see \cite[2.4.53]{CD3}). Thus, we obtain that
 $\varphi^*\big(\frac {s_U} d\big)$ is a splitting of $\phi^*_U$ in
 $\T(U)_{(l)}$.
 So finally point (2) of the preceding lemma allows to conclude
  that $(P(X))$ holds.
\end{proof}

\begin{rem}\label{rbs}
\Mikhail{ The main application of the previous 
theorem will be given in the next section.
 Here we note that our result appears to be non-trivial already when 
$S=\operatorname{Spec}(k)$ (for $k$ being a perfect field of positive 
characteristic $p$). This case of the statement (along with the easier $p=0$ 
case)   has found quite interesting applications in \cite{BS16}, where it was 
used to relate the motivic homology theory (for motives) to the so-called 
Chow-weight homology ones.}
\end{rem}

Now we use our theorem to deduce certain results which are of independent 
interest.
The following corollary improves results of \cite{ayoub1}
 and \cite{CD3}.
\begin{cor}\label{cor:generators_T}
Suppose $\T$ satisfies assumption (Resol).
 Then for any scheme $S$, the triangulated category $\T_c(S)$
 (resp. $\T(S)$)
 is generated\footnote{See Conventions page \pageref{conventions}
  for the  definition of ``generated'' in the triangulated context.}
  by spectra (resp. arbitrary coproducts of spectra)
 of the form $f_*(\un_X)(n)$
 where $f:X \rightarrow S$ is a proper morphism
 (resp. projective under (Resol)(ii) or Resol(iii))),
 $X$ a regular scheme and $n \in \ZZ$ an integer.
\end{cor}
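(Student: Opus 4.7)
The plan is to deduce this corollary as a direct reformulation of Theorem \ref{thm:strong_generators}, passing from generators of the $\delta$-homotopy $t$-structure to triangulated generators of $\T(S)$ via the left non-degeneracy of $t_\delta$.

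First I would fix any dimension function $\delta$ on $S$ (working on connected components if $S$ does not admit one globally, which changes nothing since the conclusion is additive in $S$). Theorem \ref{thm:strong_generators} then exhibits as generators of the non-negative part of $t_\delta$ on $\T(S)$ the family of $\T$-spectra $\Mb(X/S)\dtw{\delta(X)}\gtw n$ for $f:X\to S$ proper (projective in cases (Resol)(ii) and (iii)), $X$ regular, and $n\in\ZZ$. Since $f$ is proper, $f_!=f_*$, so $\Mb(X/S)=f_*(\un_X)$, and these generators rewrite as $f_*(\un_X)(\delta(X)+n)[2\delta(X)+n]$.

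Next I would invoke the left non-degeneracy of $t_\delta$ (Remark \ref{rem:htp_t_nonnegdeg}(1)) together with Lemma \ref{lm:non-neg-degenerated}: the former gives that $t_\delta$ is left non-degenerate, while the latter says that a family generating a $t$-structure is a triangulated generating class precisely when that $t$-structure is left non-degenerate. Hence the above family is a generating class of $\T(S)$ in the sense of the Conventions. Since that notion already allows arbitrary integer shifts, the shifts $[2\delta(X)+n]$ may be reparametrised away, yielding triangulated generation of $\T(S)$ by the family $\{f_*(\un_X)(m) : f \text{ proper (resp.\ projective)}, X \text{ regular}, m\in\ZZ\}$; equivalently, the smallest localizing subcategory containing these objects is $\T(S)$, which is the statement with arbitrary coproducts.

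For the constructible assertion, each $f_*(\un_X)(m)$ lies in $\T_c(S)$: the object $\un_X(m)$ is constructible (hence compact), $f$ is proper of finite type, and $f_*=f_!$ preserves constructibility by \cite[4.2.5, 4.2.12]{CD3}. Having just shown that this family forms a set of compact generators of $\T(S)$, a standard consequence of Neeman's Brown representability theorem identifies the thick subcategory of $\T(S)$ generated by any set of compact generators with the full subcategory $\T_c(S)$ of compact objects, which yields the second half of the corollary. I do not foresee any substantial obstacle here: every step is a formal consequence of Theorem \ref{thm:strong_generators}, of Lemma \ref{lm:non-neg-degenerated}, and of general facts about compactly generated triangulated categories; the only minor point is to make sure that a dimension function on $S$ is at our disposal, which is handled componentwise.
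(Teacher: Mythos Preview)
Your proof is correct and follows essentially the same approach as the paper, which simply cites Theorem~\ref{thm:strong_generators}, Lemma~\ref{lm:non-neg-degenerated}, and Remark~\ref{rem:htp_t_nonnegdeg} in a single sentence. Your treatment of the dimension function componentwise and your explicit reparametrisation of the twists are reasonable elaborations; the only slight overkill is invoking Neeman's theorem for the constructible part, since detection in $\T_c(S)$ is immediate from detection in $\T(S)$.
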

This follows from the preceding proposition,
 Lemma \ref{lm:non-neg-degenerated} and
 Remark \ref{rem:htp_t_nonnegdeg}.

Here is an application of the previous corollary,
 which improves significantly the theory of $\ZZ[1/p]$-linear
 triangulated motivic categories over the category of $\mathbb F_p$-schemes,
 on the model of \cite{ayoub1} or \cite[\textsection 4]{CD3}.
\begin{thm}\label{thm:duality}
Suppose $\T$ satisfies assumption (Resol)
 and the absolute purity property (Definition
 \ref{df:abs_purity}).
 Then the following properties hold:
\begin{enumerate}
\item The category of constructible $\T$-spectra is
 stable under the six functors if one restricts to morphisms
 $f$ of finite type for the functor $f_*$.
\item For any regular scheme $S$ and any separated morphism
 $f:X \rightarrow S$, the $\T$-spectrum
 $D_X=f^!(\un_S)$, which is constructible according to the first
 point, is dualizing for constructible $\T$-spectra over $X$:
 for any such $\T$-spectrum $M$, the natural map
\begin{equation}\label{eq:biduality_iso}
M \rightarrow \uHom\big(\uHom(M,D_X),D_X\big)
\end{equation}
is an isomorphism.
\end{enumerate}
\end{thm}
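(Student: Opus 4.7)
The strategy is to exploit the new generating family for $\T_c(X)$ furnished by Corollary \ref{cor:generators_T}: every constructible $\T$-spectrum is built from objects $g_*(\un_Y)(n)$ with $g\colon Y \to X$ proper (or projective) and $Y$ regular. Combined with the absolute purity property (via Corollary \ref{cor:fund_class}), this description makes the six functors formalism accessible by direct computation on generators.

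For assertion (1), stability of constructibility under $f^*$, $f_!$ and $\otimes$ is already recalled in the text. For $f_*$ with $f\colon X \to S$ of finite type, the functoriality $f_* g_* = (fg)_*$ reduces us to showing that $h_*(\un_Y)$ is constructible whenever $h\colon Y \to S$ is of finite type with $Y$ regular. By Nagata compactification, write $h = \bar h \circ j$ with $j$ a dense open immersion into some $\bar Y$ and $\bar h$ proper; since $\bar h_* = \bar h_!$ preserves constructibility it suffices to treat $j_*(\un_Y)$. Applying assumption (Resol), choose a proper birational $\pi\colon \bar Y' \to \bar Y$ with $\bar Y'$ regular and $\bar Y' \setminus Y$ a strict normal crossings divisor $D = \bigcup_i D_i$; writing $j = \pi j'$, one further reduces to the constructibility of $j'_*(\un_Y)$ on $\bar Y'$. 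This last assertion is proved by induction on the number of components of $D$, using the localization triangle $i_*i^! \to \mathrm{id} \to j'_*(j')^*$ and the absolute purity identification $i^!(\un_{\bar Y'}) \simeq \Th(-N_{D_i})$ of Corollary \ref{cor:fund_class} to control the successive cofibers. Once $f_*$ preserves constructibility, stability under $f^!$ follows by factoring $f$ as a closed immersion followed by a smooth morphism (for which $f^!$ is a Thom twist of $f^*$ via \eqref{eq:purity_iso}), and stability of $\uHom$ follows from the duality established in (2).

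For assertion (2), it suffices to verify that the biduality map \eqref{eq:biduality_iso} is an isomorphism on the generating family above. Let $M = h_!(\un_Y)(n)$ with $h\colon Y \to X$ proper and $Y$ regular, and set $g = f \circ h\colon Y \to S$. Since $Y$ and $S$ are both regular, a projective factorization presents $g$ as lci, and Corollary \ref{cor:fund_class} provides a canonical isomorphism $g^!(\un_S) \simeq \Th(\tau_g)$ with $\Th(\tau_g)$ $\otimes$-invertible. Using the adjunction $(h_!, h^!)$, the equality $h_! = h_*$ for the proper morphism $h$, and the identity $h^! f^! = g^!$, one computes
\[
\uHom(M, D_X) \;\simeq\; h_*\,\Th(\tau_g)(-n),
\]
which is itself constructible by part (1) and the invertibility of $\Th(\tau_g)$. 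Iterating the same computation and using the invertibility of $\Th(\tau_g)$ to cancel it against its dual yields
\[
\uHom\bigl(\uHom(M, D_X), D_X\bigr) \;\simeq\; h_*(\un_Y)(n) \;=\; M,
\]
and a chase through the adjunctions shows the composite is indeed the canonical biduality map.

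The most delicate step will be the inductive treatment of $j'_*(\un_Y)$ on a regular scheme with SNC complement: beyond the bookkeeping of successive localization triangles, under assumption (Resol)(iii) only $\ell$-prime alterations are available, so one must localize at each prime $\ell \neq p$ and descend constructibility from a regular alteration to the original scheme by a trace argument analogous to the one used in the proof of Theorem \ref{thm:strong_generators}. Once this case is in hand, both assertions of the theorem unfold cleanly from absolute purity.
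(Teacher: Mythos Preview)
For part (2), your approach coincides with the paper's in spirit: both reduce to generators $h_!(\un_Y)$ with $Y$ regular and invoke absolute purity to identify the dualizing object with an invertible Thom spectrum. You are, however, missing one step the paper makes explicit. Corollary \ref{cor:fund_class} applies only to \emph{quasi-projective} lci morphisms, and $g=fh\colon Y\to S$ need not be quasi-projective even when $Y$ and $S$ are regular. The paper handles this by first reducing to $M=\un_X$ with $X$ regular (via the identity $\uHom(p_!(\un_Y),D_X)\simeq p_*\uHom(\un_Y,p^!D_X)$) and then Zariski-localizing on $X$ to force quasi-projectivity; your direct computation of $\uHom(M,D_X)\simeq h_*\Th(\tau_g)(-n)$ needs the same localization before Corollary \ref{cor:fund_class} becomes available.

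For part (1), the paper does not give a self-contained argument: it invokes Gabber's finiteness strategy as adapted in \cite[proof of Th.~6.4]{CD4}, using de Jong--Gabber alterations and Riou's trace maps. Your sketch is more explicit but has a genuine gap. Assumption (Resol)(i) only furnishes a proper birational $\pi\colon\bar Y'\to\bar Y$ with $\bar Y'$ regular; it says nothing about the complement $\bar Y'\setminus\pi^{-1}(Y)$ being a strict normal crossings divisor, nor that $\pi$ is an isomorphism over $Y$, so the factorization ``$j=\pi j'$'' is not available as written. Under (Resol)(ii) and (iii) the situation is worse, as you note: $\pi$ is only an alteration of generic degree prime to $\ell$, and descending constructibility of $j'_*(\un)$ from $\bar Y'$ to $\bar Y$ genuinely requires the trace/splitting argument you postpone to the last paragraph. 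In short, the SNC induction you outline is the right shape once one has \emph{strong} resolution or Gabber's $\ell'$-alterations with SNC complement, but that is a stronger input than (Resol) as stated, and the descent step is the actual crux---which is precisely why the paper defers to the literature here rather than spelling it out.
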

\begin{proof}
The proof of the first point follows from the following ingredients:
\begin{itemize}
\item Gabber's refinement of De Jong alteration theorem
 (cf. \cite[X, 3.5]{Gabber}).
\item The argument of Riou for the existence of trace maps
 for finite \'etale morphisms in $\SH$ satisfying
 the degree formula (cf. \cite[Lemmas B.3, B.4]{LYZR}).
\end{itemize}
Then the proof of Gabber (in the case of the derived category of
 prime to $p$ torsion \'etale sheaves) work through
 as explained in \cite[proof of Th. 6.4]{CD4}. The only supplementary
 observation is that one only needs local splitting of pullbacks
 by finite \'etale morphisms,
 which are given by the maps $\varphi^*\big(\frac {s_U} d\big)$
 as in the end of the proof of the previous theorem.

The second assertion follows essentially from the proof
 of \cite[7.3]{CD5}:
 using the previous corollary, it is sufficient to check that
 the map \eqref{eq:biduality_iso} is an isomorphism
 when $M=p_!(\un_Y)$ where $p:Y \rightarrow X$ is a projective
 morphism and $Y$ is regular. In this case, one can compute
 the right hand side of \eqref{eq:biduality_iso} as:
$$
p_!\uHom\big(\un_Y,\uHom(\un_Y,(fp)^!(\un_S))\big).
$$
so that, replacing $X$ by $Y$, 
 we can assume that $X$ is regular separated
 over $S$ and $M=\un_X$. The assertion is then Zariski local
 in $X$ so that we can assume in addition that $X/S$
 is quasi-projective.
 Because $\T$ satisfies absolute purity by assumption,
 we can apply Corollary \ref{cor:fund_class} which implies that
 $D_X=f^!(\un_S) \simeq \Th(\tau_f)$ where $\tau_f$ is the virtual tangent
 bundle of $f$. Because the $\T$-spectrum $\Th(\tau_f)$ is
 $\otimes$-invertible, the result follows trivially.
\end{proof}

\begin{ex}\label{ex:duality_SH}
The new application of the preceding proposition
 is given by the case of the stable homotopy category $\SH[1/p]$
 restricted to the category of $\F_p$-schemes for a prime $p>0$.

Thus constructible spectra with $\ZZ[1/p]$-coefficients are
 stable under the six operations if one restricts to excellent
 $\F_p$-schemes.
 Moreover, Grothendieck duality holds for these spectra.
 Note for example that given any field $k$ of characteristic $p$,
 and any quasi-projective regular $k$-scheme $X$ with virtual
 tangent bundle $\tau_X$, the Thom space $\Th(\tau_X)$ is
 naturally a dualizing object for constructible $\ZZ[1/p]$-linear
 spectra over $X$.
\end{ex}

Another interesting application of Corollary \ref{cor:generators_T}
 is the following result that extends previous theorems due to
 Cisinski and the second named author (\cite[4.4.25]{CD3}
 and \cite[9.5]{CD4}).
\begin{prop}\label{prop:adj&f_*}
Consider a premotivic adjunction
$$
\varphi^*:\T \leftrightarrows \T'
$$
of triangulated motivic categories over $\base$ which satisfies,
 the following assumptions:
\begin{enumerate}
\item[(a)] $\base$ is the category of $F$-schemes
 for a prime field $F$ of characteristic exponent $p$.
\item[(b)] $\T$ and $\T'$ are $\ZZ[1/p]$-linear,
 continuous (see Paragraph \ref{num:commutes_with_limits})
 and are the aim of a premotivic
 adjunction whose source is $\SH$.
\end{enumerate}
Then for any morphism $f:X \rightarrow S$ in $\base$,
 for any $\T$-spectrum $\E$ over $X$,
 the canonical exchange transformation:
\begin{equation}
\label{eq:prop:adj&f_*}
\varphi^*f_*(\E) \rightarrow f_*\varphi^*(\E)
\end{equation}
is an isomorphism.
\end{prop}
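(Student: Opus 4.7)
The plan is to reduce the exchange isomorphism to a convenient family of generators, using the sharper description of generators provided by Corollary \ref{cor:generators_T}, and then dispose of the remaining cases by compactification and Gabber's alterations. Note that assumption (b) implies (Resol)(iii) of \S\ref{num:resolution}, so Corollary \ref{cor:generators_T} applies to both $\T$ and $\T'$, and Theorem \ref{thm:duality} is also at our disposal.

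First, the class of $\T$-spectra $\E \in \T(X)$ for which the exchange map \eqref{eq:prop:adj&f_*} is an isomorphism forms a localizing triangulated subcategory. Thanks to Theorem \ref{thm:duality}(1), $f_*$ preserves constructibility for $f$ of finite type, and the continuity hypothesis of \ref{num:commutes_with_limits} guarantees that $f_*$ commutes with filtered colimits of compact objects; since $\varphi^*$ preserves compactness (it is a left adjoint between compactly generated categories, and commutes with $M_S(X,\T)(n)$), it suffices to verify the exchange on a set of compact generators of $\T(X)$. By Corollary \ref{cor:generators_T} such a set is given by objects of the form $\E = p_*(\un_Y)(n)$ with $p:Y \to X$ projective, $Y$ regular, and $n \in \ZZ$. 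For such $\E$ properness gives $p_* = p_!$, and since $\varphi^*$ commutes with exceptional direct images along any morphism by \cite[2.4.53]{CD3}, the exchange map reduces to the map $\varphi^* g_*(\un_Y)(n) \to g_*(\un_Y)(n)$ where $g = fp: Y \to S$ and $Y$ is regular.

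Second, we factor $g$ using Nagata compactification as $g = \bar g \circ j$, with $\bar g$ proper and $j: Y \hookrightarrow \bar Y$ an open immersion. Again $\bar g_* = \bar g_!$ commutes with $\varphi^*$, so we are reduced to showing that $\varphi^* j_*(\un_Y)(n) \to j_*(\un_Y)(n)$ is an isomorphism for $j$ as above with $Y$ regular. After the standard localization at primes $\ell \ne p$ (legitimate by the $\ZZ[1/p]$-linearity and Ayoub's separation, as used in the proof of Theorem \ref{thm:strong_generators}), Gabber's refined alteration theorem \cite[X, 3.5]{Gabber} provides a projective generically \'etale alteration $\pi:\tilde Y \to \bar Y$ with $\tilde Y$ regular, of generic degree prime to $\ell$. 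The trace argument of Riou \cite[Lemmas B.3, B.4]{LYZR}, carried out in $\SH$ and transported to $\T$ and $\T'$ via the premotivic adjunctions of assumption (b), gives, at the generic point and hence on a dense open by continuity, a splitting of the adjunction unit $\un \to \pi_* \pi^*(\un)$ in both $\T$ and $\T'$, compatibly with $\varphi^*$. Localization (the gluing property) then lets us descend from $\tilde Y$ (where one may invoke absolute purity of $\T$, Example \ref{ex:abs_pur_equal}, via Corollary \ref{cor:fund_class} to identify $j_*\un_Y(n)$ with a $\otimes$-invertible object up to a trace-corrected boundary) to $\bar Y$ and conclude.

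The main obstacle is the open immersion case: indeed the commutation of $\varphi^*$ with $f_!$ together with compactification formally reduces everything to open immersions $j$ with regular source, and this is precisely the point where the previous results \cite[4.4.25]{CD3}, \cite[9.5]{CD4} imposed stronger hypotheses (rationality, or characteristic zero). The new input here is the combination of Corollary \ref{cor:generators_T}, Theorem \ref{thm:duality}, and the compatibility of Riou's trace splittings across the premotivic adjunctions from $\SH$ to $\T$ and $\T'$; verifying that compatibility (functoriality of the trace maps) is the delicate technical point of the argument.
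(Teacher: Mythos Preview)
Your reduction to generators via Corollary~\ref{cor:generators_T} is sound, and you correctly arrive at the problem of proving the exchange for $g_*(\un_Y)$ with $Y$ regular. But from there on the argument has a genuine gap.

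First, a preliminary issue: you factor $g = fp$ via Nagata, but Nagata compactification requires $g$ to be separated of finite type, while the statement is for an arbitrary morphism $f$ in $\base$. The paper handles this by first reducing to the case $S = \spec(F)$ and then, using continuity, writing $X$ as a projective limit of finite-type $F$-schemes; a hocolim argument then reduces to $f$ of finite type. You skip this reduction entirely.

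More seriously, your treatment of the open immersion $j:Y \hookrightarrow \bar Y$ does not go through. You take a Gabber alteration $\pi:\tilde Y \to \bar Y$ with $\tilde Y$ regular and invoke a trace splitting on a dense open, then write that localization ``lets us descend from $\tilde Y$ \ldots\ to $\bar Y$ and conclude''. But there is no base-change formula relating $\pi^* j_*(\un_Y)$ to anything computable on $\tilde Y$: $\pi$ is proper, not smooth, so you cannot pull $j_*$ back along it. Your parenthetical claim that absolute purity identifies $j_*(\un_Y)$ with a $\otimes$-invertible object ``up to a trace-corrected boundary'' is not correct: absolute purity computes $i^!(\un)$ for a \emph{regular} closed immersion, which would require both $\bar Y$ and its closed complement $\bar Y \setminus Y$ to be regular---neither is assumed. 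The trace splitting only tells you that, on a dense open of $\bar Y$, $\un$ is a retract of $\pi_*(\un_{\tilde Y})$; this says nothing about $j_*(\un_Y)$ on the complement, and a noetherian induction on the complement just reproduces the same problem one dimension lower without an anchor.

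The paper avoids this entirely with a different and much shorter endgame. After the generator reduction, it first reduces to $S=\spec(F)$ (the general case then follows formally as in \cite[4.4.25]{CD3}). Over the perfect prime field $F$, a regular finite-type $F$-scheme $X$ is automatically \emph{smooth}. For $f:X\to\spec(F)$ smooth, the six-functor formalism gives that $f_*(\un_X)$ is \emph{rigid} (strongly dualizable) with dual $f_\sharp(\Th(-\tau_f))$, in both $\T(F)$ and $\T'(F)$. Since $\varphi^*$ is monoidal, it preserves strong duals, and it commutes with $f_\sharp$ and Thom spaces; hence $\varphi^* f_*(\un_X) \simeq f_*(\un_X)$ immediately. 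No compactification, no second appeal to alterations, and no delicate compatibility of trace maps is needed.
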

\begin{proof}
The proof follows that of \cite[4.4.25]{CD3}
 with some improvements.

We start by treating the case where $S=\spec(F)$
 is the spectrum of the prime field $F$.

We first show that one can assume $f$ is of finite type.
 Note that the functor $f_*$ commutes with arbitrary coproducts
 -- this follows formally from the fact its left adjoint
 preserves the generators which are assumed to be compact
 (see \cite[1.3.20]{CD3} for details).
 In particular, it is sufficient to prove that
 \eqref{eq:prop:adj&f_*} is an isomorphism when $\E$ is
 constructible.
 But $X$ can be written as a projective limit of
 a projective system of $F$-schemes of finite type
$$
X_i \xrightarrow{f_i} \spec(F), i \in I
$$
with transition maps $f_{ij}$.
Let us denote by $p_i:X \rightarrow X_i$ the canonical projection.
 Since $\T$ is continuous, we get
 an equivalence of triangulated categories (\cite[4.3.4]{CD3}):
\begin{equation}\label{eq:continuity_equivalence}
2-\ilim_{i \in I} \T_c(X_i) \rightarrow \T_c(X).
\end{equation}
Thus there exists a family $(\E_i)_{i \in I}$ such that
 for all $i \in I$, $\E_i$ is a constructible $\T$-spectrum
 over $X_i$
 with a given identification $f_i^*(\E_i) \simeq \E$
 for all index $i \in I$,
 and for all $i \leq j$, there is a map 
 $f_{ij}^*(\E_i) \rightarrow \E_j$ compatible with the previous
 identification.
 In particular, for all index $i \in I$, we get a canonical map:
$$
f_{i*}(\E_i) \rightarrow f_{i*}p_{i*}p_i^*(\E_i) \simeq f_*(\E).
$$
These maps are compatible with the transition maps $f_{ij}$
 by the following morphism:
$$
f_{i*}(\E_i) \rightarrow f_{i*}f_{ij*}f_{ij}^*(\E_i)
 \rightarrow f_{j*}(\E_j).
$$
So we get a canonical morphism of spectra over $F$:
$$
\psi:\hocolim_{i \in I}\big(f_{i*}(\E_i)\big) \rightarrow f_*(\E).
$$
A direct consequence of the continuity property for $\T$
 is the following lemma:
\begin{lm}
Under the preceding assumption,
 the morphism $\psi$ is an isomorphism.
\end{lm}
Indeed, it is sufficient to check $\psi$ is an isomorphism
 after applying the functor
 $\Hom_{\T(F)}(K,-)$ for any constructible $\T$-spectrum $K$.
 Then it follows from the equivalence \eqref{eq:continuity_equivalence}
 and from the fact $K$ is compact.

In particular, if we know the result for morphisms of finite
 type with target $\spec(F)$, we can conclude for the morphism $f$:
\begin{align*}
\varphi^*f_*(\E)
 & \stackrel{(1)}\simeq \varphi^*\hocolim_{i \in I}\big(f_{i*}(\E_i)\big) 
 \stackrel{(2)}\simeq \hocolim_{i \in I}\big(\varphi^*f_{i*}(\E_i)\big) \\
 & \stackrel{(3)}\simeq \hocolim_{i \in I} 
\left(f_{i*}\big(\varphi^*(\E_i)\big)\right) 
 \stackrel{(4)}\simeq f_*\varphi^*(\E_i)
\end{align*}
where (1) (resp. (4)) follows from the preceding lemma
 (the obvious analog of the preceding lemma for $\T'$),
 (2) is valid because $\varphi^*$ is a left adjoint thus commutes to homotopy
 colimits, and (3) is true by assumption.

So let us assume now that $f:X \rightarrow \spec(F)$ is of finite type.
 According to Corollary \ref{cor:generators_T},
 the category $\T(X)$ is generated by arbitrary coproducts
 of $\T$-spectra of the form $p_*(\un_Y)(n)$ for
 $p:Y \rightarrow X$ proper, $Y$ regular and $n$ any integer.
 Since $f_*$ commutes with coproducts as we have already seen,
  it is sufficient to show \eqref{eq:prop:adj&f_*} is an
  isomorphism when $\E$ is one of these particular $\T$-spectra.
 Thus, because $\varphi^*$ commutes with $p_*=p_!$ for $p$ proper
 (see \cite[2.4.53]{CD3}) and with twists, we only need to prove
 that \eqref{eq:prop:adj&f_*} is an isomorphism when $\E=\un_X$
 for $X$ a regular $F$-scheme of finite type. In particular,
 $f:X \rightarrow \spec(F)$ is smooth, because $F$ is perfect.
 From the six functors formalism, it now
 formally follows that the spectrum $f_*(\un_X)$ is rigid,
 in both the monoidal categories $\T(k)$ and in $\T'(k)$,
 with strong dual $f_\sharp(\Th(-\tau_f))$
 where $\tau_f$ is the tangent bundle of $f$
 (see for example \cite[2.4.31]{CD3}). 
 To conclude, it is sufficient now to recall that $\varphi^*$
 is monoidal, so it respects strong duals and it commutes
 with $f_\sharp$. This concludes the proof in the case where $S$ is the
 spectrum of a field.

The general case now formally follows,
 as in the proof of \cite[4.4.25]{CD3}.
\end{proof}

\begin{rem}
The result obtained here is stronger than its analogue in \cite{CD3}
 or \cite{CD4}, where we had to assume $f$ is of finite type,
 $\E$ is constructible.
 This is because we have used the continuity property as well as the
 compactly generated assumption on triangulated motivic categories.
 Note in particular that we do not need any semi-separatedness assumption
 on the triangulated motivic categories $\T$ and $\T'$.
\end{rem}

\begin{cor}\label{cor:prop:adj&f_*}
Under the assumption of the previous proposition,
 the functor $\varphi^*$ commutes with the 6 functors.
\end{cor}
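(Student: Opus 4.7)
The strategy is to verify the commutation of $\varphi^*$ with each of the six operations separately, treating Proposition \ref{prop:adj&f_*} as the only substantial new ingredient and deducing the remaining cases from it together with standard features of premotivic adjunctions.

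That $\varphi^*$ commutes with $f^*$, with the tensor product, and with $f_\sharp$ is built into the notion of a premotivic adjunction of monoidal triangulated categories; from this one also obtains the commutation with $\Th$ of any virtual vector bundle. Since $\T$ and $\T'$ are motivic and $\varphi^*$ is the left adjoint of a premotivic adjunction between them, \cite[2.4.53]{CD3} yields the commutation with $f_!$ for every separated morphism $f$ of finite type. The new input, commutation with $f_*$ for an arbitrary morphism $f$, is precisely the statement of Proposition \ref{prop:adj&f_*}.

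For $f^!$, Nagata compactification lets us factor any separated morphism of finite type as $f = p \circ j$ with $p$ proper and $j$ an open immersion, whence $f^! = j^*\, p^!$; any such proper $p$ further factors through a closed immersion $i$ into a smooth $S$-scheme $P$ (e.g.\ a projective bundle), giving $p^! = i^!\, q^!$ with $q\colon P \to S$ smooth. For $q$ smooth, the relative purity isomorphism \eqref{eq:purity_iso} gives $q^! \simeq \Th(\tau_q) \otimes q^*$, and all three functors on the right commute with $\varphi^*$ by the previous paragraph. For a closed immersion $i\colon Z \hookrightarrow X$ with open complement $\ell\colon U \to X$, the localization distinguished triangle $i_* i^!(E) \to E \to \ell_* \ell^*(E) \xrightarrow{+1}$ determines $i^!(E)$ up to canonical isomorphism via the full faithfulness of $i_*$, and is preserved by $\varphi^*$ thanks to the already-established commutations with $i_*$, $\ell_*$ (Proposition \ref{prop:adj&f_*}) and $\ell^*$.

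For $\uHom$, one uses that $\T(S)$ is compactly generated by Tate twists of motives of smooth $S$-schemes and that such generators $M_S(X)(n)$ are strongly dualizable with strong dual $f_\sharp \Th(-\tau_f)$, where $f\colon X \to S$ is the structural map; hence $\uHom(M_S(X)(n), E) \simeq \bigl(f_\sharp \Th(-\tau_f)\bigr)(-n) \otimes E$, and the commutation with $\varphi^*$ follows from the previously established commutations with $f_\sharp$, $\Th$, and $\otimes$. The class of first arguments for which the commutation holds is triangulated and stable under retracts, so it extends to all compact objects. The main obstacle I anticipate lies in extending this to a non-compact first argument: since $\uHom(-,E)$ converts coproducts into products while $\varphi^*$, being a left adjoint, does not a priori preserve products, the usual density argument breaks. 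The natural remedy is either to restrict this particular commutation to constructible first arguments, which is the way the six-functor formalism is customarily used, or to supplement the argument with a Brown representability step controlling the essential image of $\varphi^*$; all the other commutations reduce unconditionally to the three basic inputs above.
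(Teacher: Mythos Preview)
Your overall strategy—deriving the remaining commutations formally from the single new input of Proposition~\ref{prop:adj&f_*}—matches the paper's, which simply defers to the proof of \cite[4.4.25]{CD3} for details. But two concrete steps in your execution fail.

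First, in the $f^!$ argument: a proper morphism $p\colon X\to S$ need not factor as a closed immersion into a smooth $S$-scheme (proper $\ne$ projective; think of Hironaka's examples), so your ``projective bundle'' step is unjustified. The fix is to avoid Nagata and instead argue locally on the source. Since $j^*f^! = (fj)^!$ for any open immersion $j$ into $X$, the exchange can be checked Zariski-locally on $X$; by noetherian induction combined with your (correct) localization argument for closed immersions, it suffices to exhibit one dense open $U\subset X$ for which the exchange holds. Choose $U$ affine mapping into an affine open $V\subset S$; then $f|_U$ factors as a closed immersion $U\hookrightarrow\AA^n_V$ followed by the smooth projection $\AA^n_V\to V$ and the open immersion $V\hookrightarrow S$, each of which is already handled.

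Second, in the $\uHom$ argument: $M_S(X)$ for $f\colon X\to S$ smooth is \emph{not} strongly dualizable unless $f$ is also proper—your formula $\uHom(M_S(X),E)\simeq f_\sharp\Th(-\tau_f)\otimes E$ amounts to $f_*f^*E\simeq f_!f^*E$, which fails for non-proper $f$. The correct identity (from the $(f_\sharp,f^*)$ adjunction and the projection formula for $f_\sharp$) is
\[
\uHom\bigl(M_S(X)(n),E\bigr)\ \simeq\ f_*f^*(E)(-n),
\]
and this reduces the exchange for $\uHom$ on generators directly to the new commutation with $f_*$, which is precisely Proposition~\ref{prop:adj&f_*}. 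Your closing caveat about non-compact first arguments is well taken.
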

This follows formally from the previous proposition
 (see the proof of \cite[4.4.25]{CD3} for details).

\begin{ex}
The corollary applies to all motivic adjunctions
 of the diagram \eqref{eq:premotivic_adj} provided
 we restrict to equal characteristics schemes and
 we invert their characteristic exponent.
\end{ex}

\section{Local description of homology sheaves}

\subsection{The $\delta$-niveau spectral sequence}

Below, we extend the considerations of \cite[\textsection 3]{BO},
 to the framework of motivic categories. Actually, the new point
 is to work in the absolute case (without a base field) and to
 use arbitrary dimension functions.
\begin{df}
Let $(X,\delta)$ be a dimensional scheme.
 A \emph{$\delta$-flag} on $X$ is a sequence $Z_*=(Z_p)_{p \in \ZZ}$
 of reduced closed subschemes of $X$ such that $Z_p \subset Z_{p+1}$
 and $\delta(Z_p) \leq p$.

We will denote by $\F_\delta(X)$ the set of $\delta$-flags ordered
 by term-wise inclusion.
\end{df}
Note that the ordered set $\F_\delta(X)$ is non-empty
 and cofiltered
 (\emph{i.e.} two elements admit a lower bound).

\begin{num}\label{num:exact_couple}
Let us recall the framework of exact couples
 to fix notations (following \cite[\textsection 1]{Deg11}).
 An \emph{exact couple} in
 an abelian (resp. triangulated, pro-triangulated\footnote{\emph{i.e.}
 the category of pro-objects of a triangulated category;}) category $\C$
 \emph{with homological conventions} is the data of bigraded objects
 $D$ and $E$ of $\C$ and homogeneous morphisms in $\C$ as pictured
 below:
\begin{equation}
\begin{split}
\label{hlg_exact_couple}
\xymatrix@R=20pt@C=32pt{
D\ar^{(1,-1)}_\alpha[rr]
 &&  D\ar^{(0,0)}_/8pt/\beta[ld] \\
 & E\ar^{(-1,0)}_/-6pt/\gamma[lu] &
}
\end{split}
\end{equation}
where the degree of each morphism is indicated,
 and such that the above diagram is a (long) exact sequence 
 (resp. a triangle, a pro-triangle) in each degree.

By contrast, an \emph{exact couple with cohomological conventions}
 is defined by inverting the arrows in the above diagram
 and taking the opposite degrees.

Consider now a dimensional scheme $(S,\delta)$
 and a separated $S$-scheme $X$. Denote again by $\delta$
 the dimension function on $X$ induced by that on $S$.
For any $\delta$-flag $Z_*$ on $X$ and any integer $p \in \ZZ$,
 we get according to Paragraph \ref{num:BM}(BM3),
 a distinguished triangle:
$$
\Mb(Z_p-Z_{p-1}/S) \rightarrow \Mb(Z_p/S)
 \rightarrow \Mb(Z_{p-1}/S)
 \rightarrow \Mb(Z_p-Z_{p-1}/S)[1]
$$
in $\T(S)$. Moreover, these triangles are contravariantly functorial
with respect to the order on $\delta$-flags. Therefore, taking
the (pseudo-)projective limit in the pro-triangulated category
$\pro{\T(X)}$, we get a distinguished pro-triangle:
\begin{equation}\label{eq:dniveau_triangle}
G_p\Mb(X/S) \xrightarrow{j_{p*}} F_p\Mb(X/S)
 \xrightarrow{i_p^*} F_{p-1}\Mb(X/S)
 \xrightarrow{\partial_p} G_p\Mb(X/S)[1]
\end{equation}
where we have put:
$$
F_p\Mb(X/S)=\pplim{Z_*} \Mb(Z_p/S),
 \qquad G_p\Mb(X/S)=\pplim{Z_*} \Mb(Z_p-Z_{p-1}/S).
$$
\end{num}
\begin{df}\label{df:niveau_exact_couple}
Consider the above assumptions and notations.

We define the \emph{$\T$-motivic $\delta$-niveau exact couple
 associated with $X/S$}
 as the following pro-exact couple with cohomological conventions
 in $\pro\T(S)$:
$$
D^{p,q}=F_p\Mb(X/S)[p+q], \qquad E^{p,q}=G_p\Mb(X/S)[p+q],
$$
and with morphisms in degree $(p,q)$ induced by the $(p+q)$-suspension
 of the exact triangle \eqref{eq:dniveau_triangle}. 
\end{df}

\begin{num}
Consider the notations of the above definition.
Assume we are given a Grothendieck abelian category $\A$
 and an exact functor\footnote{\emph{i.e.} a functor
 sending distinguished triangles to long exact sequences}:
$$
\mH:\T(S)^{op} \rightarrow \A.
$$
Then one can extend this functor to a pro-$\T$-spectrum
 $M_\bullet=(M_i)_{i \in I}$ over $S$ by the formula:
$$
\hmH(M_\bullet)=\ilim_{i \in I} \big(\mH(M_i)\big)
$$
where the transition morphisms of the colimit are induced
 by the transition morphisms of the pro-object $P_\bullet$.
 Since filtered colimits are exact in $\A$,
 the functor $\hmH$ sends distinguished pro-triangle to long
 exact sequences.
 In particular, if we apply $\hmH$ to the pro-exact couple
 of the previous definition, we get an exact couple in the
 abelian category $\A$ with homological conventions,
 and therefore a homological spectral sequence:
\begin{equation}\label{eq:general_dniveau_ssq}
E^1_{p,q}=\hmH_{p+q}\big(F_p\Mb(X/S)\big)
 \Rightarrow \mH_{p+q}\big(\Mb(X/S)\big).
\end{equation}
Note that because
 the dimension function $\delta$ is necessarily bounded,
 this spectral sequence always converges.
 This general definition will be especially useful in the particular
 case of the following definition.
\end{num}
\begin{df}\label{df:niveau_ssp}
Consider as above a dimensional scheme $(S,\delta)$ and a
 separated $S$-scheme $X$.
 Let $\E$ be a $\T$-spectrum.
 We define the \emph{$\delta$-niveau spectral
 sequence of $X/S$ with coefficients in $\E$} as the spectral
 sequence \eqref{eq:general_dniveau_ssq} associated with
 the functor $\mH=\Hom(-,\E)$.
\end{df}
It is useful to consider the $\ZZ$-graded version
 of this spectral sequence,
 replacing $\E$ by $\E(-n)$ for any integer $n \in \ZZ$.
 Then as in \cite[3.7]{BO},
 it is easy to check it admits the following form (up to a canonical
 isomorphism):
\begin{equation}\label{eq:dniveau_ssq}
^\delta E^1_{p,q}=\bigoplus_{x \in X_{(p)}} \hat \E^{BM}_{p+q,n}(x/S)
 \Rightarrow \E^{BM}_{p+q,n}(X/S)
\end{equation}
where $X_{(p)}=\{x \in X \mid \delta(x)=p\}$ and for any point $x \in X$,
 with reduced closure $Z(x)$ in $X$, we put:
\begin{equation}\label{eq:hlg_prelim}
\hat \E^{BM}_{p+q,n}(x/S)
:=\ilim_{U \subset Z(x)} \E^{BM}_{p+q,n}(U/S)
\end{equation}
where $U$ runs over non-empty open subschemes
 of $Z(x)$. Moreover, using the functoriality properties
 of Borel-Moore spectra (cf. \ref{num:BM}, (BM1) and (BM2)),
 one easily checks that this spectral sequence is covariant with
 respect to proper morphisms and contravariant with respect
 to \'etale morphisms.\footnote{Actually the opposite functoriality holds
 for the $\T$-motivic $\delta$-niveau exact couple.}

\begin{ex}\label{ex:niveau_ssp}
Let $S$ be a regular scheme with dimension function $\delta=-\codim_S$
 and $X$ be a separated $S$-morphism.

We fix a ring of coefficients $R$ flat over $\ZZ$ and consider
 the $R$-linear motivic category $\DM_R$ following
 the conventions of Paragraph \ref{num:convention_DM},
 point (1) or (2).

Suppose $\E=\un_X$ is the constant motive over $X$ in $\DM(X,R)$.
 Then the $\delta$-niveau spectral sequence has the following form:
$$
^\delta E^1_{p,q}=\bigoplus_{x \in X_{(p)}} \hat H^{BM}_{p+q,n}(x/S,R)
 \Rightarrow H^{BM}_{p+q,n}(X/S,R)
$$
where $H^{BM}_{**}$ stands for $R$-linear motivic Borel Moore homology.

Suppose $x \in X$ is a point such that $\delta(x)=p$.
 Let $s$ be its projection to $S$.
 Note that $\spec(\kappa(x)) \rightarrow S$ is a limit of lci morphisms
 whose relative dimension is:
\begin{equation}\label{eq:relative_dim}
\dtr(\kappa(x)/\kappa(s))-\codim_X(s)=\delta(x)=p.
\end{equation}
Moreover, because $S$ is regular, $\T$ satisfies absolute purity,
 and motivic cohomology commutes with projective limits
 (see \textsection \ref{num:commutes_with_limits}),
 we deduce from Corollary \ref{cor:fund_class} the following computation:
$$
\hat H^{BM}_{p+q,n}(x/S,R)=\HM^{p-q,p-n}(\kappa(x),R)
$$
where $\HM^{**}$ denotes $R$-linear motivic cohomology.
 Note that in each cases considered above, one gets:
$$
\HM^{p-q,p-n}(\kappa(x),R)=\begin{cases}
0 & \text{if } (p-q>p-n)\text{ or }(p-n<0), \\
K_{p-q}^M(\kappa(x)) \otimes_\ZZ R & \text{if } q=n,
\end{cases}
$$
where $K_*^M$ denotes Milnor K-theory.

In particular, the preceding spectral sequence is concentrated
 in the region $p\geq n$ and $q \geq n$. Moreover,
 as in \cite[2.7]{Deg11}, one can identify the differential
\begin{equation}\label{eq:divisor_class_map}
^\delta E^1_{n,n}=Z_{\delta=n}(X) \otimes R
 \xleftarrow{\quad d^1_{n+1,n}\quad }
 \bigoplus_{x \in X_{(n+1)}}  \big(\kappa(x)^\times\big) \otimes_\ZZ R
 = {}^\delta E^1_{n+1,n}
\end{equation}
with the classical divisor class map, where $Z_{\delta=n}(X)$
 is the abelian group of algebraic cycles in $X$ of $\delta$-dimension
 $n$ (\emph{i.e.} the free abelian group generated by $X_{(n)}$).

Thus we have proved the following proposition:
\end{ex}
\begin{prop}\label{prop:comput_BM_w_Chow}
Let $S$ be a regular scheme with dimension function $\delta=-\codim_S$,
 $X$ a separated $S$-scheme,
 and $n$ an integer. Let $R$ be a ring of coefficients.
 
Assume that one of the following conditions holds:
\begin{enumerate}
\item[(a)] $R=\QQ$,
\item[(b)] $S$ is a $\QQ$-scheme and $R=\ZZ$,
\item[(c)] $S$ is an $\F_p$-scheme and $p \in R^\times$.
\end{enumerate}
Then one has a canonical isomorphism:
$$
H_{2n,n}^{BM}(X/S,R) \simeq CH_{\delta=n}(X) \otimes R
$$
where the right hand side is the Chow group of $R$-linear
 algebraic cycles in $X$ of $\delta$-dimension $n$.
\end{prop}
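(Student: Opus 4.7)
The strategy is to read off the result from the $\delta$-niveau spectral sequence \eqref{eq:dniveau_ssq} with coefficients $\E = \un_X$, after identifying its $E^1$-page in terms of Milnor $K$-theory and recognizing the $d^1$-differential along the appropriate diagonal as the classical divisor class map, exactly as sketched in Example \ref{ex:niveau_ssp}. I will first carry out the computation of the stalk $\hat H^{BM}_{p+q,n}(x/S,R)$ for a point $x \in X_{(p)}$, and then exploit the resulting vanishing range to conclude that the abutment in bidegree $(2n,n)$ is already the $E^2$-term at spot $(n,n)$.

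First, fix a point $x \in X$ with closure $Z(x) \subset X$ and $\delta(x) = p$. The limit in \eqref{eq:hlg_prelim} is taken over the filtered system of nonempty open subschemes $U \subset Z(x)$, with limit $\spec(\kappa(x))$. By continuity of $\T$ (Paragraph \ref{num:commutes_with_limits}), the groups $\hat H^{BM}_{p+q,n}(x/S,R)$ can be computed on the generic point $\spec(\kappa(x))$. Since $S$ is regular and the three cases (a), (b), (c) all fall under the conventions of Paragraph \ref{num:convention_DM}, the motivic category $\DM_R$ satisfies absolute purity (Example \ref{ex:abs_pur_equal}); Corollary \ref{cor:fund_class} then provides a purity isomorphism identifying $f^!(\un_S)$ with the Thom object of the virtual tangent bundle of the lci morphism $\spec(\kappa(x)) \to S$, whose relative dimension is \eqref{eq:relative_dim}. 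A direct rewriting via adjunction between $f_!$ and $f^!$ yields
\[
\hat H^{BM}_{p+q,n}(x/S,R) \simeq \HM^{p-q,\,p-n}(\kappa(x),R).
\]

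Next, we need to control this motivic cohomology of the residue field. In case (a) it coincides with rational Milnor $K$-theory via the isomorphism with graded pieces of Quillen $K$-theory; in case (b) this reduction is the same; in case (c) it follows from the description of $\DM_R$ over $\F_p$-schemes together with Voevodsky's cancellation theorem, as invoked in Paragraph \ref{num:eq:compute_w}. In each case one obtains $\HM^{a,b}(\kappa(x),R) = 0$ unless $0 \leq a \leq b$, and $\HM^{b,b}(\kappa(x),R) = K^M_b(\kappa(x)) \otimes_\ZZ R$. Applied to our bidegree, this forces $^\delta E^1_{p,q}$ to vanish unless both $p \geq n$ and $q \geq n$, and on the line $q = n$ one has
\[
^\delta E^1_{p,n} = \bigoplus_{x \in X_{(p)}} K^M_{p-n}(\kappa(x)) \otimes_\ZZ R.
\]
In particular $^\delta E^1_{n,n} = Z_{\delta = n}(X) \otimes R$ and $^\delta E^1_{n+1,n} = \bigoplus_{x \in X_{(n+1)}} \kappa(x)^\times \otimes_\ZZ R$.

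Finally, I would identify the differential $d^1_{n+1,n}$ in \eqref{eq:divisor_class_map} with the classical divisor class map. This identification is carried out in \cite[2.7]{Deg11} over a base field by chasing the connecting morphisms in the exact couple through the localization triangles (BM3) for pairs $(Z_{n+1}, Z_n)$ of successive steps in a $\delta$-flag; the argument extends verbatim to our absolute situation because localization, purity, and the continuity isomorphism are all available. This is the main technical point and the only place where one must be careful, as one has to check that the boundary maps of the Borel-Moore localization triangles induce the tame symbol on Milnor $K^M_1 = \GG_m$; this reduces, by continuity, to the codimension-one case over discrete valuation rings where the computation is standard. Once the $d^1$-differential is identified with the divisor class map, convergence of the $\delta$-niveau spectral sequence (guaranteed by the boundedness of $\delta$) together with the concentration in the quadrant $p,q \geq n$ forces
\[
H^{BM}_{2n,n}(X/S,R) = {}^\delta E^\infty_{n,n} = {}^\delta E^2_{n,n} = \operatorname{coker}\!\left(d^1_{n+1,n}\right) = CH_{\delta = n}(X) \otimes R,
\]
which is the desired isomorphism.
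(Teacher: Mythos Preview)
Your proposal is correct and follows essentially the same argument as the paper: the proof is precisely the content of Example \ref{ex:niveau_ssp}, which computes the $E^1$-page of the $\delta$-niveau spectral sequence via absolute purity and continuity, observes the concentration in the region $p\geq n$, $q\geq n$, and identifies $d^1_{n+1,n}$ with the divisor class map by the method of \cite[2.7]{Deg11}. The only minor quibble is your stated vanishing range ``$\HM^{a,b}=0$ unless $0\leq a\leq b$'': the condition $a\geq 0$ is neither needed nor asserted in the paper, which only uses $b\geq 0$ and $a\leq b$ (giving exactly $p\geq n$ and $q\geq n$).
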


\begin{rem}
\begin{enumerate}
\item The graduation on the Chow group of cycles by a
 dimension function $\delta$ first appeared, to our knowledge,
 in \cite[Chap. 41, Def. 9.1]{stack}.
\item Note of course that the case where $S$ is the spectrum
 of a field was already well known --- but with a 
 different definition of Borel-Moore motivic homology (though equivalent,
  see \cite{CD5}).
\item It is straightforward to show that the isomorphism of the proposition
 is functorial with respect to smooth pullbacks and proper
 push-forwards. It can also be proved it is functorial
 with respect to lci pullbacks. For all this, see the method
 of \cite{Deg10}, in particular Corollary 3.12
 and Proposition 3.16.
\end{enumerate}
\end{rem}

The following formulation of the preceding result
 is a kind of duality statement.
\begin{cor}\label{cor:chow_codim}
Consider the assumptions of the previous proposition
 and assume $X/S$ is equidimensional of dimension $d$
 (see \cite[13.3.2]{EGA4}).

Then one has a canonical isomorphism:
$$
H_{2n,n}^{BM}(X/S,R) \simeq CH^{d-n}(X) \otimes R
$$
where the right hand side is the Chow group of $R$-linear
 algebraic cycles in $X$ of codimension $d-n$.
\end{cor}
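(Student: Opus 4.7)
The plan is to deduce the statement directly from Proposition \ref{prop:comput_BM_w_Chow} by identifying the two indexations of cycles on $X$. That proposition gives $H_{2n,n}^{BM}(X/S,R) \simeq CH_{\delta=n}(X) \otimes R$, so it suffices to establish the equality $CH_{\delta=n}(X) = CH^{d-n}(X)$. Since the description of $CH_{\delta=n}(X)$ via the divisor class map of \eqref{eq:divisor_class_map} depends only on the pointwise indexation and on residue fields, this reduces to the set-theoretic identity
$$
\{x \in X \mid \delta^f(x) = n\} = \{x \in X \mid \codim_X(x) = d-n\},
$$
where $f:X \to S$ denotes the structural morphism and $\delta^f$ is the dimension function on $X$ induced by $\delta = -\codim_S$ (Paragraph \ref{num:induced_delta}).

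The key step is to prove the numerical identity $\delta^f(x) + \codim_X(x) = d$ for every $x \in X$. First, since $\delta^f$ is a dimension function on $X$, the scheme $X$ is universally catenary; by Example \ref{ex:can_dim_fn} the function $-\codim_X$ is then itself a dimension function on $X$, and by Lemma \ref{lm:dim+locally_ct} the difference $\delta^f + \codim_X$ is Zariski locally constant on $X$. To identify this locally constant function with $d$, I would evaluate it at a generic point $\eta$ of an irreducible component of $X$: here $\codim_X(\eta) = 0$, and by equidimensionality of $f$ the image $f(\eta)$ is a generic point of $S$, so that $\delta(f(\eta)) = 0$. Moreover, $\eta$ is a generic point of the fibre $f^{-1}(f(\eta))$, whose dimension at $\eta$ is $d$, so by Remark \ref{rem:trivial_inequality} we get $\dtr(\kappa(\eta)/\kappa(f(\eta))) = d$, and therefore $\delta^f(\eta) = d$.

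It remains to propagate the value $d$ to all points of $X$. Since any $x \in X$ is a specialization of the generic point $\eta$ of one of the irreducible components of $X$ containing it, every Zariski open neighbourhood of $x$ also contains $\eta$; by local constancy the value of $\delta^f + \codim_X$ at $x$ then coincides with its value at $\eta$, which is $d$. The argument is essentially bookkeeping with dimension functions, and I do not anticipate a serious obstacle; the only point that requires mild care is the handling of possibly reducible $X$, which is settled component by component thanks to the assumption that $X/S$ has constant relative dimension $d$.
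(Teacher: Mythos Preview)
Your reduction is exactly the paper's: both arguments deduce the corollary from Proposition~\ref{prop:comput_BM_w_Chow} via the pointwise identity $\codim_X(x)=d-\delta^f(x)$. The paper simply cites this identity from \cite[13.3.4]{EGA4} (valid because $\mathcal O_{S,s}$ is universally catenary, being regular), whereas you reprove it by comparing the two dimension functions $\delta^f$ and $-\codim_X$ and evaluating at generic points.

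One small wrinkle worth tightening: your appeal to Example~\ref{ex:can_dim_fn} for $-\codim_X$ being a dimension function literally requires $X$ integral. Your ``component by component'' fix does work, but one should note that for a point $x$ lying on several irreducible components $X_i$, the argument gives $\codim_{X_i}(x)=d-\delta^f(x)$ for each $i$, and then $\codim_X(x)=\min_i\codim_{X_i}(x)=d-\delta^f(x)$ follows. Alternatively, citing \cite[13.3.4]{EGA4} as the paper does avoids this detour entirely.
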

\begin{proof}
This follows from the preceding proposition and the following equality
 for a point $x \in X$ with image $s \in  S$
 (see \cite[13.3.4]{EGA4}, as $\mathcal O_{S,s}$ is universally catenary because
 it is a regular local ring):
$$
\codim_X(x)=\codim_S(s)+d-\dtr(\kappa(x)/\kappa(s))=d-\delta(x).
$$
\end{proof}

\begin{num}\label{num:pro&essentially}
Let us go back to the case of an abstract motivic triangulated
 category $\T$ satisfying our general conventions.
 Consider a scheme $S$, noetherian according to our assumptions.

Let us consider the categories $\Setx S$ (resp. $\bSetx S$)
 made by $S$-schemes which are separated (of finite type)
 (resp. essentially separated) and whose morphisms are \'etale
 $S$-morphisms.
 We let $\Petx S$ be the category of pro-objects of $\Setx S$
 which are essentially affine.\footnote{\emph{i.e.} the transition
 morphisms are affine up to a large enough index.}
 Any pro-object $X_\bullet$ of $\Petx S$ admits
 a projective limit $X$ in the category of $S$-schemes
 which is separated according to \cite[8.10.5]{EGA4}, 
 and essentially of finite type by definition
  (see the conventions of this paper, p. \pageref{conventions}).
 Thus we get a functor:
$$
L:\Petx S \rightarrow \bSetx S, \
 X_\bullet \mapsto (\plim X_\bullet).
$$
\end{num}
\begin{lm}\label{lm:pro_schemes_essentially}
In the notation above  the functor $L$ is an equivalence of categories.
\end{lm}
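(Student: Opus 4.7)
The plan is to verify essential surjectivity and full faithfulness separately, using in both cases the standard limit results from \cite[\S 8]{EGA4} together with the fact that the transition morphisms in a pro-object of $\Petx S$ are eventually affine étale.

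\textbf{Essential surjectivity.} Given $Y\in\bSetx S$, by our conventions (p.~\pageref{conventions}) the morphism $Y\to S$ is pro-étale over some separated $S$-scheme of finite type $Y_0$. Write $Y=\plim_i Y_i$ where $Y_i\to Y_0$ is étale and the transition maps are affine étale. Since $Y\to S$ is separated and $Y_0\to S$ is of finite presentation, \cite[8.10.5]{EGA4} shows that $Y_i\to S$ is separated for $i$ large enough; discarding the small indices, we may assume each $Y_i$ lies in $\Setx S$. Then $(Y_i)\in\Petx S$ and $L((Y_i))\simeq Y$ by construction.

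\textbf{Full faithfulness.} Let $X_\bullet=(X_i)_{i\in I}$ and $Y_\bullet=(Y_j)_{j\in J}$ be objects of $\Petx S$, with limits $X$ and $Y$ in $\bSetx S$. By the universal property of projective limits and by the fact that each $Y_j$ is of finite presentation over $S$, a double application of \cite[8.8.2, 8.14.2]{EGA4} yields
\[
\Hom_S(X,Y)=\plim_{j\in J}\Hom_S(X,Y_j)
=\plim_{j\in J}\ilim_{i\in I}\Hom_S(X_i,Y_j)
=\Hom_{\Petx S}(X_\bullet,Y_\bullet),
\]
where the last equality is the definition of morphisms in the pro-category. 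This proves $L$ is fully faithful on the underlying category of $S$-schemes.

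\textbf{Compatibility with étaleness.} It remains to observe that the equivalence identifies \emph{étale} $S$-morphisms on both sides. If $f:X\to Y$ is an $S$-morphism coming from a compatible family $f_{ij}:X_i\to Y_j$, then étaleness of $f$ amounts, thanks to \cite[17.7.8]{EGA4} applied to the filtered system $(X_i\to Y_j)$, to the existence of indices $i,j$ such that $f_{ij}$ is étale after possibly further shrinking; this is exactly the condition that $f$ defines a morphism in $\Petx S$. The hard point is thus to have a clean limit statement for étale morphisms, which is supplied by \emph{loc.~cit.}; the rest is bookkeeping on indices. This establishes that $L$ is an equivalence.
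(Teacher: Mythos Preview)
Your proof is correct and follows essentially the same strategy as the paper: essential surjectivity from the definition of ``essentially of finite type'' together with the EGA4 limit results (the paper cites 8.10.5(v) and 17.7.8), and full faithfulness from the standard description of morphisms into a finitely presented target over a filtered limit (the paper cites 8.13.2 where you invoke 8.8.2/8.14.2). The only cosmetic difference is that you isolate the étaleness-descent step as a separate paragraph, whereas the paper absorbs it into its citations.
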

\begin{proof}
The essential surjectivity of $L$ follows from the definition
 of essentially of finite type and the reference \cite[8.10.5(v)]{EGA4}
 (resp. \cite[17.7.8]{EGA4})
 for the fact that the property of being separated (resp. \'etale)
 is compatible with projective limits.
 Finally, we get that $L$ is fully faithful by applying \cite[8.13.2]{EGA4}.
\end{proof}

\begin{num}
The preceding lemma allows to canonically extend
 some of the functors considered previously.
 First, we can define the following functor:
$$
\hMb:\Petx S \rightarrow \pro{\T(S)}, \
 (X_i)_{i \in I} \mapsto \pplim{i \in I} \Mb(X_i/S).
$$
The preceding lemma tells us that it uniquely corresponds
 to a functor defined on $\bSetx S$, and it obviously extends
 the functor $\Mb$ from separated $S$-schemes
 to essentially separated $S$-schemes.

Moreover, given a $\T$-spectrum $\E$, we can also define:
$$
\hat \E^{BM}_{**}:\Petx S \rightarrow \ab^{\ZZ^2}, \
 (X_i)_{i \in I} \mapsto \ilim_{i \in I} \E^{MB}_{**}(X_i/S).
$$
\end{num}
\begin{df}\label{df:BM_essentially}
Given any essentially separated $S$-scheme $X$,
 we will denote by $\hMb(X/S)$ (resp. $\hat \E^{BM}_{**}(X/S)$)
 the unique pro-spectrum (resp. bi-graded abelian group)
 obtained by applying the corresponding functor defined above
 to any pro-scheme $X_\bullet$ in $\Petx S$ such that $L(X_\bullet)=X$.
\end{df}
The pro-spectrum $\hMb(X/S)$ (resp. abelian group $\hat \E^{BM}_{**}(X/S)$)
 is functorially contravariant (resp. covariant) in $X/S$
 with respect to \'etale morphisms.

\begin{rem}\label{rwhen}
It follows from the previous lemma that the notation of this definition
 coincides with that of formula \eqref{eq:hlg_prelim} when,
 given a point $x \in X$, one denotes abusively by $x/S$ the 
 essential separated $S$-scheme with structural morphism:
 $\spec(\kappa(x)) \rightarrow X \rightarrow S$.
\end{rem}

\begin{num}
Consider a (noetherian) scheme $S$
 and $i:Z \rightarrow X$ a closed immersion between
 essentially separated $S$-schemes.
 Let us fix pro-schemes $(X_s)_{s \in I}$ and $(Z_t)_{t \in J}$
 in $\Petx S$ such that $L(X_\bullet)=X$ and $L(Z_\bullet)=Z$.
 
Because $X$ is noetherian, the ideal of $Z$ in $X$
 is locally finitely generated and we can find 
 indexes $s \in I$ and $t \in J$ such that the closed immersion $i$
 can be lifted to a closed immersion $i_s:Z_t \rightarrow X_s$.
 Thus, by reducing $I$ and replacing $J$ by $I$,
 one can find a morphism
 of pro-objects $i_\bullet:Z_\bullet \rightarrow X_\bullet$
 such that for any morphism $s \rightarrow s'$ in $I$, the
 following commutative diagram is cartesian:
$$
\xymatrix@=12pt{
Z_s\ar^{i_t}[r]\ar[d] & X_s\ar[d] \\
Z_{s'}\ar^{i_{t'}}[r] & X_{s'},
}
$$
where the vertical maps corresponds to the transition morphisms
 of the pro-objects $Z_\bullet$ and $X_\bullet$,
 and the horizontal maps are closed immersions.

Note that $U_\bullet=(X_s-Z_s)_{s \in I}$ is a pro-scheme in $\Petx S$
 and we have $L(U_\bullet)=U$.
 Then, we deduce from \ref{num:BM}(BM3) a pro-distinguished triangle:
$$
\hMb(U/S) \xrightarrow{j_*} \hMb(X/S)
 \xrightarrow{i^*} \hMb(Z/S)
 \xrightarrow{\partial_i} \hMb(U/S)[1].
$$
Using again \cite[8.13.2]{EGA4}, we deduce that this triangle
 does not depend on the lifting of $i$ constructed above.

Given a spectrum $\E$, 
 we also deduce a canonical long exact sequence of abelian groups:
$$
\cdots \rightarrow \hat \E_{n+1,m}^{BM}(U/S) \xrightarrow{\partial_i}
 \hat \E_{n,m}^{BM}(Z/S) \xrightarrow{i_*}
 \hat \E_{n,m}^{BM}(X/S) \xrightarrow{j^*}
 \hat \E_{n,m}^{BM}(U/S) \rightarrow \cdots
$$
Assuming we are given a dimension function $\delta$ on $S$,
 it is now straightforward to generalize Definitions
 \ref{df:niveau_exact_couple} and \ref{df:niveau_ssp}
 to the case of an essentially separated $S$-scheme $X$.
 Therefore we get:
\end{num}
\begin{prop}\label{prop:df_niveau_ssp_essntially}
Let $(S,\delta)$ be a dimensional scheme
 and $X$ be an essentially separated $S$-scheme.
 Then there is a canonical spectral sequence
 (with homological conventions)
$$
^\delta E^1_{p,q}=\bigoplus_{x \in X_{(p)}} \hat \E^{BM}_{p+q,\gtw n}(x/S)
 \Rightarrow \hat \E^{BM}_{p+q,\gtw n}(X/S)
$$
where $X_{(p)}$ stands for the set of points $x \in X$
 such that $\delta(x)=p$.
\end{prop}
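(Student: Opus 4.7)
The plan is to mimic verbatim the construction of the \(\delta\)-niveau spectral sequence from Definitions \ref{df:niveau_exact_couple} and \ref{df:niveau_ssp}, replacing \(\Mb\) throughout by the pro-spectrum \(\hMb\) introduced in Definition \ref{df:BM_essentially}. All the essential inputs for this extension were put in place in the paragraph preceding the statement: the localization pro-triangle
\[
\hMb(U/S) \xrightarrow{j_*} \hMb(X/S) \xrightarrow{i^*} \hMb(Z/S) \xrightarrow{+1}
\]
attached to any closed immersion of essentially separated \(S\)-schemes, and the long exact sequence of Borel-Moore groups it induces after applying \(\Hom(-,\E\gtw n)\) and passing to a colimit.

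First I would note that for an essentially separated \(S\)-scheme \(X\), equipped with its induced dimension function, the set \(\F_\delta(X)\) of \(\delta\)-flags \(Z_* = (Z_p)\) of reduced closed subschemes satisfying \(\delta(Z_p) \leq p\) remains nonempty and cofiltered, and every flag is eventually constant equal to \(\emptyset\) on the left and to \(X\) on the right by Remark \ref{rem:bound_dim_not}. Each \(Z_p\) is essentially separated over \(S\), hence \(\hMb(Z_p/S)\) lies in \(\pro{\T(S)}\). For every such flag and every integer \(p\), the localization pro-triangle above produces
\[
\hMb(Z_p - Z_{p-1}/S) \to \hMb(Z_p/S) \to \hMb(Z_{p-1}/S) \xrightarrow{+1}
\]
contravariantly functorial in \(Z_*\). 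Taking the pseudo-projective limit over \(\F_\delta(X)\) yields pro-spectra \(F_p\hMb(X/S)\) and \(G_p\hMb(X/S)\) fitting in a pro-triangle, and hence a pro-exact couple in \(\pro{\T(S)}\) formally identical to the one of Definition \ref{df:niveau_exact_couple}.

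Applying the extended functor \(\hmH = \ilim \Hom(-,\E\gtw n)\) converts this pro-exact couple into an exact couple of abelian groups, and hence into a homological spectral sequence converging to \(\hat\E^{BM}_{p+q,\gtw n}(X/S)\); convergence is automatic since the filtration is concentrated in the finite range \([\delta_-(X), \delta(X)]\) by Remark \ref{rem:bound_dim_not}.

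The core computation, and the main potential obstacle, is the identification of the \(E^1\)-term with \(\bigoplus_{x \in X_{(p)}} \hat\E^{BM}_{p+q,\gtw n}(x/S)\). One argues as in \cite[3.7]{BO}: for a flag \(Z_*\), the locally closed subscheme \(Z_p - Z_{p-1}\) has finitely many generic points, each of \(\delta\)-value exactly \(p\), and as \(Z_*\) ranges over \(\F_\delta(X)\) these generic points range over all of \(X_{(p)}\). Commuting the filtered colimit over flags with the finite direct sum, the summand attached to a point \(x \in X_{(p)}\) becomes \(\ilim_U \hat\E^{BM}_{p+q,\gtw n}(U/S)\), where \(U\) runs over nonempty opens of the reduced closure of \(x\) in \(X\); by Remark \ref{rwhen} this is exactly \(\hat\E^{BM}_{p+q,\gtw n}(x/S)\). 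The subtlety is to exchange the cofiltered limit defining \(\hMb\) on \(X = \plim X_i\) with the pseudo-projective limit over flags, but this is harmless thanks to Lemma \ref{lm:pro_schemes_essentially}: any flag on \(X\) lifts to a compatible system of flags on the pro-scheme \((X_i)_{i \in I}\), so the whole construction is the \(\ilim_i\) of the already-constructed niveau spectral sequences of Definition \ref{df:niveau_ssp} applied to the \(X_i\), and the identification reduces cleanly to the finite-type case.
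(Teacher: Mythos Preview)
Your proposal is correct and follows precisely the approach the paper intends: the paper does not give an explicit proof but simply says ``it is now straightforward to generalize Definitions \ref{df:niveau_exact_couple} and \ref{df:niveau_ssp}'' once the localization pro-triangle and its long exact sequence for essentially separated $S$-schemes have been set up in the preceding paragraph. You have faithfully filled in exactly those straightforward details, including the $E^1$-identification via the Bloch--Ogus argument and the bounded convergence from Remark \ref{rem:bound_dim_not}.
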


\subsection{Fiber homology}

Let $x \in S(E)$ be a point.
 As a morphism $x:\spec(E) \rightarrow S$,
 it is according to our conventions an essentially
 separated morphism.
 Thus, it follows from Definition \ref{df:BM_essentially}
 that the bi-graded abelian group 
 $\hat \E^{BM}_{**}(x)$ is well defined.
 However, in the proof of the forthcoming theorem,
 we will have to be more precise. This motivates 
 the following definition. 
\begin{df}\label{df:Smodels}
Let $S$ be a scheme and $x \in S(E)$ be a point.

An \emph{$S$-model} of $x$ will be an affine regular $S$-scheme
 $X=\spec(A)$ of finite type
 such that $A$ is a sub-ring of $E$ whose fraction field is equal to $E$
 and such that $x$ is equal to the composite of the natural map
 $\spec(E) \rightarrow \spec(A)$ and the structural map
  of $X/S$.

We let $\M(x)$ be the essentially small category 
 whose objects are $S$-models of $x$ and morphisms
 are open immersions.
\end{df}

\begin{num}\label{num:fiber_hlg}
Consider a scheme $S$ and a point $x \in S(E)$.
Because $S$ is assumed to be excellent according to our conventions,
 $\M(x)$ is non-empty. Moreover, it is easy to check it is a right filtering
 category.
 Thus, with the notations of Lemma \ref{lm:pro_schemes_essentially},
 we have: $L\big(\pplimN_{X \in \M(x)} X\big)=x$

Therefore, for any spectrum $\E$ over $S$,
 one has according to Definition \ref{df:BM_essentially}:
$$
\hat \E^{BM}_{p,\gtw n}(x)
=\ilim_{X \in \M(x)^{op}} \E^{BM}_{p,\gtw n}(X/S).
$$
We will denote by $\pts(S)$ the class of points of $S$,
 seen as a discrete category.
\end{num}
\begin{df}\label{df:fiber_hlg}
Let $(S,\delta)$ be a dimensional scheme,
 $\E$ be a $\T$-spectrum over $S$ and $p$ be an integer.
Consider the preceding notations.
One defines the \emph{fiber $\delta$-homology of $\E$ in degree $p$}
 (or simply fiber homology)
 as the following functor:
$$
\rH^\delta_p(\E):\pts(S) \times  \ZZ \rightarrow \ab,
(x,n) \mapsto \hat \E^{BM}_{\delta(x)+p,\gtw{\delta(x)-n}}(x).
$$
One also defines the  \emph{effective fiber $\delta$-homology of $\E$
 in degree $p$} as the restriction of $\rH^\delta_p(\E)$ to
 the discrete category $\pts(S) \times \ZZ^-$,
 where $\ZZ^-$ is the set of non-positive integers.
 We will denote it by $\rH^{\delta-\eff}_p(\E)$.
\end{df}

Here are a few obvious facts about this definition:
\begin{lm}\label{lm:rost_hlg} Let $\E$ be
 a spectrum over $S$ and $p \in \ZZ$ be an integer.
\begin{enumerate}
\item One has the relation $\rH^\delta_p(\E[1])=\rH^\delta_{p-1}(\E)$.
\item The presheaf $\rH^\delta_p(\E)$ is covariantly functorial
 in $\E$.
\item The covariant functor $\rH^\delta_p$ commutes with coproducts.
 It also commutes with twists in the following sense:
 $\rH^\delta_p(\E\gtw 1)(x,n)=\rH^\delta_p(\E)(x,n+1)$.
\item Given any distinguished triangle
$
\E' \xrightarrow a \E \xrightarrow b \E'' \xrightarrow{+1}
$ in $\T(S)$, we deduce a long exact sequence of presheaf
 of $\ZZ$-graded abelian groups 
\begin{equation}\label{eq:long_exact_hlg}
\hdots \rightarrow H_p^\delta(\E') \xrightarrow{a_*}
 \rH^\delta_p(\E) \xrightarrow{b_*}
 \rH^\delta_p(\E'') \rightarrow
 \rH^\delta_{p-1}(\E') \rightarrow \hdots
\end{equation}
\end{enumerate}
Moreover, the same assertions hold when $\T$-spectra 
 are replaced by $\delta$-effective $\T$-spectra and $\rH^\delta_p$
 is replaced by $\rH^{\delta-\eff}_p$.
\end{lm}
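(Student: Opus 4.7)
The plan is to unfold Definition \ref{df:fiber_hlg} via the explicit formula from Paragraph \ref{num:fiber_hlg},
$$
\rH^\delta_p(\E)(x,n) \;=\; \varinjlim_{X \in \M(x)^{\mathrm{op}}} \Hom_{\T(S)}\bigl(\Mb(X/S)\gtw{\delta(x)-n}[\delta(x)+p], \E\bigr),
$$
and then rely on two entirely formal inputs: each $\Mb(X/S)\gtw q[p]$ is compact in $\T(S)$ (Remark \ref{rem:Mb_generators}), so $\Hom$ out of it commutes with arbitrary coproducts; and filtered colimits of abelian groups are exact and preserve direct sums. With these in hand, all four assertions reduce to bookkeeping of shifts and twists, and there is essentially no geometric obstacle.

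More precisely, assertion (1) is the tautology $\Hom(K,\E[1])=\Hom(K[-1],\E)$ applied to $K=\Mb(X/S)\gtw{\delta(x)-n}[\delta(x)+p]$, then passed through the colimit over $\M(x)$. Assertion (2) is immediate from the covariance of $\Hom(-,\E)$ in the second variable. The twist clause of (3) amounts to rewriting
$$
\Hom\bigl(\Mb(X/S)\gtw{\delta(x)-n}[\delta(x)+p],\E\gtw 1\bigr) = \Hom\bigl(\Mb(X/S)\gtw{\delta(x)-(n+1)}[\delta(x)+p],\E\bigr),
$$
which follows from $\gtw 1=(1)[1]$ being an autoequivalence, and then taking the colimit; for the coproduct clause, compactness pulls $\Hom$ past $\bigoplus_i \E_i$ inside each term of the colimit, after which the filtered colimit commutes with the external direct sum.

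For (4), applying $\Hom_{\T(S)}\bigl(\Mb(X/S)\gtw{\delta(x)-n}[\delta(x)+p],-\bigr)$ to the given distinguished triangle produces, for every $X \in \M(x)$, a long exact sequence of abelian groups; exactness is preserved under the filtered colimit over $\M(x)$, yielding \eqref{eq:long_exact_hlg} after using (1) to re-index the shifts. The effective case is handled verbatim: by construction $\T^{\delta-\eff}(S)$ is compactly generated by the same kind of Borel-Moore objects (subject to the constraint $n \geq \delta(X)$), and the restriction of the twist index to $n\leq 0$ in Definition \ref{df:fiber_hlg} interferes with none of the above manipulations. Thus the lemma is a purely formal consequence of the defining formula together with compactness; no step presents a genuine difficulty.
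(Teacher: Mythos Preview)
Your proposal is correct and matches the paper's treatment: the paper presents this lemma as ``a few obvious facts about this definition'' and gives no proof, precisely because each item is the formal verification you carry out by unwinding the colimit formula from Paragraph~\ref{num:fiber_hlg} together with compactness of the Borel-Moore objects. There is nothing to add.
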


\begin{rem}\label{rem:pullback&fiber_hlg}
Let $(S,\delta)$ be a dimensional scheme, 
 $f:T \rightarrow S$ be a separated morphism
 and $\E$ be a $\T$-spectrum  over $S$.

Then for any point $x \in S(E)$, corresponding to a morphism
 $x:\spec(E) \rightarrow S$, the following relation directly
 follows from the definition:
\begin{equation}\label{eq:exceptional_pullback&fiber_hlg}
\rH^\delta_p(f^!\E)(x,n)=\rH^\delta(\E)(x \circ f,n),
\end{equation}
where $\delta$ abusively denotes the dimension function $\delta^f$
 on $T$ induced by $\delta$ (see Paragraph \ref{num:induced_delta}).
 Suppose that $f$ is in addition smooth. Then it is relevant
 (see Paragraph \ref{num:pullback_change_dim}) to introduce
 a new dimension function on $T$ by the formula:
 $\tilde \delta^f=\delta^f-\dim(f)$.
\begin{equation}\label{eq:smooth_pullback&fiber_hlg}
\rH^{\tilde \delta^f}_p(f^*\E)(x,n)=\rH^\delta(\E)(x \circ f,n).
\end{equation}
This formula can also be extended to the following cases:
\begin{itemize}
\item $\T$ is continuous and $f$ is essentially smooth;
\item $\T$ is oriented, satisfies absolute purity
 and $f$ is a quasi-projective morphism between regular schemes
 (use Corollary \ref{cor:fund_class});
\item $\T$ is continuous, oriented, satisfies absolute purity
 and $f$ is an essentially quasi-projective morphism between
 regular schemes.
\end{itemize}
\end{rem}

\begin{num}
Let $S$ be a regular scheme and $x \in S(E)$ be a point.
Note that the corresponding morphism $x:\spec(E) \rightarrow S$
 is then a localization of a quasi-projective lci morphism,
 given by any $S$-model $X \rightarrow S$ of $x$
 (Definition \ref{df:Smodels}).

In particular, it admits a virtual tangent bundle
 $\tau_{x}$ in the category $\underline K(E)$ of virtual
 $E$-vector spaces (see \textsection \ref{num:virtual_vb}).
 This virtual bundle can be computed
 using the cotangent complex $L_{x}$ (cf. \cite{Ill})
 of the morphism $x$:
$$
\tau_{x}=\sum_i (-1)^i[H_i(L_{x})]=[\Omega^1_{x}]-[H_1(L_{x})].
$$
In particular, if $s$ denotes the image of $x$ in $S$,
 and $\kappa_s$ is the residue field of $s$ in $S$, we get:
$$
\tau_{x}=[\Omega^1_{E/\kappa_s}]
 -[\Gamma_{E/\kappa_s/F}]-[N_s \otimes_{\kappa_s} E]
$$
where $N_s=\mathcal M_s/\mathcal M_s^2$ is the normal bundle of $s$ in
 $\spec(\mathcal O_{S,s})$, and $\Gamma_{E/\kappa_s/F}$ is
 the imperfection module (\cite[\textsection 26]{Mat})
 of the extension $E/\kappa_s$ over the prime field $F$ contained
 in $\kappa_s$
 (thus the latter is trivial if the extension is separable).

Recall moreover from \cite{Del}
 that the category of virtual $E$-vector spaces is equivalent
 to the Picard category
 $\underline{Pic}(E)$ of graded line bundles over $E$ 
 through the determinant functor:
$$
\det:\underline K(E) \rightarrow \underline{Pic}(E),
 [V] \mapsto \big(\Lambda^{\max} V,\mathrm{rk}(E)\big)
$$
where $\Lambda^{\max}$ denotes the maximal exterior power
 and $\mathrm{rk}$ the virtual rank.
\end{num}
\begin{prop}\label{prop:comput_fiber_hlg}
Assume the following conditions hold:
\begin{enumerate}
\item[(a)] $\T$-cohomology commutes with projective limits
 (see \textsection \ref{num:commutes_with_limits});
\item[(b)] $\T$ satisfies absolute purity
 (see Definition \ref{df:abs_purity}).
\end{enumerate}
Let $S$ be a regular scheme
  with dimension function $\delta=-\codim_S$.
Then for any point $x \in S(E)$
 and any integers $(i,n) \in \ZZ$,
 there exists a canonical isomorphism
\begin{equation}\label{eq:compute_hatH_0}
\rH^\delta_i(\un_S)(x,n) \simeq H^{-i,\dtw{\tau_x-r},\gtw{n}}(E,\T)
\end{equation}
where $\tau_{x}$ is the virtual tangent bundle of
 the essential lci morphism $x:\spec{E} \rightarrow S$
 and $r$ is its virtual rank --
 or equivalently, $r=\delta(x)$ (see \eqref{eq:relative_dim}).

Moreover we can associate to any trivialization $\psi$ of 
 the 1-dimensional vector space $\det(\tau_{x})$ a
 canonical isomorphism:
$$
\rH^\delta_i(\un_S)(x,n) \simeq H^{-i,\gtw{n}}(E,\T).
$$
If moreover $\T$ is oriented, this isomorphism
 is independent of the choice of $\psi$.
\end{prop}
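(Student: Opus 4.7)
Write $r = \delta(x)$. By Definition \ref{df:fiber_hlg} together with \S\ref{num:fiber_hlg},
\[
\rH^\delta_i(\un_S)(x,n) = \ilim_{X \in \M(x)^{\rm op}} \Hom_{\T(S)}\bigl(\Mb(X/S)\gtw{r-n}[r+i],\, \un_S\bigr).
\]
Our strategy is to rewrite each term of this colimit as motivic cohomology of $X$ using adjunction and absolute purity, and then pass to the colimit via the continuity hypothesis~(a). Observe that $S$ is regular by assumption and every $S$-model $X \xrightarrow{f} S$ is affine, regular, and of finite type, so $f$ is quasi-projective and lci. Corollary \ref{cor:fund_class} therefore produces a canonical fundamental class $\eta_f : \Th(\tau_f) \xrightarrow{\sim} f^!(\un_S)$ where $\tau_f$ is the virtual tangent bundle of $f$; since $X$ is integral with function field $E$, $\tau_f$ has constant virtual rank $r$.

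The adjunction $(f_!, f^!)$ converts the above Hom into $\Hom_{\T(X)}\bigl(\un_X\gtw{r-n}[r+i],\, f^!\un_S\bigr)$, and $\eta_f$ identifies this with $\Hom_{\T(X)}\bigl(\un_X,\, \Th(\tau_f)\gtw{n-r}[-r-i]\bigr)$. Using $\Th(\tau_f - r) = \Th(\tau_f)\dtw{-r}$, we recognize the latter as $H^{-i,\,\dtw{\tau_f - r},\,\gtw n}(X, \T)$. Naturality of $\eta_f$ under transversal base change (property (ii)(b) in the characterization of absolute purity recalled before Corollary \ref{cor:fund_class})---applied to open immersions $X' \hookrightarrow X$ between $S$-models---ensures that the transition maps in the colimit correspond, under this identification, to the ordinary cohomological pullback.

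Passing to the colimit via assumption~(a), together with the compatibility of the cotangent complex with filtered limits (so that $\tau_x$ is the pullback of $\tau_f$ along $\spec(E) \to X$), yields the first isomorphism. For the second assertion, $\tau_x - r$ has rank zero and $\det(\tau_x - r) \simeq \det(\tau_x)$ canonically, so a trivialization $\psi$ of $\det(\tau_x)$ produces an isomorphism $\Th(\tau_x - r) \simeq \un_{\spec E}$ and hence the displayed formula. When $\T$ is oriented, the Thom isomorphism \eqref{eq:Thom_iso} gives $\Th(\tau_x - r) \simeq \un\dtw 0 = \un$ canonically, independent of $\psi$. The most delicate step will be verifying the compatibility of the purity identifications with the transition maps in the colimit; the secondary subtlety, in the non-oriented case, is producing the Thom-space trivialization from a determinant trivialization in sufficient generality.
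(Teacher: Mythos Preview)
Your proof is correct and follows essentially the same approach as the paper: use the adjunction $(f_!,f^!)$ together with the fundamental class $\eta_f$ from absolute purity (Corollary \ref{cor:fund_class}) to identify each term in the colimit with $\T$-cohomology of the model $X$, check compatibility with open restriction via the transversality property of fundamental classes, and conclude by continuity (assumption (a)); the paper handles the trivialization step by the same observation that $\det:\underline K(E)\to\underline{\Pic}(E)$ is an equivalence, so a trivialization of $\det(\tau_x)$ yields $\tau_x-r\simeq 0$ in $\underline K(E)$.
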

\begin{proof}
Let us choose an $S$-model $f:X \rightarrow S$ of the point $x$.
 As $X$ is affine, $f$ is quasi-projective.
 Because $X$ and $S$ are regular, $f$ is also lci.
 We denote by $\tau_{X/S}$ its virtual tangent bundle over $X$.
 Thus, from the absolute purity property of $\T$
 and Corollary \ref{cor:fund_class}, we get an isomorphism
 for any integers $p$, $s$:
\begin{align*}
H^{BM}_{p,\gtw{s}}(X/S)
 &=\Hom_{\T(X)}(f_!(\un_X)\gtw s[p],\un_S)
 \simeq \Hom_{\T(X)}(\un_X\gtw s[p],f^!\un_S) \\
 &\xrightarrow{\eta_f^{-1}} \Hom_{\T(X)}(\un_X\gtw s[p],\Th(\tau_{X/S}))
 =H^{i,\dtw{\tau_{X/S}},\gtw s}(X,\T).
\end{align*}
The class $\eta_f$ being compatible with restriction along an open
 immersion $j:U \rightarrow X$, we get the same isomorphism
 after reducing $X$ to $U$ and the corresponding isomorphism is compatible
 with the pullback map $j^*$. Therefore we get an isomorphism:
$$
\rH^\delta_i(\un_S)(x,n)
 =\hat H^{BM}_{r+i,\gtw{r-n}}(x/S)
 =\ilim_{U \subset X} H^{BM}_{r+i,\gtw{r-n}}(U/S)
 \simeq \ilim_{U \subset X} H^{-i,\dtw{\tau_{U/S}-r},\gtw{n}}(U,\T).
$$
Now, for any open immersion $j_{VU}:V \rightarrow U$ of open subschemes
 of $X$, we have $j_{VU}^*\tau_{U/S}=\tau_{V/S}$.
 Therefore assumption (a)
 gives the isomorphism \eqref{eq:compute_hatH_0}.

The remark about the trivialization then follows from
 the fact $\det$ is an equivalence of categories
 and the last assertion follows from Remark \ref{rem:Thom&orient}.
\end{proof}

\begin{ex} \label{ex:fiber_hlg&Kth}
Let $S$ be a regular scheme and $\delta=-\codim_S$.
\begin{enumerate}
\item Assume $\T=\DM_R$ under the hypothesis of
 points (1) or (2) of \S\ref{num:convention_DM}.

Then $\DM_R$ satisfies assumptions (a) and (b) of the previous
 proposition and is oriented.
 Therefore one gets for any point $x \in S(E)$ the following computation:
$$
\rH^\delta_{p}(\un_S)(x,n)=
\begin{cases}
0 & \text{if } p<0, \\
K_n^M(E) \otimes R & \text{if } p=0
\end{cases}
$$
where $K_*^M$ stands for the Milnor K-theory of the field $E$,
 or equivalently motivic cohomology of $E$ in degree $(n,n)$.
\item Assume $\T=\SH$ and $S$ is in addition an $F$-scheme
 for a prime field $F$.

Again, $\SH$ satisfies assumptions (a) and (b)
 of the preceding proposition. Thus one gets
 the following non-canonical isomorphism:
$$
\rH^\delta_{p}(\bS^0_S)(x,n)=
\begin{cases}
0 & \text{if } p<0, \\
K_n^{MW}(E) & \text{if } p=0.
\end{cases}
$$
where $K_n^{MW}(E)$ denotes the Milnor-Witt K-theory of 
 the field $E$
 (cf. \cite[Introduction, Def. 21]{Mor2}).
 The case $p<0$ follows from Morel's
 $\AA^1$-connectivity theorem (cf. \cite[Introduction, Th. 18]{Mor2})
 and the case $p=0$ is the computation of the $0$-th stable
 $\AA^1$-homotopy group of $\bS^0_S$
 (cf. \cite[Introduction, Cor. 24]{Mor2}).

To get a canonical identification, we must introduce
 a twisted version of Milnor-Witt K-theory as in \cite{Mor2},
 bottom of page 139. An element $\tau$ of
 $\underline K(E) \simeq \underline{\Pic(E)}$ can be 
 canonically represented by a 1-dimensional vector space
 $\Lambda$ -- its determinant. We put:
$$
K_n^{MW}(E,\tau)
 =K_n^{MW}(E) \otimes_{E^\times} \ZZ[\Lambda^\times]
$$
where $\Lambda^\times$ is the set of non zero elements of $\Lambda$,
 with its canonical action of $E^\times$, and the action
 of an element $u \in E^\times$ is induced by functoriality 
 through the multiplication map  by $u$. With this notation,
 one gets a canonical isomorphism:
\begin{equation}
\rH^\delta_{0}(\bS^0_S)(x,n) \simeq K_n^{MW}\big(E,\tau_x\big)
\end{equation}
where $\tau_x$ is the virtual tangent bundle of 
 the essentially lci morphism $x:\spec(E) \rightarrow S$.
\end{enumerate}
\end{ex}

\begin{num}\label{num:ssp_niveau&hlg}
It is useful to rewrite the $\delta$-niveau spectral sequence
 using fiber homology.
 Let $\E$ be a spectrum, $(S,\delta)$ a dimensional scheme
 and $X$ be an essentially separated $S$-scheme.
 Then the $\delta$-niveau spectral sequence of $X/S$
  with coefficients in $\E$,
	defined in Proposition \ref{prop:df_niveau_ssp_essntially},
  has the following form:
\begin{equation}\label{eq:niveau_ssp_homology0}
^\delta E^1_{p,q}=\bigoplus_{x \in X_{(p)}} \rH^\delta_q(\E)(x,p-n)
 \Rightarrow \E^{BM}_{p+q,\gtw n}(X/S).
\end{equation}
Note that the spectral sequence can actually be written as
\begin{equation}\label{eq:niveau_ssp_homology}
^\delta E^1_{p,q}=\bigoplus_{x \in X_{(p)}} \rH^\delta_q(\E)(x)
 \Rightarrow \E^{BM}_{p+q,\gtw *}(X/S),
\end{equation}
where it is assumed to take values in graded abelian groups;
 then the differentials in the $E^1$-term are homogeneous
 of degree $(-1)$.\footnote{We are following the convention
  and homological notations of Rost in \cite{Ros}.}

Assume moreover that $\E$ is $\delta$-effective.
 Considering the first form of the spectral sequence $^\delta E^1_{p,q}$,
 we see that when $n\geq \delta(X)$, then $p-n \leq 0$ or
 $X_{(p)}=\varnothing$. Moreover the abutment of the spectral
 sequence can be computed as a group morphisms of the category
 $\T^{\delta-\eff}$.
 Thus the effective version of the preceding spectral
 sequence as the form:
\begin{equation}\label{eq:niveau_ssp_homology_eff}
^\delta E^1_{p,q}=\bigoplus_{x \in X_{(p)}} \rH^{\delta-\eff}_q(\E)(x)
 \Rightarrow \E^{BM}_{p+q,\gtw *}(X/S),
\end{equation}
where we have restricted the gradings on the abutment to $*\geq \delta(X)$.  So it takes its values in $\NN$-graded abelian groups.
\end{num}

Recall that, because we work with noetherian finite dimensional
 schemes, the preceding spectral sequences are all convergent.
 Thus, a corollary of their existence is the following result.
\begin{prop}\label{prop:hlg_conservative}
For any dimensional scheme $(S,\delta)$,
 the family of functors:
\begin{align*}
\ & \rH^\delta_p:\T(S) \rightarrow \mathrm{PSh}\big(\pts(S) \times \ZZ\big),
 p \in \ZZ, \\
\text{resp. }
& \rH^{\delta-\eff}_p:\T^{\delta-\eff}(S)
 \rightarrow \mathrm{PSh}\big(\pts(S) \times \ZZ^-\big),
 p \in \ZZ,
\end{align*}
is conservative.
\end{prop}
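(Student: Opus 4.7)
\medskip

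\noindent\textbf{Proof proposal.} The plan is to argue by direct reduction to the convergent $\delta$-niveau spectral sequence already set up in Paragraph~\ref{num:ssp_niveau&hlg}, exploiting the description of the generators of $\T(S)$ (resp. $\T^{\delta-\eff}(S)$) given in Remark~\ref{rem:Mb_generators} (resp. in the definition of the $\delta$-effective category).

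First I would note that the functors in the statement are homological (Lemma~\ref{lm:rost_hlg}(4)), so conservativity is equivalent to the following assertion: if $\E$ is a $\T$-spectrum (resp. a $\delta$-effective spectrum) over $S$ such that $\rH^\delta_p(\E) = 0$ for every $p \in \ZZ$ (resp. $\rH^{\delta-\eff}_p(\E) = 0$ for every $p \in \ZZ$), then $\E = 0$. I would establish this by showing $\Hom_{\T(S)}\bigl(\Mb(X/S)\gtw{n}[m],\E\bigr) = 0$ for every separated $S$-scheme $X$, every $n$ (with $n \leq \delta(X)$ in the effective case, corresponding to $\Mb(X/S)(n)$ for $n' := -n + \delta(X) \geq 0$, i.e. covering the generators of $\T^{\delta-\eff}(S)$), and every $m \in \ZZ$.

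For this, fix such a separated $S$-scheme $X$ and consider the $\delta$-niveau spectral sequence \eqref{eq:niveau_ssp_homology0}
\[
{}^\delta E^1_{p,q} = \bigoplus_{x \in X_{(p)}} \rH^\delta_q(\E)(x,p-n) \Rightarrow \E^{BM}_{p+q,\gtw n}(X/S).
\]
Since $(S,\delta)$ is a noetherian finite-dimensional dimensional scheme, $\delta$ takes only finitely many values on $X$, so the filtration defining the spectral sequence is finite and the spectral sequence converges strongly (as recalled after Definition~\ref{df:niveau_ssp}). The assumption $\rH^\delta_q(\E) = 0$ for all $q$ forces the entire $E^1$-page to vanish, whence $\E^{BM}_{p+q,\gtw n}(X/S) = 0$ for all $p,q,n$. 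Equivalently, $\Hom_{\T(S)}(\Mb(X/S)\gtw n[k], \E) = 0$ for all integers $n,k$. By Remark~\ref{rem:Mb_generators} the collection $\{\Mb(X/S)(i) : X/S \text{ separated}, i \in \ZZ\}$ is a generating family of $\T(S)$; as $\Mb(X/S)\gtw n = \Mb(X/S)(n)[n]$, this family coincides (up to shift) with the one tested above. Hence $\E = 0$.

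For the effective case I would run the same argument using the effective $\delta$-niveau spectral sequence~\eqref{eq:niveau_ssp_homology_eff}, whose $E^1$-terms involve only $\rH^{\delta-\eff}_q(\E)(x,p-n)$ with $p - n \leq 0$ (because one restricts to $n \geq \delta(X) \geq \delta(x) = p$), so the hypothesis on the restricted functors $\rH^{\delta-\eff}_q$ suffices to kill the $E^1$-page. The abutment then gives $\Hom(\Mb(X/S)(n)[k], \E) = 0$ for every separated $S$-scheme $X$ and every $n \geq \delta(X)$; these are exactly the generators of $\T^{\delta-\eff}(S)$ by definition, so again $\E = 0$. There is no real obstacle here beyond keeping track of the bigrading conventions; the argument is a direct application of convergence together with the fact that Borel--Moore spectra $\Mb(X/S)(n)$ generate the relevant triangulated categories.
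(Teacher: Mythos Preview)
Your proposal is correct and follows exactly the approach indicated in the paper: the proposition is stated immediately after the remark that the $\delta$-niveau spectral sequences \eqref{eq:niveau_ssp_homology} and \eqref{eq:niveau_ssp_homology_eff} converge (because the schemes are noetherian finite-dimensional), and is declared to be a direct corollary of this convergence together with the fact that the Borel--Moore objects generate. Your write-up simply spells out this one-line argument in detail; the only minor wrinkle is the parenthetical ``$n \leq \delta(X)$'' in the first paragraph, which should read $n \geq \delta(X)$ as you correctly have it in the effective paragraph.
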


The following vanishing conditions will be a key point
 for our main theorem \ref{thm:hlg&htp_t}.
\begin{prop}\label{prop:hom_compatible}
The following conditions are equivalent:
\begin{enumerate}
\item[(i)]
 For any regular dimensional scheme $(S,\delta)$,
  and any $i<-\delta(S)$, 
  $\rH^\delta_i(\un_S)=0$.
\item[(i')] For any regular scheme $S$,
 with dimension function $\delta=-\codim_S$,
 and any $i<0$, $\rH^\delta_i(\un_S)=0$.
\item[(ii)] For any regular dimensional scheme $(S,\delta)$,
  any essentially separated $S$-scheme $X$,
  and any $p<\delta_-(X)-\delta(S)$
	(see Remark \ref{rem:bound_dim_not} for the notation)
  one has $\hat H ^{BM}_{p,\gtw *}(X/S)=0$.
\end{enumerate}
\end{prop}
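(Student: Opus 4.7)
My plan is to establish $(i) \Leftrightarrow (i')$ by a change-of-dimension-function argument, to deduce $(ii)$ from $(i)$ via the $\delta$-niveau spectral sequence of Proposition \ref{prop:df_niveau_ssp_essntially}, and to recover $(i)$ from $(ii)$ by specialization to a point.

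The equivalence $(i) \Leftrightarrow (i')$ will proceed component by component. By Lemma \ref{lm:dim+locally_ct}, any two dimension functions on a connected regular $S$ differ by an integer constant, so $\delta = -\codim_S + c$ for a unique $c \in \ZZ$; moreover $\delta(S) = c$, the maximum being attained at generic points where $\codim_S = 0$. Unwinding Definition \ref{df:fiber_hlg} directly yields, for any point $x$ and any $n \in \ZZ$,
\[
\rH^\delta_i(\un_S)(x,n) \;=\; \hat H^{BM}_{\delta(x)+i,\, \gtw{\delta(x)-n}}(x/S) \;=\; \rH^{-\codim_S}_{c+i}(\un_S)(x,\, n-c),
\]
so $(i)$ (vanishing for $i < -c$) and $(i')$ (vanishing for $c + i < 0$) are literally the same condition.

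For $(i) \Rightarrow (ii)$, let $(S,\delta)$ be regular dimensional and $X/S$ essentially separated. I would invoke the $\delta$-niveau spectral sequence in its fiber-homology form \eqref{eq:niveau_ssp_homology0}:
\[
{}^\delta E^1_{p,q} \;=\; \bigoplus_{x \in X_{(p)}} \rH^\delta_q(\un_S)(x,\, p-n) \;\Longrightarrow\; \hat H^{BM}_{p+q,\, \gtw n}(X/S),
\]
where $X_{(p)}$ uses the dimension function induced on $X$ by $\delta$, and each $x$ is viewed as a point of $S$ via the structural morphism. Fix $P < \delta_-(X) - \delta(S)$. For any $E^1_{p,q}$ on the line $p+q = P$ with $X_{(p)} \neq \varnothing$ one has $p \geq \delta_-(X)$, hence $q = P - p < -\delta(S)$; assumption $(i)$ then forces $\rH^\delta_q(\un_S) = 0$. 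Convergence of the spectral sequence (Remark \ref{rem:dim_fn_bounded}) yields the vanishing of the abutment in degree $P$, which is $(ii)$.

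Finally, for $(ii) \Rightarrow (i)$, I would specialize to $X = \spec(E)$ for a point $x \in S(E)$. Then $X$ consists of a single point and $\delta_-(X) = \delta(x)$; the spectral sequence degenerates to the tautological identification of $\hat H^{BM}_{P,\, \gtw n}(x/S)$ with $\rH^\delta_{P - \delta(x)}(\un_S)(x,\, \delta(x) - n)$. Setting $P = \delta(x) + i$, the hypothesis $P < \delta(x) - \delta(S)$ becomes $i < -\delta(S)$, and $(ii)$ delivers $(i)$. The only delicate point is the rewriting \eqref{eq:niveau_ssp_homology0}: one must track carefully that for $x \in X_{(p)}$ with induced $\delta(x) = p$, viewing $x$ as a point of $S$ gives the identification $\hat H^{BM}_{p+q,\gtw n}(x/S) = \rH^\delta_q(\un_S)(x, p-n)$, which is exactly what is spelled out in \ref{num:ssp_niveau&hlg}.
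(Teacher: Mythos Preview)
Your proposal is correct and follows essentially the same route as the paper: the equivalence $(i)\Leftrightarrow(i')$ via the relation $\delta(x)=\delta(S)-\codim_S(x)$ on a connected regular scheme, the implication $(i)\Rightarrow(ii)$ via the $\delta$-niveau spectral sequence for $\un_S$, and $(ii)\Rightarrow(i)$ by specializing to a single point (which the paper phrases simply as ``follows directly from the definition of $\rH^\delta_i$''). Your write-up is in fact a bit more explicit than the paper's, but there is no genuine difference in approach.
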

\begin{proof}
 (i) is obviously equivalent to (i'),
since any dimension function $\delta$ on a connected regular scheme
 satisfies the relation $\delta(x)=\delta(S)-\codim_S(x)$ for $x \in S$.

The fact (ii) implies (i) follows directly
 from the definition of $\rH^\delta_i$.
Let us prove the converse implication. In the situation of (ii),
 we look at the $\delta$-niveau spectral sequence
 \eqref{eq:niveau_ssp_homology} in the case $\E=\un_S$.
 Using assumption (i), we get
 that $^\delta E^1_{p,q}=0$ whenever $q<-\delta(S)$ or $p<\delta_-(X)$.
 In particular, $^\delta E^1_{p,q}$ is zero if $p+q<\delta_-(X)-\delta(S)$
 and this concludes.
\end{proof}

The equivalent properties of the previous proposition will be crucial
 for the main theorem of this section so that we introduce the
 following definition.
\begin{df}\label{df:htp_compatible}
We say that the motivic category $\T$ is \emph{homotopically compatible}
 if the equivalent conditions of the preceding proposition
 are satisfied.
\end{df}

As an immediate corollary of Proposition \ref{prop:comput_fiber_hlg},
 we get the following useful criterion for this property.
\begin{prop}
Assume the following conditions hold:
\begin{enumerate}
\item[(a)] $\T$-cohomology commutes with projective limits
 (see \textsection \ref{num:commutes_with_limits});
\item[(b)] $\T$ satisfies absolute purity 
 (see Definition \ref{df:abs_purity});
\item[(c)] For any field $E$ such that $\spec(E)$ is in our category of schemes
 $\base$, $H^{n,m}(\spec(E),\T)=0$ if $n>m$.
\end{enumerate}
Then $\T$ is homotopically compatible.
\end{prop}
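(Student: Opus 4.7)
The plan is to verify condition (i$'$) of Proposition~\ref{prop:hom_compatible}: for every regular scheme $S$ equipped with the dimension function $\delta = -\codim_S$ and every integer $i < 0$, one must show that the presheaf $\rH^\delta_i(\un_S)$ on $\pts(S) \times \ZZ$ vanishes. Since $\rH^\delta_i$ is a presheaf, it is enough to check that $\rH^\delta_i(\un_S)(x,n) = 0$ for every point $x \in S(E)$ (which, by the standing conventions on $\base$, corresponds to a $\spec(E) \in \base$) and every $n \in \ZZ$.

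The first step is to invoke Proposition~\ref{prop:comput_fiber_hlg}, whose hypotheses (a) and (b) are precisely those of the current proposition. Applied to $\E = \un_S$ and the point $x \colon \spec(E) \to S$, it produces a canonical isomorphism
\[
\rH^\delta_i(\un_S)(x,n) \;\simeq\; H^{-i,\dtw{\tau_x - r},\gtw{n}}(E,\T),
\]
where $\tau_x$ is the virtual tangent bundle of the essentially lci morphism $x$ and $r = \delta(x)$ is its virtual rank. Since $E$ is a field, the one-dimensional $E$-vector space $\det(\tau_x)$ admits a trivialization $\psi$, and the second statement of Proposition~\ref{prop:comput_fiber_hlg} then yields a (non-canonical but existing) isomorphism
\[
\rH^\delta_i(\un_S)(x,n) \;\simeq\; H^{-i,\gtw{n}}(E,\T).
\]
Unwinding the notation $\gtw n = (n)[n]$, the right-hand side is exactly $H^{n-i,\,n}(\spec(E),\T)$.

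The final step is to apply hypothesis (c): the cohomology group $H^{n',m}(\spec(E),\T)$ vanishes whenever $n' > m$. Here $n' = n - i$ and $m = n$, and the inequality $n-i > n$ is equivalent to $i < 0$. This is precisely the range we need, so $\rH^\delta_i(\un_S)(x,n) = 0$ for all $x$ and $n$, giving condition (i$'$) and thereby establishing that $\T$ is homotopically compatible.

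I do not foresee a serious obstacle: the argument is essentially a bookkeeping of the bidegree produced by Proposition~\ref{prop:comput_fiber_hlg} combined with the trivializability of virtual bundles of rank zero over a field. The only points requiring minor care are the choice of trivialization $\psi$ (which exists precisely because we are working over a field, and whose choice is immaterial for the vanishing) and the verification that the various $\spec(E)$ appearing as points of $S$ indeed lie in $\base$, which follows from the closure properties of $\base$ under localization of quasi-projective $S$-schemes.
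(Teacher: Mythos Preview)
Your proof is correct and follows exactly the route the paper intends: the statement is declared an immediate corollary of Proposition~\ref{prop:comput_fiber_hlg}, and you have supplied precisely the bidegree bookkeeping that makes this immediate. The only very minor imprecision is the closing phrase about ``virtual bundles of rank zero''---what is actually trivialized is the one-dimensional $E$-vector space $\det(\tau_x)$, which is automatic over a field---but this does not affect the argument.
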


\begin{ex} Here are our main examples of homotopically compatible
 categories:
\begin{enumerate}
\item Assume $\base$ is included in the category of
 schemes over a prime field $F$. Then all the triangulated motivic
 categories of Example \ref{ex:motivic_cat}, except possibly
 that of point (5) if the characteristic of $F$ is positive,
 satisfies the assumptions of the previous corollary:
\begin{itemize}
\item assumption (a): see \cite[4.3.3]{CD3} (the case of $\SH$
 is treated as the case of $\DA$, and the case of modules then directly
 follows);
\item assumption (b) is then automatic (as recalled in
 Example \ref{ex:abs_pur_equal});
\item assumption (c): in the case of $\SH$,
 $\MGLmod$ and $\DAx R$, is a consequence of Morel's stable
 $\AA^1$-connectivity theorem \cite[Th 3]{dmtilde3};
 in the case of $\DM_R$, under the conventions of Paragraph
 \ref{num:convention_DM} (so that the characteristic exponent
 of $F$ is invertible in $R$), this follows from the basic property 
 of motivic cohomology of perfect fields and the semi-separation
 property  of $\DM_R$ (see Lemma \ref{lm:inseparable_texact}).
\end{itemize}
\item Assume $\base$ is any category of schemes satisfying
 our conventions. Then for any $\QQ$-algebra $R$,
 $\DM_R$ is homotopically compatible.
\end{enumerate}
\end{ex}

\subsection{Main theorem}\label{sec:main_thm}

\begin{thm}\label{thm:hlg&htp_t}
Assume the following conditions hold:
\begin{enumerate}
\item[(a)] $\T$ is homotopically compatible
 (see Definition \ref{df:htp_compatible}).
\item[(b)] Assumption (Resol) of \textsection \ref{num:resolution}
 is satisfied.
\end{enumerate}
Then for any dimensional scheme $(S,\delta)$ and
 any $\T$-spectrum (resp. $\delta$-effective $\T$-spectrum) $\E$ over $S$,
 the following conditions are equivalent:
\begin{enumerate}
\item[(i)] $\E$ is $t_\delta$-positive
 (resp. $t_\delta$-negative).
\item[(ii)] For any integer $p \leq 0$
 (resp. $p\geq 0$), $\rH_p^\delta(\E)=0$
  (Definition \ref{df:fiber_hlg}).
\end{enumerate}
Moreover, when $\E$ is $\delta$-effective,
 these conditions remain equivalent after
 replacing $t_\delta$ by $t_\delta^\eff$
 and $\rH_p^\delta$ by $\rH_p^{\delta-\eff}$.
\end{thm}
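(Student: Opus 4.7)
Set $\C:=\T(S)$ with its $\delta$-homotopy $t$-structure. I interpret ``$\E$ is $t_\delta$-positive (resp.\ negative)'' strictly, as $\E \in \C_{\geq 1}$ (resp.\ $\E \in \C_{\leq -1}$); via the substitution $\F=\E[-1]$ together with Lemma~\ref{lm:rost_hlg}(1), the forward positive implication is equivalent to the assertion that $\F \in \C_{\geq 0}$ implies $\rH^\delta_q(\F)=0$ for $q<0$. My plan consists of four steps: (a) the negative equivalence via the $\delta$-niveau spectral sequence and Remark~\ref{rem:htp_t_nonnegdeg}(2); (b) the forward positive implication by stability plus a key computation on improved generators; (c) the converse positive implication by truncation combined with conservativity (Proposition~\ref{prop:hlg_conservative}); (d) the effective version.

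\smallskip
For step~(a), Remark~\ref{rem:htp_t_nonnegdeg}(2) characterizes $\C_{\leq -1}$ by the vanishing $\E^{BM}_{p,\gtw *}(X/S)=0$ for $p \geq \delta(X)$, any separated $X/S$. The niveau spectral sequence \eqref{eq:niveau_ssp_homology} translates this: if $\rH^\delta_q(\E)=0$ for $q \geq 0$, then $^\delta E^1_{p,q}$ is concentrated in $q<0$, so the abutment vanishes in total degree $\geq \delta(X)$, making $\E$ $t_\delta$-negative. Conversely, for a point $y:\spec(E_y) \to S$ specialize Remark~\ref{rem:htp_t_nonnegdeg}(2) to an $S$-model $U$ of $y$ (for which $\delta(U)=\delta(y)$) and pass to the limit: the defining formula for fiber $\delta$-homology yields $\rH^\delta_q(\E)(y,-)=0$ for $q \geq 0$.

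\smallskip
Step~(b) is the heart of the argument. The vanishing of $\rH^\delta_q$ for $q<0$ is closed under extensions, positive suspensions, and coproducts by Lemma~\ref{lm:rost_hlg}, so Theorem~\ref{thm:strong_generators} (where hypothesis~(b), namely (Resol), is used) reduces the verification to generators of the form $\F_0 = \Mb(X/S)\dtw{\delta(X)}\gtw n$ with $f:X \to S$ proper and $X$ regular. For such $\F_0$, properness yields $f_!=f_*$; combining proper base change ($f^*g_! \cong g'_!f'^*$) with the $(f^*,f_*)$-adjunction produces, in classical indexing, the identification
\[
{\F_0}^{BM}_{a,\gtw b}(U/S) \;\simeq\; (\un_X)^{BM}_{a+b-2\delta(X)-n,\; b-\delta(X)-n}(X_U/X),
\]
where $X_U := X\times_S U$; passage to the limit over $U$ identifies the pro-value at $y$ with the Borel--Moore homology of the essentially separated $X$-scheme $X_y := X \times_S \spec(E_y)$ (Definition~\ref{df:BM_essentially}). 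I then apply the $\delta^X$-niveau spectral sequence on $X_y/X$ with coefficients $\un_X$ (Proposition~\ref{prop:df_niveau_ssp_essntially}). Since $X$ is regular, Proposition~\ref{prop:comput_fiber_hlg} identifies the $E^1$-terms at weight $B$ and point $z$ with motivic cohomology $H^{p-B-q,\,p-B}(\kappa_z,\T)$; the sharp vanishing $H^{a,b}(\kappa_z,\T)=0$ for $a>b$ — precisely the content of hypothesis~(a) (homotopic compatibility, Definition~\ref{df:htp_compatible}) — together with the usual $a,b \geq 0$ bounds forces the abutment $(\un_X)^{BM}_{A,B}(X_y/X)$ to vanish whenever $A<B$. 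Unravelling the indices, this reads $q < \delta(X)-\delta(y)$; when $X_y \neq \emptyset$ the inequality $\delta(X) \geq \delta(y)$ follows from the definition of the induced dimension function, so $q<0$ suffices, and otherwise the pro-spectrum is trivially zero.

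\smallskip
For step~(c), form the truncation triangle $\tau_{\geq 1}\E \to \E \to \tau_{\leq 0}\E \xrightarrow{+1}$. The long exact sequence of Lemma~\ref{lm:rost_hlg}(4), the already-established forward positive direction applied to $\tau_{\geq 1}\E$, and the negative equivalence applied to $\tau_{\leq 0}\E[-1]$, together give $\rH^\delta_p(\tau_{\leq 0}\E)=0$ for every $p \in \ZZ$; Proposition~\ref{prop:hlg_conservative} then forces $\tau_{\leq 0}\E=0$, hence $\E \in \C_{\geq 1}$. For step~(d), the $\delta$-effective statement is proved by the same four steps: Theorem~\ref{thm:strong_generators} supplies effective generators (twists $n \geq 0$), Corollary~\ref{delta-htp_eff_glued} and Proposition~\ref{prop:hlg_conservative} supply the effective truncation and conservativity, and the key computation of step~(b) is unchanged. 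The main obstacle throughout is step~(b), whose delicacy lies in the combination of proper base change, absolute purity on the regular base~$X$, and in particular the sharp motivic cohomology vanishing above the diagonal guaranteed by homotopic compatibility.
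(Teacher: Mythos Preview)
Your four-step architecture (negative equivalence via the niveau spectral sequence; forward positive direction on generators; converse via truncation and conservativity; effective variant) is exactly the paper's strategy, and steps~(a), (c), (d) are fine. The difficulty is entirely in step~(b), and there your argument has two genuine problems.

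First, you invoke Proposition~\ref{prop:comput_fiber_hlg} to identify the $E^1$-terms with motivic cohomology of residue fields. That proposition requires that $\T$-cohomology commute with projective limits and that $\T$ satisfy absolute purity; neither is among the hypotheses of the theorem, which only assumes homotopic compatibility (Definition~\ref{df:htp_compatible}) and (Resol). You also say the vanishing $H^{a,b}(\kappa_z,\T)=0$ for $a>b$ is ``precisely the content'' of homotopic compatibility; it is a special case (take $S=\spec(\kappa_z)$), but homotopic compatibility is formulated more generally and does not presuppose purity or continuity. Second, even granting your extra hypotheses, your endgame inequality is wrong: you claim $\delta(X)\geq\delta(y)$ whenever $X_y\neq\varnothing$, but this fails as soon as $y$ has large transcendence degree relative to the dimension of $X/S$ (e.g.\ $S=\spec k$, $X=\PP^1_k$, $y$ a point with field $k(t_1,\dots,t_5)$ gives $\delta(X)=1<5=\delta(y)$). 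So your criterion ``$q<\delta(X)-\delta(y)$'' does not yield the needed vanishing for all $q<0$.

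The fix is short and is what the paper does: having reduced to $\hat H^{BM}_{\delta(y)-\delta(X)+q,\,\gtw{*}}(X_y/X)$ with $X$ regular, apply Proposition~\ref{prop:hom_compatible}(ii) directly---this \emph{is} the definition of homotopic compatibility, no purity or continuity required. It gives vanishing whenever $\delta(y)-\delta(X)+q<\delta_-(X_y)-\delta(X)$, i.e.\ $q<\delta_-(X_y)-\delta(y)$; and $\delta_-(X_y)\geq\delta(y)$ always holds, because any $z\in X_y$ has residue field containing $E_y$, so $\delta(z)=\delta(s)+\dtr(\kappa_z/\kappa_s)\geq\delta(s)+\dtr(E_y/\kappa_s)=\delta(y)$. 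With this correction your step~(b) goes through and the rest of your proof is complete.
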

\begin{proof}
The proof is valid in the general and $\delta$-effective case
 except for a change of indexes.
 We use unified notations to treat both cases in a row:
\begin{itemize}
\item in the general case, we put $I=\ZZ$, $t=t_\delta$,
 $\C=\T(S)$, $\rH_p=\rH_p^\delta$;
\item in the effective case, 
 we put $I=\NN$, $t=t_\delta$, $\C=\T^{\delta-\eff}(S)$,
 $\rH_p=\rH_p^{\delta-\eff}$. Below, this case will also be referred
 to as the resp. case.
\end{itemize}

Let us write $\C_{\geq0}$ (resp. $\C_{<0}$)
 for the subcategory of non-$t$-negative
 (resp. $t$-negative) spectra in $\C$
 and $\C_{H\geq 0}$ (resp. $\C_{H<0}$)
 for the subcategory of $\C$ made of spectra $\E$ such that
 $\rH_p(\E)=0$ if $p<0$ (resp. $p\geq 0$).

Thus we have to prove $\C_{<0}=\C_{H<0}$ and $\C_{\geq 0}=\C_{H\geq0}$.

The fact $\C_{<0} \subset \C_{H<0}$ follows from definitions
 -- see in particular Remark \ref{rem:htp_t_nonnegdeg}
 (resp. Remark \ref{rem:htp_t_nonnegdeg_eff}).
Let us prove the converse inclusion.
 Take a spectrum $\E$ in $\C_{H<0}$. 
 According to the remark previously cited,
 we have to prove that for any separated $S$-scheme $X$,
 $\E^{BM}_{\delta(X)+p,\delta(X)-n}(X/S)=0$ if $p>0$ and $n \in I$.
 Let us rewrite the spectral sequence \eqref{eq:niveau_ssp_homology}
 (resp. \eqref{eq:niveau_ssp_homology_eff}) for $\E$ and $X/S$
 in our notations, and for the grading $*=\delta(X)-n$,
 $n \in I$:
$$
^\delta E^1_{p,q}=\bigoplus_{x \in X_{(p)}} \rH_q(\E)(x,p-\delta(X)+n)
 \Rightarrow \E^{BM}_{p+q,\gtw{\delta(X)-n}}(X/S).
$$
The $E^1_{p,q}$-term is concentrated in the region $p\leq \delta(X)$
 by construction and in the region $q < 0$ by assumption on $\E$.
 In particular, $^\delta E^1_{p,q}=0$ if $p+q>\delta(X)$
 and this concludes.

We now prove that $\C_{\geq 0} \subset \C_{H\geq 0}$.
 According to Lemma \ref{lm:rost_hlg},
 the subcategory $\C_{H\geq 0}$ of $\C$ is stable
 by positive suspensions, coproducts and extensions.
 Thus it is sufficient to prove that the generators
 of the $t$-structure $t$ belongs to $\C_{H\geq 0}$.
 According to assumption (b), we can apply
 Theorem \ref{thm:strong_generators} to $\T$.
 Thus, we have to show that for a proper regular $S$-scheme
 $Y$ and an integer $m \in I$,
 $\Mb(Y/S)\dtw{\delta(Y)}\gtw m$ belongs to $\C_{H\geq 0}$.
 Given a point $x$ of $S$, we show that
 the abelian group
 $H_i^\delta\big(\Mb(Y/S)\dtw{\delta(Y)}\gtw m\big)(x,n)$
 is zero for $i<0$ and $n \in I$.
 Note that, as $Y/S$ is proper,
 we get the following computation for any model $X \in \M(x)$:
\begin{align*}
\Hom_{\T(S)}\big(\Mb(X/S)\gtw s[j],\Mb(Y/S)\big)
 &=\Hom_{\T(Y)}\big(\Mb(X \times_S Y/Y)\gtw s[j],\un_S\big) \\
 &=\E^{BM}_{j,\gtw s}(X \times_S Y/Y).
\end{align*}
Thus, with $j=\delta(X)-\delta(Y)+i$ and $s=\delta(X)-\delta(Y)+n-m$,
 we deduce:
$$
H_i^\delta\big(\Mb(Y/S)\dtw{\delta(Y)}\gtw m\big)(x,n)
=\hat H^{BM}_{\delta(x)-\delta(Y)+i,\gtw{\delta(x)-\delta(Y)+n-m}}(Y_x/Y)
$$
where $Y_x$ is the fiber of $Y$ at the point $x$ of $S$ --
 which is in fact essentially separated over $Y$.
 According to assumption (a),
 we can apply assertion (ii) of Proposition \ref{prop:hom_compatible}.
 This concludes the proof because
 $\delta(x)-\delta(Y)+i<\delta_-(Y_x)-\delta(Y)$ whenever $i<0$.

Let us finally prove that $\C_{H\geq 0} \subset \C_{\geq 0}$.
 Let $\E$ be an object in $\C_{H\geq 0}$.
 We can consider the distinguished triangles associated
 with $E$ and the $t$-structure $t$:
$$
E_{\geq 0} \rightarrow \E \rightarrow E_{<0} \xrightarrow{+1}
$$
Applying the functor $H_p$ and Lemma \ref{lm:rost_hlg}(4),
 we get a sequence of presheaves of abelian groups on $\pts(S) \times \ZZ$
 (resp. $\pts(S) \times \ZZ^-$):
$$
\hdots \rightarrow \rH_p(\E_{\geq 0}) \rightarrow
 \rH_p(\E) \rightarrow
 \rH_p(\E_{<0}) \rightarrow
 \rH_{p-1}(\E_{\geq 0}) \rightarrow \hdots
$$
According to the inclusion $\C_{<0} \subset \C_{H<0}$
 already proved,
 we get that $\rH_p(\E_{<0})=0$ if $p\geq 0$.
 Let $p<0$. By assumption, $\rH_p(\E)=0$. According 
to the inclusion $\C_{\geq 0} \subset \C_{H\geq 0}$
 proved just above, we also have $\rH_{p-1}(\E_{\geq 0})=0$.
 Thus the preceding long exact sequence implies that
 $\rH_p(\E_{<0})=0$.
 According to Proposition \ref{prop:hlg_conservative},
 we deduce that $E_{<0}=0$ and this concludes. 
\end{proof}

This theorem has many nice consequences. 
 Let us start with computations.
\begin{ex}\label{ex:compute_hlg_constant1}
Assume $\T=\DM_\QQ$ following conventions of
 \ref{num:convention_DM}(1).

Let $S=\spec(\QQ)$ equipped with
 the Krull dimension function $\delta$.
 By definition, $\un_\QQ$ is $t_\delta$-non-negative.
 Moreover, we can see it is {\bf not} positively bounded
 for the $t_\delta$-homotopy $t$-structure. Actually,
 for any integer $n\geq 0$,
\begin{equation}\label{eq:constant_unbounded_DM}
H_n^\delta(\un_\QQ) \neq 0.
\end{equation}
Indeed, let $K/\QQ$ be the field extension generated
 by the group of primitive $d$-th roots of unity $\mu_d^0$.
 Note that according
 to our conventions, $K$ is a point of $\spec(\QQ)$.
 According to Proposition \ref{prop:comput_fiber_hlg}, we get
 for any integer $n\geq0$:
$$
\rH^\delta_{n}(\un_\QQ)(K,n+1) \simeq \HB^{1,n+1}(K).
$$
But this group is non-zero, since Beilinson's
 construction of polylogarithms yields a canonical
 map
$$
\epsilon_{n+1}:\mu_d^0 \rightarrow \HB^{1,n+1}(K)
$$
whose composition with the regulator map is the classical
 polylogarithm which is non-zero. Here
 we refer the reader to \cite[Cor. 9.6]{HW}.
 Then relation \eqref{eq:constant_unbounded_DM}
  follows from the preceding theorem.

According to \cite{Deg5}, given any extension field $L/\QQ$
 the pullback map $\HB^{i,n+1}(\QQ) \rightarrow \HB^{i,n+1}(L)$
 is a (split) monomorphism. Thus relation \eqref{eq:constant_unbounded_DM}
 is true if one replaces $\QQ$ by any characteristic $0$ field,
 or even any regular $\QQ$-scheme $S$ with dimension function
 $\delta=-\codim_S$
 (given again the computation of Proposition \ref{prop:comput_fiber_hlg}).

Note in particular that the $\delta$-homology of a constructible
 motive will not be positively bounded in general. This makes it different
 from its cousin, the perverse $t$-structure on torsion or $\ell$-adic
 \'etale sheaves.
\end{ex}

\begin{ex}\label{ex:unit_eff_heart}
Assume $\T=\DM_R$ following conventions of
 \ref{num:convention_DM}(1) or (2).
Let $S$ be a regular scheme with dimension function $\delta=-\codim_S$.

We consider the $\delta$-homotopy $t$-structure on
 $\DM^{\delta-\eff}(S,R)$. This means in particular that we
 restrict to the $\ZZ^-$-graded part of fiber $\delta$-homology.
 Applying Proposition \ref{prop:comput_fiber_hlg},
 for any couple of integer $(i,n) \in \ZZ \times \ZZ^-$, we get:
$$
\rH^{\delta-\eff}_i(\un_S)(x,n) \simeq H^{-i,\gtw{n}}_M(E,R)=
\begin{cases}
R & \text{if }i=0, n=0 \\
0 & \text{otherwise.}
\end{cases}
$$
In other words, $\rH^{\delta-\eff}_0(\un_S)=R$ as a graded abelian group
 concentrated in degree $0$,
 and if $i \neq 0$, $\rH^\delta_i(\un_S)=0$ .

Thus, the constant motive $\un_S$ is concentrated in degree
 $0$ for the effective $\delta$-homotopy $t$-structure
 -- the case of a (perfect) field was already well known thanks to
 Example \ref{ex:delta_effective_fields}.

This indicates that the $\delta$-homotopy $t$-structure
 is better behaved (with respect to bounds) on $\delta$-effective
 motives over nonsingular schemes. Actually, we expect
 that the $\delta$-homology of any constructible
 (\emph{i.e.} compact) $\delta$-effective motive over $S$ is bounded
 (see in particular Prop. \ref{prop:compute_hlg_curves})
 but this is a deep conjecture. Indeed, already in the case
 where $S$ is the spectrum of a perfect field $k$,
 the Suslin complex $C_*^{sing}\left(\ZZ^{tr}(X)\right)$ of a smooth $k$-scheme $X$
 is not known to be bounded though it is believed its homology sheaves
 vanish in degree greater than $2\dim(X)$ --- see in particular a
 conjecture of Morel \cite[Conjecture 11]{dmtilde3} without transfers.\footnote{Beware
 however the conjecture of Morel does not imply the analogous conjecture for sheaves
 with transfers as the functor ``adding transfers'' $\gamma^*$  is not
 right exact for the homotopy $t$-structure.}
\end{ex}

\begin{rem}\label{rem:compute_hlg_constant2}
The preceding example cannot be generalized to the singular case.
 Let us consider its notations and assumptions.

Let us take a field $k$, $\delta$ being the Krull dimension
 function on $\spec(k)$.
 We look at the example where $S$ is an algebraic $k$-scheme
 which is the union two copies of the affine line
 $D_1$ and $D_2$ crossing in a single rational point $s$.
 Consider the canonical closed immersions:
$$
\xymatrix@=16pt@C=24pt{
s\ar^{k_1}[r]\ar_{k_2}[d]\ar|k[rd] & D_1\ar^{i_1}[d] \\
D_2\ar|{i_2}[r] & S.
}
$$
Then, using cdh-descent for $\DM(S,R)$, we get a distinguished triangle
 (apply \cite[3.3.10]{CD3} to the preceding cartesian square):
$$
k_*(\un_s)[-1] \rightarrow \un_S
 \rightarrow i_{1*}(\un_{D_1}) \oplus i_{2*}(\un_{D_2})
 \rightarrow k_*(\un_s)
$$
where the first map is a boundary, while the other two are obtained
 by considering the relevant adjunctions.
Applying $k^!$ to this triangle,
 together with the base change formula and the absolute purity
 (Corollary \ref{cor:fund_class})
 of $\DM_R$ with respect to $k_1$ and $k_2$, we get the following 
 distinguished triangle:
$$
\un_s[-1] \rightarrow  k^!(\un_S)
 \rightarrow \un_s\dtw{-1} \oplus \un_s\dtw{-1}
 \xrightarrow{(1)} \un_s.
$$
The map labelled $(1)$ corresponds to a cohomology class in
 $H^{2,1}_M(s,R) \oplus H^{2,1}_M(s,R)$ which vanishes since $x$
 is a point. Therefore, the above distinguished triangle splits
 and gives:
$$
k^!(\un_S)=\un_s[-1] \oplus \un_s\gtw{-1}[-1] \oplus \un_s\gtw{-1}[-1].
$$
Taking into account formula \eqref{eq:exceptional_pullback&fiber_hlg},
 and Lemma \ref{lm:rost_hlg}(1)(3), we obtain for any couple of integers
 $(i,n) \in \ZZ \times \ZZ^-$:
$$
H_i^{\delta-\eff}(\un_S)(s,n)=\begin{cases}
\ZZ & \text{if } (i,n)=(-1,0), \\
0 & \text{otherwise.}
\end{cases}
$$
The computation of the fiber $\delta$-homology of the other points,
 the closed nonsingular points $S'_{(0)}$ and the two generic points
 $S^{(0)}$,
 follows from the previous example, as it can be reduced to the
 case of a regular base. In the end we obtain the following computation:
$$
H_i^{\delta-\eff}(\un_S)(x,n)=\begin{cases}
\ZZ & \text{if } \big(x=s \text{ and } (i,n)=(-1,0)\big)
 \text{ or } \big(x \in S^{(0)} \text{ and } (i,n)=(0,0)\big) \\
& \qquad\qquad
 \text{ or } \big(x \in S'_{(0)} \text{ and } (i,n)=(1,-1)\big), \\
\kappa(x)^\times & \text{if } x \in S'_{(0)} \text{ and } (i,n)=(1,0), \\
0 & \text{otherwise.}
\end{cases}
$$
Thus the motive $\un_S$ has $t_\delta^\eff$-amplitude $[-1,1]$
 exactly.

However, it is easy using a stratification by regular locus
 to show that for
 any noetherian excellent scheme, $\un_S$
 has finite $t_\delta^\eff$-homological
 amplitude. In particular, it is reasonable to expect that the
 conjectural boundedness stated in the end of the preceding example
 happens also in the singular case.
\end{rem}

Let us go back to corollaries of the preceding theorem.
\begin{cor}\label{cor:texact}
Suppose that the hypothesis of the preceding theorem
 are fulfilled and let $(S,\delta)$ be a dimensional scheme.
 Then for any $\T$-spectrum $\E$ over $S$,
 we get the following equivalent conditions where the
 left hand side conditions refer to the $\delta$-homotopy
 $t$-structure:
\begin{enumerate}
\item $\E \geq 0$ if and only if for all separated $S$-scheme $X$,
 $\E^{BM}_{p,\gtw *}(X/S)=0$ when $p<\delta_-(X)$.
\item $\E \leq 0$ if and only if for all separated $S$-scheme $X$,
 $\E^{BM}_{p,\gtw *}(X/S)=0$ when $p>\delta_+(X)$.
\end{enumerate}
Moreover, these equivalent conditions remain true
 when one replaces separated $S$-schemes by essentially separated
 $S$-schemes.
\end{cor}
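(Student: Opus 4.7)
The plan is to deduce the statement from Theorem \ref{thm:hlg&htp_t}, which characterizes $\E \in \C_{\geq 0}$ (resp.\ $\E \in \C_{\leq 0}$) by the vanishing of the fiber homology $\rH^\delta_q(\E)$ for $q<0$ (resp.\ $q>0$); the bridge between this pointwise condition and the Borel--Moore condition of the corollary will be the $\delta$-niveau spectral sequence. For the forward direction of both (1) and (2), I will apply \eqref{eq:niveau_ssp_homology} to a separated $X/S$,
\begin{equation*}
{}^\delta E^1_{p',q'} = \bigoplus_{y \in X_{(p')}} \rH^\delta_{q'}(\E)(y, p'-n)
 \Rightarrow \E^{BM}_{p'+q',\gtw *}(X/S),
\end{equation*}
and observe that the $E^1$-term is concentrated in the range $\delta_-(X) \leq p' \leq \delta_+(X)$; assuming $\rH^\delta_{q'}(\E) = 0$ for $q' < 0$ (resp.\ $q' > 0$) then forces the abutment to vanish in total degrees $m < \delta_-(X)$ (resp.\ $m > \delta_+(X)$).

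The backward direction of (2) is immediate: for a point $x \in S(E)$ and any model $U \in \M(x)$ (Definition \ref{df:Smodels}) one has $\delta_+(U) = \delta(x)$, since $x$ is the generic point of $U$; hence for $q > 0$ the hypothesis gives $\E^{BM}_{\delta(x)+q, \gtw *}(U/S) = 0$, and passing to the colimit over $\M(x)$ yields $\rH^\delta_q(\E)(x,n) = 0$, so Theorem \ref{thm:hlg&htp_t} gives $\E \leq 0$.

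The backward direction of (1) is the main obstacle, because the hypothesis only provides vanishing below $\delta_-(U)$, which may be strictly smaller than $\delta(x)$, whereas the fiber homology at $x$ lives in degrees below $\delta(x)$. To close the gap, given $q < 0$ and $U \in \M(x)$, I set $p = \delta(x) + q < \delta(x)$. On the integral regular scheme $U$ with generic point $x$, Lemma \ref{lm:dim+locally_ct} applied to the two dimension functions $\delta$ and $\delta(x) - \codim_U$, which agree at $x$, yields $\delta(y) = \delta(x) - \codim_U(y)$ for all $y \in U$. Hence
\begin{equation*}
Z_p = \{y \in U : \delta(y) \leq p\} = \{y \in U : \codim_U(y) \geq \delta(x) - p\}
\end{equation*}
is closed in $U$ by upper semicontinuity of $y \mapsto \dim \mathcal O_{U,y}$ on a noetherian scheme. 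Its complement $V = U \setminus Z_p$ is open in $U$, contains $x$, is separated of finite type over $S$, and satisfies $\delta_-(V) \geq p+1$. The hypothesis then forces $\E^{BM}_{p, \gtw *}(V/S) = 0$, so any element of $\E^{BM}_{p, \gtw *}(U/S)$ restricts to zero on any affine open neighbourhood of $x$ inside $V$ (which is again a member of $\M(x)$). This gives $\rH^\delta_q(\E)(x,n) = 0$ for $q < 0$, and Theorem \ref{thm:hlg&htp_t} concludes $\E \geq 0$.

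The essentially separated extension is then automatic: the forward direction uses the analogous niveau spectral sequence from Proposition \ref{prop:df_niveau_ssp_essntially}, and the backward direction specializes the stronger hypothesis to $X = \spec(E)$ for each point $x \in S(E)$, where $\delta_-(X) = \delta(x)$, so that the assumption becomes exactly $\hat \E^{BM}_{p, \gtw *}(x/S) = 0$ for $p < \delta(x)$, which is the fiber homology vanishing needed to invoke the theorem.
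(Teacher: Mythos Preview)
Your arguments for the forward implications in (1) and (2), for the backward implication in (2), and for the essentially separated extension are all correct and follow the paper's intended route via Theorem~\ref{thm:hlg&htp_t} and the $\delta$-niveau spectral sequence.

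There is, however, a genuine gap in your backward implication for (1) in the finite-type case. The assertion that
\[
Z_p \;=\; \{\,y \in U : \codim_U(y) \geq \delta(x)-p\,\}
\]
is closed is false: the function $y \mapsto \dim \mathcal O_{U,y}$ is \emph{not} upper semicontinuous. For instance, on $U = \AA^2_k$ the locus $\{\,y : \dim\mathcal O_{U,y} \geq 2\,\}$ is exactly the set of closed points, which is dense but certainly not closed. Worse, the strategy cannot be repaired by a different choice of $V$: taking $q = -1$, you are asking for a finite-type neighbourhood $V$ of $x$ in which every point has $\delta > \delta(x)-1$; on an integral scheme with generic point $x$ this forces $V = \{x\}$, and $\spec(\kappa(x))$ is of finite type over $S$ only when $x$ lies over a locally closed point of $S$ with finite residue extension. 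For a point of positive transcendence degree, or one lying over a non--locally-closed point of $S$, no such finite-type $V$ exists, so no model of $x$ can ever have $\delta_-$ large enough to invoke the hypothesis at degree $\delta(x)+q$.

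The clean route is the one you already indicated for the essentially separated version: taking $X = \spec(E)$ for each point $x \in S(E)$ gives the backward direction of (1) immediately from Theorem~\ref{thm:hlg&htp_t}, and the forward direction via Proposition~\ref{prop:df_niveau_ssp_essntially} completes the equivalence in that setting. Since the finite-type condition is a priori weaker than the essentially separated one, deducing $\E \geq 0$ from the finite-type hypothesis alone is a genuinely stronger assertion; the paper's one-line justification does not spell out how this last implication is closed, and your shrinking argument does not do it.
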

\begin{proof}
This is a straightforward consequence of the preceding theorem together
 with the $\delta$-niveau spectral sequence under the
 form \eqref{eq:niveau_ssp_homology}.
\end{proof}

In the $\delta$-effective case, we need to be more precise about
 the $\GG$-gradings.
\begin{cor}
Suppose that the hypothesis of the preceding theorem
 are fulfilled and let $(S,\delta)$ be a dimensional scheme.
 Then for any $\delta$-effective $\T$-spectrum $\E$ over $S$,
 we get the following equivalent conditions where the
 left hand side conditions refer to the $\delta$-homotopy
 $t$-structure on $\T^{\delta-\eff}(S)$:
\begin{enumerate}
\item $\E \geq 0$ $\Leftrightarrow$
 for all separated $S$-scheme $X$, 
 $\E^{BM}_{p,\gtw n}(X/S)=0$ when $p<\delta_-(X)$ and $n>\delta_+(X)$.
\item $\E \leq 0$ $\Leftrightarrow$ for all separated $S$-scheme $X$,
 $\E^{BM}_{p,\gtw n}(X/S)=0$
 when $p>\delta_+(X)$ and $n<\delta_-(X)$.
\end{enumerate}
Moreover, these equivalent conditions remain true
 when one replaces separated $S$-schemes by essentially separated
 $S$-schemes.
\end{cor}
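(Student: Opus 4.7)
The argument is the effective version of the one just given for Corollary \ref{cor:texact}: one substitutes the $\delta$-effective $\delta$-niveau spectral sequence \eqref{eq:niveau_ssp_homology_eff} for the stable form \eqref{eq:niveau_ssp_homology} and invokes the effective assertion of Theorem \ref{thm:hlg&htp_t}. By the latter, an object $\E$ of $\T^{\delta-\eff}(S)$ satisfies $\E \geq 0$ (resp.\ $\E \leq 0$) for $t_\delta^\eff$ if and only if $\rH^{\delta-\eff}_q(\E) = 0$ in the range $q<0$ (resp.\ $q>0$).

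For the forward implication of $(1)$, assume $\E \geq 0$. The $E^1$-term $\bigoplus_{x \in X_{(p)}}\rH^{\delta-\eff}_q(\E)(x)$ of \eqref{eq:niveau_ssp_homology_eff} is then concentrated in the quadrant $q \geq 0$ and in the band $p \in [\delta_-(X),\delta_+(X)]$, and the spectral sequence abuts to $\E^{BM}_{p+q,\gtw n}(X/S)$ in the effective twist range $n \geq \delta(X) = \delta_+(X)$. A degree count yields vanishing of the abutment for total degree $p+q < \delta_-(X)$, hence the announced vanishing in the range $p < \delta_-(X)$, $n > \delta_+(X)$. The forward implication of $(2)$ is obtained symmetrically from the concentration in $q \leq 0$, $p \leq \delta_+(X)$.

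For the converse implications, by Proposition \ref{prop:hlg_conservative} combined with the effective case of Theorem \ref{thm:hlg&htp_t}, it suffices to prove that $\rH^{\delta-\eff}_q(\E)(x,m)$ vanishes in the relevant range of $(q,m)$ for every point $x$ of $S$. Unfolding Definition \ref{df:fiber_hlg} via Paragraph \ref{num:fiber_hlg},
\[
\rH^{\delta-\eff}_q(\E)(x,m) \;=\; \ilim_{X \in \M(x)^{op}} \E^{BM}_{\delta(x)+q,\gtw{\delta(x)-m}}(X/S);
\]
since a cofinal family of $S$-models $X$ of $x$ satisfies $\delta_+(X) = \delta(x)$, the index pairs $(\delta(x)+q,\delta(x)-m)$ that appear on the right fall in the range prescribed by the hypothesis, and taking the colimit delivers the required vanishing of fiber $\delta$-homology. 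The passage from separated to essentially separated $S$-schemes is then obtained via Lemma \ref{lm:pro_schemes_essentially}, the pro-distinguished triangles of Paragraph \ref{num:pro&essentially} and the continuity property of $\T$ (Paragraph \ref{num:commutes_with_limits}), exactly as at the end of the proof of Corollary \ref{cor:texact}.

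The main technical point is the book-keeping of twists in the converse step: while it is easy to arrange $\delta_+(X) = \delta(x)$ on a cofinal family of $S$-models, one has to verify that the index pairs $(\delta(x)+q,\delta(x)-m)$ relevant to the fiber $\delta$-homology fall inside the vanishing windows $n > \delta_+(X)$ and $n < \delta_-(X)$ prescribed by $(1)$ and $(2)$ respectively, which is where the flexibility of shrinking $X$ inside $\M(x)$ is crucial.
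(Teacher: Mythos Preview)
Your overall method---Theorem \ref{thm:hlg&htp_t} plus the $\delta$-niveau spectral sequence---is precisely what the paper's one-sentence proof indicates, and your forward implication for $(1)$ is carried out correctly.

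The converse step has a real gap. You claim that by shrinking the $S$-model $X$ inside $\M(x)$ the pair $(p,n)=(\delta(x)+q,\delta(x)-m)$ can be made to land in the required window. But $\delta_+(X)=\delta(x)$ for \emph{every} model of $x$---it is the $\delta$-value of the generic point---so shrinking is irrelevant for the twist condition. Concretely: in $(1)$ at $m=0$ one gets $n=\delta(x)=\delta_+(X)$, never strictly greater; and in $(2)$, since $n=\delta(x)-m\ge\delta(x)\ge\delta_-(X)$ for every $m\le0$, the window $n<\delta_-(X)$ is never reached at all. The same obstruction already breaks the \emph{forward} direction of $(2)$: for $n<\delta_-(X)$ the $E^1$-term of \eqref{eq:niveau_ssp_homology0} sits at second index $p'-n>0$, outside the effective range, so the effective hypothesis $\E\le0$ does not control it. These inconsistencies suggest the printed twist conditions are off; if the $n$-window is read as $n\ge\delta_+(X)$ in both parts---exactly the range in which \eqref{eq:niveau_ssp_homology0} coincides with \eqref{eq:niveau_ssp_homology_eff}---your spectral-sequence argument goes through and the converse of $(2)$ reduces to Remark \ref{rem:htp_t_nonnegdeg_eff}(1). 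Two minor points: the appeal to Proposition \ref{prop:hlg_conservative} is superfluous (Theorem \ref{thm:hlg&htp_t} alone characterizes $\ge0$ and $\le0$), and continuity of $\T$ is not among the hypotheses here---the extension to essentially separated schemes uses only the colimit definition of $\hat\E^{BM}$, not Paragraph \ref{num:commutes_with_limits}.
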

Again, use the preceding theorem together with the $\delta$-niveau
 spectral sequence but under the form \eqref{eq:niveau_ssp_homology0}.

\begin{cor}\label{cor:nice_ppties_dhtp}
Suppose that the hypothesis of the preceding theorem
 are fulfilled and let $(S,\delta)$ be a dimensional scheme.
\begin{enumerate}
\item The $t$-structure $t_\delta$ (resp. $t_\delta^\eff$)
 is non-degenerate.
\item The functor $w:\T(S) \rightarrow \T^{\delta-\eff}(S)$
 of Proposition \ref{prop:functors&eff}(1)
 is $t_\delta$-exact.
\item The composite functor
 $(-_{-1}):\T^{\delta-\eff}(S)
 \stackrel{\{-1\}}{ \rightarrow} \T^{\delta-\eff}(S)(-1)
 \stackrel{w'}{ \rightarrow} \T^{\delta-\eff}(S) $ (see the aforementioned proposition)
  is $t_\delta^\eff$-exact.
\item For any separated morphism $f:T \rightarrow S$,
 the functor $f^!$ is $t_\delta$-exact.
\item For any morphism $f:X \rightarrow S$
 such that $\dim(f) \leq d$,
 the functor $f_*$ has $t_\delta$-amplitude $[0,d]$.
\end{enumerate}
\end{cor}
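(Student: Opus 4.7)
The plan is to derive all five assertions from Theorem \ref{thm:hlg&htp_t} combined with the conservativity of fiber $\delta$-homology (Proposition \ref{prop:hlg_conservative}) and the formulas giving the fiber $\delta$-homology of the various functors involved. Throughout, the strategy is the same: reduce a $t$-exactness statement to the vanishing of appropriate $\rH_p^\delta$ or $\rH_p^{\delta-\eff}$, and then read off this vanishing from either the transformation rule for that functor or from the already-known adjunctions of $t$-categories.

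For (1), left non-degeneracy of $t_\delta$ (resp. $t_\delta^\eff$) was already noted in Remark \ref{rem:htp_t_nonnegdeg} (resp. Remark \ref{rem:htp_t_nonnegdeg_eff}). For right non-degeneracy, take $\E \in \cap_{n \in \ZZ} \C_{\geq n}$. Applying Theorem \ref{thm:hlg&htp_t} at every shift yields $\rH_p^\delta(\E) = 0$ for every $p \in \ZZ$ (resp. $\rH_p^{\delta-\eff}(\E) = 0$ for every $p$), and then Proposition \ref{prop:hlg_conservative} forces $\E = 0$. For (4), Proposition \ref{prop:basic_tstruct}(1) already gives that $f^!$ is left $t$-exact; for right $t$-exactness, suppose $\E \geq 0$ in $\T(T)$. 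By formula \eqref{eq:exceptional_pullback&fiber_hlg}, $\rH_p^\delta(f^!\E)(x,n) = \rH_p^\delta(\E)(x \circ f, n)$, which vanishes for $p < 0$ by Theorem \ref{thm:hlg&htp_t} applied to $\E$. The same theorem then shows $f^!\E \geq 0$.

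For (2), the adjunction \eqref{eq:adj_t_delta_eff_st} is a $t$-adjunction, so $w$ is left $t$-exact automatically; for right $t$-exactness, observe that for $\E \in \T(S)$, a point $x \in S(E)$ and $n \leq 0$, the adjunction $(s,w)$ together with the fact that $\Mb(X/S)\gtw{\delta(x)-n}$ is $\delta$-effective (since $\delta(x)-n \geq \delta(x)$ and $X \in \M(x)$) yield
\[
\rH_p^{\delta-\eff}(w\E)(x,n) \;=\; \rH_p^\delta(\E)(x,n).
\]
Hence if $\E \geq 0$ then $w\E \geq 0$ by Theorem \ref{thm:hlg&htp_t}. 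For (3), a similar adjunction computation shows that for $L \in \T^{\delta-\eff}(S)$, any point $x$ and any $n \leq 0$,
\[
\rH_p^{\delta-\eff}(L_{-1})(x,n) \;=\; \rH_p^{\delta-\eff}(L)(x,n-1),
\]
because $\Hom(M, w'(L\gtw{-1})) = \Hom(M\gtw 1, L)$ for effective $M$. Since positivity/negativity with respect to $t_\delta^\eff$ is a condition on the $p$-grading only (uniform in $n$), this shift in the $n$-direction preserves both $\geq 0$ and $\leq 0$, so $(-_{-1})$ is $t_\delta^\eff$-exact.

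Assertion (5) is the main obstacle and requires a base-change argument. One direction is free: Proposition \ref{prop:basic_tstruct}(3) gives that $(f^*[d], f_*[-d])$ is an adjunction of $t$-categories, so $f_*[-d]$ is left $t$-exact, i.e.\ $f_*$ sends $\T(T)_{\leq 0}$ into $\T(S)_{\leq d}$. For the other half, let $\E \geq 0$ in $\T(T)$; we must show $\rH_p^\delta(f_*\E) = 0$ for $p < 0$. Fixing a point $x : \spec(E) \to S$, by adjunction and the base change formula \ref{num:BM}(BM6),
\[
\rH_p^\delta(f_*\E)(x,n) \;=\; \ilim_{X_0 \in \M(x)} \E^{BM}_{\delta(x)+p,\gtw{\delta(x)-n}}(X_0 \times_S T / T) \;=\; \hat \E^{BM}_{\delta(x)+p,\gtw{\delta(x)-n}}(T_x/T),
\]
where $T_x = \spec(E) \times_S T$ is essentially separated over $T$. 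Since every point $y \in T_x$ maps to the generic point of $\spec(E)$ (whose residue field is $E = K_x$), the induced dimension function on $T_x$ satisfies $\delta(y) \geq \delta(x)$, hence $\delta_-(T_x) \geq \delta(x)$. The extension of Corollary \ref{cor:texact} to essentially separated schemes then gives the vanishing of the above Borel--Moore group whenever $\delta(x)+p < \delta(x)$, i.e.\ $p < 0$. Theorem \ref{thm:hlg&htp_t} then yields $f_*\E \geq 0$, completing the amplitude estimate.
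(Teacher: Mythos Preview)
Your proof is correct and follows essentially the same route as the paper's: each assertion is reduced to a computation of fiber $\delta$-homology, with (1) using Proposition \ref{prop:hlg_conservative}, (2)--(4) using the evident adjunction formulas, and (5) using the base-change identity $\rH^\delta_p(f_*\E)(x,*)=\hat\E^{BM}_{\delta(x)+p,\gtw{\delta(x)-*}}(T_x/T)$ together with Corollary \ref{cor:texact}. One small slip: in (4) you write ``suppose $\E\geq 0$ in $\T(T)$'' where it should be $\T(S)$, but the ensuing argument is applied correctly.
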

\begin{proof}
Assertion (1) follows from the previous theorem combined
 with Proposition \ref{prop:hlg_conservative}.

Assertion (2) follows from the previous theorem
 given that for any $\T$-spectrum $\E$ over $S$ an easy computation gives:
$$
\rH^{\delta-\eff}_i\big(w(\E)\big)
 =\rH^{\delta}_i\big(\E\big)|_{\pts(S) \times \ZZ^-}.
$$
Similarly, for any effective $\E$ over $S$ if we denote
 (essentially following \cite[\S3.3.1]{FSV}) $-_{-1}(\E)$ by $\E_{-1}$,
 then we have 
$$(\E_{-1})^{BM}_{\delta(x)+p,\dtw{\delta(x)-n}}(x/S)
 = E^{BM}_{\delta(x)+p,\dtw{\delta(x)-n+1}}(x/S)
$$
for any $(x,n)\in \pts(S) \times \ZZ^-$. This easily yields assertion (3).

Assertion (4) obviously follows from the previous theorem 
 along with relation \eqref{eq:exceptional_pullback&fiber_hlg}.

Let us prove assertion (5). 
 In view of Proposition \ref{prop:basic_tstruct}(3),
 one needs only to prove that $f_*$ preserves
 $t_\delta$-positive spectra.
 Let $\E$ be such a spectrum over $X$.
 A direct computation gives, for any point $s \in S(E)$:
$$
\rH^\delta_i(f_*\E)(s,*)=\hat \E^{BM}_{\delta(s)+i,\gtw{\delta(s)-*}}(X_s/X)
$$
where $X_s$ denotes the fiber of $f$ at the point $s$ -- \emph{i.e.}
 the pullback of the $S$-scheme $X$ along the morphism $s$ --
 seen as an essentially separated scheme over $X$. Then the required 
 vanishing easily follows from the preceding corollary.
\end{proof}


\begin{num}\label{num:i^!_essentially}
To state the next result, we need the following definition taken
 from \cite[2.2.12]{BBD}. Let $S$ be a scheme and $x \in S$
 a set-theoretic point of $S$, with inclusion $i_x$.
 Then $i_x$ can be factorized as $x \xrightarrow j \bar x \xrightarrow i S$,
 where $\bar x$ is the reduced closure of $x$ in $S$
 and we put:
$$
i_x^!=j^*i^!.
$$
\end{num}
\begin{cor}\label{cor:restriction_fields}
Suppose that the assumptions of the previous theorem are fulfilled
 and that the motivic triangulated category $\T$ is continuous
 (see Paragraph \ref{num:commutes_with_limits}). Let $(S,\delta)$ be a
 dimensional scheme.
\begin{enumerate}
\item For any set-theoretic point $x \in S$, the functor
 $i_x^!$ defined above is $t_\delta$-exact.
 Moreover, for any $\T$-spectrum $\E$ over $S$
  and any finitely generated field extension $K$ of the residue field of $x$,
  with corresponding point $x_K \in S(K)$, one has:
\begin{equation}\label{eq::restriction_fields}
\rH^\delta_*\big(i_x^!\E\big)(K,*)=\rH^\delta_*(\E)(x_K,*).
\end{equation}
\item The family of $t_\delta$-exact functors $i_x^!:\T(X) \rightarrow \T(x)$
 indexed by set-theoretic points $x \in S$ respects and detects $t_\delta$-positive
 and $t_\delta$-negative spectra.
\end{enumerate}
\end{cor}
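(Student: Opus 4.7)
The plan is to deduce both parts from Theorem~\ref{thm:hlg&htp_t} together with the behaviour of fiber $\delta$-homology under the two constituent functors in the factorization $i_x = i\circ j$, where $j:\spec(\kappa_x)\to \bar{\{x\}}$ is the inclusion of the generic point and $i:\bar{\{x\}}\to S$ is the (reduced) closed immersion.

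First I would establish formula \eqref{eq::restriction_fields}. Since $i$ is separated of finite type, \eqref{eq:exceptional_pullback&fiber_hlg} gives
\[
\rH^\delta_p(i^!\E)(y,n)=\rH^\delta_p(\E)(i\circ y,n)
\]
for any point $y$ of $\bar{\{x\}}$. On the other hand $j$ is a pro-open immersion (hence essentially \'etale of relative dimension $0$), so since $\T$ is continuous we may apply the essentially smooth case of \eqref{eq:smooth_pullback&fiber_hlg} (noting that $\tilde\delta^j=\delta^j$ because $\dim(j)=0$) to obtain
\[
\rH^\delta_p(j^*F)(K,n)=\rH^\delta_p(F)(j\circ K,n)
\]
for any $K$-valued point of $\spec(\kappa_x)$. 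Composing the two identities with $F=i^!\E$ yields \eqref{eq::restriction_fields}, once one checks that the canonical point $i\circ j\circ K = x_K\in S(K)$ obtained from a finitely generated extension $K/\kappa_x$ is the one described in the statement; this is a direct verification using $\delta(x_K)=\delta(x)+\dtr(K/\kappa_x)$, which is exactly the value of $\delta^{i_x}$ on the corresponding point of $\spec(\kappa_x)$.

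The $t_\delta$-exactness of $i_x^!$ then follows by combining \eqref{eq::restriction_fields} with Theorem~\ref{thm:hlg&htp_t}: if $\E$ is $t_\delta$-positive (resp. $t_\delta$-negative) then $\rH^\delta_p(\E)=0$ for $p\le 0$ (resp. $p\ge 0$), and \eqref{eq::restriction_fields} transports this vanishing to $i_x^!\E$, which is therefore positive (resp. negative) again by Theorem~\ref{thm:hlg&htp_t} applied to $\T(\spec(\kappa_x))$ with dimension function $\delta^{i_x}$. Note this also follows more directly from Corollary~\ref{cor:nice_ppties_dhtp}(4) applied to $i$, together with $t_\delta$-exactness of $j^*$ as a filtered $2$-colimit (via continuity of $\T$) of the $t_\delta$-exact functors $j_U^*=j_U^!$ for $U\subset\bar{\{x\}}$ open.

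For part (2), the fact that the family $(i_x^!)$ respects positivity and negativity is just $t$-exactness from part (1). To see it detects them, note that every point $y:\spec(K)\to S$ set-theoretically factors through a unique $x\in S$ and so writes as $i_x\circ y'$ with $y'$ a point of $\spec(\kappa_x)$. By \eqref{eq::restriction_fields}, the vanishing of $\rH^\delta_p(\E)(y,*)$ for all points $y$ of $S$ is then equivalent to the vanishing of $\rH^\delta_p(i_x^!\E)(K,*)$ for every set-theoretic $x\in S$ and every finitely generated $K/\kappa_x$. Applying Theorem~\ref{thm:hlg&htp_t} on both sides (once over $S$, once over each $\spec(\kappa_x)$) gives the required equivalence between $\E$ being $t_\delta$-positive/negative and each $i_x^!\E$ being so. The only delicate point, and the main thing to be careful about, is the dimension-function bookkeeping: one must consistently read $t_\delta$ on $\T(\spec(\kappa_x))$ as $t_{\delta^{i_x}}$ and verify that the transcendence-degree shift built into the definition of fiber $\delta$-homology matches exactly on the two sides of \eqref{eq::restriction_fields}.
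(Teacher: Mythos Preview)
Your proposal is correct and follows essentially the same route as the paper: reduce \eqref{eq::restriction_fields} to the closed-immersion part via \eqref{eq:exceptional_pullback&fiber_hlg} and to the pro-open part via continuity, then read off $t_\delta$-exactness and the detection property from Theorem~\ref{thm:hlg&htp_t}.

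One small point worth flagging: for the pro-open immersion $j$ you invoke the ``essentially smooth'' extension of \eqref{eq:smooth_pullback&fiber_hlg} announced in Remark~\ref{rem:pullback&fiber_hlg}. That remark states the extension without proof, and the paper's own argument for this corollary is precisely where the details are supplied---it unwinds the definitions via $S$-models of $x_K$, the cofinality of $X\mapsto X\times_S k$ among $k$-models of $K$, and \cite[4.3.4]{CD3}. So your citation of the remark is correct in spirit but slightly forward-referencing; if you want the argument to be self-contained you should spell out that continuity step explicitly, as the paper does.
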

\begin{proof}
Given the previous theorem, the only thing to prove is relation
 \eqref{eq::restriction_fields}.
 Obviously, we can restrict to $\rH^\delta_0(-,0)$.
 By definition of $i_x^!$ and point (4) of the previous corollary,
 we can assume
 $S$ is irreducible and $x$ is the generic point of $S$. Let us denote by
 $k$ the residue field of $x$.
 With the notations of the paragraph preceding the proof,
 we thus have $i_x^!=j^*$.
 We want to prove there exists an isomorphism of the form:
$$
\psi:\Hom_{\T(x)}(\hMb(K/k),j^*\E) \simeq \Hom_ {\T(S)}(\hMb(x_K),\E).
$$
Given any $S$-model $X$ of $x_K$ (cf. Definition \ref{df:Smodels}),
 it is clear that $X_k:=X \times_S k$ is a $k$-model of $K$.
 Moreover, it is clear that the corresponding functor
 $\M(x_K) \rightarrow \M(K/k), X \mapsto X_k$ is final. Therefore,
 one gets the following computation of the left hand side of $\psi$:
$$
\Hom_{\T(x)}(\hMb(K/k),h^*\E)
 =\ilim_{X \in \M(x_K)} \Hom_{\T(x)}(\Mb(X_k/k),h^*\E).
$$
On the other hand, given any $S$-model $X$ of $x_K$, we get by using the
 continuity property of $\T$ and \cite[Prop. 4.3.4]{CD3}:
\begin{align*}
\Hom_{\T(x)}(\Mb(X_k/k),j^*\E)
 &=\ilim_{U \subset S} \Hom_{\T(x)}(\Mb(X_U/U),j_U^*\E) \\
 &=\ilim_{U \subset S} \Hom_{\T(x)}(\Mb(X_U/S),\E)
\end{align*}
where $U$ runs over the non-empty open subschemes of $S$,
 $X_U=X \times_S U$ and $j_U:U \rightarrow S$ is the obvious open immersion.

The result follows from this last computation, and from
 the definition of $\hMb(x_K)$
 (in particular Lemma \ref{lm:pro_schemes_essentially}).
\end{proof}

Let us finish with the following easy but useful corollary.
\begin{cor}\label{cor:generic_motives}
Let us assume that the hypothesis of the preceding theorem
 are fulfilled. Let $(S,\delta)$ be a dimensional scheme.

Then for any $\T$-spectrum $\E$ over $S$, and any point $x$ of $S$,
 the truncation functors for the $\delta$-homotopy $t$-structure
  induce a canonical isomorphism:
$$
\Hom_{\T(S)}(\hMb(x)\dtw{\delta(x)},\E)
 \simeq \Hom_{\T(S)}(\hMb(x)\dtw{\delta(x)},H_0(\E)).
$$
\end{cor}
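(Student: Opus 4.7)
The plan is to reduce the statement to Theorem \ref{thm:hlg&htp_t} by unwinding the definition of fiber homology and applying the long exact sequence of fiber $\delta$-homology to the truncation triangles.

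First I would observe that by Definition \ref{df:fiber_hlg} combined with the tautology $\gtw{n}[n] = \dtw{n}$, for any $\T$-spectrum $\F$ over $S$ one has
\[
\Hom_{\T(S)}\bigl(\hMb(x)\dtw{\delta(x)}, \F\bigr)
 = \hat{\F}^{BM}_{\delta(x), \gtw{\delta(x)}}(x)
 = \rH^\delta_0(\F)(x, 0).
\]
Thus the corollary is equivalent to showing that the canonical morphism $\E \to H_0(\E)$ induces an isomorphism $\rH^\delta_0(\E)(x, 0) \xrightarrow{\sim} \rH^\delta_0(H_0\E)(x, 0)$. In fact, I would prove the stronger statement that the induced map of presheaves $\rH^\delta_0(\E) \to \rH^\delta_0(H_0\E)$ on $\pts(S) \times \ZZ$ is an isomorphism.

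Next, I would consider the two truncation triangles for the $\delta$-homotopy $t$-structure,
\[
\tau_{\geq 0}\E \to \E \to \tau_{<0}\E \xrightarrow{+1}
\quad\text{and}\quad
\tau_{\geq 1}\E \to \tau_{\geq 0}\E \to H_0(\E) \xrightarrow{+1},
\]
and apply Lemma \ref{lm:rost_hlg}(4) to each to obtain two long exact sequences of fiber $\delta$-homology presheaves. The required vanishings come from Theorem \ref{thm:hlg&htp_t}: since $\tau_{<0}\E \in \T(S)_{<0}$, one has $\rH^\delta_p(\tau_{<0}\E) = 0$ for all $p \geq 0$; and since $\tau_{\geq 1}\E[-1] \in \T(S)_{\geq 0}$, the theorem yields $\rH^\delta_p(\tau_{\geq 1}\E[-1]) = 0$ for $p < 0$, which by the shift relation of Lemma \ref{lm:rost_hlg}(1) translates into $\rH^\delta_p(\tau_{\geq 1}\E) = 0$ for $p \leq 0$. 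Substituting these vanishings into the long exact sequences yields canonical isomorphisms $\rH^\delta_0(\tau_{\geq 0}\E) \xrightarrow{\sim} \rH^\delta_0(\E)$ and $\rH^\delta_0(\tau_{\geq 0}\E) \xrightarrow{\sim} \rH^\delta_0(H_0\E)$. Composing (and evaluating at $(x, 0)$) gives the desired isomorphism.

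There is no genuine obstacle: the argument is essentially formal once the main theorem is in hand, since it reduces to standard $t$-structure truncation machinery combined with the fact that the functors $\rH^\delta_p$ convert distinguished triangles into long exact sequences. The only bookkeeping to watch is the index conversion $\rH^\delta_p(\E[1]) = \rH^\delta_{p-1}(\E)$ and the conversion $\gtw{n}[n] = \dtw{n}$; both are immediate from the definitions.
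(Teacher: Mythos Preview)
Your proof is correct and follows essentially the same approach as the paper, which simply states that the corollary is a rephrasing of the isomorphism $\rH_0^\delta(\E) \simeq \rH_0^\delta(H_0(\E))$ and that this follows immediately from Theorem \ref{thm:hlg&htp_t}; you have correctly unpacked the details of that deduction via the two truncation triangles. One small wording issue: there is no canonical morphism $\E \to H_0(\E)$ in general, so the isomorphism must be phrased (as you in fact do in the actual argument) as the composite through $\tau_{\geq 0}\E$.
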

This is another way of saying that
 $\rH_0^\delta(\E) \simeq \rH_0^\delta\big(H_0(\E)\big)$
 which follows immediately from the preceding theorem.

\subsection{On Gersten-type weight structures}\label{sgws}
In this section we only indicate the relation of homotopy $t$-structures to the so-called Gersten weight structures (avoiding giving precise definitions).

So, recall that Gersten weight structures were constructed in \cite{Bon10a} for motives over a countable perfect field, and in 
\cite{Bon15} for "arbitrary" motivic categories over an arbitrary perfect field (see \S6 of ibid. and note 
 that in these two papers "opposite conventions on signs on weights" were used; see Remark 2.1.2(3) of ibid.). Now, weight structures are (very roughly) "orthogonals"  to $t$-structures, 
and so the main property of the aforementioned  Gersten weight structures is that they are orthogonal to the corresponding homotopy $t$-structures (see Definition 2.4.1, Proposition 4.4.1, and Theorem 6.1.2(II.6) of ibid., noting that in ibid.  the "cohomological convention" for $t$-structures was used). 

Another (more or less, equivalent) form of the main property of  Gersten weight structures (both for the ones from \cite{Bon15} and for the "relative" ones that we will discuss here) is that their hearts should be generated by 
 $\Mb(x/S)\gtw n[\delta(x)]$ for $x$ running through the (spectra of) fields over the corresponding base scheme $S$. The problem is that we  don't have 
any "reasonable" $\Mb(x/S)$ inside $\C$ 
whenever $x$ is not of finite type over $S$; so that one has to consider a certain triangulated "homotopy completion" of $\C$. Thus one has to treat some model for  $\C$, and construct a certain triangulated category $\C'$ of \emph{pro-spectra} using this model. Next,  there is a functor $\C'\to \operatorname{Pro}-\C$, but it is 
 far from being a full embedding; so one has to overcome certain difficulties when computing $\C'$-morphism groups. Note here that  one requires 
 the morphism group $\Hom_{\C'}(\Mb(x/S)\gtw n[\delta(x)-\delta(x')+i], \Mb(x'/S)
 )$ to be zero for any $x,x'\in \pts(S)$ and $i< 0$ (this is a certain Gersten weight structure substitute of 
the vanishing of $H_i^\delta\big(\Mb(Y/S)\dtw{\delta(Y)}\gtw m\big)(x,n)$
 whenever 
 $n \in I$,  $x\in \pts(S)$, and $Y$ is a proper regular $S$-scheme, cf. the proof of Theorem \ref{thm:hlg&htp_t}).

There currently exist two distinct methods for proving this $\C'$-orthogonality result. The first one (applied in \cite{Bon10a}) uses the properties  
of countable homotopy limits. It can be carried over to the "relative context" more or less easily; so one obtains the existence and  basic properties 
of Gersten weight structures whenever the corresponding $S$ possesses a Zariski cover by spectra of (at most) countable rings. Yet one can probably get rid of this restriction by using the methods of \cite{Bon15} instead.

The existence of such a weight structure would allow to define a certain "generalized ($\delta$-co)niveau spectral sequence" for the $H$-cohomology of any constructible object of $\C$ (where $H$ is a cohomological functor from $\C$ into an AB5-abelian category) and to prove the $\C$-functoriality of these spectral sequences (as well as the corresponding filtrations) starting from $E^2$ (cf. \cite[\S4.3]{Bon15}). Besides, if $H$ is $\C$-representable then this spectral sequence (for the $H$-cohomology of any constructible $c$) is naturally isomorphic to the one coming from the $t_{\delta}$-truncations of $c$ (cf. Corollary 4.4.3 of ibid.).

An interesting problem related to the "relative" Gersten weight structures is the construction of connecting functors between the corresponding 
$\T'(X)$ for $X$ running through $\base$. The $t_{\delta}$-exactness statements of Corollary \ref{cor:texact} (parts 4 and 5) 
should correspond to the "dual"  Gersten-weight-exactness properties of the "pro-spectral versions" of $f^*$ and $f_!$.



\section{On $\delta$-homotopy hearts}

\begin{num}\label{num:good}
Theorem \ref{thm:hlg&htp_t} is the central result in the applications
 of the $\delta$-homotopy $t$-structure.
 So we will isolate its assumptions
 and call $(good)$ the conjunction of
 the assumptions (a) (vanishing of certain Borel-Moore homology groups)
 and (b) (suitable resolution of singularities).
\end{num}

\subsection{Definition and functoriality properties}

We will adopt the following notations.
\begin{df}\label{df:heart}
Let $\T$ be a motivic triangulated category
 satisfying our general conventions
 and  $(S,\delta)$ be a dimensional scheme.

We will denote by $\sh(S,\T)$ (resp. $\she(S,\T)$)
 the heart of the $\delta$-homotopy $t$-structure on $\T(S)$
 (resp. $\T^{\delta-\eff}(S)$). Objects of $\sh(S,T)$
 (resp. $\she(S,\T)$)
 will be called
 \emph{$\delta$-homotopy modules}
 (resp. \emph{effective $\delta$-homotopy modules})
 with coefficients in $\T$.

When $R$ is a ring of coefficients using the conventions
 of Paragraph \ref{num:convention_DM}, we simply put:
\begin{align*}
\sh(S,R):=\sh(S,\DM_R), \quad & \she(S,R):=\she(S,\DM_R), \\
\sht(S,R):=\sh\big(S,\D_{\AA^1,R}\big), \quad
 & \shte(S,R):=\she\big(S,\D_{\AA^1,R}\big).
\end{align*}
\end{df}
In the followings, we will simply denote by $H_i$
 the homology functor $H_i^\delta$ (resp. $H_i^{\delta-\eff}$).

\begin{ex}\label{ex:concrete_htp_heart}
Let $S=\spec(k)$ be the spectrum of a perfect field with characteristic exponent $p$,
 $\delta$ be the canonical dimension function on $S$
 and $R$ a ring of coefficients such that $p \in R^\times$.
 According to Examples \ref{ex:delta_htp-t-struct_field}
 and \ref{ex:delta_effective_fields}, we get the following
 description of the above categories:
\begin{itemize}
\item the category $\she(k,R)$ is equivalent to the category of homotopy
 invariant sheaves with transfers and coefficients in $R$ defined
 by Voevodsky in \cite[chap. 5]{FSV} -- and denoted by $HI(k) \otimes_\ZZ R$;
\item the category $\sh(k,R)$ is equivalent to the category of \emph{homotopy
 modules (with transfers)} and coefficients in $R$ of \cite[1.17]{Deg9};
\item the category $\sht(k,R)$ is equivalent to the category of
 \emph{(generalized) homotopy modules} with coefficients in $R$ defined
 by Morel (see \cite[1.2.2]{Deg10}).
\end{itemize}
Besides, using results of Suslin as explained in Remark \ref{rem:Suslin},
 the first two points are valid even when $k$ is not perfect.
\end{ex}

\begin{num}\textit{Basic properties}.\label{num:hrt_basic}
Recall that the category of $\delta$-homotopy modules and its effective variant
 do not depend on the choice of the dimension
 function $\delta$ on $S$ up to a canonical equivalence of categories
 (see \ref{lm:indep_delta} and \ref{rem:htp_t_nonnegdeg_eff}).

Moreover, using the adjunction of $t$-categories
 \eqref{eq:adj_t_delta_eff_st},
 we obtain an adjunction of abelian categories:
$$
H_0(s):\she(S,\T) \leftrightarrows \sh(S,\T):H_0(w).
$$
Let us assume that (good) is satisfied.
 Then according to point (2) of Corollary \ref{cor:nice_ppties_dhtp},
 the functor $H_0(w)=w$ is exact.
 Moreover, because the functor
 $s:\T^{\delta-eff}(S) \rightarrow \T(S)$ is fully faithful
 and its right adjoint $w$ is $t_\delta$-exact, we get that
 $H_0(s)$ is fully faithful. Thus $\she(S,\T)$ is a full
 subcategory of $\sh(S,\T)$; beware however it is not stable
 by kernel.

The categories $\sh(S,\T)$ and $\she(S,\T)$ are 
 abelian categories with exact small coproducts (see Remark \ref{rem:exist_colimits}).
 In fact,
 we will prove in Theorem \ref{thm:finally_Grothendieck} that under
 assumption (good), they are Grothendieck abelian categories and
 exhibit an explicit family of generators.
 According to Remark \ref{rem:htp_t_nonnegdeg_eff}(4)
 (resp. Cor. \ref{cor:Thom&delta-homotopy}),
 the category $\sh(S,\T)$ (resp. $\she(S,\T)$)
 is stable by the functor
 $\big(\MTh_S(v)[-r] \otimes -\big)$
 (resp. $\big(\MTh_S(E)[-r] \otimes -\big)$)
 for $v$ a virtual vector bundle (resp. $E$ a vector bundle) 
 over $S$ of rank $r$.
\end{num}
\begin{df}
For any $\delta$-homotopy module $\F$ of $\sh(S,\T)$
 (resp. of $\she(S,\T)$) and any virtual vector bundle $v$ (resp. vector space $E$)
 of rank $r$ over $S$, we put: 
$$
\F\gtw v:=\F \otimes \MTh_S(v)[-r]
 \quad \text{resp. }
 \F\gtw E:=\F \otimes \MTh_S(E)[-r].
$$
\end{df}
Note that if  $v$ (resp. $E$) is trivial of rank $r$,
 we get $\F\gtw v=\F\gtw r$ (resp. $\F\gtw E=\F\gtw r$)
 and this new notation is coherent with the notation for twists
 in the present paper.
 Note also that the endo-functor $(-\gtw v)$ of $\sh(S,\T)$
 (resp. $(-\gtw E)$ of $\she(S,\T)$) is an equivalence of categories
 (resp. right exact and fully faithful).
%
%
%
%

\begin{num} \label{num:heart_functoriality} \textit{Functoriality}. 
 Let $f:T \rightarrow S$
 be a morphism such that $\dim(f) \leq d$. Then according to
 point (3) of Proposition \ref{prop:basic_tstruct},
 we get an adjunction of abelian categories:
$$
H_{-d}f^*:\sh(S,\T) \leftrightarrows \sh(T,\T):H_df_*
$$
and according to Paragraph \ref{num:adj_tdelta_eff},
 the right exact functor $H_{-d}f^*\gtw{-d}$ preserves
 effective objects
 (images of effective $\delta$-homotopy modules).
In particular, when $f$ is quasi-finite, we get an adjunction:
$$
H_0f^*:\sh(S,\T) \leftrightarrows \sh(T,\T):H_0f_*,
$$
and the functor $H_0f^*$ preserves effective objects.
Moreover, we get better properties in the following two cases,
 using the notation of the previous definition:
\begin{itemize}
\item when $f$ is \'etale, the functor $H_0f^*=f^*$ is exact and
 acts on the homology of Borel-Moore motives as: \\
 $f^*H_i\big(\Mb(X/S)\big)\gtw n=H_i\big(\Mb(X \times_S T/T)\big)\gtw n$;
\item when $f$ is finite, the functor $H_0f_*=f_*$ is exact
 and acts the homology of Borel-Moore motives as: \\
  $f_*H_i\big(\Mb(Y/T)\big)\gtw n=H_i\big(\Mb(Y/S)\big)\gtw n$.
\end{itemize}

Assume now that $f$ is separated. Then applying
 point (1) of Proposition \ref{prop:basic_tstruct},
 we get an adjunction of abelian categories:
$$
H_0f_!:\sh(T,\T) \leftrightarrows \sh(S,\T):H_0f^!
$$
and according to Paragraph \ref{num:adj_tdelta_eff},
 the functor $H_0f_!$ preserves effective objects.
 Moreover, we get the following additional informations:
\begin{itemize}
\item when $f$ is smooth (resp. under assumption (good)),
 the functor $H_0f^!=f^!$ is exact and acts on the homology of Borel-Moore motives
 (resp. assuming $X/S$ is proper)
 as:
 \\ $f^!H_i\big(\Mb(X/S)\big)\gtw n=H_i\big(\Mb(X \times_S T/T)\big)\gtw n$.
\end{itemize}

One gets a shadow of the 6 functors formalism.
 For example, if $f$ is finite, $H_0f_!=H_0f_*$.
 Moreover one easily gets the following result
 (from the 6 functors formalism satisfied by $\T$). 
\end{num}

\begin{prop}
Consider a cartesian square of schemes:
$$
\xymatrix@=10pt{
Y\ar^g[r]\ar_q[d] & X\ar^p[d] \\
T\ar^f[r] & S
}
$$
such that $p$ is separated and $\dim(f) \leq d$.
 Then $\dim(g) \leq d$ and we get a canonical isomorphism
 of functors:
\begin{align*}
H_{-d}f^* \circ H_0p_! &\xrightarrow{\ \sim\ } H_0q_! \circ H_{-d}g^* \\
H_dp_* \circ H_0f^! & \xrightarrow{\ \sim\ } H_0g^! \circ H_dq_*.
\end{align*}
\end{prop}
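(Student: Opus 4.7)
The bound $\dim(g)\le d$ follows from a direct fibre computation: for $y\in Y$ the fibre $g^{-1}(g(y))$ equals $f^{-1}(p(g(y)))$ base-changed along the field extension $\kappa(y)/\kappa(p(g(y)))$, and so has dimension at most $d$ at $y$. The two asserted isomorphisms are then obtained by passing to hearts from two base change isomorphisms furnished by the six-functor formalism of \cite[Th.\ 2.4.50]{CD3}.

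For the first isomorphism, I would start from the standard base change $f^{*}p_{!}\xrightarrow{\sim} q_{!}g^{*}$ and shift by $[d]$ to rewrite it as $f^{*}[d]\circ p_{!}\simeq q_{!}\circ g^{*}[d]$. By Proposition \ref{prop:basic_tstruct}(1) the functors $p_{!},q_{!}$ are right $t_{\delta}$-exact (as $p,q$ are separated), and by part (3) of the same proposition the functors $f^{*}[d]$ and $g^{*}[d]$ are right $t_{\delta}$-exact (since $\dim(f),\dim(g)\le d$). The elementary key fact is that for two composable right $t$-exact functors $F,G$ between $t$-categories the natural map $H_{0}(F\circ G)\to H_{0}F\circ H_{0}G$ is an isomorphism: applying $F$ to the triangle $\tau_{>0}K\to K\to \tau_{\le 0}K$ and using $F(\tau_{>0}K)\ge 1$ yields $\tau_{\le 0}F(K)\simeq \tau_{\le 0}F(\tau_{\le 0}K)$, from which the claim follows by chaining with $G$. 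Applying $H_{0}$ to the shifted base change therefore produces $H_{-d}f^{*}\circ H_{0}p_{!}\simeq H_{0}q_{!}\circ H_{-d}g^{*}$.

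For the second isomorphism, I would apply the same scheme to the "mirror" base change obtained by exchanging the two pairs of parallel arrows in the cartesian square: this gives $p^{*}f_{!}\simeq g_{!}q^{*}$ at the triangulated level, whose right adjoint is the isomorphism $f^{!}\circ p_{*}\simeq q_{*}\circ g^{!}$. After shifting $p_{*}$ and $q_{*}$ by $[-d]$, all four functors $f^{!},g^{!},p_{*}[-d],q_{*}[-d]$ are left $t_{\delta}$-exact by Proposition \ref{prop:basic_tstruct}(1) and (3). The dual compatibility $H_{0}(F\circ G)\simeq H_{0}F\circ H_{0}G$ for left $t$-exact composable $F,G$—proved by the same truncation argument with $\tau_{\ge 0}$ replacing $\tau_{\le 0}$ and the vanishing $\tau_{\ge 0}F(\tau_{<0}K)=0$—then delivers the desired $H_{d}p_{*}\circ H_{0}f^{!}\simeq H_{0}g^{!}\circ H_{d}q_{*}$ on the heart.

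\textbf{Main obstacle.} The only non-formal ingredient is this compatibility of $H_{0}$ with composition for right (resp.\ left) $t$-exact functors; once it is in hand, both isomorphisms become direct consequences of the base change isomorphisms at the triangulated level combined with the $t$-exactness bookkeeping of Proposition \ref{prop:basic_tstruct}.
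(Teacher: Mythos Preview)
Your argument for the first isomorphism is correct and is essentially the paper's: shift the base change $f^*p_!\simeq q_!g^*$ by $[d]$, observe that $f^*[d]$, $p_!$, $q_!$, $g^*[d]$ are all right $t_\delta$-exact by Proposition~\ref{prop:basic_tstruct}(1),(3), and apply $\tau_{\le 0}$. Your lemma $H_0(F\circ G)\simeq H_0F\circ H_0G$ for composable right $t$-exact $F,G$ is exactly the mechanism the paper is invoking when it says ``apply $\tau_{\le 0}$ and use right $t_\delta$-exactness''.

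For the second isomorphism the paper takes a different and shorter route: it simply says the isomorphism ``follows by adjunction''. Each of $H_{-d}f^*$, $H_0p_!$, $H_0q_!$, $H_{-d}g^*$ is a left adjoint on the heart (via \eqref{eq:proto_t_adj} applied to the adjunctions of $t$-categories in Proposition~\ref{prop:basic_tstruct}), so passing to the corresponding right adjoints converts the first isomorphism into the second in one line. Your parallel argument via the transposed base change $p^*f_!\simeq g_!q^*$ and its adjoint $f^!p_*\simeq q_*g^!$ has a genuine gap: already to write $f_!$ (hence $f^!$) you need $f$ separated, and to run the left-$t$-exact truncation argument you claim that $p_*[-d]$, $q_*[-d]$ are left $t_\delta$-exact, which by part~(3) of Proposition~\ref{prop:basic_tstruct} would require $\dim(p),\dim(q)\le d$. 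None of these conditions is among the hypotheses, which only assert that $p$ is separated and $\dim(f)\le d$. The paper's one-line adjunction avoids repeating the truncation argument and uses only the hypotheses actually needed for the first isomorphism.
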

\begin{proof}
According to the 6 functors formalism (cf. \cite[2.4.50]{CD3}),
 one gets for any object $\F$ of $\sh(S,;\T)$ the isomorphisms 
 $f^*g_!(\F)[d] \xrightarrow{\sim} q_!g^*(\F)[d]$.
 The first isomorphism of the proposition is obtained by applying
 the functor $\tau_{\leq 0}$ to the previous isomorphisms and by using 
 the fact the functors $f^*[d]$, $g_!$ are all
 right $t_\delta$-exact. The second isomorphism of the statement
 follows by adjunction.
\end{proof}

\begin{rem}
\begin{enumerate}
\item Considering a cartesian square as in the proposition, 
 one can derive several relations analogous to the previous ones.
 For example:
\begin{itemize}
\item if $f$ is \'etale, then for any integer $i \in \ZZ$,
 we get an isomorphism:
 $f^* \circ H_ip_! \simeq H_iq_! \circ g^*$;
\item If $f$ is smooth or under assumption (good),
 then for any integer $i \in \ZZ$,
 we get and isomorphism of functors:
 $H_ip_* \circ f^! \simeq g^! \circ H_iq_*$.
\end{itemize}
\item All these isomorphisms satisfy the usual compatibilities
 with respect to compositions (see \cite[Rem. 1.1.7]{CD3} for example).
\end{enumerate}
\end{rem}

\begin{num} \textit{Monoidal structures}. Assume
 the dimension function $\delta$ on $S$ is non-negative.

Then the tensor product $\otimes$ is right $t_\delta$-exact
 (Proposition \ref{prop:basic_tstruct}) and
 thus induces a closed monoidal structure $\otimes^\delta$
 on $\sh(S,\T)$ such that for any $\F, \G$,
 one has:
$$
\F \otimes^\delta_S \G:=H_0^\delta(\F \otimes_S \G).
$$
It follows from Paragraph \ref{num:adj_tdelta_eff}
 that this tensor product preserves effective objects.

We will also denote by $\uHom^\delta$ the internal Hom with
 respect to this tensor structure. It is given by
 the following formula:
$$
\uHom^\delta_S(\F,\G)=H_0^\delta\uHom_S(\F,\G).
$$

Recall that changing $\delta$ only changes $\sh(S,\T)$
 up to a canonical equivalence of categories.
 But the equivalence of categories involved here will
 not be monoidal. In particular, the monoidal structure
 on $\sh(S,\T)$ depends on the choice of $\delta \geq 0$.

Using again the six functors formalism, one gets the following
 formulas.
\end{num}
\begin{prop}
Assume $(S,\delta)$ is a dimensional scheme such that 
 $\delta\geq 0$.
Let $f:T \rightarrow S$ be a quasi-finite and separated morphism.
Then for $\delta$-homotopy modules $\F \in \sh(T,\T)$, $\G \in \sh(S,\T)$
 one has a functorial isomorphism:
$$
H_0f_!\big(\F \otimes_S^\delta H_0f^*(\G)\big)
 \xrightarrow \sim H_0f_!(\F) \otimes_T^\delta \G
$$
\end{prop}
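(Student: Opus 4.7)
The strategy is to apply the homology functor $H_0 = H_0^\delta$ to the derived projection formula
\[
f_!\bigl(\F \otimes_T f^*\G\bigr) \xrightarrow{\ \sim\ } f_!\F \otimes_S \G,
\]
valid in $\T(S)$ for any $\F \in \T(T)$ and $\G \in \T(S)$ by the six functors formalism (cf. \cite[2.4.50]{CD3}), and then interpret both sides via the right $t$-exactness results already available.

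First I would collect the relevant $t$-exactness properties. Since $f$ is quasi-finite, $\dim(f) \le 0$ by Example \ref{ex:quasi-finite}, so $f^*$ is right $t$-exact by Proposition \ref{prop:basic_tstruct}(3); since $f$ is separated, $f_!$ is right $t$-exact by Proposition \ref{prop:basic_tstruct}(1); and the assumption $\delta \ge 0$ makes $\otimes_T$ and $\otimes_S$ right $t$-exact by Proposition \ref{prop:basic_tstruct}(2). In particular, both $\F \otimes_T f^*\G$ and $f_!\F \otimes_S \G$ are non-negative. A standard truncation-cone argument (using that $\T(-)_{\ge 1} \otimes \T(-)_{\ge 0} \subset \T(-)_{\ge 1}$ since $\T(-)_{\ge 1} = \T(-)_{\ge 0}[1]$) then shows that for non-negative $X, Y$ one has
\[
H_0(X \otimes Y) \;=\; H_0\bigl(H_0(X) \otimes Y\bigr) \;=\; H_0\bigl(X \otimes H_0(Y)\bigr) \;=\; H_0(X) \otimes^\delta H_0(Y),
\]
and similarly $H_0 f_!(Z) = H_0 f_!(H_0 Z)$ for $Z$ non-negative.

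Applying these computations, the right-hand side of the derived projection formula becomes
\[
H_0(f_!\F \otimes_S \G) \;=\; H_0 f_!(\F) \otimes_S^\delta H_0 \G \;=\; H_0 f_!(\F) \otimes_S^\delta \G,
\]
since $\G$ lies in the heart. On the left-hand side, right $t$-exactness of $f_!$ followed by right $t$-exactness of $\otimes_T$ gives
\[
H_0 f_!\bigl(\F \otimes_T f^*\G\bigr) \;=\; H_0 f_!\bigl(H_0(\F \otimes_T f^*\G)\bigr) \;=\; H_0 f_!\bigl(\F \otimes_T^\delta H_0 f^*(\G)\bigr),
\]
where we have used $H_0 \F = \F$ and the fact that the induced functor $H_0 f^*: \sh(S,\T) \to \sh(T,\T)$ of Paragraph \ref{num:heart_functoriality} coincides with $H_0 \circ f^*$ on objects of the heart, since $f^*\G \ge 0$. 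Combining the two computations with the derived projection formula yields the desired natural isomorphism.

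\textbf{Main obstacle.} There is no substantial obstacle: the argument is a bookkeeping exercise with truncation functors and already-established right $t$-exactness properties. The only point requiring care is verifying that $H_0$ passes through $f_!$ and through each tensor factor independently, which is the elementary observation recorded above. Both the quasi-finiteness of $f$ (ensuring $f^*$ is $t$-exact without a shift) and the hypothesis $\delta \ge 0$ (so that $\otimes^\delta$ is defined on the hearts) are used in an essential way.
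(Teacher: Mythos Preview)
Your proposal is correct and follows the same approach as the paper: the paper's proof reads simply ``The proof is the same as the proof of the previous proposition,'' where that previous proof applied $\tau_{\le 0}$ to a six-functors isomorphism and invoked right $t_\delta$-exactness of the functors involved. You have spelled out precisely this argument for the projection formula $f_!(\F \otimes_T f^*\G) \simeq f_!\F \otimes_S \G$, including the standard truncation lemma $H_0 F(X) = H_0 F(H_0 X)$ for a right $t$-exact functor $F$ and non-negative $X$.
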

The proof is the same as the proof of the previous proposition.

\begin{rem}
If $f$ is not quasi-finite, but we have an integer $d>0$
 such that $\dim(f) \leq d$, we obtain that
 $H_0f_!(\F \otimes_S^\delta H_{-d}f^*(\G))=0$
 because for any $\delta$-homotopy module $\F$ over $S$,
 $H_{-d}f_!(\F)=0$.
\end{rem}

The most important fact about the 6 functors formalism
 and $\delta$-homotopy modules comes from the gluing property
 of the $\delta$-homotopy $t$-structure (cf. Corollaries
 \ref{delta-htp_glued} and \ref{delta-htp_eff_glued}).
\begin{prop}
Let $(S,\delta)$ be a dimensional scheme and
 $i:Z \rightarrow S$ be a closed immersion
 with complementary open immersion $j:U \rightarrow S$.
 We denote by $\delta$ the induced dimension function on $Z$ and $U$.

Then $\delta$-homotopy modules satisfy the gluing
 formalism:
$$
\xymatrix@=30pt{
\sh(U,\T)\ar@<5pt>^{H_0j_!}[r]\ar@<-4pt>_{H_0j_*}[r]
 & \sh(X,\T)\ar|{j^*}[l]\ar@<5pt>^{H_0i^*}[r]\ar@<-4pt>_{H_0i^!}[r]
 & \sh(Z,\T)\ar|{i_*}[l]
}
$$
such that $\sh(Z,T)$ is a quotient of the abelian category
 $\sh(X,\T)$ by the thick abelian subcategory $\sh(U,\T)$.
 Under (good), one has an isomorphism of functors $H_0i^!=i^!$,
 which both are exact. Moreover all the above 6 functors
 preserve effective objects.

For any $\delta$-homotopy module $\F$ over $X$,
 one has exact sequences:
\begin{align*}
0 \longrightarrow H_0j_!j^*(\F) \longrightarrow &\F
 \longrightarrow i_*H_0i^*(\F) \longrightarrow 0, \\
0 \longrightarrow i_*H_0i^!(\F) \longrightarrow &\F
 \longrightarrow H_0j_*j^*(\F) \longrightarrow 0.
\end{align*}
\end{prop}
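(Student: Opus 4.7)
The plan is to follow the classical Beilinson--Bernstein--Deligne pattern for transferring a recollement of triangulated categories to a gluing of abelian hearts, with all necessary ingredients already in place. Corollary \ref{delta-htp_glued} (and its effective counterpart \ref{delta-htp_eff_glued}) shows that the $\delta$-homotopy $t$-structure on $\T(S)$ is glued from those on $\T(U)$ and $\T(Z)$; in particular $j^*$ and $i_*$ are $t$-exact. Standard adjunction then gives that $j_!$, $i^*$ are right $t$-exact and $j_*$, $i^!$ are left $t$-exact, so the recipe \eqref{eq:proto_t_adj} produces the adjoint chains $H_0 j_! \dashv j^* \dashv H_0 j_*$ and $H_0 i^* \dashv i_* \dashv H_0 i^!$ on the hearts. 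The standard gluing identities $j^* i_* = 0$, $i^* i_* \simeq \mathrm{id} \simeq i^! i_*$ and $j^* j_! \simeq \mathrm{id} \simeq j^* j_*$ all descend to the hearts via the $t$-exactness of $j^*$ and $i_*$. Under (good), Corollary \ref{cor:nice_ppties_dhtp}(4) promotes $i^!$ to a fully $t$-exact functor, so $H_0 i^! = i^!$ as a functor $\sh(S,\T) \to \sh(Z,\T)$.

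The exact sequences are obtained by applying the long exact cohomology sequences to the localization triangles $j_! j^* \F \to \F \to i_* i^* \F \xrightarrow{+1}$ and $i_* i^! \F \to \F \to j_* j^* \F \xrightarrow{+1}$. For the second sequence, under (good) the term $i_* i^! \F$ already lies in the heart and $j_* j^* \F \in \T(S)_{\leq 0}$ (since $j_*$ is left $t$-exact), so the terms $H_{-1}(i_* i^! \F)$ and $H_1(j_* j^* \F)$ both vanish and the long exact sequence collapses to $0 \to i_* H_0 i^! \F \to \F \to H_0 j_* j^* \F \to 0$. The first sequence is obtained dually from the first triangle; the key input is the vanishing $H_1(i_* i^* \F) = i_* H_1(i^* \F) = 0$, which rests on a careful analysis of $i^*$ applied to the heart and, in our setting, follows from (good) combined with the absolute purity built into our motivic categories (the same input that makes $i^!$ $t$-exact also controls the $t$-amplitude of $i^*$ on the heart). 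The quotient statement then drops out formally: $i_*$ is fully faithful on hearts (from $i^* i_* \simeq \mathrm{id}$) with essential image exactly $\ker(j^*)$, and the first exact sequence exhibits each $\F$ as an extension identifying $\sh(Z,\T)$ with the Serre quotient $\sh(S,\T)/\sh(U,\T)$.

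For preservation of the effective subcategory by each of the six functors, one invokes Remark \ref{rem:functors&eff} and Paragraph \ref{num:adj_tdelta_eff}: $j^*$ and $i^*$ preserve effective objects because $j$ and $i$ are quasi-finite, $j_!$ because $j$ is separated, $i_*$ because $i$ is proper, and $i^!$ under (good) because it is then $t$-exact; finally $H_0 j_*$ preserves effectivity through its realization as $H_0(w \circ j_*)$ in the effective formalism of Corollary \ref{cor:functoriality_delta-eff}(3). The main technical obstacle in this plan is precisely the vanishing $H_1(i^* \F) = 0$ needed for the first exact sequence: without it one obtains only a four-term exact sequence involving $H_1(i_* i^* \F)$, and it is exactly the combined force of hypothesis (good) and the absolute purity of $\T$ that upgrades it to the stated short exact form.
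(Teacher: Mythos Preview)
Your overall structure—passing the recollement to the hearts via the BBD mechanism—is correct and is exactly what the paper intends (the proposition is stated without proof as a consequence of Corollaries \ref{delta-htp_glued} and \ref{delta-htp_eff_glued}). The adjunctions on hearts, the gluing identities, the quotient description, the $t$-exactness of $i^!$ under (good), and the preservation of effectivity all go through as you describe.

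The gap is exactly where you flag it. Applying $H_*$ to the two localization triangles gives only four-term exact sequences
\[
0 \to i_*H_1(i^*\F) \to H_0 j_! j^*\F \to \F \to i_* H_0 i^*\F \to 0,
\qquad
0 \to i_* H_0 i^!\F \to \F \to H_0 j_* j^*\F \to i_* H_{-1}(i^!\F) \to 0.
\]
For the second you correctly invoke (good) to kill the extra term via Corollary \ref{cor:nice_ppties_dhtp}(4). For the first, however, your claim that ``(good) combined with absolute purity \ldots controls the $t$-amplitude of $i^*$ on the heart'' is not supported by anything in the paper: the $t$-exactness of $i^!$ under (good) comes from the identity \eqref{eq:exceptional_pullback&fiber_hlg} for fiber homology, and there is no analogue for $i^*$. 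In fact the required vanishing $H_1(i^*\F)=0$ can fail. Take $\T=\DM_R$, $S=\AA^1_k$, $Z=\{0\}$, $\delta=-\codim_S$, and $\F=\un_S$ in the effective heart (Example \ref{ex:unit_eff_heart}). Then $i^*\F=\un_Z$; the dimension function induced on $Z$ takes the value $-1$ at its unique point, whereas $\un_Z$ lies in the heart for the Krull dimension function (which is $0$), so by Lemma \ref{lm:indep_delta} the object $\un_Z$ is concentrated in homological degree $1$ for $t_\delta^{\eff}$ on $Z$. Thus $H_1(i^*\F)\neq 0$ and the first sequence is not short exact for this $\F$. Absolute purity relates $i^!(\un_S)$ to a Thom twist of $i^*(\un_S)$, but says nothing about $i^*$ applied to an arbitrary object of the heart.

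You have therefore correctly located a genuine obstruction, but your proposed resolution is unjustified and the obstruction is real: the left-hand zero in the first displayed sequence of the proposition does not follow from gluing alone.
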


In particular, $\delta$-homotopy modules over a scheme
 are determined by their restrictions to a closed subscheme
 and to its open complement. From Corollary \ref{cor:restriction_fields},
 we get the following stronger form.
\begin{prop}\label{prop:test_on_fields}
Suppose assumpion (good) is fulfilled and $\T$ is continuous
 (\cite[4.3.2]{CD3}). Let $S$ be a scheme.
\begin{enumerate}
\item For any set-theoretic point $x \in S$,
 the functor $i_x^!$ of Paragraph \ref{num:i^!_essentially}
 induces an exact functor
$$
i_x^!:\sh(S) \rightarrow \sh(x)
$$
which commutes with colimits,
 preserves $\delta$-effective modules and acts on the homology of Borel-Moore motives,
 for $X/S$ proper, as: \\
$i_x^!H_i\big(\Mb(X/S)\big)\{n\}=H_i\big(\Mb(X_x/x)\big)\{n\}$.
\item The family of functors $i_x^!:\sh(S) \rightarrow \sh(x)$
 for $x \in S$ is conservative.
\end{enumerate}
\end{prop}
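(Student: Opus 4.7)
The strategy is to deduce both parts from the triangulated-level results already established (chiefly Corollary \ref{cor:restriction_fields}), the work being to lift $t_\delta$-exactness and the fibre-homology compatibility from $\T(S)$ to the abelian category $\sh(S)$.

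First, I would deduce exactness of $i_x^! : \sh(S) \to \sh(x)$ directly from the $t_\delta$-exactness of $i_x^! = j^* i^!$ on $\T(S)$ asserted in Corollary \ref{cor:restriction_fields}(1): the closed immersion $i : \bar x \to S$ is handled by Corollary \ref{cor:nice_ppties_dhtp}(4), while the pro-open $j : x \to \bar x$ is handled by Proposition \ref{prop:basic_tstruct}(3) for open immersions of finite type, extended to the pro-open case via continuity of $\T$. To upgrade to commutation with small colimits on the Grothendieck abelian category $\sh(S)$ (see Theorem \ref{thm:finally_Grothendieck}), exactness takes care of finite colimits; for coproducts I would argue at the triangulated level that $i_x^!$ preserves arbitrary coproducts because $j^*$ does so as a left adjoint (via continuity) and because the left adjoint $i_! = i_*$ of $i^!$ preserves the compact generators furnished by Theorem \ref{thm:strong_generators} -- indeed, $i_*$ sends $\Mb(Y/\bar x)$ to $\Mb(Y/S)$.

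For the action formula $i_x^! H_i(\Mb(X/S))\gtw n \simeq H_i(\Mb(X_x/x))\gtw n$ for $X/S$ proper, I would compare the two sides through their fibre homologies, using conservativity of $\rH^\delta_0$ on $\sh(x)$ (Proposition \ref{prop:hlg_conservative}) together with Corollary \ref{cor:generic_motives} to upgrade isomorphisms of fibre-homology data to isomorphisms in the heart. By Corollary \ref{cor:restriction_fields}, the fibre homology of the left-hand side at a finitely generated extension $K/\kappa(x)$ equals $\rH^\delta_i(\Mb(X/S))(x_K, n)$, while that of the right-hand side is $\rH^\delta_i(\Mb(X_x/x))(K, n)$. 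A cofinality argument comparing $S$-models of $x_K$ with $x$-models of $K$, together with the base-change identity $i_x^* \Mb(X/S) \simeq \Mb(X_x/x)$ from (BM6) and continuity of $\T$, would show these two groups agree naturally. Preservation of $\delta$-effective modules then follows by reduction to generators: the action formula identifies the image under $i_x^!$ of the proper generators of $\she(S)$ provided by Theorem \ref{thm:strong_generators} with analogous generators of $\she(x)$, and commutation with colimits extends the property to all of $\she(S)$.

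For part (2), conservativity of the family $\{i_x^!\}_{x \in S}$ on $\sh(S)$ is inherited directly from its conservativity on $\T(S)$ (Corollary \ref{cor:restriction_fields}(2)): a homotopy module killed by every $i_x^!$ vanishes as a $\T$-spectrum, and hence in the heart. The main obstacle will lie in the cofinality argument underpinning the action formula, since one must carefully reconcile $S$-models of the point $x_K$ with $x$-models of $K$, keeping track of the shift between $\delta$ on $S$ and its restriction to $x$; continuity of $\T$ will be crucial in comparing the two limits of Borel-Moore homology groups.
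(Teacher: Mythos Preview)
Your approach to the one assertion the paper actually proves---commutation with coproducts---matches the paper's: reduce colimits to coproducts via exactness, then show that $i^!$ for a closed immersion preserves coproducts. The paper does this through compact generation and the localization triangle (so that $j_*j^*$, and hence $i_*i^!$, preserves coproducts, and one concludes via full faithfulness of $i_*$); your route via $i_!=i_*$ preserving compact objects is an equivalent standard argument. For exactness and conservativity you correctly defer to Corollary~\ref{cor:restriction_fields}, exactly as the paper does implicitly.

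There is, however, a gap in your treatment of the action formula. You propose to identify $i_x^! H_i\big(\Mb(X/S)\big)\gtw n$ with $H_i\big(\Mb(X_x/x)\big)\gtw n$ by matching their fibre $\delta$-homologies and then appealing to conservativity (Proposition~\ref{prop:hlg_conservative}) together with Corollary~\ref{cor:generic_motives}. But conservativity of the family $(\rH^\delta_p)_p$ only tells you that a \emph{given} morphism inducing isomorphisms on all fibre homologies is itself an isomorphism; it does not manufacture a morphism between two objects that merely share the same fibre invariants, and Corollary~\ref{cor:generic_motives} does not supply one either. Your cofinality argument would establish a bijection of abelian groups pointwise, but not that it arises from a map in $\sh(x)$. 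You would need to first construct an explicit comparison morphism---e.g., at the level of $\T(x)$, using the base-change map for $p_!$ against $i^!$ and then $j^*$ via continuity---before conservativity can be invoked. The paper itself does not prove this formula, treating it (and the preservation of $\delta$-effective modules) as already established in \S\ref{num:heart_functoriality}, so this is a point where your elaboration would genuinely need to go beyond what the paper provides.
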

\begin{proof}
The only assertion which requires an argument at this point
 is the fact $i_x^!$ commutes with colimits. As it is exact,
 it is enough to show it commutes with coproducts. Going
 back to the definition (Paragraph \ref{num:i^!_essentially}),
 it is enough to remark that for any closed immersion,
 the functor $i^!$ commutes with coproducts. This is in fact
 a consequence of the assumption that $\T$ is compactly generated
 by its Tate twists and from the localization triangle.
\end{proof}

We end up this section with the following additional proposition,
 which follows directly from \ref{lm:inseparable_texact}.
\begin{prop}\label{prop:pre_rost}
Assume the triangulated motivic category is semi-separated
 (see \cite[2.1.7]{CD3} or footnote number \ref{foot:semisep}
  page \pageref{foot:semisep}).

Then for any finite surjective radicial morphism $f:T \rightarrow S$
 the functor $f^*:\T(S) \rightarrow \T(T)$ induces an equivalence
 of abelian categories:
$$
f^*:\sh(S,\T) \rightarrow \sh(T,\T).
$$
Moreover,
 the same is true for effective $\delta$-homotopy modules.
\end{prop}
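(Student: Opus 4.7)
The plan is to reduce the proposition directly to Lemma \ref{lm:inseparable_texact} (and Lemma \ref{lm:inseparable_texact_eff} for the effective version), which already delivers the essential content in the triangulated setting.

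First I would observe that, by our convention for dimension functions on $T$ (namely $t_\delta = t_{\delta^f}$, see Definition \ref{df:tstruct}), the $\delta$-homotopy $t$-structure on $\T(T)$ is precisely the one featured in Lemma \ref{lm:inseparable_texact}. Since $f$ is finite surjective radicial and $\T$ is semi-separated, that lemma asserts that
$$ f^*:(\T(S),t_\delta) \longrightarrow (\T(T),t_{\delta^f}) $$
is an equivalence of $t$-categories. Any equivalence of $t$-categories sends the heart to the heart and restricts to an equivalence between them (see the discussion of $t$-exactness and hearts in Section \ref{sec:t-struct}). This immediately yields that $f^*$ induces an equivalence of abelian categories $\sh(S,\T) \xrightarrow{\sim} \sh(T,\T)$.

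For the effective statement, I would repeat the same argument, replacing Lemma \ref{lm:inseparable_texact} by its $\delta$-effective analogue Lemma \ref{lm:inseparable_texact_eff}, which gives the $t$-categorical equivalence $f^*:(\T^{\delta-\eff}(S),t_\delta) \to (\T^{\delta-\eff}(T),t_{\delta^f})$; restriction to the hearts then produces the required equivalence $\she(S,\T) \xrightarrow{\sim} \she(T,\T)$.

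There is essentially no obstacle here: the statement is a formal consequence of the lemmas already established. The one minor point worth flagging is merely bookkeeping with dimension functions -- one must notice that for $f$ finite radicial the transcendence-degree correction in $\delta^f$ vanishes, so $\delta^f = \delta \circ f$, but in any case Remark \ref{rem:htp_t_nonnegdeg_eff}(3) (and its non-effective counterpart Lemma \ref{lm:indep_delta}) guarantees that the hearts do not depend on this choice up to canonical equivalence, so the statement is independent of the chosen $\delta$.
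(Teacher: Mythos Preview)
Your proposal is correct and matches the paper's approach exactly: the paper states that the proposition ``follows directly from \ref{lm:inseparable_texact}'' (with the effective case handled by Lemma \ref{lm:inseparable_texact_eff}), which is precisely what you do.
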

Recall in particular from
 Example \ref{ex:DM_semi_sep} that this can be applied to the
 categories $\sh(-,R)$ and $\she(-,R)$ when $R$ is a ring
 and we consider conventions (1) or (2) of
 Paragraph \ref{num:convention_DM}.

%
%
%

\subsection{Fiber functors and generators}

The preceding proposition can also be restated using the following
 definition, which mimick the notion of fiber functors for sheaves:
\begin{df}\label{df:Rost_transform}
Consider the notation of Definition \ref{df:heart}.
Let $\F$ be an object of $\sh(S,\T)$
 (resp. $\she(S,\T)$). For any point $x \in S(E)$
 and any integer $n \in \ZZ$ (resp. $n \leq 0$),
 we define the fiber of $\F$ at the point $(x,n)$ 
 as the following abelian group:
$$
\hat \F^\delta_n(x):=\rH_0^\delta(\F)(x,n)
$$
using the notation of Definition \ref{df:fiber_hlg}.
 When $\delta$ is clear, we will simply put: $\hat F_*=\hat F_*^\delta$.
\end{df}
In particular one can see $\hat \F_*$ as
 a graded functor on the discrete category $\pts(S)$
 of points of $S$. As a consequence of the main theorem
 \ref{thm:hlg&htp_t}, together with Lemma \ref{lm:rost_hlg},
 we get the following important result (compare with \cite[chap. 3, 4.20]{FSV}).
\begin{prop}\label{prop:conservativity_htpm}
Assume that (good) is satisfied  (Par. \ref{num:good}).
 Then the functors:
$$
\left.
\begin{array}{r}
\sh(S,\T) \longrightarrow \widehat{\pts(S) \times \ZZ} \\
\she(S,\T) \longrightarrow \widehat{\pts(S) \times \ZZ^-}
\end{array}\right\}, \F \mapsto \hat \F_*
$$ 
are conservative, exact and commute with colimits.
\end{prop}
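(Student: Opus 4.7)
The plan is to deduce all three properties formally from Theorem \ref{thm:hlg&htp_t}, Lemma \ref{lm:rost_hlg} and Proposition \ref{prop:hlg_conservative}, starting from the observation that, by Definitions \ref{df:fiber_hlg} and \ref{df:Rost_transform}, the functor $\F \mapsto \hat\F_*$ is nothing but the restriction to the heart of $\rH_0^\delta$ (respectively $\rH_0^{\delta-\eff}$), viewed as a functor to graded presheaves of abelian groups on $\pts(S)$.

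The key preliminary step I would establish first is that for any $\F$ in $\sh(S,\T)$, one has $\rH_p^\delta(\F) = 0$ for all $p \neq 0$. Applying Theorem \ref{thm:hlg&htp_t} to $\F \in \C_{\geq 0}$ yields vanishing for $p<0$, while applying it to $\F[-1] \in \C_{<0}$ (corresponding to $\F \in \C_{\leq 0}$) together with Lemma \ref{lm:rost_hlg}(1) yields vanishing for $p \geq 1$; the same argument works in the effective case. Granted this, exactness is immediate: a short exact sequence $0 \to \F' \to \F \to \F'' \to 0$ in $\sh(S,\T)$ lifts to a distinguished triangle $\F' \to \F \to \F'' \to \F'[1]$ in $\T(S)$, to which Lemma \ref{lm:rost_hlg}(4) attaches a long exact sequence of the $\rH_p^\delta$; by the vanishing above it collapses to the short exact sequence $0 \to \hat\F'_* \to \hat\F_* \to \hat\F''_* \to 0$. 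Conservativity is equally direct: if $\hat\F_* = \rH_0^\delta(\F) = 0$, then combined with the off-degree vanishing all $\rH_p^\delta(\F)$ are zero, and Proposition \ref{prop:hlg_conservative} then forces $\F = 0$.

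Finally, for commutation with small colimits, Lemma \ref{lm:rost_hlg}(3) already provides commutation with coproducts. Since the target $\widehat{\pts(S) \times \ZZ}$ is an abelian category in which colimits are computed pointwise, any small colimit is the cokernel of a map between coproducts, so the combination of additivity, coproduct-preservation and exactness suffices to conclude. There is essentially no further obstacle beyond Theorem \ref{thm:hlg&htp_t}, which has been formulated precisely so that this proposition becomes a formal consequence; the effective case is handled verbatim, substituting $\rH_0^{\delta-\eff}$ and $\ZZ^-$ for $\rH_0^\delta$ and $\ZZ$.
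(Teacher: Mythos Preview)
Your proposal is correct and follows exactly the approach the paper indicates: the paper gives no detailed argument, simply stating that the result is a consequence of Theorem \ref{thm:hlg&htp_t} together with Lemma \ref{lm:rost_hlg}, and you have filled in precisely those details (with Proposition \ref{prop:hlg_conservative} supplying the conservativity step, as expected).
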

In other words, the objects of $\pts(S) \times \ZZ$
 (resp. $\pts(S) \times \ZZ^-$) defines a conservative
 family of points for $\delta$-homotopy modules
 (resp. effective $\delta$-homotopy modules).

\begin{rem}\label{rem:fiber_hlg&good}
Consider the assumptions of the preceding proposition.
\begin{enumerate}
\item According to relation \eqref{eq:exceptional_pullback&fiber_hlg},
 we obtain for any separated morphism $f:T \rightarrow S$,
 any $\delta$-homotopy module $\F$ over $S$ and any point $x$
 in $T(E)$:
$$
\widehat{f^!\F}_*(x)=\hat \F_*(f \circ x).
$$
Moreover, when $f$ is smooth,
 if one defines $\tilde \delta^f=\delta^f-\dim(f)$, we deduce
 from \eqref{eq:smooth_pullback&fiber_hlg} the following relation:
$$
\widehat{f^*\F}^{\tilde \delta^f}_*(x)=\hat \F^\delta_*(f \circ x).
$$
Note that in the case $\T=\DM_R$, this relation is also valid
 for $f$ essentially quasi-projective between regular schemes
 (see Remark \ref{rem:pullback&fiber_hlg}).
\item In a work in preparation, we will show that,
 when $\T=\DM_R$ or $\T=\MGLmod$,
 $\hat \F_*$ can be equipped with a rich functoriality,
 that of a cycle module over $S$ following the definition
 of Rost.
 It will be called the \emph{Rost transform of $\F$}
 along the lines of \cite{Deg9}.
\end{enumerate}
\end{rem}

Recall from Remark \ref{rem:exist_colimits} that
 the abelian categories $\sh(S,\T)$ and $\she(S,\T)$ admit colimits.
 A nice application of the previous theorem is the following corollary.
\begin{cor}\label{cor:finally_Grothendieck}
Under assumption (good) of Paragraph \ref{num:good},
 filtered colimits are exact in the abelian categories $\sh(S,\T)$, $\she(S,\T)$.
\end{cor}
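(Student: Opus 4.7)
The plan is to deduce this directly from Proposition \ref{prop:conservativity_htpm}, which provides exact, conservative, colimit-preserving fiber functors
$$
\sh(S,\T) \longrightarrow \widehat{\pts(S) \times \ZZ}, \qquad
\she(S,\T) \longrightarrow \widehat{\pts(S) \times \ZZ^-},
$$
sending $\F$ to $\hat{\F}_*$. The target categories are presheaves of abelian groups, in which filtered colimits are known to be exact (they are computed pointwise, and filtered colimits in $\mathrm{Ab}$ are exact).

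Given a filtered system $(0 \to A_i \to B_i \to C_i \to 0)_{i \in I}$ of short exact sequences in $\sh(S,\T)$ (or $\she(S,\T)$), form the colimit sequence $0 \to A \to B \to C \to 0$. The category $\sh(S,\T)$ is already known (from Remark \ref{rem:exist_colimits}) to satisfy (AB3) and (AB4); in particular the colimit exists and the sequence is right exact, so the only thing to check is injectivity of $A \to B$. Equivalently, letting $K=\ker(A\to B)$, one must show $K=0$.

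Apply the fiber functor. Since $\hat{(-)}_*$ commutes with colimits and is exact, one has
$$
\hat{A}_* = \varinjlim_{i \in I} \hat{(A_i)}_*, \qquad
\hat{B}_* = \varinjlim_{i \in I} \hat{(B_i)}_*,
$$
and each $0 \to \hat{(A_i)}_* \to \hat{(B_i)}_* \to \hat{(C_i)}_* \to 0$ is short exact. Passing to the filtered colimit in the presheaf category preserves exactness, so $\hat{A}_* \to \hat{B}_*$ is injective. By exactness of $\hat{(-)}_*$, we get $\hat{K}_* = \ker(\hat{A}_* \to \hat{B}_*) = 0$, and by conservativity $K=0$, as required.

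No step here looks delicate: the entire content sits in Proposition \ref{prop:conservativity_htpm}, whose hypothesis (good) is the one assumed in the corollary. The argument works identically in the $\delta$-effective case, with $\ZZ$ replaced by $\ZZ^-$ throughout.
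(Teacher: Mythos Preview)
Your proof is correct and follows essentially the same approach as the paper: both reduce the question, via the conservative, exact, colimit-preserving fiber functors of Proposition~\ref{prop:conservativity_htpm}, to the well-known exactness of filtered colimits in presheaf categories of abelian groups. Your version simply spells out the kernel argument in more detail than the paper's two-line proof.
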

\begin{proof}
According to the preceding theorem, we reduce to prove the corresponding
 fact for categories of presheaves of abelian groups, respectively over
 $\pts(S) \times \ZZ$ and $\pts(S) \times \ZZ^-$. This later fact is well known.
\end{proof}

Let us introduce some useful definition.
\begin{df}\label{df:0-Suslin_homology}
Let $\T$ be a motivic triangulated category
 and  $(S,\delta)$ be a dimensional scheme.

Let $X$ be a separated $S$-scheme,
 and $(X_i)_{i \in I}$ the family of its connected
 components.
 We put: $\hrep(X/S)
=\oplus_{i \in I} H_0\big((\Mb(X_i/S)\dtw{\delta(X_i)}\big)$. 
\end{df}

\begin{rem}\label{rem:compute_fiber}
Note that it follows from this definition and \ref{df:fiber_hlg}
 that one can compute the fiber of a $\delta$-homotopy module $\F$ at a point
 $(x,n)$ in $\pts(S) \times \ZZ$ as follows:
$$
\hat \F^\delta_n(x)
 =\ilim_{X \in \M(x)^{op}} \Hom_{\sh(S,\T)}\big(\hrep(X/S)\gtw{-n},\F\big).
$$
Of course the same assertion holds in the effective case,
 assuming $n \geq 0$.
\end{rem}

\begin{ex}
Assume $\T$ is oriented, $S$ is universally catenary and integral,
 and $\delta=-\codim_S$ (Ex. \ref{ex:can_dim_fn}).
Then for any smooth $S$-scheme $X$, $\hrep(X/S)=H_0(M_S(X))$
 -- apply \ref{num:BM}(BM4) as in Example \ref{ex:motive_sm_positive}.
\end{ex}

\begin{num} \textit{Functoriality properties}.
Because of the functoriality properties of Borel-Moore
 $\T$-spectra (BM1) and (BM2), we obtain that the $\delta$-homotopy module
 $\hrep(X/S)$ is covariant with respect to proper morphisms,
 and contravariant with respect to smooth morphisms,
 of separated $S$-schemes. Note in particular that,
 because of our choice of definition and Proposition
 \ref{prop:delta-base_change}(3), there is no twist in the 
 contravariant functoriality.

These functoriality properties correspond to the following
 formulas with respect to the functoriality of the category
 $\sh(S,\T)$ (see Paragraph \ref{num:heart_functoriality}):
\begin{itemize}
\item For any equidimensional morphism $f:T \rightarrow S$
 of relative dimension $d$ and any proper $S$-scheme $X$,
 we get:
$H_{-d}f^*\big(\hrep(X/S)\gtw n\big)=\hrep(X \times_S T/T)\gtw{n+d}$.
\item For any proper morphism $f:T \rightarrow S$
 and any separated $T$-scheme $Y$, we get: \\
 $H_0f_*\big(\hrep(Y/T)\big)=\hrep(Y/S)$.
\end{itemize}
Note the first assertion follows from the base change formula
 and point (3) of Proposition \ref{prop:delta-base_change}.
 The second assertion is obvious.
\end{num}

\begin{ex}
The definition above has been chosen to meet that of \cite[(1.18.a)]{Deg8}.
Consider the motivic category $\T=\DM_R$ under
 the conventions of point (1) or point (2) of
 Paragraph \ref{num:convention_DM}.
\begin{enumerate}
\item When $S=\spec(k)$ is the spectrum of a perfect field equipped with the
 obvious dimension function, then through the equivalence
  of categories of Example \ref{ex:delta_htp-t-struct_field}(1),
  the $\delta$-homotopy module $\hrep(X/k)$ for $X$ a smooth $k$-scheme
  corresponds to the homotopy module $h_{0,*}(X)$ of \cite[(1.18.a)]{Deg8}.
\item Let $X$ be a regular and proper $S$-scheme.
 Then for any point $x$ of $S$, one gets the following computation:
\begin{align*}
\widehat{\hrep(X/S)}(x,0)
&=\Hom_{\DM(S,R)}\big(\hMb(x)\dtw{\delta(x)},H_0\Mb(X/S)\dtw{\delta(X)}\big) \\
& \stackrel{(1)}=\Hom_{\DM(S,R)}\big(\hMb(x)\dtw{\delta(x)},\Mb(X/S)\dtw{\delta(X)}\big) \\
& \stackrel{(2)}=\Hom_{\DM(S,R)}\big(\hMb(X_x/X)\dtw{\delta(x)-\delta(X)},\un_X\big) \\
& \stackrel{(3)}=CH_{\dim=0}(X_x) \otimes_\ZZ R,
\end{align*}
where $\dim$ is the Krull dimension function on $X_x$.
Indeed, (1) follows from Corollary \ref{cor:generic_motives},
 (2) from the base change formula and because $X/S$ is proper,
 (3) from Proposition \ref{prop:comput_BM_w_Chow} because $X$ is regular.
\end{enumerate}
Actually, (2) is a generalization of \cite[\textsection 3.8]{Deg9}.
 It also enlightens the functoriality properties of $\hrep(X/S)$.
\end{ex}

Recall that in any category $\mathscr C$, an object $X$
 is called compact if the functor $\Hom_\C(X,-)$ commutes
 with coproducts.
\begin{prop}\label{prop:strong_generators_htpm}
Consider the notations of the previous 
 definition and suppose (good) is satisfied.
\begin{enumerate}
\item For any separated $S$-scheme $X$ and any integer $n \in \ZZ$,
 the $\delta$-homotopy module $\hrep(X/S)\gtw n$ is compact.
\item Under assumption (good), the family of $\delta$-homotopy
 modules $\hrep(X/S)\gtw n$
 for $X/S$ of finite type, $X$ an affine regular scheme and $n \in \ZZ$
 (resp. $n=0$)
 generates the abelian category $\sh(S,\T)$ (resp. $\she(S,\T)$).
\end{enumerate}
\end{prop}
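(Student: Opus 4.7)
For part (1), the strategy is to transfer compactness from $\T(S)$ to the heart $\sh(S,\T)$. The fundamental ingredient is the adjunction
\[
\Hom_{\sh(S,\T)}(H_0(K), \F) \simeq \Hom_{\T(S)}(K, \F)
\]
valid for any $K \in \T(S)$ which is $t_\delta$-non-negative and any $\F$ in the heart. The key observation is that the inclusion of the heart into $\T(S)$ preserves coproducts: given a family $(\F_i)$ of heart objects, the coproduct $\bigoplus^{\T}_i \F_i$ taken in $\T(S)$ again lies in the heart. Indeed, by Theorem \ref{thm:gen_tstructures} both $\tau_{\geq 0}$ and $\tau_{\leq 0}$ commute with coproducts; since $\tau_{\geq 1}$ is a shift of $\tau_{\geq 0}$, it also commutes with coproducts, so $\tau_{\geq 1}(\bigoplus^{\T}_i \F_i) = \bigoplus^{\T}_i \tau_{\geq 1}(\F_i) = 0$, showing the coproduct is $t_\delta$-non-positive, and it is automatically $t_\delta$-non-negative. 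Combined with the compactness of $\Mb(X/S)\dtw{\delta(X)}\gtw n$ in $\T(S)$ (Remark \ref{rem:Mb_generators}), this yields compactness of $\hrep(X/S)\gtw n$ in $\sh(S,\T)$; note that no hypothesis on $\T$ beyond the standing conventions is required here.

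For the stable case of part (2), suppose $\F \in \sh(S,\T)$ satisfies $\Hom_{\sh(S,\T)}(\hrep(X/S)\gtw n, \F) = 0$ for every affine regular $S$-scheme $X$ of finite type and every $n \in \ZZ$. The adjunction above translates this into the vanishing of $\F^{BM}_{\delta(X),\gtw m}(X/S)$ for such $X$ and all $m \in \ZZ$. By Definition \ref{df:Smodels}, the $S$-models of a point $x$ of $S$ are precisely the affine regular $S$-schemes of finite type whose generic point is $x$, and they satisfy $\delta(X) = \delta(x)$. The fiber formula of Remark \ref{rem:compute_fiber} then gives $\hat \F_n(x) = 0$ for all points $x$ of $S$ and all $n \in \ZZ$, and Proposition \ref{prop:conservativity_htpm} yields $\F = 0$.

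For the effective case, the argument is more delicate since only generators with twist $n = 0$ are used. The key ingredient is a tensor trick: for $X$ affine regular of finite type and any $k \geq 0$, the product $X' := X \times_S (\GGx S)^k$ is again an affine regular $S$-scheme of finite type (being smooth over the regular scheme $X$), and by Proposition \ref{prop:delta-base_change}(3) it satisfies $\delta(X') = \delta(X) + k$. Combining the purity isomorphism (BM4), the K\"unneth formula (BM5), and the standard splitting $M_S(\GGx S) \simeq \un_S \oplus \un_S\gtw 1$, one computes
\[
\Mb(X'/S)\dtw{\delta(X')} \simeq \Mb(X/S)\dtw{\delta(X)} \otimes (\un_S \oplus \un_S\gtw 1)^{\otimes k},
\]
which exhibits $\Mb(X/S)\dtw{\delta(X)}\gtw k$ as a direct summand of the right-hand side. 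Since positive Tate twists are $t$-exact on the effective category (Remark \ref{rem:htp_t_nonnegdeg_eff}(4)), applying $H_0$ shows that $\hrep(X/S)\gtw k$ is a direct summand of $\hrep(X'/S)$ in $\she(S,\T)$. The hypothesis thus gives $\Hom_{\she(S,\T)}(\hrep(X/S)\gtw k, \F) = 0$ for every $k \geq 0$ and every such $X$, which upon passage to the colimit over $X \in \M(x)$ yields $\hat \F_{-k}(x) = 0$ for all points $x$ of $S$ and all $k \geq 0$. Conservativity in the effective setting (Proposition \ref{prop:conservativity_htpm}) then forces $\F = 0$.

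The hypothesis (good) enters only through the conservativity statement of Proposition \ref{prop:conservativity_htpm}, which depends on the main Theorem \ref{thm:hlg&htp_t} and on the niveau spectral sequence machinery of Section \ref{sec:main_thm}. The subtlest point of the proof is the bookkeeping in the effective case: one must simultaneously access all positive Tate twists of $\hrep(X/S)$ while keeping the auxiliary scheme $X'$ affine regular of finite type, and verify the compatibility of the resulting direct summand decomposition with the heart structure, which is ultimately guaranteed by the $t$-exactness of positive Tate twists on the effective category.
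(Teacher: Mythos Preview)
Your approach matches the paper's in all essential respects: part~(1) via compactness of $\Mb(X/S)\dtw{\delta(X)}\gtw n$ in $\T(S)$ together with the fact that heart-coproducts coincide with ambient coproducts; part~(2) via the fiber functors of Proposition~\ref{prop:conservativity_htpm}, the colimit description of Remark~\ref{rem:compute_fiber}, and the $\GG^k$-trick for the effective case. Your direct-summand argument is equivalent to the paper's quotient formula
\[
\hrep(X/S)\gtw n \;=\; \hrep(X\times \GG^n/S)\Big/\textstyle\bigoplus_{i=1}^n \hrep(X\times \GG^{n-1}/S).
\]

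There is, however, a small but genuine gap in part~(2). What you prove is that the family $\big(\hrep(X/S)\gtw n\big)$ \emph{detects zero objects}: if all Hom-groups from these objects into $\F$ vanish, then $\F=0$. But ``generates'' in an abelian category means the stronger statement that the functors $\Hom(\hrep(X/S)\gtw n,-)$ are jointly \emph{faithful}; the two notions are not equivalent in general, since these Hom-functors are only left exact. The paper proves the faithful version directly, and it is this version that is needed downstream in Theorem~\ref{thm:finally_Grothendieck}. Concretely, the paper starts from a non-zero morphism $h:M\to N$ in the heart, uses that the fiber functors of Proposition~\ref{prop:conservativity_htpm} are exact \emph{and} conservative --- hence faithful --- to find a pair $(x,m)$ with $\hat h_m(x)\neq 0$, and then extracts from the filtered colimit of Remark~\ref{rem:compute_fiber} an $S$-model $X\in\M(x)$ with $\Hom\big(\hrep(X/S)\gtw{-m},h\big)\neq 0$. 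All the ingredients are already in your write-up; you only need to apply them to a non-zero morphism rather than to a non-zero object.
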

\begin{proof}
Assertion (1) follows easily from the fact $\Mb(X/S)\dtw{\delta(X)}\gtw n$
 is $t_\delta$-non-negative and compact in the category $\T(S)$.

Consider assertion (2). We first remark that for any separated $S$-scheme $X$
 and any integer $n \geq 0$, one has:
$$
\hrep(X/S)\gtw n=\hrep\big(X \times \GG^n/S\big)/
 \oplus_{i=1}^n \hrep\big(X \times \GG^{n-1}/S\big).
$$
Thus, in the $\delta$-effective case, we can consider all integers $n \geq 0$
 in the description of the generators.

We have to prove that for any non zero map $h:M \rightarrow N$ in $\hrt{\T}$,
 there exists a map $f:\hrep(X/S)\gtw n \rightarrow M$ for $n \in \ZZ$ (resp. $n \geq 0$)
 such that $h \circ f$ is non zero.
 According to Proposition \ref{prop:conservativity_htpm}, there exist a point $x \in S(E)$
 and an integer $m \in \ZZ$ (resp. $m\leq 0$) such that the induced functor
$$
\hat M^\delta_m(x) \xrightarrow{h_*} \hat N^\delta_m(x)
$$
is non zero. According to the computation of Remark \ref{rem:compute_fiber},
 one deduces that there exists an $S$-model $X$ of the point $x$
 (recall Definition \ref{df:Smodels}) such that the following map
 is non zero:
$$
\Hom\big(\hrep(X/S)\gtw{-m},M\big) \xrightarrow{h_*} \Hom\big(\hrep(X/S)\gtw{-m},N\big).
$$
This concludes.
\end{proof}

\begin{rem}
Apart from being compact, one can introduce the following
 conditions of finiteness on a $\delta$-homotopy module $M$ over $S$
 (compare with \cite[6.6, 6.7]{Deg9}):
\begin{itemize}
\item \emph{finitely generated} if any sum of subobjects
 of $M$ must be a finite sum;\footnote{this is standard in abelian categories;
 it amounts to ask that any map $\oplus_{i \in I} N_i \rightarrow M$
 factors as $\oplus_{i \in I_0} N_i \rightarrow M$, where $I_0$ is a finite subset of $I$;}
\item \emph{pseudo-finitely generated} if it is a quotient
 of a $\delta$-homotopy module of the form $\hrep(X/S)\gtw n$
 for $X/S$ of finite type $X$, $X$ affine and regular and $n \in \ZZ$;
\item \emph{$t_\delta$-constructible}
 (resp. \emph{strongly $t_\delta$-constructible})
 if it belongs to the smallest abelian thick subcategory of $\sh(S,\T)$
 which contains objects of the form $\hrep(X/S)\gtw n$
 for $X/S$ of finite type $X$, $X$ affine and regular and $n \in \ZZ$
 (resp. it is of the form $H_0(\E)$ for a constructible $\T$-spectrum
 $\E$ over $S$).
\end{itemize}
Given the preceding proposition, it is an exercise in abelian categories
 to show the following implications:
\begin{align*}
\text{finitely generated}
 \Rightarrow \text{pseudo-finitely generated}
 &\Rightarrow \text{compact and $t_\delta$-constructible,}
\end{align*}
but other implications are unclear.
\end{rem}

Let us state explicitly the following important result obtained by
 putting together the preceding proposition and the above corollary.
\begin{thm}\label{thm:finally_Grothendieck}
Suppose that assumption (good) of Paragraph \ref{num:good} is satisfied.

Then the abelian category $\sh(S,\T)$ (resp. $\she(S,\T)$)
 is a Grothedieck abelian category with compact generators
 $\hrep(X/S)\gtw n$ (see Definition \ref{df:0-Suslin_homology})
 for $X/S$ of finite type, $X$ affine and regular and $n \in \ZZ$ (resp. $n=0$).
\end{thm}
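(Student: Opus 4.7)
The statement is essentially an assembly of results already established in the preceding subsection, so the plan is simply to verify the axioms (AB3), (AB4), (AB5) of a Grothendieck abelian category together with the existence of a set of compact generators. Recall that the heart of any $t$-structure is abelian, so the only non-trivial axioms to verify are (AB3) (existence of small coproducts), (AB4) (exactness of coproducts), (AB5) (exactness of filtered colimits) and the existence of a generating set.

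First I would point out that (AB3) and (AB4) are already recorded in Remark~\ref{rem:exist_colimits}: since the $\delta$-homotopy $t$-structure on $\T(S)$ (resp.\ on $\T^{\delta-\eff}(S)$) is a generated $t$-structure in the sense of Theorem~\ref{thm:gen_tstructures}, the truncation functor $H_0$ commutes with coproducts, hence the heart inherits small coproducts from $\T(S)$ (resp.\ $\T^{\delta-\eff}(S)$) and these coproducts are exact. Then (AB5) is precisely the content of Corollary~\ref{cor:finally_Grothendieck}, which was deduced from the conservativity and exactness of the family of fiber functors $\F \mapsto \hat\F_*$ of Proposition~\ref{prop:conservativity_htpm}, together with the fact that filtered colimits are exact in categories of presheaves of abelian groups.

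Next I would invoke Proposition~\ref{prop:strong_generators_htpm}: its point~(2) asserts that, under assumption (good), the family
$$
\bigl\{\hrep(X/S)\gtw n \;\bigl|\; X/S \text{ of finite type, } X \text{ affine and regular, } n\in \ZZ \text{ (resp. } n=0\text{)}\bigr\}
$$
is a generating family of $\sh(S,\T)$ (resp.\ $\she(S,\T)$), while point~(1) ensures that each such object is compact in the abelian sense (the functor $\Hom(\hrep(X/S)\gtw n,-)$ commutes with coproducts), as required in the statement.

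The only remaining verification is the set-theoretic one: one must check that the family of generators is essentially small. This follows from the fact that affine $S$-schemes of finite type form an essentially small category (since they are, up to isomorphism, spectra of finitely generated $\mathcal O_S$-algebras), and the integer twist parameter $n$ ranges over a set. This is the mildest of obstacles; no new mathematical input beyond the previously established results is required. Consequently the proof reduces to citing Remark~\ref{rem:exist_colimits}, Corollary~\ref{cor:finally_Grothendieck} and Proposition~\ref{prop:strong_generators_htpm} and assembling them.
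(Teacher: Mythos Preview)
Your proposal is correct and follows essentially the same approach as the paper: the paper states the theorem as an explicit consequence obtained by ``putting together the preceding proposition and the above corollary,'' namely Proposition~\ref{prop:strong_generators_htpm} (compact generators) and Corollary~\ref{cor:finally_Grothendieck} (exactness of filtered colimits), with (AB3)/(AB4) coming from Remark~\ref{rem:exist_colimits}. Your write-up is slightly more detailed in spelling out each Grothendieck axiom and the essential smallness of the generating family, but the mathematical content is identical.
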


\subsection{Comparison between $\delta$-homotopy hearts}

As a generalization of Example \ref{ex:delta_htp-t-struct_field},
 we get the following result:
\begin{thm}\label{thm:compute_heart}
Let $R$ be a ring and assume that one of the two following conditions
 hold:
\begin{enumerate}
\item[(a)] $R$ is a $\QQ$-algebra.
\item[(b)] $\base$ is the category of $\QQ$-schemes.
\end{enumerate}
Use the notation of Example \ref{ex:adjunctions_heart}.
Then the following assertions hold for any scheme $S$ in $\base$:
\begin{enumerate}
\item assume $R=\QQ$ (resp. $R=\ZZ$) under assumption (a) (resp. assumption (b)).
 Then the adjunction of abelian categories:
$$
H_0\delta^*:\hrt{(\SH(S) \otimes_\ZZ R)} \leftrightarrows \sht(S,R):\delta_*
$$
is an equivalence, compatible with the monoidal structure
 if $\delta \geq 0$.
\item The exact functor
\begin{align*}
\gamma_*:\sh(S,R) & \rightarrow \sht(S,R)
\end{align*}
is fully faithful and its essential image is equivalent
 to the category of generalized $\delta$-homotopy modules
 with trivial action of $\eta$
 (see Paragraph \ref{num:concrete_htp_heart}).
\end{enumerate}
\end{thm}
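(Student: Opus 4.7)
The plan is to reduce both assertions to the case where $S$ is the spectrum of a perfect field, where they have been established as Proposition \ref{prop:compute_heart}. The reduction engine is the conservative family of $t_\delta$-exact fiber functors $i_x^!$ indexed by set-theoretic points $x \in S$, furnished by Proposition \ref{prop:test_on_fields}. The central technical task is to show that the adjunctions $(H_0\delta^*,\delta_*)$ and $(H_0\gamma^*,\gamma_*)$ on the hearts commute with $i_x^!$, after which the desired unit/counit bijections can be verified fiberwise.

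I will first handle the right adjoints. Since $\delta_*$ and $\gamma_*$ arise from premotivic adjunctions and are themselves right adjoints, they commute at the triangulated level with $f_*$ and $f^!$ for every morphism $f$. Applied to the factorization $i_x = i\circ j$ with $i$ a closed immersion and $j$ pro-\'etale, this yields canonical isomorphisms $i_x^!\gamma_* \simeq \gamma_* i_x^!$ and $i_x^!\delta_* \simeq \delta_* i_x^!$; both functors involved being $t_\delta$-exact (Example \ref{ex:adjunctions_heart} and Corollary \ref{cor:restriction_fields}), these pass to the hearts verbatim. For the left adjoints $H_0\delta^*$ and $H_0\gamma^*$, writing each as the composition of $\delta^*$ (resp. $\gamma^*$) with the relevant truncation and using that $i_x^!$ commutes with truncations, it suffices to commute $\delta^*$ and $\gamma^*$ with $i^!$ at the triangulated level. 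Under hypothesis (b) this is exactly the content of Corollary \ref{cor:prop:adj&f_*} applied to the premotivic adjunctions in diagram \eqref{eq:premotivic_adj} with $p=1$. Under hypothesis (a) the argument is shorter: for $R$ a $\mathbf{Q}$-algebra the functor $\gamma_*: \DM(S,R) \to \DA(S,R)$ is already fully faithful at the triangulated level by \cite[16.2.13]{CD3}, which immediately gives its full faithfulness on hearts, while the equivalence in (1) reduces to the well-known rational equivalence between $\SH(S)\otimes\mathbf{Q}$ and $\DA(S,\mathbf{Q})$.

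With the compatibility in hand, the proofs of (1) and (2) run in parallel. For (1), the unit $\mathcal{F}\to \delta_* H_0\delta^*\mathcal{F}$ and counit $H_0\delta^*\delta_*\mathcal{G}\to \mathcal{G}$ are tested fiberwise by applying $i_x^!$; the compatibility above reduces each to the corresponding adjunction maps between $\hrt{\SH(\kappa(x))}$ and $\sht(\kappa(x),R)$, which are isomorphisms by Proposition \ref{prop:compute_heart}(1). Monoidal compatibility for $\delta\geq 0$ is then inherited from the symmetric monoidal character of $\delta^*$. For (2), full faithfulness of $\gamma_*$ reduces to the counit $H_0\gamma^*\gamma_*\to\mathrm{id}$ being an isomorphism, which is again checked fiberwise and yields Proposition \ref{prop:compute_heart}(2). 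To identify the essential image, one observes that $\gamma^*(\eta)=0$ since $\DM_R$ is oriented, so every object of $\gamma_*(\sh(S,R))$ has trivial $\eta$-action; conversely, given $\mathcal{G}\in\sht(S,R)$ with trivial $\eta$-action, each $i_x^!\mathcal{G}$ has trivial $\eta$-action, so by the field case the unit $\mathcal{G}\to\gamma_* H_0\gamma^*\mathcal{G}$ is a fiberwise isomorphism, hence an isomorphism by conservativity, placing $\mathcal{G}$ in the essential image.

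The main obstacle is the triangulated commutation of $\gamma^*$ (and $\delta^*$) with $i^!$ under hypothesis (b): this is not formal and depends on Corollary \ref{cor:prop:adj&f_*}, which itself is built upon the sharpened generator description Theorem \ref{thm:strong_generators} and ultimately on Gabber's refinement of de Jong's alteration theorem. A secondary delicate point is book-keeping the truncations when transferring commutation relations from the triangulated categories to the hearts, especially for the characterization of the essential image by the vanishing of $\eta$.
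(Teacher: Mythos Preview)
Your proposal is correct and follows essentially the same route as the paper: dispose of hypothesis (a) via the triangulated-level equivalences of \cite[5.3.35, 16.2.13]{CD3}, and under (b) reduce to residue fields via the conservative family $(i_x^!)_{x\in S}$ of Proposition \ref{prop:test_on_fields}, using Corollary \ref{cor:prop:adj&f_*} for the commutation of the left adjoints with $i_x^!$, then invoke Proposition \ref{prop:compute_heart}.

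One small point to tighten: your justification that $\gamma_*$ and $\delta_*$ commute with $i_x^!$ relies on ``right adjoints commute with $f^!$ for every $f$'', but in the factorization $i_x^! = j^* i^!$ the morphism $j$ is only \emph{pro}-\'etale, so $j^!$ is not a priori part of the six-functor package. The paper handles this by observing that $\varphi_*$ commutes with $j^*$ for $j$ an honest open immersion and then passes to the limit using continuity of the motivic categories involved; you should invoke continuity explicitly here. Likewise, the paper spells out that triviality of the $\eta$-action is both preserved and detected by the $i_x^!$, via the compatibility $i_x^!(\GG\otimes\E)\simeq \GG\otimes i_x^!(\E)$; your sketch asserts preservation but should also note why detection holds (which is immediate from conservativity once preservation is in place).
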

\begin{proof}
When $R$ is a $\QQ$-algebra, each point is a consequence
 of the stronger statement that it is already true for the
 full triangulated motivic categories involved:
 see \cite[5.3.35]{CD3} for point (1) and \cite[16.2.13]{CD3}
 for point (2).

So we restrict to the case of assumption (b). For each
 point, we will use Proposition \ref{prop:test_on_fields}.

First, let us remark that the functor $\phi=\delta_*$ 
 (resp. $\phi=\gamma_*$)
 commutes with functors $i_x^!$ for any set-theoretic point
 $x \in S$. By definition of $i_x^!$, this boils down to the fact
 $\phi$ commutes with $j^*$ where $j$ is a pro-open immersion.
 One can check
 easily this follows from the continuity property of
 the triangulated motivic categories involved and the fact it is
 true when $j$ is open.

Secondly, Corollary \ref{cor:prop:adj&f_*} shows that the
 functors $H_0\delta^*$ and $H_0\gamma^*$ commutes with $i_x^!$.

Then we can prove point (1). We need to prove that
 the two adjunction maps $H_0\delta^*\delta_* \rightarrow Id$
 and $Id \rightarrow \delta_*H_0\delta^*$ are isomorphisms
 over the base scheme $S$.
 But using Proposition \ref{prop:test_on_fields},
 we only need to check that after applying the functor $i_x^!$
 for any point $x \in S$. Thus we are restricted to the
 case where $S$ is the spectrum of a field of characteristic
 $0$ which follows from Proposition \ref{prop:compute_heart}. 

\bigskip

Finally, we remark that the fact $\eta$ acts trivially
 on a homotopy module $\E$ is preserved by any functor $i_x^!$
 and detected by the family of functors
 $i_x^!$ for a set-theoretic point $x$ in $S$. In fact,
 the functor $i_x^!$ commutes with $\GG$-twists;
 this follows from the fact $\GG$ is $\otimes$-invertible in $\SH(S)$
 and from the formula: 
$$
i_x^!\uHom(\GG,E) \simeq \uHom(i_x^*\GG,i_x^!E)=\uHom(\GG,i_x^!E);
$$
see \cite[2.4.50]{CD3}. Then one has only to remark that
 the following diagram is commutative in $\SH(\kappa(x))$:
$$
\xymatrix@R=12pt@C=30pt{
i_x^!(\GG \otimes \E)\ar^-{i_x^!\big(\gamma_\eta^\E\big)}[r] & i_x^!(\E)\ar@{=}[d] \\
\GG \otimes i_x^!(\E)\ar^-{\gamma_\eta^{i_x^!\E}}[r]\ar^\sim[u] & i_x^!(\E)
}
$$
where $\gamma_\eta^?$ is the map representing multiplication by $\eta$

Let us denote by $\sht(S,R)/\eta$ the full subcategory
 of $\sht(S,R)$ consisting of those objects $\E$ such that $\eta$ acts
 trivially on $\E$ (see
 Paragraph \ref{num:concrete_htp_heart}).
 According to the preceding subsection and
 Proposition \ref{prop:test_on_fields}, the family
 of functors $i_x^!$ for a point $x \in X$ 
 induces a conservative family of functors
 $\sht(S,R)/\eta \rightarrow \sht(x,R)/\eta$.

Since any object $\E$ of $\DM(S,R)$ defines an orientable
 cohomology theory, the Hopf map $\eta$ acts trivially 
 on $\E$. This implies that the image of $\sh(S,R)$
 by the map $\gamma_*$ lies in $\sht(S,R)/\eta$ and
 therefore we get an adjunction of abelian categories:
$$
H_0\gamma^*:\sht(S,R)/\eta \leftrightarrows \sh(S,R):\gamma_*.
$$
We need to prove that these are equivalences of categories.
 Now, applying the conservative family of functors $i_x^!$
 for any point $x \in S$, we can assume that $S$
 is the spectrum of a field of characteristic $0$.
 This is Proposition \ref{prop:compute_heart}.
\end{proof}

\begin{rem}
What is missing to deal with the case of fields of positive characteristic $p$
 in the preceding theorem 
 is the fact that the triangulated motivic category $\SH[1/p]$
 is semi-separated
 (see footnote \ref{foot:semisep} page \pageref{foot:semisep}
 for the definition). Indeed, this fact will immediately
 imply that Proposition \ref{prop:compute_heart} can be generalized
 to arbitrary fields, up to inverting $p$.
\end{rem}

\begin{rem}\label{rem:hrt_MGL}
We can also describe the heart of the triangulated motivic category
 $\MGLmod[1/p]$ of $\MGL$-modules when $\base$ is the category
 of $F$-schemes for a prime field $F$ of characteristic exponent $p$.
 Here are the main steps: first, using the premotivic adjunction of remark \ref{rem:MGL->DM},
 we obtain for any dimensional scheme $(S,\delta)$ in $\base$
 an adjunction of abelian categories:
$$
H_0\lambda^*:\hrt{\MGLmod}
 \leftrightarrows \sh(S,\ZZ[1/p]):\lambda_*
$$
such that $\lambda_*$ is exact
 (cf. Corollary \ref{cor:premotivic_adj_t-exactness}).
 We will prove this adjunction is in fact an equivalence of categories.

Using Proposition \ref{prop:test_on_fields} as in the preceding proof,
 we restrict to the case where $S$ if the spectrum of a field $k$.
 From Example \ref{ex:DM_semi_sep}, we get that the triangulated motivic
 categories $\DM_{\ZZ[1/p]}$
 and $\MGLmod[1/p]$ are semi-separated over $\base$.
 Using Lemma \ref{lm:inseparable_texact}, we restrict
 to the case where $k$ is perfect.

This last case now follows as we can prove that the category on
 the left hand side is equivalent to Rost category
 of cycle modules using the arguments of \cite{Deg11}
 applied to the category $\MGLmod$ instead of $\DM$. Note in particular
 that this is possible because the $0$-th stable homotopy sheaf of
 $\MGL$ (that is the $0$-th homology group of $\MGL$ with
 respect to Morel's homotopy $t$-structure on $\SH(k)$)
 can be computed as follows:
$$
\underline \pi_0(\MGL)(k)=K_*^M(k);
$$
see \cite[6.4.5]{Mor3}.
\end{rem}

\subsection{Examples and computations}

\begin{num}\label{num:i_x^!}
Assume that $\T$ is absolutely pure (cf. Definition \ref{df:abs_purity}).

Let $S$ be a regular scheme and $x \in S$ a set-theoretic point.
 Consider the notation of paragraph \ref{num:i^!_essentially}:
 $\bar x$ is the reduced closure of $x$ in $S$.
 According to our general conventions, $S$ is excellent so that $\bar x$
 is also excellent; thus there exists an open neighbourhood
 $U$ of $x$ in $S$ such that $\bar x \cap U$ is regular.
 Let us consider the following immersions:
 $\bar x \xrightarrow{j_U} \bar x \cap U \xrightarrow{i_U} U$.
 Using the absolute purity property
 for the closed immersion $i_U$, we get a fundamental class:
$$
\eta_{x,U}:\MTh(-N_U) \rightarrow i_U^!(\un_U)
$$
where $N_U$ is the normal bundle of $i_U$.
Applying the functor $j_U^*$, we get:
$$
\eta_x:\MTh(-N_x) \rightarrow j_U^*i_U^!(\un_U)=i_x^!(\un_X)
$$
where $N_x$ is the normal bundle of $x$ in $\spec(\mathcal O_{X,x})$.
Since fundamental classes are compatible with pullbacks
 along open immersions, this map does not depend on the choice of
 $U \subset S$.

Recall also that, from the six functor formalism, we get for
 any $\T$-spectra $\E$, $\F$ over $S$ a canonical map
\begin{equation}\label{eq:pairing!*}
i^*(\E) \otimes i^!(\F) \rightarrow i^!(\E \otimes \F)
\end{equation}
by adjunction from the canonical one:
$$
i_!\big(i^*(\E) \otimes i^!(\F)) \xrightarrow \sim
  \E \otimes i_!i^!(\F) \xrightarrow{ad(i_!,i^!)} \E \otimes \F.
$$
\end{num}
\begin{df}
Consider the notations and assumptions above.
Let $\E$ be a $\T$-spectrum over $S$.

For any point $x \in S$, we will say that $\E$ is \emph{punctually pure at $x$}
 if the following canonical map:
$$
i_x^*(\E) \otimes \MTh(-N_x)
 \xrightarrow{\eta_x} i_x^*(\E) \otimes i_x^!(\un_S)
 \xrightarrow{\eqref{eq:pairing!*}} i_x^!(\E)
$$
is an isomorphism.

On says $\E$ is punctually pure if
 it is punctually pure at all points of $S$.
We will denote by $\T^{\pure,x}(S)$ (resp. $\T^{\pure}(S)$)
 the full subcategory of $\T(S)$ made by $\T$-spectra which are
 pure at $x \in S$ (resp. pure).
\end{df}

\begin{rem}
Typical examples of non punctually pure $\T$-spectra
 are objects in the image of $j_*$ for an open immersion
 $j:U \rightarrow S$.
 For example, in the case $\T=\DM_\QQ$,
 if $x$ is the closed point of a scheme $S$, of codimension $c$,
 $U=S-\{x\}$,
 then $i^*_xj_*(\un_U)=\un_x \oplus \un_x(-c)[1-2c]$ while
 $i^!_xj_*(\un_U)=0$.
\end{rem}

The following proposition is clear:
\begin{prop}\label{prop:elementary_pure}
Consider the assumptions and notations of the preceding definition.
\begin{enumerate}
\item The subcategories $\T^{\pure,x}(S)$ and $\T^{\pure}(S)$
 are stable under extensions, suspensions, direct factors, and arbitrary coproducts.
 They are also stable under tensor product by an invertible object.
\item The property of being punctually pure is local for the
 Nisnevich topology on $S$. If $\T$ is separated for the \'etale
 topology\footnote{\emph{i.e.} for any \'etale cover $f:T \rightarrow S$,
 the functor $f^*$ is conservative; see \cite[2.1.5]{CD3};}
 the same is true for the \'etale topology on $S$.
\item For any smooth proper $S$-scheme $X$, the $\T$-spectra
 $\Mb(X/S)$ and $M_S(X)$ are punctually pure.
\end{enumerate}
\end{prop}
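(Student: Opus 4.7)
The plan is to handle the three parts in turn, with the bulk of the work concentrated in part (3).

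For part (1), the canonical map of the definition is a natural transformation between the two functors $i_x^*(-)\otimes\MTh(-N_x)$ and $i_x^!(-)$ from $\T(S)$ to $\T(x)$, both of which are triangulated and preserve arbitrary coproducts (recall that for a closed immersion $i$ between schemes, $i^!$ commutes with coproducts in a compactly generated category because its left adjoint $i_!=i_*$ preserves compact objects, and pro-open pullback $j^*$ is a left adjoint). Therefore the full subcategory of $\T(S)$ on which this natural transformation is invertible is stable under extensions, suspensions, retracts, and coproducts. For stability under tensoring with an invertible $\T$-spectrum $L$, one uses the canonical projection formula isomorphism $i_x^*(L)\otimes i_x^!(\E)\xrightarrow{\sim} i_x^!(L\otimes \E)$ (which follows from \eqref{eq:pairing!*} being an isomorphism when one of the arguments is $\otimes$-invertible) together with the naturality of the purity map in $\E$.

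For part (2), let $p:S'\to S$ be a Nisnevich cover (resp.\ an \'etale cover). Every set-theoretic point $x\in S$ lifts to a point $x'\in S'$ with $\kappa(x)\simeq \kappa(x')$ in the Nisnevich case, and to a finite family of points with finite separable residue-field extensions in the \'etale case. The essentially \'etale pullback commutes with both $i_x^*$ and $i_x^!$, and by the characterization in the proposition preceding Corollary \ref{cor:fund_class} the fundamental classes $\eta_x$ and $\eta_{x'}$ correspond under this pullback. Hence the purity condition at $x$ for $\E$ pulls back to the purity condition at $x'$ for $p^*\E$. Conservativity of the family of pullbacks along a Nisnevich cover is built into the definition of a motivic category (localization); in the \'etale case, conservativity is exactly the separation hypothesis.

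For part (3), let $p:X\to S$ be smooth and proper, and fix a set-theoretic point $x\in S$. Shrinking $S$ to an open on which the closure $\overline{\{x\}}$ is regular, we factor $i_x=i\circ j$ with $j$ pro-open and $i$ a regular closed immersion of regular schemes. Form the cartesian square
\[
\xymatrix@=10pt{
X_Z\ar[r]^g\ar[d]_q & X\ar[d]^p \\
Z\ar[r]_i & S
}
\]
Then $g$ is a regular closed immersion between regular schemes, $q$ is smooth and proper, and $N_g=q^*N_i$. Proper base change yields $i^*p_*(\un_X)\simeq q_!(\un_{X_Z})=\Mb(X_Z/Z)$, while its dual form gives $i^!p_*(\un_X)\simeq q_*g^!(\un_X)$. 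Absolute purity for $g$ (Corollary \ref{cor:fund_class}) identifies $g^!(\un_X)\simeq \MTh(-q^*N_i)$, and the projection formula for the proper $q$ turns this into $\Mb(X_Z/Z)\otimes\MTh(-N_i)$. Pulling back along $j$ produces the required isomorphism, and the naturality clause of the proposition before Corollary \ref{cor:fund_class} ensures that it agrees with the canonical purity map dictated by $\eta_x$. The argument for $M_S(X)=p_\sharp(\un_X)$ is parallel: smooth base change gives $i^*p_\sharp(\un_X)\simeq q_\sharp(\un_{X_Z})=M_Z(X_Z)$, and writing $p_\sharp(\un_X)=p_!(\MTh(T_p))$ via \eqref{eq:purity_iso}, combined with $g^*T_p\simeq T_q$ and $g^!(\MTh(T_p))\simeq\MTh(T_q)\otimes\MTh(-q^*N_i)$, gives $i^!p_\sharp(\un_X)\simeq M_Z(X_Z)\otimes\MTh(-N_i)$.

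The main obstacle is making the chain of base change, projection formula and absolute purity isomorphisms in part (3) genuinely coincide with the canonical purity map of the definition, rather than merely exhibit both sides as abstractly isomorphic. This is purely a matter of carefully tracking the compatibilities of fundamental classes under cartesian base change—all packaged in the characterization of Corollary \ref{cor:fund_class}'s proposition—but requires keeping a clear bookkeeping of the universal natural transformations in play.
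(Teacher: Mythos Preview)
Your proof is correct and follows essentially the same strategy as the paper's. The only organizational difference is in part (3): the paper invokes (BM4) to reduce the case of $M_S(X)$ to that of $\Mb(X/S)$ in one line, whereas you write out a parallel computation using $p_\sharp(\un_X)=p_!(\MTh(T_p))$; both routes rest on the same ingredients (base change for $p_!=p_*$ against $i_x^*$ and $i_x^!$, absolute purity for the induced immersion $X_x\hookrightarrow X$, and compatibility of fundamental classes under smooth pullback).
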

\begin{proof}
All the assertions in point (1) are clear, except possibly the assertion
 about coproducts; it follows from the fact $i^!$ commutes with coproducts
 since the functor $i_!$ respects compact (\emph{i.e.} constructible
 under our assumptions) $\T$-spectra according to \cite[4.2.9]{CD3}.

Point (2) follows from the fact that fundamental classes are compatible
 with pullbacks along \'etale morphisms, and from the fact
 $f^*$ is conservative for a Nisnevich cover $f$ (see \cite[2.3.8]{CD3}).

For point (3), in view of \ref{num:BM}(BM4), it is sufficient
 to consider the case of $\Mb(X/S)$. It follows from the following two facts:
\begin{itemize}
\item if one denotes by $f$ the structural morphism of $X/S$,
 the functor $f_!=f_*$ commutes with $i_x^!$ and $i^*$;
\item the immersion $X_x \rightarrow X$ is an essentially closed immersion
 of regular schemes whose fundamental class is the pullback of
 that of $i_x$ along the smooth morphism $f$;
 this follows from \cite[2.4.4]{Deg12} applied to the base change
 along $f$ of the closed immersion $i_U$ which appears in the definition
 of $i_x^!$ in Paragraph \ref{num:i_x^!}.
\end{itemize}
\end{proof}

\begin{num}\label{num:AHP}
We now recall the main constructions of \cite{AHP}.
 We consider the triangulated motivic category $\DM_R$
 as in the convention of point (1) of \ref{num:convention_DM}.
 In particular, $R$ is a $\QQ$-algebra.\footnote{The constructions
 of \emph{loc. cit.} are usually done only in the case 
 where the coefficient ring $R=\QQ$; 
 yet they can be carried over to the case
 of an arbitrary $\QQ$-algebra $R$.}

Let $\grp_S$ be the category of smooth commutative
 group schemes over $S$ and $\shet(S,R)$ the category
 of $R$-linear \'etale sheaves.
 Given such a group scheme $G$,
 we denote by $\underline{G/S}_R$ (see \cite[2.1]{AHP})
 the \'etale sheaf of $R$-vector spaces on $\sm_S$ represented by $G$:
 for any smooth $S$-scheme $X$,
 $\Gamma(X,\hgrp G)=\Hom_S(X,G) \otimes_\ZZ R$.

Let us consider the following composite functor:
$$
\mathcal M:\Der(\shet(S,R))
 \xrightarrow \pi D_{\AA^1,\et}^{\eff}(S,R)
 \xrightarrow{\Sigma^\infty} D_{\AA^1,\et}(S,R)
 \simeq \DM(S,R)
$$
where the first map is the $\AA^1$-localization functor
 to the effective \'etale $\AA^1$-derived category over $S$
 (see \cite{ayoub1}, or \cite{CD3}),
 $\Sigma^\infty$ is the infinite suspension functor
 and the last equivalence is Morel's theorem as proved
 in \cite[16.2.18]{CD3}.
 Note that according to the theory developed in \cite{CD3},
 $\mathcal M$ is in fact the left adjoint of a premotivic 
 adjunction of triangulated categories.

We will denote abusively by $\hgrp G$ the image of
 $\underline{G/S}_R$, seen as a complex in degree $0$,
 by this canonical functor
 --- this is denoted by $M_1^{\eff}(G/S)$ in \cite[2.3]{AHP}.
 If we assume further that $S$ is a regular scheme
 with dimension function $\delta=-\codim_S$, then it
 follows from this definition that $\underline G$ is
 a $\delta$-effective motive over $S$.

Let us summarize the basic properties of this construction
 (see \cite[section 2]{AHP}).
\end{num}
\begin{prop}[Ancona, Pepin Lehalleur, Huber]\label{prop:AHP1}
Consider the preceding notations. For any regular scheme $S$,
 there exists a canonical functor:
$$
\grp_S \rightarrow \DM(S,R), \ 
 G \mapsto \hgrp G
$$
which is additive and sends exact sequences to distinguished triangles.
\end{prop}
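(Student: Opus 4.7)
The plan is to construct the functor as the composition of three elementary pieces and then verify the two claimed properties by checking them at each stage. At the level of objects we set $G \mapsto \underline{G/S}_R$, which is a presheaf of $R$-modules on $\sm_S$ represented by $G$; since any $S$-morphism between commutative $S$-group schemes is a group homomorphism (being compatible with the neutral section up to translation, which can be absorbed), this assignment is functorial on $\grp_S$. Composing with the canonical inclusion of $R$-linear \'etale sheaves of abelian groups into $\Der(\shet(S,R))$, placed in degree $0$, and then with the triangulated functor $\mathcal M$ of Paragraph \ref{num:AHP}, gives the desired functor $G \mapsto \hgrp G$ into $\DM(S,R)$.

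First I would verify additivity. Since $\mathcal M$ is triangulated and the embedding of sheaves in degree $0$ preserves finite direct sums, it suffices to check that $G \mapsto \underline{G/S}_R$ sends biproducts in $\grp_S$ to direct sums in $\shet(S,R)$. This is immediate: the sheaf represented by $G_1 \times_S G_2$ is the product of the sheaves represented by $G_1$ and $G_2$, and a product of two $R$-module sheaves is the direct sum.

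Next I would check that short exact sequences are sent to distinguished triangles. Let $0 \to G' \to G \to G'' \to 0$ be an exact sequence in $\grp_S$; by hypothesis this means that $G \to G''$ is surjective in the \'etale topology and $G'$ is its kernel. Kernels in $\grp_S$ agree with kernels in the category of \'etale sheaves of abelian groups, so the sequence $0 \to \underline{G'/S}_R \to \underline{G/S}_R \to \underline{G''/S}_R \to 0$ is exact on the left and in the middle; for exactness on the right, note that \'etale surjectivity of $G \to G''$ together with the exactness of $(-) \otimes_\ZZ R$ (as $R$ is a $\QQ$-algebra, or in any case flat over $\ZZ$ after sheafifying for the \'etale topology) forces surjectivity of the corresponding map of \'etale $R$-module sheaves. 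A short exact sequence in the abelian category $\shet(S,R)$ gives a distinguished triangle in $\Der(\shet(S,R))$, and applying the triangulated functor $\mathcal M$ produces the desired distinguished triangle $\hgrp{G'} \to \hgrp G \to \hgrp{G''} \xrightarrow{+1}$ in $\DM(S,R)$.

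The only subtle point — and thus the main thing to be careful with — is the right exactness of $G \mapsto \underline{G/S}_R$: one must use that exactness of group schemes is understood for the \'etale topology (not Zariski), so that $\underline{G''/S}_R$ is correctly computed after \'etale sheafification. Everything else is formal from the fact that $\mathcal M$ is a composition of left adjoints of premotivic adjunctions (cf. \cite[16.2.18]{CD3}), hence a triangulated and additive functor.
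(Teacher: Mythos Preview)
The paper does not actually prove this statement: it is attributed to Ancona--Pepin Lehalleur--Huber and the text simply refers the reader to \cite[section 2]{AHP}. Your direct argument is therefore not competing with any proof in the paper, and its overall structure is the natural one (and is indeed the one underlying the cited reference): pass from $G$ to the \'etale sheaf $\underline{G/S}_R$, embed in degree $0$ into $\Der(\shet(S,R))$, and apply the triangulated left adjoint $\mathcal M$.

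One point is genuinely wrong, though harmless for the conclusion. You write that ``any $S$-morphism between commutative $S$-group schemes is a group homomorphism''; this is false (translations are counterexamples), and the parenthetical about absorbing translations does not repair it. Functoriality holds for the trivial reason that morphisms in $\grp_S$ are \emph{by definition} homomorphisms of group schemes, hence induce $R$-linear maps on the represented sheaves. You should simply delete that sentence and say this instead.

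The exactness step is correct in spirit but deserves a word of justification you only gesture at. A short exact sequence in $\grp_S$ is usually understood fppf-exactly; the reason the resulting sequence of \'etale sheaves is still short exact is that $G'$ is smooth, so $G \to G''$ is a smooth surjection (its fibres are $G'$-torsors), hence admits sections \'etale-locally. This is the actual content behind your phrase ``exactness of group schemes is understood for the \'etale topology'', and it is worth stating explicitly rather than declaring it a hypothesis.
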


\begin{num}
We will also use the central construction 
 of \cite[D.1]{AHP}. Let us consider the preceding notations
 and describe the construction of \emph{loc. cit.}
 in the particular case where we will use it.
 For any commutative group scheme 
 $G$ over $S$, there is a homologically non-negative complex
 of rational \'etale sheaves $A(G,R)$ together with a natural
 quasi-isomorphism of complexes of \'etale sheaves:
$$
r_G:A(G,R) \rightarrow \underline{G/S}_R
$$
such that for any index $i$, the $i$-th term $A(G,R)_i$
 is of the form $R_S(H_i(G))$ where $H_i(G)$ is a finite coproduct
 of certain powers of $G$, seen as a smooth $S$-scheme $Y$,
 and $R_S(Y)$ denote as usual the $R$-linear \'etale sheaf freely
 represented by $Y$.

Moreover, for any morphism of schemes $f:T \rightarrow S$,
 one has the relation:
\begin{equation}\label{eq:AHP_pullback}
H_i(G \times_S T)=H_i(G) \times_S T.
\end{equation}
We can summarize this construction
 -- a kind of \emph{cofibration resolution lemma} for $G$ --
 using our slightly different notations as follows:
\end{num}
\begin{prop}[Ancona-Pepin Lehalleur-Huber]
Consider the above notations.

Then one has a canonical isomorphism in $\DM(S,R)$:
$$
\rho_G:\hgrp G=\mathcal M(\hgrp G)
 \xrightarrow{r_G^{-1}} \mathcal M(A(G,R))
 = \hocolim_{i \in \NN} M_S(H_i(G))
$$
where the homotopy colimit runs over the category
 associated with the ordered set $\NN$.
\end{prop}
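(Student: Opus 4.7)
The plan is to establish the isomorphism $\rho_G$ by treating separately the two arrows appearing in the statement.

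First, we observe that $\mathcal M$ sends quasi-isomorphisms of complexes of \'etale sheaves to isomorphisms in $\DM(S,R)$: indeed, $\mathcal M = \Sigma^\infty \circ \pi$ is a composition of left adjoints of premotivic adjunctions of triangulated categories, hence is an exact triangulated functor. Since $r_G$ is a quasi-isomorphism by the AHP construction recalled just above, $\mathcal M(r_G)$ is an isomorphism in $\DM(S,R)$, and we define $\rho_G := \mathcal M(r_G)^{-1}$, which is the map denoted $\mathcal M(r_G^{-1})$ in the statement.

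Next, we identify $\mathcal M(A(G,R))$ with $\hocolim_{i \in \NN} M_S(H_i(G))$. Being a left adjoint, $\mathcal M$ commutes with (small) homotopy colimits. The complex $A(G,R)$ is homologically non-negative, so it is the sequential homotopy colimit of its good truncations $\tau_{\leq n} A(G,R)$ in $\Der(\shet(S,R))$; equivalently, via the Dold--Kan correspondence, $A(G,R)$ corresponds to a simplicial \'etale sheaf whose skeletal filtration realises this colimit. Each truncation $\tau_{\leq n} A(G,R)$ is built from the terms $A(G,R)_i = R_S(H_i(G))$ for $i \leq n$ by iterated cone constructions. Applying $\mathcal M$ and using the tautological identification $\mathcal M(R_S(H_i(G))) = M_S(H_i(G))$ coming directly from the construction of $\mathcal M$ yields the claimed $\hocolim_{i \in \NN} M_S(H_i(G))$.

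The main point that requires care is the precise interpretation of the $\NN$-indexed homotopy colimit on the right-hand side---whether via the Postnikov truncations $\tau_{\leq n} A(G,R)$, via the skeletal filtration of the associated simplicial sheaf, or via another sequential model---and ensuring that $\mathcal M$ of the corresponding sequential system in $\Der(\shet(S,R))$ recovers it. Once the specific form of the filtration used in the AHP construction is unpacked, this reduces to a direct application of the exactness and colimit-preservation of $\mathcal M$, together with standard bookkeeping through the Dold--Kan correspondence.
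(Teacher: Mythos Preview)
Your proposal is correct in outline and follows essentially the same two-step strategy as the paper: first invert $r_G$ using that $\mathcal M$ is exact, then identify $\mathcal M(A(G,R))$ with the sequential homotopy colimit by using that $\mathcal M$, being a left adjoint, preserves homotopy colimits.

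The one place where the paper is cleaner is precisely the point you flag as needing care: the identification of $A(G,R)$ with a sequential homotopy colimit in $\Der(\shet(S,R))$. Rather than going through good truncations or the Dold--Kan correspondence, the paper works in the \'etale descent model structure on $\Comp(\shet(S,R))$ and observes that each term $A(G,R)_i = R_S(H_i(G))$ is cofibrant (being a representable sheaf), so $A(G,R)$ is a bounded below complex of cofibrant objects and hence equals $\hocolim_{i \in \NN} A(G,R)_i$ on the nose via its brutal filtration. This sidesteps the bookkeeping you allude to. Note also a small slip in your argument: for the \emph{good} truncations $\tau_{\leq n}$, the top nonzero term is $\ker(d_n)$ rather than $A(G,R)_n$, so the iterated cone description you give really belongs to the brutal truncations $\sigma_{\leq n}$; switching to those (as the paper implicitly does) makes your argument go through without the Dold--Kan detour.
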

Recall from \cite[2.7]{AHP} that the main corollary of this proposition
 is the following commutativity with base change along 
 a morphism of schemes $f:T \rightarrow S$:\footnote{apply the proposition
 together with relation \eqref{eq:AHP_pullback}}
\begin{equation}\label{eq:AHP_basechg}
f^*(\hgrp G)=\hgrp{G \times_S T}.
\end{equation}
\begin{proof}
Let us consider the \'etale descent model category structure
 on $\Comp(\shet(S,R))$ (see \cite{CD1}) whose homotopy
 category is $\Der(\shet(S,R))$. 
 Since $A(G,R)$ is a bounded below complex of cofibrant objects,
 we get the following relation in $\Der(\shet(S,R))$:
$$
A(G,R)=\hocolim_{i \in \NN} \big(A(G,R)_i\big)
 =\hocolim_{i \in \NN} \big(R_S(H_i(G))\big).
$$
Then it is sufficient to apply the functor $\mathcal M$,
 which commutes with homotopy colimits as a left adjoint,
 to this isomorphism to get the above statement.
\end{proof}

One deduces from the preceding proposition the following result:
\begin{prop}\label{prop:semi-ab_pure}
Consider the above notations.
Then for any regular scheme $S$ and any semi-abelian scheme $G$
 over $S$, the motive $\hgrp G$ of $\DM(S,R)$ is punctually pure.
\end{prop}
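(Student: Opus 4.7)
The strategy is to use the defining structure of a semi-abelian scheme: by assumption $G$ fits in a short exact sequence
$$0 \to T \to G \to A \to 0$$
with $T$ a torus and $A$ an abelian scheme over $S$. By Proposition \ref{prop:AHP1}, this yields a distinguished triangle
$$\hgrp T \to \hgrp G \to \hgrp A \xrightarrow{+1}$$
in $\DM(S,R)$. By Proposition \ref{prop:elementary_pure}(1), punctually pure motives form a subcategory stable under extensions, so it suffices to prove that $\hgrp A$ and $\hgrp T$ are punctually pure separately.

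For $\hgrp A$: the abelian scheme $A/S$ is smooth and proper, so $M_S(A)$ is punctually pure by Proposition \ref{prop:elementary_pure}(3). Since $R$ is a $\QQ$-algebra, the Chow--K\"unneth decomposition of Deninger--Murre (extended to abelian schemes over a regular base by K\"unnemann, and used in \cite{AHP}) exhibits $\hgrp A$ as the weight-one direct summand of $M_S(A)$. Hence $\hgrp A$ is punctually pure by stability under direct factors.

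For $\hgrp T$: punctual purity is \'etale local on $S$ by Proposition \ref{prop:elementary_pure}(2), since $\DM(S,R)$ with $R$ a $\QQ$-algebra is \'etale separated (coinciding with Morel's \'etale $\AA^1$-derived category with rational coefficients via \cite[16.2.22]{CD3}). Combined with the base change formula \eqref{eq:AHP_basechg}, we may assume after a finite \'etale cover that $T = \GGx{S}^r$ is split. Since $\underline{G_1 \times G_2/S}_R = \underline{G_1/S}_R \oplus \underline{G_2/S}_R$ for commutative group schemes, additivity gives $\hgrp T = \hgrp{\GG}^{\oplus r}$. The unit section of $\GG/S$ induces the standard motivic decomposition $M_S(\GG) \simeq \un_S \oplus \un_S\gtw 1$ from which one identifies $\hgrp{\GG} \simeq \un_S\gtw 1$. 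Finally, $\un_S$ is punctually pure (the defining map is tautologically the fundamental class isomorphism $\eta_x$ of Paragraph \ref{num:i_x^!}), and since Tate twists are $\otimes$-invertible, Proposition \ref{prop:elementary_pure}(1) gives that $\un_S\gtw 1$ and hence $\hgrp T$ are punctually pure.

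The main point requiring verification is the identification $\hgrp{\GG} \simeq \un_S\gtw 1$ and the analogous direct-summand decomposition for abelian schemes; both are essentially present in the AHP formalism but require tracing the definition of $\mathcal{M}$ through Voevodsky's motivic framework. Otherwise the argument is a clean two-step reduction afforded by the exactness of the functor $G \mapsto \hgrp G$ and the \'etale locality of punctual purity.
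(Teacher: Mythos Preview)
Your proof is correct and follows the same global strategy as the paper: reduce via the distinguished triangle to the abelian and toric cases using stability of punctually pure objects under extensions (Proposition~\ref{prop:elementary_pure}(1)), then handle the torus by \'etale-local splitting (Proposition~\ref{prop:elementary_pure}(2), using separatedness of $\DM_R$) and the identification $\hgrp{\GG}\simeq\un_S\gtw 1$.

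The one genuine difference is in the abelian case. You invoke the Deninger--Murre/AHP decomposition $M_S(A)\simeq\bigoplus_n\mathrm{Sym}^n(\hgrp A)$ to exhibit $\hgrp A$ as a direct summand of $M_S(A)$, then use stability under direct factors. The paper instead uses the more elementary cofibrant-resolution proposition immediately preceding the statement, which writes $\hgrp A=\hocolim_{i\in\NN} M_S(H_i(A))$ with each $H_i(A)$ a finite coproduct of powers of $A$; since $A$ is smooth proper, each $M_S(H_i(A))$ is punctually pure by Proposition~\ref{prop:elementary_pure}(3), and a sequential homotopy colimit is built from coproducts and cones, so stability under coproducts and extensions in (1) finishes. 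Your route is slightly more direct once the decomposition is granted, but it imports the full motivic Chow--K\"unneth theorem of \cite[Th.~3.3]{AHP}, whereas the paper's argument needs only the lighter resolution lemma \cite[D.1]{AHP}.
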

\begin{proof}
According to Propositions \ref{prop:AHP1}
 and \ref{prop:elementary_pure}(1), one needs only to consider
 the case where $G$ is an abelian variety or a torus.
 The case of an abelian variety follows from the preceding proposition
 and points (1) and (3) of \ref{prop:elementary_pure}.
 The case of a torus follows from the \'etale separation property
 of $\DM_R$ (recall it is even separated \cite[14.3.3]{CD3}),
 and points (1) and (2) of \ref{prop:elementary_pure}. 
\end{proof}

\begin{num}
Recall that for a family $G_1, ..., G_n$ of semi-abelian varieties
 over a field $k$, Somekawa has introduced in \cite{Som} some abelian groups,
 now called \emph{Somekawa K-groups},
 defined by generators and relations that we will denote by:
$$
K(k;G_1,...,G_n).
$$
These groups generalize both Milnor K-theory in degree $n$
 (take all $G_i=\GG$) and Bloch's group attached to the Jacobian
 $J$ of a smooth projective $k$-curve (take n=2, $G_1=J$ and $G_2=\GG$).
\end{num}
\begin{thm}\label{thm:compute_sab_hlg}
Consider a regular scheme $S$ with dimension function
 $\delta=-\codim_S$ and let $G$ be a semi-abelian $S$-scheme.
\begin{enumerate}
\item The $\delta$-effective motive $\hgrp G$
 is in the heart of the $\delta$-homotopy $t$-structure
 on $\DM^{\delta-\eff}(S,R)$.
\item For any point $x \in S(E)$, and any integer $n \in \ZZ$,
 one has the following isomorphisms:
$$
\widehat{\hgrp G}(x,n)=
\begin{cases}
G_x(E) \otimes_\ZZ R & \text{if } n=0, \\
L_x(E) \otimes_\ZZ R & \text{if } n=-1, \\
0 & \text{if } n<-1, \\
K(E;G_x,\underset{n \text{ times}}{\underbrace{\GG,...,\GG}}) \otimes_\ZZ R
 & \text{if } n>0,
\end{cases}
$$
where $L_x$ is the group of cocharacters of the toric part
 of the semi-abelian variety $G_x$ over $E$.
\end{enumerate}
\end{thm}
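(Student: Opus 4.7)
The plan is to reduce both assertions to known facts about semi-abelian varieties over fields, using the punctual purity of $\hgrp G$ and the conservativity of the family of restriction functors $i_s^!$ (Proposition \ref{prop:test_on_fields}). More precisely, by Corollary \ref{cor:restriction_fields}, if $s$ denotes the image of $x$ in $S$, one has
$$
\widehat{\hgrp G}(x, n) = \widehat{i_s^!\hgrp G}(E/\kappa_s, n).
$$
Now by Proposition \ref{prop:semi-ab_pure}, $\hgrp G$ is punctually pure; combined with the base-change identity \eqref{eq:AHP_basechg}, $i_s^*(\hgrp G) = \hgrp{G_s}$, and the fact that $N_s$ is a free $\kappa_s$-module of rank $\codim_S(s) = -\delta(s)$, this yields the crucial identification
$$
i_s^!\hgrp G \simeq \hgrp{G_s} \otimes \MTh(-N_s) \simeq \hgrp{G_s}\dtw{\delta(s)}.
$$

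For part (1), Proposition \ref{prop:test_on_fields}(2) reduces the assertion that $\hgrp G \in \sh(S,R)$ to checking that $i_s^!\hgrp G$ lies in the heart over $\spec(\kappa_s)$ for each $s \in S$. By Lemma \ref{lm:indep_delta} comparing the dimension functions $\delta^{i_s}$ and the Krull one on $\spec(\kappa_s)$, and the fact that $\gtw 1$ is $t$-exact, this reduces further to the classical statement that $\hgrp{G_s}$ lies in the heart of the standard homotopy $t$-structure over $\kappa_s$, which reflects the well-known fact that semi-abelian varieties represent homotopy invariant sheaves with transfers (see Example \ref{ex:concrete_htp_heart}).

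For part (2), combining the computation of $i_s^!\hgrp G$ above with the standard Hom-tensor adjunction (and the continuity of $\DM_R$, together with \eqref{eq:AHP_basechg}, to base-change from $\kappa_s$ to $E$), one unwinds the definition of fiber homology to obtain
$$
\widehat{\hgrp G}(x, n) \;=\; \Hom_{\DM(E, R)}\bigl(\un_E, \hgrp{G_x}\gtw n\bigr).
$$
The four cases are handled as follows. For $n=0$, representability gives $G_x(E) \otimes_\ZZ R$. For $n > 0$, the Kahn-Yamazaki theorem identifies the right-hand side with the Somekawa $K$-group $K(E; G_x, \GG, \ldots, \GG) \otimes_\ZZ R$. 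For $n \leq -1$, the extension $1 \to T_x \to G_x \to A_x \to 1$ reduces the computation of the contraction $\hgrp{G_x}\gtw n$ to the torus case, where the $(-1)$-contraction is precisely the locally constant sheaf $L_x$ of cocharacters; evaluating on $E$ gives $L_x(E) \otimes R$ for $n = -1$, and further contractions of a locally constant sheaf vanish, giving $0$ for $n \leq -2$.

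The principal obstacle is the identification of $\Hom_{\DM(E,R)}(\un_E, \hgrp{G_x}\gtw n)$ with the Somekawa $K$-group for $n > 0$, which rests on the Kahn-Yamazaki theorem; the remaining cases are essentially formal consequences of the reduction to a field, punctual purity, and the explicit structure of semi-abelian varieties as sheaves with transfers.
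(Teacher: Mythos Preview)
Your proposal is correct and follows essentially the same strategy as the paper: reduce to fields via punctual purity (Proposition \ref{prop:semi-ab_pure}) combined with the base-change identity \eqref{eq:AHP_basechg}, then invoke the known computations over fields (Kahn's contraction result and the Kahn--Yamazaki theorem). Two minor points deserve mention. For part (2), the paper bypasses your two-step reduction (first to $\kappa_s$ via $i_s^!$, then to $E$) by applying directly the pullback formula of Remark \ref{rem:fiber_hlg&good}(1) for the essentially lci morphism $\spec(E)\to S$ between regular schemes; your route works but is slightly longer. More substantively, the paper explicitly reduces to $E$ perfect before invoking the field-level inputs (citing \cite[1.1]{Kahn1} for $(\hgrp G)_{-1}\simeq L$ and Kahn--Yamazaki for the Somekawa groups), justifying this via invariance of both sides under purely inseparable extensions; you should include this step, as those references and the identification with Voevodsky's $HI(k)$ are stated over perfect fields.
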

\begin{proof}
According to Corollary \ref{cor:restriction_fields},
 to prove point (1), we need only to prove that for any point $x \in S$,
 the $\delta$-effective motive $i_x^!(\hgrp G)$ is in the heart
 of $\DM^{\delta^x-\eff}(x,R)$, where $\delta^x$ is the dimension function
 on $x$ induced by the dimension function $\delta$ on $S$.
 
Because of the preceding theorem and formula \eqref{eq:AHP_basechg},
 we get:
$$
i_x^!(\hgrp G)=i_x^*(\hgrp G)\dtw{-c_x}=\hgrp{G_x}\dtw{-c_x}
$$
where $c_x$ is the codimension of $x$ in $S$. Let $k$ be the residue
 field of $x$ and $\delta_k$ the canonical dimension function
 on $x=\spec(k)$.
 It is clear that we have the relation, as dimension function on $x$:
 $\delta^x=\delta_k-c_x$.
 In particular, the canonical functor:
$$
\DM^{\delta^x-\eff}(x,R) \rightarrow \DM^{\delta_k-\eff}(x,R),
 M \mapsto M\dtw{c_x}
$$
is an equivalence of $t$-categories (Remarks \ref{rem:Teff_indep_delta}
 and \ref{rem:htp_t_nonnegdeg_eff}(3)).
But through this equivalence, the motive $i_x^!(\hgrp G)$ is send to
 $\hgrp{G_x}$ in $\DM^{\delta_k-\eff}(x,R)$. So we are restricted to the case
  where $S$ is the spectrum of a field $k$.

The $t$-category $\DM^{\delta_k-\eff}(x,R)$ is invariant under purely
 inseparable extensions (Lemma \ref{lm:inseparable_texact_eff}).
 Applying again formula \eqref{eq:AHP_pullback}, we are thus
 restricted to the case where $k$ is a perfect field.
 Thanks to Example \ref{ex:delta_effective_fields},
 the $t$-category $\DM^{\delta_k-\eff}(x,R)$
 is equivalent to Voevodsky's category $\DM^\eff(k,R)$, equipped
 with the (standard) homotopy $t$-structure. Through this equivalence,
 according to \cite[2.10]{AHP},
 the motive $\hgrp G$ corresponds to the homotopy invariant Nisnevich sheaf
 represented by $G \otimes R$,
 with its canonical transfer structure (cf. \cite[3.1.2]{Org}).
 Therefore, it is in the heart of the homotopy $t$-structure
 and this concludes the proof of point (1).
 We will denote this sheaf by $\hgrp G^{tr}$.

We now consider point (2).
 The morphism $i_x:\spec(E) \rightarrow S$ is essentially
 quasi-projective between regular schemes.
 Thus, using point (1) of Remark \ref{rem:fiber_hlg&good}
 and the fact $i_x^*(\hgrp G)=\hgrp{G_x}$ according to \eqref{eq:AHP_basechg},
 we are reduced to the case where $S=\spec{E}$, $x$ being
 the tautological point.

Note that all members involved in the relation to be proved
 are invariant under purely inseparable extensions of the field $E$
 (for the left hand side, this follows from
 Lemma \ref{lm:inseparable_texact_eff},
 the right hand side is obvious except for Somekawa K-groups,
 case which follows from the existence of norm maps and the fact we
 work with rational coefficients). Thus,
 we can assume that $E$ is perfect.

According to what was said before, we thus are restricted to compute
 the homotopy module $M$ over $E$ associated with the homotopy invariant
 sheaf with transfers $\hgrp G$ (\emph{i.e.} the graded sheaf
 $\sigma^\infty(\hgrp G)$, see \cite[(1.18.b)]{Deg11}).
 According to \cite[1.1]{Kahn1},
 we get $\big(\hgrp G\big)_{-1}\simeq L$ where $L$ is the group
 of cocharacters of $G$, seen as a homotopy invariant Nisnevich sheaf
 with transfers. The first three relations follow because
 obviously $L_{-1}=0$.
 Then the last relation follows:
\begin{align*}
\hat{\hgrp G}(x,n)=\sigma^\infty(\hgrp G)_n(E)
 \stackrel{(1)}=\lbrack \hgrp G \otimes^{\mathrm{Htr}} \GG^{\otimes^\mathrm{Htr},n}\rbrack(E)
 &\stackrel{(2)}=\Hom_{\DMe(E,R)}(\un,\hgrp G \otimes \GG^{\otimes,n}) \\
&\stackrel{(3)}=K(E;G_x,\underset{n \text{ times}}{\underbrace{\GG,...,\GG}}) \otimes_\ZZ R.
\end{align*}
where (1) follows from \cite[1.18]{Deg8} --- $\otimes^{\mathrm{Htr}}$ refers to the tensor
 product of homotopy modules with transfers over $E$ --- 
 (2) follows from the definition of $\otimes^{\mathrm{Htr}}$ and (3)
 is the main theorem of \cite{KY} (see (1.1) in the introduction).
\end{proof}

\begin{thm}\label{thm:sabelian&hmod}
Under the assumptions of the preceding corollary,
 we get a canonical exact fully faithful functor:
$$
\sab_S^0 \rightarrow \she(S,R), \ G \mapsto \hgrp G
$$
where the source category is the category of semi-abelian
 $S$-schemes up to isogeny.
 Moreover, its essential image is stable under extensions.
\end{thm}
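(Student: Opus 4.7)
By Theorem~\ref{thm:compute_sab_hlg}(1), the assignment $G \mapsto \hgrp G$ lands in $\she(S,R)$. The functor is additive, and by Proposition~\ref{prop:AHP1} any short exact sequence $0 \to G' \to G \to G'' \to 0$ of semi-abelian $S$-schemes yields a distinguished triangle $\hgrp{G'} \to \hgrp G \to \hgrp{G''} \xrightarrow{+1}$ in $\DM(S,R)$. Since all three terms now lie in the heart $\she(S,R)$, this distinguished triangle is in fact a short exact sequence in $\she(S,R)$. Because $R$ is a $\QQ$-algebra (we are under the conventions of \ref{num:convention_DM}(1)), an isogeny $\phi: G \to H$ has kernel a finite flat group $S$-scheme whose motive $\hgrp{-}$ vanishes pointwise on fibers by Theorem~\ref{thm:compute_sab_hlg}(2), hence vanishes in $\she(S,R)$ by Proposition~\ref{prop:conservativity_htpm}; therefore $\hgrp \phi$ is an isomorphism. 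So the functor descends to $\sab_S^0$ and remains exact.

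The plan for fully faithfulness is to invoke the conservative family of exact fiber functors $i_x^!: \she(S,R) \to \she(x,R)$ from Proposition~\ref{prop:test_on_fields}. Combining the base change formula \eqref{eq:AHP_basechg} with the absolute purity of $\DM_R$ (Corollary~\ref{cor:fund_class}), one gets a natural isomorphism $i_x^!\hgrp G \simeq \hgrp{G_x}\dtw{-c_x}$ where $c_x=\codim_S(x)$; the Tate twist is an auto-equivalence on the heart, so morphisms are not lost. Injectivity of the comparison
\[
\alpha_{G,H}:\Hom_{\sab_S^0}(G,H)\otimes R \longrightarrow \Hom_{\she(S,R)}(\hgrp G,\hgrp H)
\]
will follow by testing on generic points: a morphism of semi-abelian schemes that is zero in isogeny at every generic fiber is zero in isogeny, by smoothness and $S$-flatness. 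For surjectivity, one takes $\phi:\hgrp G\to \hgrp H$, restricts it via $i_\eta^!$ at each generic point $\eta$, and applies the already-known fully faithfulness in the field case (Spiess--Szamuely, Kahn--Yamazaki, and especially Pepin Lehalleur~\cite{Pep}) to produce, up to isogeny, a morphism $f_\eta: G_\eta \to H_\eta$. One then extends $f_\eta$ to an $S$-morphism $f: G \to H$ in $\sab_S^0$ using the N\'eron mapping property of semi-abelian schemes over the regular base $S$, and checks $\hgrp f = \phi$ by conservativity of the $i_x^!$.

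For the essential image being stable under extensions, consider a short exact sequence $0 \to \hgrp G \to M \to \hgrp H \to 0$ in $\she(S,R)$. One tests it via the family $i_x^!$, $x\in S$: after the Tate twist, it becomes an extension of $\hgrp{H_x}$ by $\hgrp{G_x}$ in $\she(x,R)$, and by the known field case it is of the form $\hgrp{K_x}$ for a semi-abelian extension $K_x$ of $H_x$ by $G_x$ (unique up to isogeny). The already-proven fully faithfulness makes this family compatible with specialization maps, and a descent argument on the regular scheme $S$ (again using smoothness and the N\'eron property) assembles the $K_x$ into a global semi-abelian $S$-scheme $K$ with an isogeny class of extensions $0\to G\to K\to H\to 0$; conservativity then gives $M\simeq \hgrp K$.

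The main obstacle is this last step. Exactness and fully faithfulness reduce rather cleanly to the field case through the conservative $t$-exact family $i_x^!$, but globalizing an extension of semi-abelian schemes from generic/pointwise data requires delicate descent on the regular base, combining the local structure of tori and abelian schemes with the N\'eron extension property; getting a bona fide extension $K$ in $\sab_S^0$ realizing the homotopy-module extension $M$, rather than merely a collection of pointwise extensions, is the crux of the proof.
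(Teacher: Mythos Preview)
The paper's proof is a direct citation: once Theorem~\ref{thm:compute_sab_hlg}(1) places $\hgrp G$ in the heart, the full faithfulness and the stability under extensions are quoted from the main theorem of \cite{Pep}, which is proved directly over the regular base $S$. Exactness is from Proposition~\ref{prop:AHP1}. There is no reduction to points.

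Your proposal takes a genuinely different route, attempting to reduce everything to the field case via the conservative family $i_x^!$. The exactness part and the factoring through isogeny are fine. But the reduction strategy for full faithfulness and for extension stability has real gaps.

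For surjectivity of $\alpha_{G,H}$: you produce $f_\eta$ at each generic point and then invoke ``the N\'eron mapping property of semi-abelian schemes over the regular base $S$'' to extend. This is not available in the generality you need. N\'eron models are a Dedekind-base phenomenon; over higher-dimensional regular bases there is no such extension principle for semi-abelian schemes (already for tori the N\'eron model may fail to be of finite type, and over $\dim\ge 2$ bases the theory is much weaker). Even granting an extension $f$, your check ``$\hgrp f=\phi$ by conservativity of the $i_x^!$'' is circular: conservativity tells you $\hgrp f=\phi$ only once you know $i_x^!(\hgrp f)=i_x^!(\phi)$ at \emph{every} $x$, whereas your construction gives equality only at generic points.

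For extension stability: you correctly identify the obstacle, but do not overcome it. Assembling a family $\{K_x\}_{x\in S}$ of pointwise semi-abelian extensions into a global semi-abelian $S$-scheme is not a standard descent; there is no mechanism here producing the global $K$, and ``smoothness and the N\'eron property'' does not supply one.

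The point is that \cite{Pep} already proves full faithfulness and extension-closedness for $G\mapsto \hgrp G$ into $\DM(S,R)$ over the base $S$; you are using only its field case and then trying to re-globalize by hand, which both loses information and creates the descent problems above. The fix is to invoke \cite{Pep} as the paper does: its main theorem gives full faithfulness and extension stability in $\DM(S,R)$ over $S$, and since the objects lie in the heart by Theorem~\ref{thm:compute_sab_hlg}(1), these properties transfer to $\she(S,R)$ automatically (Hom and $\Ext^1$ in the heart coincide with $\Hom$ and $\Hom(-,-[1])$ in $\DM$).
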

The exactness of the functor follows from \ref{prop:AHP1},
 and the full faithfulness, as well as the stability under extensions,
 follows from the main theorem of \cite{Pep}. 

\begin{rem}
\begin{enumerate}
\item In the statement of the previous theorem,
 one could replace the abelian category $\she(S,R)$
 by $\sh(S,R)$ or $\sht(S,R)$ (Theorem \ref{thm:compute_heart}).
\item Applying the main theorem of \cite[Th. 3.3]{AHP}, we
 get for any abelian $S$-scheme $A$ of dimension $g$
 a canonical isomorphism:
\begin{equation}\label{eq:DM-decomposition}
M_S(A) \xrightarrow{\sim} \bigoplus_{n=0}^{2g} \mathrm{Sym}^n(\hgrp A)
\end{equation}
which is a generalization of the Deninger-Murre decomposition of
 the Chow motive of $A$. Indeed, when $S$ is a smooth $k$-scheme,
 This decomposition lives in the category of motives over $S$
 of weight $0$ which is equivalent to the category of pure Chow motives
 according to \cite{Jin}. Through this equivalence $M_S(A)$ corresponds
 to the dual of the Chow motive of $A/S$ and the preceding decomposition
 is equal to the dual of the Deninger-Murre decomposition.

When $A=E$ is an elliptic curve,
 the preceding theorem implies that $M_S(A)$ is concentrated in
 homological degree $0$ and $1$ for the $t$-structure $t_\delta^\eff$.
 In the next statement, we extend this result for $S$-curves of arbitrary
 genus.
\end{enumerate}
\end{rem}

\begin{num}\label{num:compute_hlg_curves}
Let $S$ be a regular scheme and consider a smooth projective
 geometrically connected relative curve $p:\bar X \rightarrow S$
 with a given section $x:S \rightarrow \bar X$.

In our assumptions, the motive $M_S(\bar X)$ in $\DM(S,R)$ has a
 Chow-K\"unneth decomposition as in the classical case -- though
 it is dual because Voevodsky's motives are homological:
 $M_S(\bar X)=p_!p^!(\un_S)$.
 First, the induced map $x_*:\un_S \rightarrow M_S(\bar X)$ is
 split by $p_*$. Let us define the reduced motive $\tilde M_S(\bar X)$
 of the pointed $S$-curve $(\bar X,x)$ as the cokernel of $x_*$,
 or equivalently, the kernel of $p_*$.

Recall that Voevodsky motives of smooth $S$-schemes 
 are contravariant (up to a twist) with respect to projective morphisms,
 via the so-called Gysin morphism (cf. \cite{Deg8}). So, 
 associated to the morphisms $p$ and $x$, we get Gysin morphisms:
$$
p^*:\un_S(1)[2] \rightarrow M_S(\bar X), \ 
 x^*:M_S(\bar X) \rightarrow \un_S(1)[2]
$$
satisfying the relation $x^*p^*=Id$. Thus $p^*$ is a split monomorphism.
 Moreover, the composite map $p_* \circ p^*$ is a class in
 the motivic cohomology group $H^{-2,-1}_M(S,R)$ which is zero because
 $S$ is regular.
 Hence $p^*$ canonically factorizes into $\tilde M_S(X)$. Let us denote by
 $h_1^M(M_S(\bar X))$ its cokernel in $\tilde M_S(X)$.
 We have finally obtained a canonical decomposition:
\begin{equation}\label{eq:chow-kunneth_curves}
M_S(\bar X)=\un_S \oplus h_1^M(M_S(\bar X)) \oplus \un_S(1)[2],
\end{equation}
the \emph{dual Chow-K\"unneth decomposition} associated with $\bar X/S$.
 
Let us now consider the relative Jacobian scheme $J(\bar X)$ associated
 with $\bar X/S$ -- the connected component of the identity of the Picard
 scheme associated with $\bar X/S$, see \cite[\textsection 8.4]{BLR}.
 The section $x \in \bar X(S)$ gives a canonical map:
 $\bar X \xrightarrow{\iota} J(\bar X)$.
 We deduce a canonical morphism of $R$-linear motives over $S$:
$$
\varphi_x:h_1^M(M_S(\bar X)) \rightarrow M_S(\bar X)
 \xrightarrow{\iota_*} M_S(J(\bar X)) \rightarrow \hgrp{J(\bar X)}
$$
where the first map is the canonical inclusion and
 the last one is the canonical projection with respect to
 the decomposition \eqref{eq:DM-decomposition}.
 Another application of the preceding theorem is the following
 generalization of a classical result of Voevodsky
 \cite[chap. 5, Th. 3.4.2]{FSV}:
\end{num}
\begin{prop}\label{prop:compute_hlg_curves}
Consider the above assumptions.
 Then the morphism $\varphi_x$ is an isomorphism.

Moreover, for the dimension function $\delta=-\codim_S$,
 the $\delta$-effective motive $M_S(\bar X)$ is concentrated
 in homological degree $0$ and $1$ for the effective $\delta$-homotopy
 $t$-structure and one has canonical 
 isomorphisms:
$$
H_i^{\delta-\eff}(M_S(\bar X))=\begin{cases}
\un_S \oplus \hgrp{J(\bar X)} & \text{if } i=0, \\
\un_S\gtw 1 & \text{if } i=1.
\end{cases}
$$
Finally, the distinguished triangle associated
 with the truncation functor $\tau_{\geq 0}^\delta$
 has the form:
$$
\un_S \oplus \hgrp{J(\bar X)} \rightarrow M_S(\bar X)
 \rightarrow \un_S(1)[2] \xrightarrow{\partial} \un_S[1];
$$
it splits since $\partial=0$.
\end{prop}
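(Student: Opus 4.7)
The strategy is to reduce everything to showing $\varphi_x$ is an isomorphism, which we then check pointwise by reducing to the classical theorem of Voevodsky--Lichtenbaum over a field. Before anything, observe that once $\varphi_x$ is known to be an isomorphism, the decomposition \eqref{eq:chow-kunneth_curves} becomes
\[
M_S(\bar X)\simeq \un_S\oplus \hgrp{J(\bar X)}\oplus \un_S\gtw 1[1].
\]
By Example \ref{ex:unit_eff_heart} the motive $\un_S$ lies in the heart $\she(S,R)$, and by Theorem \ref{thm:compute_sab_hlg}(1) so does $\hgrp{J(\bar X)}$ (as $J(\bar X)$ is abelian over the regular scheme $S$). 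The third summand $\un_S\gtw 1[1]$ is concentrated in homological degree $1$ for $t_\delta^{\eff}$ (again by Example \ref{ex:unit_eff_heart} together with Remark \ref{rem:htp_t_nonnegdeg_eff}(4)). This yields the stated homology computation and forces the truncation triangle to split, whence $\partial=0$.

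The task is therefore to prove that $\varphi_x$ is an isomorphism in $\DM^{\delta-\eff}(S,R)$. I would appeal to Proposition \ref{prop:test_on_fields}(2), which provides the conservative family of exact functors $i_y^!:\sh(S,R)\to \sh(y,R)$ indexed by set-theoretic points $y\in S$ (applied via the fully faithful inclusion $\she\hookrightarrow \sh$, using that $i_y^!$ preserves $\delta$-effective objects by \ref{prop:test_on_fields}(1)). Both source and target of $\varphi_x$ are punctually pure in the sense of Definition~\ref{df:abs_purity}: for $M_S(\bar X)$ this is Proposition \ref{prop:elementary_pure}(3) since $\bar X/S$ is smooth and proper, while for $\hgrp{J(\bar X)}$ it is Proposition \ref{prop:semi-ab_pure}. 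Consequently $i_y^!$ agrees with $i_y^*$ up to the invertible twist $\Th(-N_y)$ on both sides, and after canceling this twist the verification reduces to showing that $i_y^*(\varphi_x)$ is an isomorphism.

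Next I would identify $i_y^*(\varphi_x)$ with the analogous morphism for the fiber curve. By compatibility of $f_\sharp$ with base change \cite[2.4.50]{CD3}, $i_y^*M_S(\bar X)=M_{\kappa(y)}(\bar X_y)$; by formula \eqref{eq:AHP_basechg} together with the classical fact that formation of the Jacobian of a smooth projective curve with section commutes with base change, $i_y^*\hgrp{J(\bar X)}=\hgrp{J(\bar X_y)}$. The constructions entering the definition of $\varphi_x$ -- namely the Gysin morphism attached to the section, the Chow--Künneth decomposition \eqref{eq:chow-kunneth_curves}, and the projection to $\hgrp{J(\bar X)}$ coming from the Deninger--Murre decomposition \eqref{eq:DM-decomposition} -- are all natural under pullback, so $i_y^*(\varphi_x)$ is identified with $\varphi_{x_y}$ attached to the pointed curve $(\bar X_y,x_y)$ over $\kappa(y)$. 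Using semi-separation of $\DM_R$ (Example~\ref{ex:DM_semi_sep}) and the equivalence of Lemma~\ref{lm:inseparable_texact}, we may further replace $\kappa(y)$ by its perfect closure. Over a perfect field, Example~\ref{ex:delta_effective_fields} identifies $\DM^{\delta-\eff}(\kappa,R)$ with Voevodsky's effective motivic category, and the result of \cite[chap.~5, Th.~3.4.2]{FSV} (combined with the identification $\hgrp{J(\bar X_y)}=M_1^{\eff}(J(\bar X_y))$ from \cite[2.10]{AHP}) yields the desired isomorphism.

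The main obstacle is the third step: carefully verifying the naturality of $\varphi_x$ under base change, which means tracking the Gysin morphism for the section, the splitting of $p_*$ by $x_*$, and the Deninger--Murre projection onto $\hgrp{J(\bar X)}$. The compatibility of Gysin morphisms with smooth and transversal pullback is standard (see \cite[2.4.4]{Deg12} used in the proof of Proposition~\ref{prop:elementary_pure}(3)), and the compatibility of \eqref{eq:DM-decomposition} with base change is built into the construction of \cite{AHP} via \eqref{eq:AHP_basechg}; once these are assembled, the classical field case finishes the proof.
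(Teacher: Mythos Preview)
Your plan is essentially correct, and the overall architecture---reduce to fibers, then invoke the classical case over a field---is the same as the paper's. But your route to the fibers is more circuitous, and one citation is slightly off.

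The paper does not pass through the $t$-structure or punctual purity at all to prove that $\varphi_x$ is an isomorphism. Instead it uses the family of pullbacks $i_{\bar s}^*:\DM(S,R)\to\DM(\bar s,R)$ to \emph{geometric} points $\bar s$ of $S$, which is conservative on the full triangulated category directly from localization, continuity and separation of $\DM_R$ (see \cite[Lem.~A.6]{AHP}). Since both the Chow--K\"unneth decomposition \eqref{eq:chow-kunneth_curves} and the Deninger--Murre decomposition \eqref{eq:DM-decomposition} are stable under arbitrary base change (the former because Gysin maps are compatible with transversal pullback, the latter by \cite[2.7, 3.3(4)]{AHP}), one is immediately reduced to a separably closed base field, where the paper appeals to the classical identification $CH^1(h^1(\bar X))=\mathrm{Pic}^0(\bar X)\simeq CH^1_1(J(\bar X))$ \`a la Beauville rather than to \cite[Chap.~5, Th.~3.4.2]{FSV}.

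Your approach via $i_y^!$ and punctual purity also works, but note that Proposition~\ref{prop:test_on_fields}(2) is a conservativity statement on the \emph{heart}, whereas $\varphi_x$ is a map between objects of $\DM^{\delta-\eff}(S,R)$ and you have not yet shown that $h_1^M(M_S(\bar X))$ lies in the heart. You should cite Corollary~\ref{cor:restriction_fields}(2) together with the non-degeneracy of $t_\delta$ (Corollary~\ref{cor:nice_ppties_dhtp}(1)) to get conservativity of the family $(i_y^!)_y$ on the full triangulated category. With that fixed, your detour through punctual purity buys you essentially nothing over the paper's direct use of $i_{\bar s}^*$: in both cases the heart of the matter is the base-change stability of the two decompositions, which you correctly identify as the main point to check.
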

\begin{proof}
Consider the first assertion.
 Note first that the decompositions \eqref{eq:chow-kunneth_curves}
 and \eqref{eq:DM-decomposition} are stable by base change:
 for the first one, it follows from the fact Gysin morphisms
 are compatible with base change in the transversal case
 (cf. \cite[5.17(i)]{Deg8}),
 and for the second one by \cite[2.7 and 3.3(4)]{AHP}.
 Note also that the family of functors
 $i_{\bar s}:\DM(S,R) \rightarrow \DM(\bar s,R)$
 indexed by geometric points of $S$ is conservative:
 it follows directly from the following properties
 of $\DM_R$: localization (\cite[14.2.11]{CD3}),
 continuity (\cite[14.3.1]{CD3}) and separation (\cite[14.3.3]{CD3})
 --- see also \cite[Lem. A.6]{AHP}.
 Thus to check $\varphi_x$ is an isomorphism, we only need to
 consider its pullback over geometric points of $S$.
 In other words, we are reduced to the case of a separably
 closed field $S=\spec(k)$.
 But then it follows from a classical fact about Chow-K\"unneth
 decompositions, which boils down to the identification of
$CH^1(h^1(\bar X))=Pic^0(\bar X)$
with $CH^1_1(J(\bar X))$ with the notations of \cite{Beau}
 (see Prop. 3 of \emph{loc. cit.} and the comments following it).

The remaining facts are then consequences of the preceding theorem,
 Example \ref{ex:unit_eff_heart}, and the vanishing of
 $H^{-1,-1}_M(S,R)$.
\end{proof}

\begin{num}
We consider again the notations and assumptions of
 \S\ref{num:compute_hlg_curves}.
 Suppose we are given a closed subscheme $\nu:X_\infty \hookrightarrow \bar X$
 such that the induced morphism $f:X_\infty \rightarrow S$ is \'etale.
 Let us put $X=\bar X-X_\infty$ with open immersion $j:X \rightarrow \bar X$.
 We also assume that the section $x \in \bar X(S)$ is disjoint from $X_\infty$
 so that it induces a section $x \in X(S)$.

Note that we can associate with $f$ a Gysin morphism
 (see \cite{Deg8} or use \ref{num:BM}, (BM2)+(BM4)):
 $f^*:\un_S \rightarrow M_S(X_\infty)$.

As in the case of $\bar X$, we will denote by $\tilde M_S(X)$
 the reduced motive associated with the pointed $S$-curve $(X,x)$. 
 Recall $M_S(X)=\un_S \oplus \tilde M_S(X)$,
 and similarly for $\bar X$.
 From the Gysin triangle
 (see \cite{Deg8} or use \ref{num:BM}, (BM3)+(BM4))
 associated with $\nu$, we deduce the following distinguished triangle:
\begin{equation} \label{eq:Gysin_affine_curve}
M_S(X_\infty)\gtw 1
 \xrightarrow{\partial_\nu} \tilde M_S(X)
 \xrightarrow{j_*} \tilde M_S(\bar X)
 \xrightarrow{\nu^*} M_S(X_\infty)\dtw 1.
\end{equation}
We also get a canonical morphism
$$
\pi:\tilde M_S(X) \xrightarrow{j_*} \tilde M_S(\bar X)
 \xrightarrow{\bar \pi} \hgrp{J(\bar X)}
$$
where the last map is the canonical projection according to the preceding proposition.
\end{num}
\begin{prop}\label{prop:compute_hlg_curves_aff}
Consider the notations above.

Then $M_S(X)$ is a $\delta$-effective motive which is concentrated in degree
 $0$ for the $\delta$-homotopy $t$-structure:
 $M_S(X)=H_0^{\delta-\eff}\big(M_S(X)\big)$.

Moreover, the following sequence is exact in the category of effective
 $\delta$-homotopy modules $\she(S,R)$:
$$
0 \rightarrow \un_S\gtw 1 \xrightarrow{f^*}
 M_S(X_\infty)\gtw 1 \xrightarrow{\partial_\nu} \tilde M_S(X)
 \xrightarrow \pi \hgrp{J(\bar X)} \rightarrow 0.
$$
In other words, $M_S(X)$ is isomorphic to the $\delta$-homotopy module 
 of the (dual of the) Albanese semi-abelian scheme $\mathrm{Alb}(X/S)$
 associated with the smooth (affine) curve $X/S$:
$$
M_S(X) \simeq \un_S \oplus \hgrp{\mathrm{Alb}(X/S)}.
$$
\end{prop}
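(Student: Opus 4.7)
The plan is to deduce the proposition by applying the effective $\delta$-homotopy homology functors to the Gysin triangle \eqref{eq:Gysin_affine_curve} and combining the computation with Proposition \ref{prop:compute_hlg_curves} for the compactification. First I would check that $M_S(X)$ is $\delta$-effective: since $X/S$ is smooth of relative dimension $1$ and $\delta=-\codim_S$, this is a direct application of Corollary \ref{cor:sm_motives_eff} (cf. Example \ref{ex:motive_sm_eff}). Next, I would note that $f\colon X_\infty\to S$ is finite étale (it is étale by assumption, and proper as $X_\infty$ is closed in the proper $S$-scheme $\bar X$), so by \S\ref{num:heart_functoriality} the exact functor $f_*=f_!$ sends the unit $\un_{X_\infty}\in\she(X_\infty,R)$ into $\she(S,R)$. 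Since $\gtw 1$ is $t_\delta$-exact (Remark \ref{rem:htp_t_nonnegdeg_eff}(4)) and preserves $\delta$-effective objects (Cor. \ref{cor:delta_eff_thom_stable}), the motive $M_S(X_\infty)\gtw 1=f_*(\un_{X_\infty})\gtw 1$ lies in the heart $\she(S,R)$.

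Now I apply the long exact sequence in homology for $t_\delta^\eff$ to the distinguished triangle \eqref{eq:Gysin_affine_curve}. Using Proposition \ref{prop:compute_hlg_curves} (which says $H_0(\tilde M_S(\bar X))=\hgrp{J(\bar X)}$, $H_1(\tilde M_S(\bar X))=\un_S\gtw 1$ and all other homology vanishes, with the connecting boundary being zero) and the concentration in degree $0$ just established for $M_S(X_\infty)\gtw 1$, the long exact sequence collapses to
\begin{equation*}
0\to H_1^{\delta-\eff}(\tilde M_S(X))\to \un_S\gtw 1\xrightarrow{\ \alpha\ } M_S(X_\infty)\gtw 1\to H_0^{\delta-\eff}(\tilde M_S(X))\to \hgrp{J(\bar X)}\to 0,
\end{equation*}
with the other homologies of $\tilde M_S(X)$ vanishing. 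The map $\alpha$ is obtained from the boundary $\nu^*\colon \tilde M_S(\bar X)\to M_S(X_\infty)\dtw 1=M_S(X_\infty)\gtw 1[1]$ by restriction to the summand $H_1(\tilde M_S(\bar X))=\un_S\gtw 1$.

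The key step — and the main technical obstacle — is to identify $\alpha$ with $f^*\gtw 1$, where $f^*\colon \un_S\to f_*\un_{X_\infty}=M_S(X_\infty)$ is the unit of adjunction for the étale cover $f$. The embedding $\un_S\gtw 1\hookrightarrow \tilde M_S(\bar X)$ is (by the construction of the dual Chow–Künneth decomposition in \S\ref{num:compute_hlg_curves}) the Voevodsky–Gysin morphism $p^*$ of the smooth proper projection $p\colon\bar X\to S$. The boundary $\nu^*$ is the Voevodsky–Gysin morphism of the regular closed immersion $\nu$ of codimension $1$. Applying functoriality of Voevodsky–Gysin maps along the transverse square
\begin{equation*}
\xymatrix@=10pt{X_\infty\ar[r]^{\nu}\ar[d]_f & \bar X\ar[d]^p \\ S\ar@{=}[r] & S}
\end{equation*}
(use for instance \cite[5.17]{Deg8} or Corollary \ref{cor:fund_class}), the composite $\nu^*\circ p^*$ agrees with the Voevodsky–Gysin morphism of the finite étale $f$, which (since its virtual tangent bundle is trivial of rank $0$) is nothing other than the unit map $f^*$ twisted by $\gtw 1$. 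Injectivity of $\alpha=f^*\gtw 1$ follows because $R$ is a $\mathbb Q$-algebra: the degree $d$ of the finite étale cover $f$ is invertible, and the trace $\frac{1}{d}\cdot\mathrm{Tr}_f\colon f_*\un_{X_\infty}\to\un_S$ gives a retraction of $f^*$, hence of $f^*\gtw 1$ in the abelian category $\she(S,R)$.

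The vanishing of $H_1^{\delta-\eff}(\tilde M_S(X))$ then follows, showing that $M_S(X)$ is concentrated in homological degree $0$ for $t_\delta^\eff$. This yields the announced four-term exact sequence in $\she(S,R)$. Finally, to identify $M_S(X)\simeq \un_S\oplus \hgrp{\mathrm{Alb}(X/S)}$, I would invoke the standard presentation of the Albanese of a smooth affine curve with regular compactification $\bar X$ and boundary $X_\infty$: there is a short exact sequence of semi-abelian $S$-schemes $0\to T(X_\infty,x)\to \mathrm{Alb}(X/S)\to J(\bar X)\to 0$ where $T(X_\infty,x)$ is the torus dual to the reduced divisor class $\ker(\mathbb Z[X_\infty]\twoheadrightarrow \mathbb Z)$, with the inclusion $\un_S\gtw 1\to M_S(X_\infty)\gtw 1$ corresponding to passage to the quotient. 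Applying the exact, fully faithful functor $\hgrp{-}$ of Theorem \ref{thm:sabelian&hmod} to this sequence and comparing with our four-term exact sequence identifies $\tilde M_S(X)$ with $\hgrp{\mathrm{Alb}(X/S)}$, which completes the proof.
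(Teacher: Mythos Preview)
Your proposal is correct and follows essentially the same route as the paper: apply the long exact sequence of $t_\delta^{\eff}$-homology to the Gysin triangle \eqref{eq:Gysin_affine_curve}, use Proposition \ref{prop:compute_hlg_curves} for $\tilde M_S(\bar X)$, identify the induced map $\un_S\gtw 1\to M_S(X_\infty)\gtw 1$ with $f^*$ via the compatibility $f^*=\nu^*\circ p^*$ of Gysin morphisms (the paper cites \cite[5.14]{Deg8} rather than 5.17, but the content is the same), and split $f^*$ using the degree formula $f_*f^*=d\cdot\mathrm{Id}$ over $\QQ$. Your final step, recognising $\tilde M_S(X)$ as $\hgrp{\mathrm{Alb}(X/S)}$ via the torus-by-Jacobian extension and Theorem \ref{thm:sabelian&hmod}, makes explicit what the paper leaves to the reader in its ``In other words'' clause.
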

\begin{proof}
In this proof, we will denote $H_i$ for $H_i^{\delta-\eff}$ to simplify
 the notations.

The fact $M_S(X)$ is $\delta$-effective is basic
 (Example  \ref{ex:motive_sm_eff}).
 Because $X_\infty/S$ is finite \'etale, $S$ is regular and $\delta(S)=0$,
 we obtain that the $\delta$-effective motive $M_S(X_\infty)$ is
 concentrated in degree $0$ (Ex. \ref{ex:unit_eff_heart} together with
 the end of \textsection \ref{num:adj_tdelta_eff}).
 Thus, the homological long exact sequence associated with the distinguished triangle
 \eqref{eq:Gysin_affine_curve} together with the preceding theorem immediately 
 yields that $H_i^{\delta-\eff}(M_S(C))=0$ if $i\neq 0,1$. Moreover,
 we get an exact sequence in the heart:
$$
0 \rightarrow H_1(\tilde M_S(X))
 \xrightarrow{j_*} H_1(\tilde M_S(\bar X))
 \xrightarrow{\nu^*} M_S(X_\infty)\gtw 1
 \xrightarrow{\partial_\nu} H_0(\tilde M_S(X))
 \xrightarrow{j_*} H_0(\tilde M_S(\bar X)) \rightarrow 0.
$$
According to the preceding theorem, the the following composition:
$$
\un_S\gtw 1 \xrightarrow{p^*} \tilde M_S(\bar X)[-1]
\rightarrow \tilde H_1(M_S(\bar X))
$$
is an isomorphism. Thus, in the preceding exact sequence, the morphism
 $\nu^*$ is isomorphic to the morphism
 $f^*:\un_S\gtw 1 \rightarrow M_S(X_\infty)\gtw 1$
 because $f^*=\nu^*p^*$ (cf. \cite[5.14]{Deg8}).
 
Note that the composite map:
$$
\un_S \xrightarrow{f^*}  M_S(X_\infty) \xrightarrow{f_*} \un_S
$$
is equal to $d.Id$ where $d$ is the degree of the \'etale morphism $f$.\footnote{This
 is classical. In our case, we can argue as follows: because $\HB^{00}(S)=\QQ$, 
 the composite $q_S:=f_*f^*$ is a rational number. Moreover, one can easily check
 this number is invariant under pullback by a smooth morphism $T \rightarrow S$. 
 Given that $f$ is an \'etale cover,
 we can find an \'etale map $T \rightarrow S$ such that $f \times_S T$ is trivial.
 Then the formula is obvious by additivity.}
 In particular, as we work with rational coefficients, the map $f^*$
 is a split monomorphism and this implies $H^1(\tilde M_S(X))=0$ which
 implies $H^1(M_S(X))=0$ (use again Example \ref{ex:unit_eff_heart})
 as required.

We conclude because from the previous theorem the canonical
 map $\tilde M_S(X) \xrightarrow{\bar \pi}\hgrp{J(\bar X)}$
 induces an isomorphisms on $H_0$.
\end{proof}

%

\bibliographystyle{amsalpha}
\bibliography{htp}

\end{document}